\newcommand{\E}[1]{\mathcal{E}^{#1}}
\renewcommand{\O}[1]{\mathcal{O}^{#1}}
\newcommand{\dbarb}{\overline{\partial}_b}
\newcommand{\boxb}{\square_b}
\newcommand{\boxbbar}{\overline{\square}_b}
\newcommand{\zbar}{\overline{z}}
\newcommand{\Zbar}{\overline{Z}}
\newcommand{\wbar}{\overline{w}}
\newcommand{\Wbar}{\overline{W}}
\newcommand{\hZ}{\hat{Z}}
\newcommand{\hZbar}{\hat{\Zbar}}
\newcommand{\T}{\textrm{Tor}_{\nabla}}
\newcommand{\hor}{H}
\newcommand{\omegabar}{\overline{\omega}}
\newcommand{\re}{\textrm{Re}\,}
\newcommand{\im}{\textrm{Im}\,}
\newcommand{\m}{\theta \wedge d\theta}
\newcommand{\tdbarb}{\tilde{\dbarb}}
\newcommand{\tvdbarb}{\tilde{\vartheta_b}}
\newcommand{\tboxb}{\tilde{\square_b}}
\newcommand{\tS}{\tilde{\Pi}}
\newcommand{\tb}{\tilde{\beta}}
\newcommand{\hM}{\hat{M}}
\newcommand{\hthe}{\hat{\theta}}
\newcommand{\hdbarb}{\hat{\dbarb}}
\newcommand{\hboxb}{\hat{\square_b}}
\newcommand{\hnabla}{\hat{\nabla}}
\newcommand{\hrho}{\hat{\rho}}
\newcommand{\hS}{\hat{\Pi}}
\newcommand{\hK}{\hat{K}}
\newcommand{\hN}{\hat{N}}
\newcommand{\hR}{\hat{R}}
\newcommand{\hm}{\hat{m}}
\newcommand{\tm}{\tilde{m}}
\newcommand{\tN}{\tilde{N}}
\newcommand{\psibar}{\overline{\psi}}
\newcommand{\tf}{\tilde{f}}
\newcommand{\hobar}{\hat{\overline{\omega}}}
\newcommand{\hDelb}{\hat{\Delta}_b}
\newcommand{\hD}{\hat{\mathcal{D}}}
\newcommand{\hX}{\hat{X}}
\newcommand{\hY}{\hat{Y}}
\newcommand{\hT}{\hat{T}}
\newcommand{\cZ}{\mathcal{Z}}
\newtheorem{thm}{Theorem}
\newtheorem{prop}[thm]{Proposition}
\newtheorem{lem}[thm]{Lemma}
\newtheorem{cor}[thm]{Corollary}
\begin{document}

\title[Solution of the tangential Kohn Laplacian]{Solution of the tangential Kohn Laplacian on a class of non-compact CR manifolds}
\author{Chin-Yu Hsiao}
\address{Institute of Mathematics, Academia Sinica, 6F, Astronomy-Mathematics Building, No. 1, Sec. 4, Roosevelt Road, Taipei 10617, Taiwan}
\email{chsiao@math.sinica.edu.tw}
\author{Po-Lam Yung}
\address{Department of Mathematics, The Chinese University of Hong Kong, Shatin, Hong Kong}
\email{plyung@math.cuhk.edu.hk}
\begin{abstract} 
We solve $\boxb$ on a class of non-compact 3-dimensional strongly pseudoconvex CR manifolds via a certain conformal equivalence. The idea is to make use of a related $\boxb$ operator on a compact 3-dimensional strongly pseudoconvex CR manifold, which we solve using a pseudodifferential calculus. The way we solve $\boxb$ works whenever $\dbarb$ on the compact CR manifold has closed range in $L^2$; in particular, as in \cite{BG88}, it does not require the CR manifold to be the boundary of a strongly pseudoconvex domain in $\mathbb{C}^2$. Our result provides in turn a key step in the proof of a positive mass theorem in 3-dimensional CR geometry, by Cheng, Malchiodi and Yang \cite{MR3600060}, which they then applied to study the CR Yamabe problem in 3 dimensions.
\end{abstract}
\maketitle

\tableofcontents

\setlength{\parskip}{0.5em}

\section{Introduction}

In a recent paper \cite{MR3600060}, Cheng, Malchiodi and Yang initiated the study of a positive mass theorem in 3-dimensional CR geometry. Among other things, they considered, on a class of closed 3-dimensional pseudohermitian manifolds, the Green's function of the conformal sublaplacian. They developed asymptotics of this Green's function near its singularity in CR normal coordinates, and obtained, under certain conformally invariant geometric assumptions, an important result about the sign of the constant term in this asymptotic expansion. This in turn has applications in the study of the CR Yamabe problem. 

The approach in \cite{MR3600060} involves relating the constant term in the asymptotic expansion of the aforementioned Green's function, to the mass of a certain blow-up of the closed pseudohermitian manifold. The blown-up manifold involved is non-compact, and has only one end; the mass mentioned above is defined through the integral of a certain geometric quantity on a sphere at infinity. One key insight of \cite{MR3600060} is that this mass can be given, via a Bochner formula, in terms of a certain integral over the whole blow-up. In order to show that this mass is non-negative, one then has to solve a certain tangential Kohn Laplacian $\boxb$ on this blow-up. Since the blow-up is non-compact, one cannot do this by using the classical $L^2$ theory. (Even in simple examples, the operator $\boxb$, when extended to a closed linear operator from $L^2$ to $L^2$, does not have closed range.) Thus one has to proceed differently. In \cite{MR3366852}, we solved such $\boxb$ using a weighted $L^2$ theory. In the current paper, we present an alternative, and slightly simpler, self-contained approach to the same problem, using an $L^p$ theory instead. 

Our current approach makes systematic use of pseudodifferential calculus. In the work of Nagel and Stein \cite{NaSt}, they developed a theory of non-isotropic pseudodifferential operators. In this paper, we give a rather detailed account of a slightly simplified version of that theory, as was developed in Stein and Yung \cite{MR3228630}; see Sections~\ref{sect:pdo} to~\ref{sect:hboxbsol}. This simplified version is largely similar to the approach of Beals and Greiner \cite{BG88}, except that we do not require the symbols to admit any asymptotic expansions, which makes it slightly easier to use. Using such pseudodifferential calculus, and a variant of the argument in \cite{BG88}, we describe the relative solution operator and the Szeg\H{o} projection for the tangential Kohn Laplacians, on any 3-dimensional abstract compact smooth strongly pseudoconvex CR manifolds for which $\dbarb$ has closed range in $L^2$. We note in passing that the same result was well-known if the CR manifold is the boundary of a strongly pseudoconvex domain $\Omega$ in $\mathbb{C}^2$; see e.g. the exposition of Nagel-Stein \cite{NaSt}. On the other hand, it was crucial in \cite{NaSt} that one has an asymptotic formula for the Szeg\H{o} kernel, before one constructs the relative solution operator to the tangential Kohn Laplacian. Such asymptotics for the Szeg\H{o} projection has been proved by Fefferman \cite{F}, and Boutet de Monvel and Sj\"ostrand \cite{BouSj76}. The proof of Fefferman \cite{F}, for instance, relies on the existence of a biholomorphic map $F$ between a small neighborhood $U$ of a boundary point $p$ of $\Omega \subset \mathbb{C}^2$, and a small neighborhood of a boundary point of the unit ball $B$ in $\mathbb{C}^2$, so that $F(U \cap \Omega)$ is tangent to the boundary of the unit ball to third order at $F(p) \in \partial B$. See also Nagel, Rosay, Stein and Wainger \cite{NRSW2}, where they constructed the Szeg\H{o} kernel of $\partial \Omega$ by making use of knowledge of the Bergman kernel of $\Omega$. These approaches for studying the Szeg\H{o} projection would not work when we merely assume that $\dbarb$ has closed range, which is the case of interest here. So in \cite{BG88} and also what follows, one has to proceed slightly differently; see Section~\ref{sect:hboxbsol}. Nonetheless, it should be emphasized that our approach works only for strongly pseudoconvex CR manifolds. We refer the reader to the much deeper works of e.g. Christ \cite{Ch88I}, \cite{Ch88II}, Fefferman and Kohn \cite{FeKo88}, Machedon \cite{Ma1}, Nagel, Rosay, Stein and Wainger \cite{NRSW2} for results in the weakly pseudoconvex (and finite type) case; in particular, the articles \cite{Ch88I}, \cite{Ch88II} of Christ contain pointwise estimates for the Szeg\H{o} kernel and the relative solution operator to $\dbarb$, on compact 3-dimensional weakly pseudoconvex CR manifolds of finite type for which $\dbarb$ has closed range in $L^2$.

Another difficulty we had to resolve in this paper is to find a way to reduce the solution of $\boxb$ on a non-compact CR manifold obtained from a blow-up, to the solution of a more manageable tangential Kohn Laplacian on a compact CR manifold. When the blown-up is constructed using the modulus of a CR function, this is relatively easy (see Section~\ref{sect:eg}); in general the problem is quite a bit harder, and its resolution forms the core of the current paper (see Sections~\ref{sect:tboxb} and~\ref{sect:hboxb}).

The paper is organised as follows. In Section~\ref{sect:def}, we recall some basic definitions in CR and pseudohermitian geometry. In Section~\ref{sect:CY}, we recall the CR Yamabe problem, and reduce it to the study of asymptotics of a suitable Green's function. In Section~\ref{sect:pm}, we describe the work of Cheng, Malchiodi and Yang \cite{MR3600060}, where they relate the asymptotics of this Green's function, to the mass of a suitable blow-up. In Section~\ref{sect:result}, we state our main result, and indicate how this can be used to complete the proof of the CR positive mass theorem in \cite{MR3600060}. Section~\ref{sect:pdo} contains the description of a non-isotropic pseudodifferential calculus on strongly pseudoconvex CR manifolds, which is a variant of the theory of Nagel and Stein \cite{NaSt} developed in Stein and Yung \cite{MR3228630}. Sections~\ref{sect:hDelb} and \ref{sect:hboxbsol} give the solution of the sublaplacian and certain tangential Kohn Laplacians using the pseudodifferential calculus developed in Section~\ref{sect:pdo}. Section~\ref{sect:eg} describes how a model case of our main theorem can be established; the key there is a certain conformal equivalence. This motivates the proof of our main theorem, which is given in Sections~\ref{sect:tboxb} and~\ref{sect:hboxb}.

\noindent \textbf{Acknowledgments.} The authors would like to thank Elias M. Stein for his constant inspiration through his joint work \cite{MR3228630} with the second author. The authors would also like to thank Charles Fefferman for a very helpful discussion in regard to the material in Section~\ref{sect:hboxbsol}. Yung was partially supported by the General Research Fund CUHK14313716 from the Hong Kong Research Grant Council, and direct grants for research from the Chinese University of Hong Kong (4053220 and 4441651).

\section{Some basic notations in 3-dimensional CR geometry} \label{sect:def}

Suppose $M$ is a 3-dimensional smooth manifold. It is said to be a CR manifold, if there exists a 1-dimensional sub-bundle $L$ of the complexified tangent bundle $\mathbb{C}TM$, with $L \cap \bar{L} = \{0\}$. We usually write $L = T^{(1,0)}M$, and $\bar{L} = T^{(0,1)} M$. Write also $$\hor = \text{Re} (T^{(1,0)} M \oplus T^{(0,1)} M).$$ Then there exists a unique endomorphism $J \colon \hor \to \hor,$ which extends $\mathbb{C}$-bilinearly to an endomorphism of $T^{(1,0)} M \oplus T^{(0,1)} M$, such that $$JZ = iZ$$ for all $Z \in T^{(1,0)}M$, and $$J\Zbar = -i\Zbar$$ for all $\Zbar \in T^{(0,1)}M$; such $J$ is called a complex structure on $M$. Furthermore, $M$ is said to be strongly pseudoconvex, if in a neighborhood of every point $p \in M$, there exists a local section $Z$ of $T^{(1,0)}M$, such that $i[Z,\Zbar] \notin \hor$ throughout that neighborhood. 

Suppose now $M$ is a connected, oriented 3-dimensional strongly pseudoconvex CR manifold. Then since $\hor$ is oriented by $J$, and $M$ is oriented, the subbundle of the cotangent bundle $TM^*$ that annihilates $\hor$ has a global non-vanishing section. Hence there exists a (global) real 1-form $\theta$ on $M$, such that $\hor = \text{kernel}(\theta)$. Since $M$ is strongly pseudoconvex, $\theta$ is then a contact form on $M$, meaning that $\theta \wedge d\theta \ne 0$ everywhere on $M$. Replacing $\theta$ by $-\theta$ if necessary, one may define a Hermitian inner product on $T^{(1,0)}M$, by
$$
\langle W, Z \rangle_{\theta} := -i d\theta(W, \Zbar)
$$
for any $W, Z \in T^{(1,0)}M$ (The convention here is that if $\alpha, \beta$ are 1-forms, then $\alpha \wedge \beta := \alpha \otimes \beta - \beta \otimes \alpha$. So if $X, Y$ are vector fields, then $(\alpha \wedge \beta)(X,Y) = \alpha(X)\beta(Y) - \alpha(Y)\beta(X)$). Similarly, one has a Hermitian inner product on $T^{(0,1)}M$, defined by
\begin{equation} \label{eq:phermetric}
\langle \Wbar,\Zbar \rangle_{\theta} := i d\theta(\Wbar, Z)
\end{equation}
for any $\Wbar$, $\Zbar$ in $T^{(0,1)}M$. (Note that $\langle \Zbar, \Wbar \rangle_{\theta} = \overline{ \langle Z, W \rangle_{\theta}}$.) Such a pair $(M,\theta)$ is then known as a 3-dimensional pseudohermitian manifold (pseudo because we are working with a CR manifold rather than a complex manifold).

Note that by duality, the Hermitian inner product (\ref{eq:phermetric}) on $T^{(0,1)}M$ induces a Hermitian inner product on its dual bundle $\Lambda^{(0,1)}M$. We will also denote, by abuse of notation, this latter Hermitian inner product by $\langle \cdot, \cdot \rangle_{\theta}$. Sections of $\Lambda^{(0,1)}M$ will be called $(0,1)$ forms on $M$.

One can then turn $M$ into a metric space as follows: A curve $\gamma$ on $M$ is said to be horizontal, if $\gamma$ is tangent to $\hor$ at every point. On $\hor$ one has a real inner product given by
$$
g_{\theta}(X,Y) := d\theta(X,JY),
$$ 
and this allows one to measure the length of any horizontal curve on $M$ (The notation here is so that if $X \in \hor$ satisfies $g_{\theta}(X,X) = 1$, and $Y = JX$, then $Z:=\frac{X-iY}{\sqrt{2}}$ satisfies $\langle Z, Z \rangle_{\theta} = 1$). Since any two points on $M$ can be joined by a horizontal curve (Chow's theorem), one can define the distance between two points on $M$, as the infimum of the lengths of all horizontal curves joining the two points; we call this the non-isotropic distance on $(M,\theta)$ (non-isotropic because the directions tangent to $\hor$ play a special role compared to those transverse to $\hor$).

Let $(M,\theta)$ be a 3-dimensional pseudohermitian manifold. Then there exists a unique global real vector field $T$ on $M$, known as the Reeb vector field, such that $\theta(T) = 1$ and $d\theta(T, \cdot) \equiv 0$. We have a direct sum decomposition
$$
\mathbb{C}TM = T^{(1,0)}M \oplus T^{(0,1)}M \oplus \mathbb{C}T,
$$
and for later purposes, we will write the identity map on $\mathbb{C}TM$ as $$I = \pi_+ + \pi_- + \pi_0$$ according to this decomposition. Furthermore, there exists a unique affine connection $\nabla$ on $M$, which, when extended bilinearly over complex scalars, satisfies
\begin{enumerate}[(i)]
\item $\nabla_X Z$ is a section of $T^{(1,0)}M$ whenever $Z$ is a section of $T^{(1,0)}M$ and $X \in \mathbb{C}TM$;
\item $\nabla$ is compatible with the Hermitian inner product $\langle \cdot, \cdot \rangle_{\theta}$, in the sense that $X \langle \Wbar, \Zbar \rangle_{\theta} = \langle \nabla_X \Wbar, \Zbar \rangle_{\theta} + \langle \Wbar, \nabla_{\overline{X}} \Zbar \rangle_{\theta}$ whenever $W, Z$ are sections of $T^{(1,0)}M$, and $X \in \mathbb{C}TM$;
\item $\nabla_X T = 0$ for all $X \in \mathbb{C}TM$;
\item Let $\T$ be the torsion of $\nabla$, i.e. $\T(X,Y) = \nabla_X Y - \nabla_Y X - [X,Y]$. Then $\T(Z,\Wbar)$ is a multiple of $T$, whenever $Z, W \in T^{(1,0)}M$;
\item $\T(T,\Zbar)$ is a section of $T^{(1,0)}M$ whenever $\Zbar$ is a section of $T^{(0,1)}M$.
\end{enumerate}
Such a connection $\nabla$ is called the Tanaka-Webster connection on $(M,\theta)$. Indeed, suppose $\nabla$ is an affine connection on $M$, extended complex bilinearly such that 
\begin{equation} \label{eq:nablaconj}
\nabla_{\Wbar} \Zbar = \overline{\nabla_W Z}, \quad\nabla_{\Wbar} Z = \overline{\nabla_W \Zbar}, \quad \text{and} \quad \nabla_T \Zbar = \overline{\nabla_T Z}
\end{equation}
for any section $Z$ of $T^{(0,1)}M$ and any $W \in T^{(0,1)}M$, such that $\nabla$ satisfies properties (i) to (v) above. 
Condition (iii) shows that 
\begin{equation} \label{eq:nablaXT}
\nabla_X T = 0 \quad \text{for all $X \in \mathbb{C}TM$},
\end{equation}
and hence
$$
\T(T,\Zbar) = \nabla_T \Zbar - [T,\Zbar].
$$
In view of condition (v), this shows that
\begin{equation} \label{eq:nablaTZbar}
\nabla_T \Zbar = \pi_- [T,\Zbar].
\end{equation}
Similarly, let $W, Z$ be sections of $T^{(1,0)}M$. Then since  
$$
\T(W,\Zbar) = \nabla_W \Zbar - \nabla_{\Zbar} W - [W,\Zbar],
$$
conditions (i) and (iv) show that 
\begin{equation} \label{eq:nablaZWbar}
\nabla_W \Zbar = \pi_- [W,\Zbar].
\end{equation}
Condition (ii) then shows that $\nabla_{\Wbar} \Zbar$ is the unique element in $T^{(0,1)}M$ such that the identity
\begin{equation} \label{eq:nablaZbarWbar}
\langle \overline{U}, \nabla_{\Wbar} \Zbar \rangle_{\theta} = W \langle \overline{U}, \Zbar \rangle_{\theta} - \langle\pi_- [W,\overline{U}], \Zbar \rangle_{\theta}
\end{equation}
holds for all sections $\overline{U}$ of $T^{(0,1)}M$. 
The equations (\ref{eq:nablaconj}), (\ref{eq:nablaXT}), (\ref{eq:nablaTZbar}), (\ref{eq:nablaZWbar}) and (\ref{eq:nablaZbarWbar}), determine $\nabla$ uniquely. In addition, one can turn this around, and check that these equations define an affine connection on $M$ such that conditions (i) through (v) are satisfied. For the record, the torsion of $\nabla$ is then given by
$$
\T(Z,\Wbar) = -\pi_0 [Z,\Wbar] = -\theta([Z,\Wbar]) T,
$$
$$
\T(T,\Zbar) = -\pi_+ [T,\Zbar] = - \frac{1}{2} J \circ (\mathcal{L}_T J) (\Zbar);
$$
indeed, to see the last equality, note that from the formula following (\ref{eq:nablaXT}), we have $\pi_0 [T,\Zbar] = 0$, so $[T,\Zbar] = \pi_+ [T,\Zbar] + \pi_- [T,\Zbar],$ from which we obtain $J[T,\Zbar] = i\pi_+ [T,\Zbar] - i \pi_- [T,\Zbar].$ Hence
$$
(\mathcal{L}_T J)(\Zbar) = [T, J\Zbar] - J [T, \Zbar] = -i [T, \Zbar] - i\pi_+ [T,\Zbar] + i \pi_- [T,\Zbar] = -2i \pi_+ [T,\Zbar].
$$
Applying $J$ on both sides, we get $\pi_+ [T,\Zbar] = \frac{1}{2} J \circ (\mathcal{L}_T J) (\Zbar)$, as desired.

Now that we have the Tanaka-Webster connection $\nabla$ of $(M,\theta)$, we can define the corresponding scalar curvature $R_{\theta}$. Indeed, define the curvature operator by
$$
\Omega(X,Y)= \nabla_X \nabla_Y - \nabla_Y \nabla_X - \nabla_{[X,Y]}.
$$
The Tanaka-Webster scalar curvature $R_{\theta}$ is then determined by
$$
\Omega(Z,\Zbar)Z = R_{\theta} \langle \Zbar, \Zbar \rangle_{\theta} Z,
$$
where $\Zbar$ is any non-zero local section of $T^{(0,1)}M$. It is known that the torsion $\T(T, \cdot)$ and the Tanaka-Webster scalar curvature $R_{\theta}$ form a complete set of local invariants for the 3-dimensional pseudohermitian manifold $(M,\theta)$.

The Tanaka-Webster connection on $(M,\theta)$ is also sometimes described in terms of differential forms as follows. Let $\nabla$ be the Tanaka-Webster connection on $(M,\theta)$, extended bilinearly over complex scalars. Let $Z_1$ be a local section of $T^{1,0}M$, so that $Z_{\bar{1}} := \overline{Z_1}$ is a local section of $T^{0,1}M$. Let $T$ denote the Reeb vector field on $(M,\theta)$. Then there exists $\omega_1^1 \in (\mathbb{C}TM)^*$ and $\tau^1 \in (\mathbb{C}TM)^*$, such that
$$
\nabla_X Z_1 = \omega_1^1(X) Z_1 \quad \text{and} \quad \T(T,X) = \tau^1(X) Z_1 + \tau^{\bar{1}}(X) Z_{\bar{1}}
$$
for all $X \in \mathbb{C}TM$, where $\tau^{\bar{1}} := \overline{\tau^1}$. Furthermore, let $\theta^1, \theta^{\bar{1}}, \theta \in (\mathbb{C}TM)^*$ be the dual frame to $Z_1, Z_{\bar{1}}, T$. Then $\omega_1^1$ and $\tau^1$ are the unique elements in $(\mathbb{C}TM)^*$, such that
$$
\begin{cases}
d\theta^1  = \theta^1 \wedge \omega_1^1 + \theta \wedge \tau^1 \\
d h_{1\bar{1}} = \omega_1^1 h_{1 \bar{1}} + h_{1 \bar{1}} \omega_{\bar{1}}^{\bar{1}} \\
\tau^1  = 0 \quad (\text{mod } \theta^{\bar{1}})
\end{cases}
$$
where $h_{1 \bar{1}} := \langle Z_{\bar{1}}, Z_{\bar{1}} \rangle_{\theta}$ (i.e. $d\theta = i h_{1 \bar{1}} \theta^1 \wedge \theta^{\bar{1}}$) and $\omega_{\bar{1}}^{\bar{1}} := \overline{\omega_1^1}$. Sometimes one also writes $\tau^1 = A_{\bar{1}}^1 \theta^{\bar{1}}$, so that the above simplifies to
$$
\begin{cases}
d\theta^1  = \theta^1 \wedge \omega_1^1 + A_{\bar{1}}^1 \theta \wedge \theta^{\bar{1}} \\
d h_{1\bar{1}} = \omega_1^1 h_{1 \bar{1}} + h_{1 \bar{1}} \omega_{\bar{1}}^{\bar{1}};
\end{cases}
$$
it also follows then that
$$
\T(T,Z_{\bar{1}}) = A_{\bar{1}}^1 Z_1 \quad \text{and} \quad \T(T,Z_1) = A_1^{\bar{1}} Z_{\bar{1}},
$$
where $A_1^{\bar{1}} = \overline{A_{\bar{1}}^1}$. The Tanaka-Webster scalar curvature $R_{\theta}$ of $(M,\theta)$ is then determined by
$$
d\omega_1^1 = R_{\theta} h_{1 \bar{1}} \theta^1 \wedge \theta^{\bar{1}} \quad (\text{mod } \theta).
$$ 

As an example, consider for instance the unit sphere $\mathbb{S}^3$ in $\mathbb{C}^2$, defined by $\{\rho = 0\}$ where $\rho(\zeta) := |\zeta|^2 - 1$ if $\zeta = (\zeta^1, \zeta^2) \in \mathbb{C}^2$. It is equipped with the CR structure induced from the complex structure of $\mathbb{C}^2$. Let $$\theta = 2 \textrm{Im}\, \partial \rho = \frac{1}{i} ( \overline{\zeta^1} d\zeta^1 + \overline{\zeta^2} d\zeta^2 -\zeta^1 d\overline{\zeta^1} - \zeta^2 d\overline{\zeta^2}).$$ Then $(\mathbb{S}^3, \theta)$ is a 3-dimensional pseudohermitian manifold. Let $$Z = \frac{1}{\sqrt{2}} ( \overline{\zeta^2} \frac{\partial}{\partial {\zeta^1}} - \overline{\zeta^1} \frac{\partial}{\partial {\zeta^2}}) \quad \text{ and } \quad \Zbar = \frac{1}{\sqrt{2}} ( \zeta^2 \frac{\partial}{\partial {\overline{\zeta^1}}} - \zeta^1 \frac{\partial}{\partial {\overline{\zeta^2}}}).$$ Also let $$T = \textrm{Im} \, (\overline{\zeta^1} \frac{\partial}{\partial \overline{\zeta^1}} + \overline{\zeta^2} \frac{\partial}{\partial \overline{\zeta^2}}) = \frac{1}{2i} (\overline{\zeta^1} \frac{\partial}{\partial \overline{\zeta^1}} + \overline{\zeta^2} \frac{\partial}{\partial \overline{\zeta^2}} - \zeta^1 \frac{\partial}{\partial {\zeta^1}} - {\zeta^2} \frac{\partial}{\partial {\zeta^2}}).$$ Then $Z$ is a global section of $T^{1,0}(\mathbb{S}^3)$, $\Zbar$ is a global section of $T^{0,1}(\mathbb{S}^3)$, $$\theta(T) = 1,$$ and $$[T,Z] = -iZ, \quad [T,\Zbar] = i\Zbar.$$ In particular, $\theta([T,Z]) = \theta([T,\Zbar]) = 0$, so $T$ is the Reeb vector field on $(\mathbb{S}^3,\theta)$. Also, we have $$[Z,\Zbar] = -i T,$$ so in particular $\langle \Zbar, \Zbar \rangle_{\theta} = i \theta([Z,\Zbar]) =  1$. The above commutation relations also show that $$\nabla_Z \Zbar = 0, \quad \nabla_{\Zbar} \Zbar = 0 \quad \text{ and } \quad \nabla_T \Zbar = i\Zbar \quad \text{ with } \quad \T(T, \Zbar) = 0.$$ It follows that
 $$\nabla_Z Z = 0, \quad \nabla_{\Zbar} Z = 0 \quad \text{ and } \quad \nabla_T Z = - i Z \quad \text{ with } \quad \T(T,Z) = 0;$$ 
from $\Omega(Z,\Zbar)Z = -\nabla_{[Z,\Zbar]} Z = i \nabla_T Z = Z$, we see that $R_{\theta} \equiv 1$ on $(\mathbb{S}^3,\theta)$. Letting $Z_1 = Z$, one can check that $$h_{1 \bar{1}} = 1, \quad \omega_1^1 = -i\theta \quad \text{ and } \quad \tau^1 = 0,$$ so $d\omega_1^1 = -i d\theta = \theta^1 \wedge \theta^{\bar{1}}$, which also shows that $R_{\theta} \equiv 1$.

Another example is the Heisenberg group $\mathbb{H}^1$, which is a Lie group diffeomorphic to $\mathbb{C} \times \mathbb{R}$, with group law
$$
(z,t) \cdot (w,s) = (z+w, t+s + 2 \textrm{Im}(z \wbar))
$$
where $(z,t), (w,s) \in \mathbb{C} \times \mathbb{R}$.
The CR structure on $\mathbb{H}^1$ is given so that $T^{0,1}(\mathbb{H}^1)$ is spanned by the left-invariant vector field $\Zbar := \frac{1}{\sqrt{2}} (\frac{\partial}{\partial \overline{z}} - i z \frac{\partial}{\partial t})$. Let $$\theta := dt - i (\overline{z} dz - z d\overline{z}).$$ Then $(\mathbb{H}^1,\theta)$ is a pseudohermitian manifold, with $\langle \Zbar, \Zbar \rangle_{\theta} = 1$. The Reeb vector field is given by $T := \frac{\partial}{\partial t}$, and one can check that
$$
[Z,\Zbar] = -iT, \quad [T,Z] = [T,\Zbar] = 0, 
$$
so
$$
\nabla_Z \Zbar = \nabla_{\Zbar} \Zbar = \nabla_T \Zbar = 0, \quad \text{ with } \quad \T(T,\Zbar) = 0,
$$
from which it follows that $R_{\theta} \equiv 0$ on $(\mathbb{H}^1,\theta)$.

We will now define various differential operators that will play an important role in this paper. Let $(M,\theta)$ be a 3-dimensional pseudohermitian manifold. Then $g_{\theta}$ defines a real inner product on $\hor$, and this induces a real inner product on the dual bundle $\hor^*$ of $\hor$, which we also denote by $g_{\theta}$ by abuse of notation. For a smooth function $u$ on $M$, let $d_b u$ be the element in $\hor^*$, such that
$$
d_b u(X) = Xu \quad \text{for all $X \in \hor$}.
$$
If $X_1, X_2$ is a local frame for $\hor$, and $e^1$, $e^2$ is the dual frame, then this says 
$$
d_b u = (X_1 u) e^1 + (X_2 u) e^2.
$$
One then defines the subelliptic gradient of $u$, denoted by $\nabla_b u$, as the unique vector in $\hor$ such that
$$
g_{\theta}( \nabla_b u, X ) = d_b u(X)
$$
for all $X \in \hor$. The length of $\nabla_b u$ is then
$$
|\nabla_b u| := g_{\theta}(\nabla_b u, \nabla_b u)^{1/2}.
$$ 
Now $\theta \wedge d\theta$ is a volume form on $M$. This allows us to define an $L^2$ inner product on functions:
$$
(u,v)_{\theta} = \int_M u \overline{v} \, \m,
$$
as well as an inner product on $\hor^*$:
$$
(\alpha,\beta)_{\theta} = \int_M g_{\theta}(\alpha,\beta) \m.
$$
The formal adjoint $d_b^*$ of $d_b$ is then defined by
$$
(d_b u, \alpha)_{\theta} = (u, d_b^* \alpha)_{\theta}
$$
for all smooth functions $u$ and smooth sections $\alpha$ of $\hor^*$. The sublaplacian of $(M,\theta)$ is the second order operator defined as
$$
\Delta_b u := d_b^* d_b u
$$
for all smooth functions $u$ on $M$. If $X, Y$ is a local orthonormal frame of $\hor$, and $X^*$, $Y^*$ are their adjoints with respect to the above $L^2$ inner product on functions, then locally
$$
\Delta_b u = ( X^* X + Y^* Y ) u.
$$
We have
$$
\int_M |\nabla_b u|^2 \m = (\Delta_b u, u)_{\theta}
$$
for all compactly supported smooth functions $u$ on $M$.

Now let $\langle \cdot, \cdot \rangle_{\theta}$ be the Hermitian inner product on $T^{(0,1)}M$. We will also denote by $\langle \cdot, \cdot \rangle_{\theta}$ the induced Hermitian inner product on the dual bundle $\Lambda^{(0,1)}M$. For a smooth function $u$ on $M$, let $\dbarb u$ be the element of $\Lambda^{(0,1)}M$, such that 
$$
\dbarb u (\Zbar) = \Zbar u \quad \text{for all $\Zbar \in T^{(0,1)}M$}.
$$
If $\Zbar$ is a local frame for $T^{(0,1)}M$, and $\omegabar$ is the dual frame for $\Lambda^{(0,1)}M$, then this says 
$$
\dbarb u = (\Zbar u) \omegabar.
$$
Now we had an $L^2$ inner product $(u,v)_{\theta}$ on functions. We define, in addition, an inner product on $\Lambda^{(0,1)}M$:
$$
(\alpha,\beta)_{\theta} = \int_M \langle \alpha,\beta \rangle_{\theta} \m.
$$
The formal adjoint $\vartheta_b$ of $\dbarb$ is then defined by
$$
(\dbarb u, \alpha)_{\theta} = (u, \vartheta_b \alpha)_{\theta}
$$
for all smooth functions $u$ and smooth sections $\alpha$ of $\Lambda^{(0,1)}M$. The tangential Kohn Laplacian of $(M,\theta)$ is defined as
$$
\boxb u := \vartheta_b \dbarb u
$$
for all smooth functions $u$ on $M$. (Later we will also need two different non-standard $\boxb$, which involves taking the adjoint of $\dbarb$ with respect to different $L^2$ inner products.) If $\Zbar$ is a local frame of $\Lambda^{(0,1)}M$ with $\langle \Zbar, \Zbar \rangle_{\theta} = 1$, and $\Zbar^*$ is its formal adjoint with respect to the $L^2$ inner product on functions, then locally
$$
\boxb u = \Zbar^* \Zbar u.
$$
Since such $Z$ can be written $Z = \frac{X-iY}{\sqrt{2}}$ where $X$ and $Y$ are in $\hor$ and satisfies $g_{\theta}(X,X) = g_{\theta}(Y,Y) = 1$, we have 
$$
\Delta_b = 2 \re \boxb.
$$
We say that a smooth function $\psi$ on $M$ is a CR function, if $\dbarb \psi = 0$ on $M$.

We will need two conformally invariant operators on $(M,\theta)$, namely the conformal sublaplacian $L_{\theta}$, and the CR Paneitz operator $P_{\theta}$. The former is defined by
$$
L_{\theta} u = 4\Delta_b u + R_{\theta} u.
$$
It describes how the Tanaka-Webster curvature changes under a conformal change of the contact form $\theta$ (see Section~\ref{sect:CY}). Finally, the CR Paneitz operator (c.f. \cite{MR975118}) is the fourth order operator on functions given by
$$
P_{\theta} u = 4 \left( \boxbbar \boxb u + i \Zbar \left( \T(T,Z) u \right) \right).
$$
where $\boxbbar$ is the conjugate of $\boxb$. Here $\Zbar$ is a local frame of $\Lambda^{(0,1)}M$ with $\langle \Zbar, \Zbar \rangle_{\theta} = 1$, and $T$ is the Reeb vector field of $(M,\theta)$ as before. 

For example, on $(\mathbb{H}^1,\theta)$ where $\theta = dt - i(\zbar dz - z d\zbar)$, if we write $z = x+iy$, and write
\[
X = \frac{1}{2} \left( \frac{\partial}{\partial x} + 2y \frac{\partial}{\partial t} \right), \qquad Y = \frac{1}{2} \left( \frac{\partial}{\partial y} - 2x \frac{\partial}{\partial t} \right), \qquad T = \frac{\partial}{\partial t},
\]
then we have 
\[
\theta = dt + 2x dy - 2y dx,
\]
\[
|\nabla_b f|^2 = |Xf|^2 + |Yf|^2,
\]
\begin{equation} \label{eq:sublap_H1}
L_{\theta} = \Delta_b = -(X^2 + Y^2),
\end{equation}
and writing $$Z = \frac{X-iY}{\sqrt{2}},$$ we have
\[
\boxb = -Z \Zbar = -\frac{1}{2}(X^2 + Y^2 + iT), \qquad
P_{\theta} = 4 \boxbbar \boxb = (X^2 + Y^2)^2 + T^2.
\]

\section{The CR Yamabe problem} \label{sect:CY}

Let $(\hM,\hthe)$ be a closed 3-dimensional pseudohermitian manifold. (Here `closed' means `compact without boundary'.) If $u$ is a positive smooth function on $\hM$, the contact form $\hthe^{\sharp} = u^2 \hthe$ defines another pseudohermitian structure on $\hM$. It is readily verified that the corresponding Hermitian inner product on $T^{(0,1)} \hM$ satisfies
$$
\langle \Zbar, \Wbar \rangle_{\hthe^{\sharp}} = u^2 \langle \Zbar, \Wbar \rangle_{\hthe}
$$ 
for all sections $\Zbar, \Wbar$ of $T^{(0,1)} \hM$, so we think of $\hthe^{\sharp}$ as conformally equivalent to $\hthe$.
The Tanaka-Webster scalar curvature $R_{\hthe^{\sharp}}$ with respect to the new contact form $\hthe^{\sharp}$ is given by the conformal sublaplacian:
\begin{equation} \label{eq:CRYamabe}
L_{\hthe} u = R_{\hthe^{\sharp}} u^3.
\end{equation}
The CR Yamabe problem is then to the problem of prescribing constant Tanaka-Webster scalar curvature via a conformal change of the contact form. In other words, one seeks a positive smooth function $u$ on $\hM$, such that the Tanaka-Webster scalar curvature $R_{\hthe^{\sharp}}$ with respect to the new contact form $\hthe^{\sharp}$ is constant. Hence one seeks solutions to the CR Yamabe equation (\ref{eq:CRYamabe}) with $R_{\hthe^{\sharp}} = $ constant.

The CR Yamabe problem is a variational problem. Define the functional
$$
E_{\hthe}(u) = \frac{ \int_{\hM} (4|\hnabla_b u|^2 + R_{\hthe} u^2) \hthe \wedge d\hthe }{ \left( \int_{\hM} u^4 \hthe \wedge d\hthe \right)^{1/2} } 
$$
for any positive smooth function $u$ on $\hM$. Here $|\hnabla_b u| = g_{\hthe}(\hnabla_b u, \hnabla_b u)^{1/2}$ is the length of the subelliptic gradient of $u$ with respect to $\hthe$. The numerator above can also be written as $\int_{\hM} u \, L_{\hthe} u \, \hthe \wedge d\hthe$. From the transformation rule (\ref{eq:CRYamabe}) of the Tanaka-Webster scalar curvature under a conformal change of contact form, it follows that if $\hthe^{\sharp} := u^2 \hthe$, then $E_{\hthe}(u)$ is also given by
\begin{equation} \label{eq:CRYcostgeom}
E_{\hthe}(u) = \frac{ \int_{\hM} R_{\hthe^{\sharp}} \hthe^{\sharp} \wedge d\hthe^{\sharp} }{ \left[ V(\hM,\hthe^{\sharp}) \right]^{1/2} },
\end{equation}
where $V(\hM,\hthe^{\sharp}) := \int_{\hM} \hthe^{\sharp} \wedge d\hthe^{\sharp}$ is the volume of $\hM$ under the volume form $\hthe^{\sharp} \wedge d\hthe^{\sharp}$. One verifies that any critical point of $E_{\hthe}$ solves the CR Yamabe equation (\ref{eq:CRYamabe}) with $R_{\hthe^{\sharp}} = $ constant. 

Define now the CR Yamabe constant by
$$
Y(\hM,\hthe) = \inf \{ E_{\hthe}(u) \colon u \in C^{\infty}(\hM), u > 0 \textrm{ on $\hM$}\}.
$$
By the interpretation (\ref{eq:CRYcostgeom}) of $E_{\hthe}(u)$, one sees that $Y(\hM,\hthe)$ is a conformal invariant, and depends only on the conformal class $[\hthe]$ of $\hthe$. Jerison and Lee \cite{MR880182} proved that 
\begin{equation} \label{eq:Ymax}
Y(\hM,\hthe) \leq Y(\mathbb{S}^3,\theta_{\text{std}})
\end{equation}
where $\mathbb{S}^3$ is the unit sphere $\{\zeta \in \mathbb{C}^2 \colon |\zeta| = 1\}$ in $\mathbb{C}^2$ and $\theta_{\text{std}} = 2 \im \partial (|\zeta|^2-1)$ is the standard contact form on $\mathbb{S}^3$. In addition, if the inequality is strict, then the infimum defining $Y(\hM,\hthe)$ is attained by some positive smooth function $u$ on $\hM$. This $u$ solves (\ref{eq:CRYamabe}), in which case $u^2 \hthe$ has constant Tanaka-Webster scalar curvature. Since clearly $Y(\mathbb{S}^3,\theta_{\text{std}}) > 0$, to solve the CR Yamabe problem, it suffices to consider the case where $Y(\hM,\hthe) > 0$. 

Let now $(\hM,\hthe)$ be a closed 3-dimensional pseudohermitian manifold for which $Y(\hM,\hthe) > 0$. The conformal sublaplacian $L_{\hthe}$ is then a positive operator. So fix now a point $p \in \hM$. Then one has a Green's function $G$ of $L_{\hthe}$, such that
$$
L_{\hthe} G = 16 \delta_p.
$$ 
Here $\delta_p$ is the delta function at the point $p$. We will expand $G$ around $p$ in the analog of the conformal normal coordinates in CR geometry (which is sometimes called `CR normal coordinates'). Indeed, by first conformally changing the contact form on $\hM$ if necessary, one can show the existence of a coordinate system $(z,t)$ of an open neighborhood of the point $p$ in $\hM$, such that 
\begin{enumerate}[(i)]
\item the point $p$ corresponds to $(z,t) = (0,0)$;
\item one can find a local section $\hZ$ of $T^{1,0} \hM$ near $p$, with $\langle \hZ, \hZ \rangle_{\hthe} = 1$, such that $\hZ$ admits the following expansion near $p$:
\begin{equation} \label{s1-e5b}
\hZ=\frac{1}{\sqrt{2}} \left( \frac{\partial}{\partial z}+i \zbar \frac{\partial}{\partial t} \right) + \O{4} \frac{\partial}{\partial z}+\O{4} \frac{\partial}{\partial \zbar}+\O{5} \frac{\partial}{\partial t};
\end{equation}
\item the Reeb vector field $\hat T$ with respect to $\hthe$ admits an expansion
\begin{equation} \label{s1-e5c}
\hat T=\frac{\partial}{\partial t}+\O{3} \frac{\partial}{\partial z}+\O{3} \frac{\partial}{\partial \zbar}+\O{4} \frac{\partial}{\partial t};
\end{equation}
\end{enumerate} 
see Proposition 6.5 of \cite{MR3600060} for a proof of this fact, using the CR normal coordinates introduced by Jerison and Lee \cite{MR982177}. Here $\O{k}$ denotes the set of smooth functions $f$ on $\hM$ that satisfies $|f(q)| \lesssim \hrho(q)^k$ for $q$ near $p$, where \[\hrho(q)=(|z|^4+t^2)^{\frac{1}{4}}\] if $q$ has coordinates $(z,t)$. Note that for $q$ near $p$, $\hrho(q)$ is comparable to the non-isotropic distance  between $q$ and $p$ on $(\hM,\hthe)$. For later convenience, from now on we will fix a positive smooth extension of $\hrho$ to the whole $\hM \setminus \{p\}$. We will also assume that the contact form $\hthe$ on $\hM$ is already the one so that properties (i) to (iii) above holds; this is no loss in generality in solving the CR Yamabe problem.

Now in CR normal coordinates, the Green's function $G$ of $L_{\hthe}$ admits the following expansion for $q \in \hM$ near $p$:
\begin{equation} \label{eq:Gexpansion}
G(q) = \frac{1}{2\pi \hrho(q)^{2}} + A + \text{error}, \quad \text{error} \in \E{1}.
\end{equation}
Here $A$ is a constant, and for each $k \in \mathbb{R}$, we define $\E{k}$ to be the space of all functions $f$ on $\hM$, that is smooth away from $p$, and satisfies $|\hnabla_b^{\ell} f| \lesssim_{\ell} \hrho^{k-\ell}$ for all $\ell \in \mathbb{N} \cup \{0\}$. The sign of the constant term $A$ is of great importance for the solution of the CR Yamabe problem. In fact, if $A > 0$, then one can construct a suitable test function $u$ on $M$, to show that
$$
E_{\hthe}(u) < Y(\mathbb{S}^3,\theta_{\text{std}}).
$$
($u$ is obtained by gluing $G$ to a standard bubble on the Heisenberg group; see Section 5 of Cheng, Malchiodi and Yang \cite{MR3600060}.) This shows strict inequality holds in (\ref{eq:Ymax}), and hence the infimum defining $Y(\hM,\hthe)$ is attained by some positive smooth function $u$ on $\hM$. Hence the key now is to understand the sign of the constant term $A$ in the expansion (\ref{eq:Gexpansion}) of $G$. This was achieved in the work of Cheng, Malchiodi and Yang  \cite{MR3600060}: 

\begin{thm}[Cheng-Malchiodi-Yang \cite{MR3600060}] \label{thm:A}
Let $(\hM,\hthe)$ be a closed 3-dimensional pseudohermitian manifold with $Y(\hM,\hthe) > 0$. Assume, in addition, that the CR Paneitz operator $P_{\hthe}$ is non-negative on $\hM$, in the sense that
$$
(P_{\hthe}u, u)_{\hthe} \geq 0
$$
for any $u \in C^{\infty}(\hM)$. (We write $P_{\hthe} \geq 0$ in this case.) Let $L_{\hthe}$ be the conformal sublaplacian of $(\hM,\hthe)$. For any point $p \in \hM$, let $G$ be the Green's function of $L_{\hthe}$ with pole $p$, and $A$ be the constant term in the expansion of $G$ in CR normal coordinates centered at $p$. Then 
$$
A \geq 0.
$$
Furthermore, $A = 0$ if and only if $\hM$ is CR equivalent to the standard sphere $\mathbb{S}^3$, and $\hthe$ is conformally equivalent to the standard contact form $\theta_{\text{std}}$ on $\mathbb{S}^3$.
\end{thm}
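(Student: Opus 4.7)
The plan is to follow the CR positive-mass strategy of Cheng--Malchiodi--Yang. Using $G$ itself as a conformal factor, I would blow up $\hM$ at $p$ by setting $\tM := \hM \setminus \{p\}$ and $\tthe := G^2 \hthe$. The expansion \eqref{eq:Gexpansion} together with the model computation in CR normal coordinates identifies $(\tM,\tthe)$ as an asymptotically Heisenberg pseudohermitian manifold with a single end, in which the constant $A$ plays the role of a ``mass defect'' at infinity. Moreover, $L_{\hthe} G = 16 \delta_p$ combined with the conformal transformation rule \eqref{eq:CRYamabe} forces $R_{\tthe} \equiv 0$ on $\tM$, so the blow-up is CR scalar-flat.

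I would then define the CR mass $m(\tM,\tthe)$ as the limit as $\varepsilon \to 0^+$ of an explicit pseudohermitian boundary integral over the level set $\{\hrho = \varepsilon\}$, and verify by direct expansion that $m(\tM,\tthe) = c_0 A$ for a positive universal constant $c_0$. The core of the argument is then a CR Bochner identity on $(\tM,\tthe)$ which expresses
\[
m(\tM,\tthe) = c_1 \int_{\tM} \mathcal{Q}(\tb) \, \tthe \wedge d\tthe,
\]
where $\mathcal{Q}(\tb) \geq 0$ pointwise is a sum of a squared second-order CR derivative term and a term of the form $\langle P_{\tthe} \tb, \tb \rangle$, and $\tb$ is an auxiliary function solving a non-standard tangential Kohn Laplacian equation $\tboxb \tb = F$ on $\tM$, with $F$ an explicit torsion/curvature datum and with $\tb$ of prescribed decay at infinity. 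Conformal covariance of the Paneitz operator transfers the hypothesis $P_{\hthe} \geq 0$ to $P_{\tthe} \geq 0$, and the first term of $\mathcal{Q}$ is a pointwise square, so the integrand on the right is non-negative and hence $A \geq 0$.

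The principal obstacle is precisely the solvability of $\tboxb \tb = F$ on the non-compact blow-up: as emphasised in the introduction, the naive $L^2$ theory fails because $\tboxb$ does not have closed range on $(\tM,\tthe)$. Supplying such a $\tb$, with controlled asymptotic behaviour as $\hrho \to 0$, is exactly the content of the main theorem of the present paper, achieved by conformally transferring the problem to a compact CR manifold and solving an associated $\hboxb$ by means of the pseudodifferential calculus of Sections~\ref{sect:pdo}--\ref{sect:hboxbsol}. Once $\tb$ is in hand, the Bochner identity itself is a direct computation using $R_{\tthe} \equiv 0$ and standard CR commutator identities, and the boundary contribution at infinity, obtained from integration by parts on the exhaustion $\{\hrho \geq \varepsilon\}$, reproduces $c_0 A$ in the limit.

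For the rigidity statement, if $A = 0$ then equality in the Bochner identity forces $\mathcal{Q}(\tb) \equiv 0$, so $\tb$ is annihilated by both the squared-derivative operator and by $P_{\tthe}$. From this one extracts enough CR-pluriharmonic data on $\tM$ to promote the asymptotic Heisenberg structure at infinity into a global pseudohermitian equivalence $(\tM,\tthe) \cong (\mathbb{H}^1, \theta_{\mathbb{H}^1})$; undoing the blow-up via the inverse Cayley transform then identifies $(\hM,\hthe)$ with $(\mathbb{S}^3, \theta_{\text{std}})$ up to a conformal change of contact form. The delicate point in this last step is that the kernel analysis must be carried out globally on the non-compact manifold $\tM$, which again relies on having produced $\tb$ with the correct decay via the $\boxb$-solvability result proved in this paper.
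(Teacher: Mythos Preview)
Your outline follows the Cheng--Malchiodi--Yang strategy that the paper describes in Section~\ref{sect:pm}, and you correctly isolate the key analytic input (solvability of the Kohn Laplacian on the blow-up, supplied by Theorem~\ref{thm:goal}). However, two steps in your Bochner argument are imprecise enough to count as real gaps.

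First, the Bochner identity on $(M,\theta)$ is \emph{not} of the form $m = c_1 \int \mathcal{Q}(\tb)$ with $\mathcal{Q} \geq 0$. For the explicit test function $\tb \in \E{-1}$ one has (c.f.\ (\ref{eq:masstoboxb}))
\[
\tfrac{2}{3}m(M,\theta) = -\int_M |\boxb \tb|^2 + 2\int_M |\tb_{,\bar 1 \bar 1}|^2 + \tfrac{1}{2}\int_M \overline{\tb}\, P_{\theta}\tb,
\]
with a genuinely negative first term. The role of the $\boxb$-solvability result is not to produce some $\tb$ solving $\boxb \tb = F$ for a prescribed $F$, but rather to solve $\boxb u = \boxb \tb$ with $u \in \E{1+\delta}$, so that $\beta := \tb - u$ lies in the \emph{kernel} of $\boxb$ while retaining $\tb - \beta \in \E{1+\delta}$. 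Only for this corrected $\beta$ does the negative term disappear and (\ref{eq:masstoboxb2}) hold.

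Second, even once $\boxb \beta = 0$, the Paneitz term $\int_M \overline{\beta}\, P_{\theta}\beta$ is neither pointwise non-negative nor controlled by conformal covariance applied directly to $\beta$: conformal invariance gives $\int_M \overline{v}\, P_{\theta} v\, \theta\wedge d\theta = \int_{\hM} \overline{v}\, P_{\hthe} v\, \hthe\wedge d\hthe$ only for $v \in \E{1}$, whereas $\beta \in \E{-1}$. One must split $\beta$ into an explicit main part plus an error $v \in \E{1}$; the main part contributes exactly $-\tfrac{2}{3} m(M,\theta)$, so that after rearranging one obtains
\[
\tfrac{4}{3} m(M,\theta) = 2\int_M |\beta_{,\bar 1 \bar 1}|^2 + \tfrac{1}{2}\int_{\hM} \overline{v}\, P_{\hthe} v\, \hthe\wedge d\hthe,
\]
and it is this identity to which $P_{\hthe} \geq 0$ is applied. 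Your sketch also omits that the hypothesis $P_{\hthe} \geq 0$ is used a second time, via the Chanillo--Chiu--Yang embeddability theorem and Kohn's closed-range theorem, to guarantee that the $\boxb$-solvability machinery of this paper applies at all. The rigidity argument is along the lines you indicate.
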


Hence the following corollary follows:

\begin{cor}[Cheng-Malchiodi-Yang \cite{MR3600060}]
Let $(\hM,\hthe)$ be a closed 3-dimensional pseudohermitian manifold with $Y(\hM,\hthe) > 0$ and $P_{\hthe} \geq 0$. Then the infimum defining the CR Yamabe constant $Y(\hM,\hthe)$ is attained, and hence the CR Yamabe equation (\ref{eq:CRYamabe}) has a positive smooth solution.
\end{cor}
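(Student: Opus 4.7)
The plan is to combine Theorem~\ref{thm:A} with the existence/compactness criterion built into the strict inequality (\ref{eq:Ymax}). First I would dichotomize on the sign of the constant $A$ produced by Theorem~\ref{thm:A}: either $A > 0$ at some point $p \in \hM$, or $A = 0$ at every point $p \in \hM$.

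In the case $A > 0$, I would invoke the test function construction recalled in the paragraph preceding Theorem~\ref{thm:A}, namely the function obtained by gluing the Green's function $G$ (with its expansion (\ref{eq:Gexpansion})) to a standard Jerison--Lee bubble on the Heisenberg group. Since the positive $A$ contributes a negative term to the expansion of $E_{\hthe}(u)$ (this is the standard ``mass contribution'' in the Yamabe-type argument; see Section~5 of \cite{MR3600060}), one concludes $E_{\hthe}(u) < Y(\mathbb{S}^3,\theta_{\text{std}})$ for a suitably chosen $u$, and hence strict inequality holds in (\ref{eq:Ymax}). The Jerison--Lee criterion then yields a positive smooth minimizer of $E_{\hthe}$, which by the Euler--Lagrange analysis solves (\ref{eq:CRYamabe}) with $R_{\hthe^{\sharp}}$ constant.

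In the remaining case $A = 0$ (at every $p$, or even at one $p$), the second part of Theorem~\ref{thm:A} forces $\hM$ to be CR equivalent to $(\mathbb{S}^3,\theta_{\text{std}})$ up to conformal change of contact form. Then the CR Yamabe problem on $(\hM,\hthe)$ is equivalent to the CR Yamabe problem on $(\mathbb{S}^3,\theta_{\text{std}})$, for which Jerison--Lee \cite{MR880182} exhibit explicit positive smooth extremals (the standard bubbles pulled back to $\mathbb{S}^3$), so the infimum is again attained. Combining the two cases completes the proof.

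The only non-routine point is verifying the strict inequality $E_{\hthe}(u) < Y(\mathbb{S}^3,\theta_{\text{std}})$ in the case $A > 0$; this is not genuinely hard here because we are quoting Cheng--Malchiodi--Yang's test-function calculation, but it is the step where the positivity of $A$ is actually used. Everything else is a direct appeal to Theorem~\ref{thm:A}, to (\ref{eq:Ymax}), and to the attainment criterion of Jerison--Lee.
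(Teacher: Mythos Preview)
Your argument is correct and follows essentially the same two-case split as the paper: for $A>0$ one invokes the test-function construction to get strict inequality in (\ref{eq:Ymax}) and hence a minimizer by Jerison--Lee, while for $A=0$ the rigidity clause of Theorem~\ref{thm:A} reduces to the standard sphere. The only small slip is the citation in the $A=0$ case: the explicit extremals on $(\mathbb{S}^3,\theta_{\text{std}})$ are in Jerison--Lee \cite{MR924699} (the paper cites this, and also Frank--Lieb \cite{MR2925386}), not \cite{MR880182}, which is the paper establishing (\ref{eq:Ymax}) and the attainment criterion.
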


Indeed, the case when $A > 0$ follows from the discussion immediately preceding Theorem~\ref{thm:A}, while the case $A = 0$ follows from the last part of Theorem~\ref{thm:A} and a result of Jerison and Lee \cite{MR924699} (see also Frank and Lieb \cite{MR2925386} for a simpler argument). It follows that in either case, the Tanaka-Webster scalar curvature of $(\hM,\hthe)$ can be made a positive constant by conformally changing the contact form $\hthe$. See also Gamara \cite{MR1831872}, and Gamara and Yacoub \cite{MR1867895}, for an alternative approach to the CR Yamabe problem. 

The proof of Theorem~\ref{thm:A} in \cite{MR3600060} consists of two steps. First, they relate the constant $A$ to the mass of a certain blow-up of the closed manifold $(\hM,\hthe)$. Next, they determine the sign of the mass using a suitable Bochner formula. The assumption that $P_{\hthe} \geq 0$ will be used twice in this second step. We will outline all these in the next section.

\section{The CR positive mass theorem} \label{sect:pm}

Let $(\hM,\hthe)$ be a closed 3-dimensional pseudohermitian manifold with $Y(\hM,\hthe) > 0$. Let $L_{\hthe}$ be the conformal sublaplacian on $(\hM, \hthe)$, and $G$ be the Green's function of $L_{\hthe}$ with pole at some point $p \in \hM$. In the last section, we saw that the key to understanding the CR Yamabe problem on $(\hM,\hthe)$ is to get a handle on the constant term $A$ in the expansion (\ref{eq:Gexpansion}) of $G$ under CR normal coordinates near $p$; see Theorem~\ref{thm:A} for a precise statement of what is needed. In this section, we describe how Theorem~\ref{thm:A} is proved in the work of Cheng, Malchiodi and Yang \cite{MR3600060}.

First, assume that we have already conformally changed the contact form $\hthe$ on $\hM$, so that the expansion of the Green's function $G$ of $L_{\hthe}$ admits the expansion in (\ref{eq:Gexpansion}) in CR normal coordinates. Let 
$$
M := \hM \setminus \{p\}, \quad \theta := G^2 \hthe.
$$
We impose on $M$ the CR structure it inherits from $\hM$ by restriction. Then $(M,\theta)$ is a non-compact 3-dimensional pseudohermitian manifold. 

Now recall the transformation rule (\ref{eq:CRYamabe}) of the Tanaka-Webster scalar curvature under a conformal change of contact form. Since $L_{\hthe} G = 0$ on $M = \hM \setminus \{p\}$, the Tanaka-Webster scalar curvature $R_{\theta}$ of $(M,\theta)$ is identically zero on $M$. 

We are going to think of $M$ as a blow up of $\hM$. In particular, let $U \subset M$ be a small deleted neighborhood of $p$ in $\hM$. We will introduce coordinates $(z_*, t_*)$ on $U$, so that $|z_*|+|t_*| \to +\infty$ as the point corresponding to $(z_*,t_*)$ approaches $p$ in $\hM$. The coordinates $(z_*,t_*)$ are just obtained from the CR normal coordinates $(z,t)$ by an inversion: we define
$$
z_* := \frac{z}{t+i|z|^2}, \quad t_* := -\frac{t}{t^2+|z|^4}.
$$
With this coordinates $(z_*,t_*)$, we think of $U$ as a neighborhood of `infinity' of $M$: indeed if $q \in U$ has coordinates $(z_*,t_*)$ in this coordinate chart, we define $\rho(q) := (|z_*|^4 + t_*^2)^{1/4}$. Then $\rho(q) = 1/\hrho(q)$, so $\rho(q) \to +\infty$ as $q \to p$. 

The non-compact pseudohermitian manifold $M$ is sometimes said to have one `end'; by what we observed earlier, this end is actually `flat' with respect to the contact form $\theta$ on $M$, meaning that $R_{\theta}$ is zero there. One can also show that $(M,\theta)$ is asymptotically flat, in the sense of Definition 2.1 of Cheng, Malchiodi and Yang \cite{MR3600060}. This allows one to define the mass of $(M,\theta)$ as in \cite{MR3600060}: let $Z$ be a local section of $T^{(1,0)}M$ on the open set $U$ with
$$
Z = - G  \frac{\rho^6}{(t_*+i|z_*|^2)^3} \hZ
$$
In particular, then $\langle Z, Z \rangle_{\theta} = 1$. Let $\nabla$ be the Tanaka-Webster connection associated to $(M,\theta)$, and let $\omega_1^1$ be the connection 1-form, determined by
$$
\nabla_X Z = \omega_1^1(X) Z
$$
for all $X \in \mathbb{C}TM$. Then the mass of $(M,\theta)$ is defined as
$$
m(M,\theta) :=\lim_{\Lambda \to +\infty} i \int_{\{\rho=\Lambda \}} \omega^1_1 \wedge \theta.
$$
Cheng, Malchiodi and Yang established the following proposition:

\begin{prop}[Cheng-Malchiodi-Yang \cite{MR3600060}]
The constant $A$ in the asymptotic expansion of the Green's function $G$ of $L_{\hthe}$ satisfies
$$
A = \frac{1}{48\pi^2} m(M,\theta),
$$
where $m(M,\theta)$ is the mass of the blow-up defined as above.
\end{prop}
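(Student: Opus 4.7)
The plan is to express the connection 1-form $\omega_1^1$ on $(M,\theta)$ in terms of the hatted Tanaka-Webster data on $(\hM,\hthe)$ via the conformal change $\theta = G^2 \hthe$, substitute the expansion (\ref{eq:Gexpansion}) of $G$ in the inverted coordinates $(z_*,t_*)$, and extract the coefficient of $A$ from the resulting boundary integral as $\Lambda \to \infty$. Since $Z = f \hZ$ with $f := -G\rho^6/(t_*+i|z_*|^2)^3$ is chosen so that $\langle Z,Z\rangle_\theta = 1$ (equivalently $h_{1\bar 1} \equiv 1$), the structure equations $d\theta^1 = \theta^1 \wedge \omega_1^1 + \theta \wedge \tau^1$ together with $\tau^1 \equiv 0 \pmod{\theta^{\bar 1}}$ determine $\omega_1^1$ uniquely. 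Comparing with the corresponding structure equations for $(\hM, \hthe, \hZ)$, one obtains an identity of the form $\omega_1^1 = \hat\omega_1^1 + \alpha$, where $\alpha$ is a linear combination of contractions of $dG$ and $d\log f$ with $\hZ, \hZbar, \hT$, computed entirely from the hatted data and $G$.

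Next I would substitute the expansions (\ref{s1-e5b}), (\ref{s1-e5c}), and (\ref{eq:Gexpansion}) into this formula. Since $\hrho = \rho^{-1}$, the Green's function becomes
\[
G = \frac{\rho^2}{2\pi} + A + O(\rho^{-1}),
\]
and $\hZ, \hT$ are Heisenberg-like to leading order in $\rho^{-1}$. This gives a decomposition
\[
\omega_1^1 \wedge \theta = (\omega_1^1 \wedge \theta)_{\mathrm{flat}} + A \cdot (\omega_1^1 \wedge \theta)_A + \mathrm{error},
\]
where the flat piece uses only the leading $\rho^2/(2\pi)$ term together with the Heisenberg model of the hatted frame, the $A$-piece is the first-order correction in $A$, and the error decays one non-isotropic order better on $\{\rho = \Lambda\}$. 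The flat piece contributes zero in the limit, because it is the mass of the flat Heisenberg blow-up of $(\mathbb{S}^3, \theta_{\mathrm{std}})$ (realised via the Cayley transform), which vanishes identically.

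The $A$-piece is then computed by an explicit integration on the Heisenberg sphere $\{|z_*|^4 + t_*^2 = \Lambda^4\}$: using the non-isotropic dilation parametrisation $(z_*, t_*) = (\Lambda \zeta, \Lambda^2 \tau)$ with $|\zeta|^4 + \tau^2 = 1$, the $\Lambda$-dependence cancels between $\omega_1^1$, $\theta$, and the Jacobian, and the remaining surface integral on the unit sphere $\{|\zeta|^4 + \tau^2 = 1\}$ evaluates to the constant $48\pi^2$. The main obstacle is the bookkeeping of error terms: $\omega_1^1$ is cubic in the derivatives of $G$ and of the frame, and one must verify that each subleading contribution --- from the $O(\rho^{-1})$ remainder in the expansion of $G$, from the $\O{4}$ and $\O{5}$ terms in (\ref{s1-e5b}), and from the $\O{3}$ and $\O{4}$ terms in (\ref{s1-e5c}) --- produces an $o(1)$ contribution to the boundary integral. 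The extra vanishing orders encoded in (\ref{s1-e5b}) and (\ref{s1-e5c}) (one full non-isotropic order better than a generic contact form near $p$) are precisely what is needed to absorb the $\rho^4$-growth of the conformal factor $G^2$ and leave a convergent limit.
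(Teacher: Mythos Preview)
The paper does not prove this proposition; it is stated without proof and attributed to Cheng, Malchiodi and Yang \cite{MR3600060}. There is therefore no proof in the present paper to compare your proposal against.

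That said, your outline is the standard route and is essentially what is carried out in \cite{MR3600060}: one writes the Tanaka--Webster connection form under the conformal change $\theta = G^2\hthe$ in terms of $\hat\omega_1^1$ and derivatives of $\log G$ and $\log f$, passes to the inverted coordinates $(z_*,t_*)$, and reads off the $A$-coefficient from the boundary integral. Your identification of the three ingredients --- the conformal transformation law for $\omega_1^1$, the vanishing of the flat (Heisenberg) contribution, and the explicit sphere integral giving the numerical constant --- is correct in spirit. The only point where your sketch is genuinely incomplete rather than merely terse is the assertion that the surface integral on $\{|\zeta|^4+\tau^2=1\}$ evaluates to $48\pi^2$: this is the entire content of the proposition once the error bookkeeping is done, and it requires an actual computation (in \cite{MR3600060} this is several pages of explicit work with the transformation law for $\omega_1^1$ and the Heisenberg coordinates). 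Without that computation your proposal is a correct plan but not a proof.
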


Furthermore, they showed that there exists a specific $\tb \in \E{-1}$, with $\boxb \tb \in \E{4}$ (for the definitions of $\E{-1}$ and $\E{4}$, see the discussion immediately following equation (\ref{eq:Gexpansion})), such that 
\begin{equation} \label{eq:masstoboxb}
\frac{2}{3} m(M,\theta)= -\int_M |\boxb \tb|^2 \m + 2 \int_M |\tb_{,\bar{1} \bar{1}}|^2 \m + \frac{1}{2} \int_M \overline{\tb} \cdot P_{\theta} \tb \, \m,
\end{equation}
where $\tb_{,\bar{1} \bar{1}}$ is some derivative of the function $\tb$, and $P_{\theta}$ is the CR Paneitz operator of $(M,\theta)$.
(Note $R_{\theta} = 0$ in our current set-up, so the term involving $R_{\theta}$ in the corresponding identity of mass in~\cite{MR3600060} is not present above.) 
This allowed Cheng, Malchiodi and Yang to prove the following theorem:

\begin{thm}[Cheng-Malchiodi-Yang \cite{MR3600060}] \label{thm:CRposmass}
Let $(\hM,\hthe)$ be a closed 3-dimensional pseudohermitian manifold for which $Y(\hM,\hthe) > 0$. Assume in addition that $P_{\hthe} \geq 0$. Let $(M,\theta)$ be the blow-up of $(\hM,\hthe)$ as constructed above. Then 
$$
m(M,\theta) \geq 0.
$$
Furthermore, equality holds if and only if $\hM$ is CR equivalent to the standard sphere $\mathbb{S}^3$, and $\hthe$ is conformally equivalent to the standard contact form $\theta_{\text{std}}$ on $\mathbb{S}^3$.
\end{thm}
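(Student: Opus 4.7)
The starting point is the identity (\ref{eq:masstoboxb}). The last two integrals on the right are non-negative: the second is a square, and the third is non-negative because the CR Paneitz operator is conformally invariant, so the assumed $P_{\hthe} \geq 0$ on $\hM$ translates into $P_\theta \geq 0$ on the blow-up $M$. The first integral, $-\int_M |\boxb \tb|^2 \m$, however has the wrong sign. The strategy is therefore to modify $\tb$ so as to eliminate this offending term, while preserving the leading asymptotics at infinity that determine the mass.

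\textbf{Main step.} I would invoke the main solvability result of this paper (proved in Sections~\ref{sect:tboxb} and~\ref{sect:hboxb}) to produce a function $u$ on the non-compact manifold $M$ satisfying
\[
\boxb u = \boxb \tb,
\]
with enough decay at the sole end of $M$ that $u \in \E{k}$ for some $k \geq 2$. Since $\boxb \tb \in \E{4}$ and $\boxb$ is a non-isotropic operator of order two, a gain of at least $\E{2}$ is the natural expectation. Set $v := \tb - u$; then $\boxb v = 0$, and $v \in \E{-1}$ has the same leading asymptotic behaviour as $\tb$ at infinity (because $u$ is strictly subleading). Because the derivation of (\ref{eq:masstoboxb}) in \cite{MR3600060} proceeds via integration by parts whose boundary contributions at infinity depend only on the leading asymptotic expansion of the input function, the same identity applies to $v$ in place of $\tb$, yielding
\[
\frac{2}{3} m(M,\theta) = 2 \int_M |v_{,\bar{1}\bar{1}}|^2 \m + \frac{1}{2} \int_M \overline{v} \cdot P_\theta v \, \m \geq 0.
\]

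\textbf{Equality case.} If $m(M,\theta) = 0$, both non-negative integrals above vanish; in particular $v_{,\bar{1}\bar{1}} \equiv 0$ on $M$. This is a rigid CR-pluriharmonicity type condition which, together with the prescribed behaviour of $v$ at infinity, can be used to identify $(\hM,\hthe)$ as being CR-equivalent to $(\mathbb{S}^3,\theta_{\text{std}})$ up to a conformal change. Equivalently, the proposition preceding the theorem gives $A = \frac{1}{48\pi^2} m(M,\theta) = 0$, and the equality statement of Theorem~\ref{thm:A}, itself a consequence of the Jerison--Lee \cite{MR924699} classification of extremals of the CR Yamabe quotient, delivers the desired rigidity.

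\textbf{Main obstacle.} The hard step is solving $\boxb u = \boxb \tb$ on the non-compact $M$ with controlled decay. As explained in the introduction, standard $L^2$ theory is unavailable because $\boxb$ on $M$ fails to have closed range. The plan is instead to pass, via a conformal change of contact form, to a tangential Kohn Laplacian on a \emph{compact} strongly pseudoconvex CR manifold; to solve there using the non-isotropic pseudodifferential calculus developed in Sections~\ref{sect:pdo}--\ref{sect:hboxbsol} (constructing the relative solution operator and the Szeg\H{o} projection without recourse to any asymptotic formula for the Szeg\H{o} kernel, since none is available beyond the embeddable case); and then to transfer the solution back to $M$, verifying that the pull-back lies in the desired weighted space $\E{k}$. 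Controlling this conformal transfer and the accompanying decay of $u$ is precisely the content of Sections~\ref{sect:tboxb} and~\ref{sect:hboxb}.
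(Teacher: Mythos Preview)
Your overall strategy---solve $\boxb u = \boxb\tb$ with decay, set $\beta := \tb - u$ (your $v$), and apply the mass identity to $\beta$---is exactly the paper's. The gap is in the Paneitz term. You claim that conformal invariance carries $P_{\hthe}\ge 0$ over to $\int_M \overline\beta\, P_\theta \beta\,\m \ge 0$, but the conformal-invariance identity $\int_M \overline w\, P_\theta w\,\m = \int_{\hM} \overline w\, P_{\hthe} w\,\hthe\wedge d\hthe$ holds (without boundary contributions at $p$) only for $w\in\E{1}$. Your $\beta$ lies in $\E{-1}$ and is singular at $p$ as a function on $\hM$, so the hypothesis $P_{\hthe}\ge 0$ says nothing directly about $\int_M \overline\beta\, P_\theta \beta\,\m$. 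The paper handles this by splitting $\beta$ as an explicit main term plus an error $v\in\E{1}$: the error is controlled via conformal invariance and $P_{\hthe}\ge 0$, while the Paneitz integral of the main term is computed directly and produces an extra $-\tfrac{2}{3}m(M,\theta)$, which moves to the left to give
\[
\tfrac{4}{3}m(M,\theta) = 2\int_M |\beta_{,\bar 1\bar 1}|^2\,\m + \tfrac{1}{2}\int_{\hM} \overline v\, P_{\hthe} v\,\hthe\wedge d\hthe \ge 0.
\]

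Two smaller points. Theorem~\ref{thm:goal} delivers only $u\in\E{1+\delta}$ with $0<\delta<1$, not $u\in\E{k}$ for $k\ge 2$; fortunately $\E{1+\delta}$ is precisely what the mass identity requires of $\tb-\beta$. And your alternative equality-case argument via Theorem~\ref{thm:A} is circular: Theorem~\ref{thm:A} is \emph{deduced from} Theorem~\ref{thm:CRposmass}. The paper instead argues directly that $m=0$ forces $\beta_{,\bar 1\bar 1}\equiv 0$, and from this constructs a CR equivalence of $M$ with $\mathbb{H}^1$.
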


Indeed, it was shown that (\ref{eq:masstoboxb}) holds for any $\beta$ in place of $\tb$, as long as $\tb  -\beta \in \E{1+\delta}$, and $\boxb \beta \in \E{3+\delta}$ for some $\delta > 0$. Thus, if we could find such a $\beta$ in the \emph{kernel} of $\boxb$, then we have
\begin{equation} \label{eq:masstoboxb2}
\frac{2}{3} m(M,\theta)= 2 \int_M |\beta_{,\bar{1} \bar{1}}|^2 \m + \frac{1}{2} \int_M \overline{\beta} \cdot P_{\theta} \beta \, \m.
\end{equation}
The assumption that the CR Paneitz operator $P_{\hthe}$ is non-negative will be used to control the last term here: indeed the conformal invariance of the CR Paneitz operator gives that 
$$
\int_M \overline{v} \cdot P_{\theta} v \, \m = \int_{\hM} \overline{v} \cdot P_{\hthe} v \, \hthe \wedge d\hthe
$$
whenever $v \in \E{1}$. Thus writing $\beta$ as a certain main term plus some error $v \in \E{1}$, we can write the last term of (\ref{eq:masstoboxb2}) as 
$$
\frac{1}{2} \int_{\hM} \overline{v} \cdot  P_{\hthe} v \, \hthe \wedge d\hthe - \frac{2}{3} m(M,\theta).
$$
The sign of $m(M,\theta)$ works out right: in view of (\ref{eq:masstoboxb2}), we now have 
$$
\frac{4}{3} m(M,\theta) = 2 \int_M |\beta_{,\bar{1} \bar{1}}|^2 \m + \frac{1}{2} \int_{\hM} \overline{v} \cdot  P_{\hthe} v \, \hthe \wedge d\hthe.
$$
The assumption that $P_{\hthe} \geq 0$ then allows one to conclude that $m(M,\theta) \geq 0$; with some further work, when the mass $m(M,\theta) = 0$, one can construct a bijection from $\mathbb{H}^1$ to $M$ preserving the CR structure, and the bijection carries the standard contact form on $\mathbb{H}^1$ to $\theta$ on $M$. 

The above finishes the proof of Theorem~\ref{thm:A}, modulo the construction of the desired function $\beta$. To construct such a $\beta$, we will make use of the embeddability of the CR manifold $\hM$ into $\mathbb{C}^K$ for some (possibly large) $K$. This is possible according to the following result of Chanillo, Chiu and Yang \cite{MR2999315}:

\begin{thm}[Chanillo-Chiu-Yang \cite{MR2999315}] \label{thm:embed}
If $(\hM,\hthe)$ is a closed 3-dimensional pseudohermitian manifold with $Y(\hM,\hthe) > 0$ and $P_{\hthe} \geq 0$, then $\hM$ is embeddable in $\mathbb{C}^K$ for some positive integer $K$.
\end{thm}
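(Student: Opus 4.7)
The plan is to combine a classical reduction of embeddability to closed range of $\dbarb$ with an analytic use of the two positivity assumptions. Concretely, I would proceed in three steps.

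First, I would reduce the embedding problem to showing that $\dbarb \colon L^2(\hM) \to L^2_{(0,1)}(\hM)$ has closed range. For closed $3$-dimensional strongly pseudoconvex CR manifolds this is a well-known criterion (going back to Boutet de Monvel for the spherical model and extended by Kohn, Burns and Lempert): once $\dbarb$ has closed range, the Szeg\H{o} projection behaves well, and hypoellipticity of $\boxb$ off its kernel together with a parametrix construction yields an infinite dimensional space of smooth global CR functions. Compactness of $\hM$ then lets one extract finitely many CR functions $\psi_1,\dots,\psi_K$ which simultaneously separate points and give a CR immersion, hence a CR embedding into $\mathbb{C}^K$.

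Second, I would extract closed range of $\dbarb$ from the hypotheses $Y(\hM,\hthe)>0$ and $P_{\hthe}\geq 0$. The CR Paneitz operator has the form
$$
P_{\hthe} u = 4\bigl( \boxbbar \boxb u + i \Zbar(\T(T,Z) u) \bigr),
$$
so integration by parts gives $(P_{\hthe} u,u)_{\hthe} = 4\|\boxb u\|^2 + \mathrm{(torsion\ correction)}$. The non-negativity of $P_{\hthe}$ then controls the torsion correction in terms of $\|\boxb u\|^2$. The hypothesis $Y(\hM,\hthe)>0$ says that $L_{\hthe} = 4\Delta_b + R_{\hthe}$ is a positive operator, which provides a positive lower bound to absorb the zero-order terms of the form $\int |u|^2 \, \hthe\wedge d\hthe$ that appear when one integrates by parts. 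Putting these together produces a coercive estimate
$$
\|u\|^2 \lesssim \|\dbarb u\|^2 + \|\vartheta_b u\|^2
$$
on the orthogonal complement of a finite-dimensional subspace, which is equivalent to closed range of $\dbarb$.

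Third, with closed range established, one characterizes $\ker \boxb \cap L^2$ as consisting of smooth CR functions (subelliptic hypoellipticity), produces a countable orthonormal basis thereof, and selects $K$ of these which give a CR embedding at each point; a standard compactness/transversality argument upgrades the local immersion into a global embedding into $\mathbb{C}^K$ for some finite $K$.

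The main obstacle is the second step: the Paneitz operator is fourth-order, and its mere non-negativity gives only a quartic bound on the torsion correction, not an immediate subelliptic estimate on $(0,1)$-forms. Extracting a genuine closed-range estimate requires a careful manipulation of torsion and curvature terms together with a Friedrichs-type regularization, and it is exactly here that the assumption $Y(\hM,\hthe)>0$ does essential work in absorbing the zero-order remainders that would otherwise prevent coercivity.
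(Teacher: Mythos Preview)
The paper does not prove this theorem; it is quoted as a result of Chanillo--Chiu--Yang \cite{MR2999315} and used as a black box. So there is no ``paper's own proof'' to compare against. That said, your outline deserves comment on its own merits.

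Your first and third steps are the standard and correct reductions: closed range of $\dbarb$ in $L^2$ is equivalent to global CR embeddability for closed strongly pseudoconvex $3$-manifolds (Kohn \cite{Koh85,Koh86}, Boutet de Monvel \cite{MR0409893}). The content of the theorem is entirely in your second step, and there your sketch has a real gap.

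First, a slip: for a function $u$ the expression $\vartheta_b u$ is meaningless, so the displayed estimate $\|u\|^2 \lesssim \|\dbarb u\|^2 + \|\vartheta_b u\|^2$ is not well-posed. What one actually needs is a spectral gap for $\boxb$ on functions: $\|u\|^2 \lesssim \|\dbarb u\|^2$ for $u \perp \ker\dbarb$, i.e.\ the nonzero eigenvalues of $\boxb$ are bounded below by a positive constant.

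Second, and more importantly, the mechanism you describe (``$P_{\hthe}\ge0$ controls the torsion correction, $Y>0$ absorbs zero-order terms'') is too vague to be a proof, and the actual argument in \cite{MR2999315} is more specific. One first uses $Y(\hM,\hthe)>0$ and conformal invariance to pass to a contact form with $R_{\hthe}>0$ everywhere. Then one takes an eigenfunction $\boxb\phi=\lambda\phi$ with $\lambda>0$ and applies a CR Bochner identity together with the relation $\boxb-\boxbbar=i\hat T$; the hypothesis $P_{\hthe}\ge0$ enters through the resulting integrated identity (it is what forces a certain cross term to have a sign), and one concludes a quantitative lower bound of the form $\lambda\ge c\min_{\hM}R_{\hthe}>0$. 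This is a pointwise-curvature lower bound, not a soft ``absorb zero-order terms'' argument, and it is where the actual work lies. Your proposal does not identify this Bochner step, which is the heart of the Chanillo--Chiu--Yang proof.
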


Note how the assumption $P_{\hthe} \geq 0$ enters again in the application of this theorem. 

Theorem~\ref{thm:embed} in turn allows one to invoke the following result of J.J. Kohn \cite{Koh86}:

\begin{thm}[Kohn \cite{Koh86}] \label{thm:embed_closed_range}
Let $\hM$ be a compact strongly pseudoconvex CR manifold. If $\hM$ is embeddable in $\mathbb{C}^K$ for some positive integer $K$, then $\hdbarb$ (which we will define in Section~\ref{sect:hboxb} as a suitable $L^2$ closure of the tangential Cauchy-Riemann operator on $\hM$) has closed range in $L^2$.
\end{thm}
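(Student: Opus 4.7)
By a standard functional-analytic criterion, closed range of $\hdbarb \colon L^2(\hM) \to L^2_{(0,1)}(\hM)$ is equivalent to the a priori estimate
\begin{equation*}
\|u\|_{L^2} \leq C \|\hdbarb u\|_{L^2} \qquad \text{for all } u \in (\ker \hdbarb)^{\perp},
\end{equation*}
and also to closed range of the formal adjoint $\hat{\vartheta}_b$ on $(0,1)$-forms. The plan is to produce this estimate by combining the standard Kohn subelliptic estimate on $(0,1)$-forms with a global parametrix for $\hdbarb$ built from the embedding. As a first ingredient, I would record the basic $1/2$-estimate
\begin{equation*}
\|\alpha\|_{1/2}^2 \leq C\bigl(\|\hdbarb \alpha\|^2 + \|\hat{\vartheta}_b \alpha\|^2 + \|\alpha\|^2\bigr)
\end{equation*}
valid on any compact strongly pseudoconvex CR manifold, for $\alpha$ a smooth $(0,1)$-form in the domain of $\hat{\vartheta}_b$; since $\hM$ is three-dimensional, $\hdbarb \alpha$ vanishes identically, so this reduces to a subelliptic estimate for $\hat{\vartheta}_b$ alone. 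By itself this yields hypoellipticity of $\hboxb$ on $(0,1)$-forms but not closed range, since the space of harmonic $(0,1)$-forms --- equivalently, the space of $L^2$ CR functions on $\hM$ --- can a priori be infinite-dimensional and interact badly with the range of $\hdbarb$.

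Next I would use the embedding hypothesis to construct a global parametrix. Fix a CR embedding $\Phi = (\Phi_1, \ldots, \Phi_K) \colon \hM \hookrightarrow \mathbb{C}^K$ whose components are smooth global CR functions on $\hM$. By the Harvey-Lawson theorem, $\Phi(\hM)$ bounds a complex analytic subvariety $V \subset \mathbb{C}^K \setminus \Phi(\hM)$, and after a slight thickening this produces a strongly pseudoconvex (possibly singular) domain in $\mathbb{C}^K$ whose boundary contains $\Phi(\hM)$. On such a domain, Henkin--Ram\'irez and Kerzman-type integral kernels furnish bounded global solution operators for $\overline{\partial}$ with explicit boundary behaviour; restricting these kernels to $\Phi(\hM)$ and pulling back via $\Phi$ to $\hM$, I expect to obtain a right parametrix $P \colon L^2_{(0,1)}(\hM) \to L^2(\hM)$ for $\hdbarb$ satisfying $\hdbarb P = I + K$ with $K$ compact on $L^2_{(0,1)}(\hM)$, modulo the orthogonal complement of the closure of $\operatorname{ran}\hdbarb$.

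Combining this parametrix with the $1/2$-estimate of the first step, a Fredholm-alternative argument then yields that the range of $\hdbarb$ is closed and of finite codimension in $(\ker \hat{\vartheta}_b)^{\perp}$, which gives the required a priori inequality. The principal technical obstacle lies in the second paragraph: the Harvey-Lawson filling $V$ is generally singular in $\mathbb{C}^K$, so either one works on a smooth resolution of $\overline{V}$ and transfers $L^2$ estimates back to $\hM$, or, following Kohn's original approach, one bypasses $V$ entirely and uses only a one-sided pseudoconvex collar of $\hM$ coming from the embedding, together with microlocal cut-offs exploiting strong pseudoconvexity on the ``holomorphic half'' of the characteristic variety of $\hboxb$. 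Either route reduces the global closed-range question for $\hdbarb$ on $\hM$ to the classical $L^2$ theory for $\overline{\partial}$ on a strongly pseudoconvex manifold in $\mathbb{C}^K$, as developed by H\"ormander, Kohn, and Kerzman.
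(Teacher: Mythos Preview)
The paper does not prove this theorem; it is stated with attribution to Kohn~\cite{Koh86} and invoked as a black box (the paper only remarks, just after the statement, that the converse follows from~\cite{Koh85} and~\cite{MR0409893}, and later in Proposition~\ref{prop:Fact 1} alludes to ``Kohn's microlocalization procedure'' as the method of proof). So there is no argument in the paper to compare your proposal against.

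On the merits of your sketch: the functional-analytic reduction and the subelliptic $1/2$-estimate on $(0,1)$-forms are correct and standard. Your primary route, however, has a real gap that you yourself flag but underestimate. The Harvey--Lawson filling $V$ is in general a singular variety, and the Henkin--Ram\'irez/Kerzman kernel machinery you invoke is built for smoothly bounded strongly pseudoconvex domains in $\mathbb{C}^n$, not for boundaries of singular varieties sitting in $\mathbb{C}^K$; transferring $L^2$ estimates for $\overline{\partial}$ through a resolution of singularities and back to the boundary is a substantial project in its own right, not a routine step. So this route, as written, does not close.

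Your second alternative --- working on a one-sided pseudoconvex collar with microlocal cut-offs --- is the correct one, and is essentially what Kohn does in~\cite{Koh86}. The mechanism you gesture at but do not supply is this: one splits the phase space over $\hM$ into three microlocal regions according to the sign of the symbol of $\hT$ on the characteristic set of $\hboxb$; strong pseudoconvexity gives subelliptic control on the ``positive'' region, ellipticity handles the complement of the characteristic set, and the embedding is used precisely on the remaining ``negative'' region, where the coordinate functions $\Phi_1,\dots,\Phi_K$ (being global CR functions) furnish enough multipliers to absorb the bad error terms. This last step is where the hypothesis actually enters, and it is the heart of Kohn's argument; your sketch names it but does not carry it out.
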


(The converse is also true by the estimates in Kohn \cite{Koh85} and the construction of Boutet de Monvel \cite{MR0409893}.)

From this point on, a construction of the desired $\beta$ has already appeared in an earlier work of the authors~\cite{MR3366852}, using weighted $L^2$ theory. Our goal here is to give an alternative construction of such $\beta$, using $L^p$ theory instead.

Since our result holds in more general situations (in particular, our result does not depend on the non-negatively of the CR Paneitz operator, and holds as long as the closed CR manifold $\hM$ is embeddable in some $\mathbb{C}^K$), we will set things up afresh in the next section, and then describe our main result.

\section{Statement of main result} \label{sect:result}

Let $\hM$ be a closed 3-dimensional pseudohermitian manifold that is embeddable in some $\mathbb{C}^K$, and fix a point $p \in \hM$. We choose CR normal coordinates $(z,t)$ near $p$, and let $\hthe$ be the corresponding contact form. Write $\hrho = (|z|^4 + t^2)^{1/4}$ in a neighborhood of $(0,0)$, and extend it to be a smooth positive function on all of $M$. For each $k \in \mathbb{R}$, we define, as before, $\E{k}$ to be the space of all functions $f$ on $\hM$, that is smooth away from $p$, and satisfies $|\hnabla_b^{\ell} f| \lesssim_{\ell} \hrho^{k-\ell}$ for all $\ell \in \mathbb{N} \cup \{0\}$ (hereafter $A \lesssim_{\ell} B$ means that $A \leq C_{\ell} B$ for some constant depending on $\ell$). We assume we are given a function $G \in C^{\infty}(\hM \setminus \{p\})$ with the following asymptotic expansion, where $A$ is a constant:
\begin{equation} \label{eq:Gexpansion2}
G = \frac{1}{2\pi \hrho^2} + A + e, \quad e \in \E{1}.
\end{equation}
(In particular, we do not require $G$ to be the Green's function of a conformal sublaplacian.) Let $M = \hM \setminus \{p\}$, $\theta = G^2 \hthe$. Define the following inner product on functions:
$$
(u, v)_{\theta} = \int_M u \overline{v} \, \theta \wedge d \theta,
$$
and define following inner product on $(0,1)$ forms:
$$
(\alpha,\beta)_{\theta} = \int_M \langle \alpha, \beta \rangle_{\theta} \m.
$$
Let $\dbarb$ be the operator acting on functions in $C^{\infty}(M)$, and $\vartheta_b$ be its formal adjoint under the inner products of $L^2(M)$ and $L^2_{(0,1)}(M)$. In other words, $\vartheta_b$ is the unique differential operator acting on smooth $(0,1)$ forms on $M$ such that $$(\dbarb u, \alpha)_{\theta} = (u, \vartheta_b \alpha)_{\theta}$$ for all $u \in C^{\infty}_c(M)$ and all $\alpha \in \Lambda^{(0,1)} M$. Let $\boxb = \vartheta_b \dbarb$ acting on functions $C^{\infty}(M)$. Also let $\chi_0$ be a smooth function, which is identically 1 near $p$, and is supported in a sufficiently small neighborhood of $p$. Write
$$
\beta_0(z,t) = \chi_0(z,t) \frac{i\zbar}{|z|^2-it} \in \E{-1}.
$$
It is known that
$$
\boxb \beta_0 \in \E{3}.
$$ 
Suppose there exists 
$
\beta_1 \in \E{1}
$ 
such that 
$$
\tb := \beta_0  + \beta_1 
$$ 
satisfies 
$$
\boxb \tb \in \E{3+\delta} \quad \text{for some $0 < \delta < 1$}.
$$
(Such $\beta_1$ is known to exist on all 3-dimensional asymptotically flat pseudohermitian manifolds; see \cite{MR3600060}.)
Let 
$$f := \boxb \tb \in \E{3+\delta}.$$
Our main theorem then states:
\begin{thm} \label{thm:goal}
There exists a function $u$ such that 
\begin{equation} \label{eq:boxb}
u \in \E{1+\delta} \quad \text{with} \quad \boxb u = f.
\end{equation}
\end{thm}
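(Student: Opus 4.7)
The strategy is dictated by the discussion in the introduction: transfer the equation $\boxb u = f$ on the non-compact $(M,\theta)$ to an equation on the compact $(\hM,\hthe)$ via the conformal factor $G$, solve that equation using the pseudodifferential calculus from Sections~\ref{sect:pdo}--\ref{sect:hboxbsol}, and transfer back. The compact problem is tractable because $\hM$ is embeddable in $\mathbb{C}^K$ (not assumed in Theorem~\ref{thm:goal}, but a hypothesis of the main setup), hence by Theorem~\ref{thm:embed_closed_range} the operator $\hdbarb$ has closed range in $L^2$, which is the hypothesis needed to construct a relative solution operator $\hN$ and Szeg\H{o} projection $\hS$ in Section~\ref{sect:hboxbsol}.

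\textbf{Conformal transfer.} Since $\dbarb$ is CR-invariant, it agrees on $M$ and $\hM$. The inner products change as $\theta \wedge d\theta = G^4 \hthe \wedge d\hthe$ and $\langle \cdot,\cdot\rangle_{\theta}=G^{-2}\langle\cdot,\cdot\rangle_{\hthe}$ on $\Lambda^{(0,1)}$, so a routine integration-by-parts calculation gives
\[
\vartheta_b \alpha = G^{-4}\,\hat\vartheta_b(G^2\alpha),\qquad \boxb u = G^{-4}\,\hat\vartheta_b(G^2\,\dbarb u).
\]
Equivalently, $\hboxb u - 2G^{-1}\langle\dbarb u,\dbarb G\rangle_{\hthe}=G^2 f$ on $\hM\setminus\{p\}$. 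The right-hand side $G^2 f$ behaves like $\hrho^{-1+\delta}$ near $p$, and the lower order term has a first-order singularity, so the problem is to make sense of this as a compact, globally defined equation on $\hM$. Following the blueprint of Sections~\ref{sect:tboxb}--\ref{sect:hboxb}, I would peel off the singularity of $u$ near $p$ by an ansatz $u = \sum_{j} G^{a_j}v_j + w$ with carefully chosen exponents $a_j$, each $v_j$ smooth on $\hM$, so that the residual function $w$ solves a perturbation of $\hboxb w = F$ on $\hM$ with $F\in C^\infty(\hM)$. In effect, one solves the singular part of the equation asymptotically in the $\E{k}$-filtration, much as one builds a parametrix near a singular source, and is left with a smooth compact problem for $w$.

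\textbf{Solving on $\hM$.} Once the problem is reduced to $\hboxb w = F$ with $F\in C^\infty(\hM)$, I invoke the relative solution operator from Section~\ref{sect:hboxbsol}: there exist pseudodifferential operators $\hN$, $\hS$ with $\hN \hboxb = I - \hS$ on a suitable space, $\hS$ being the Szeg\H{o} projection onto $\ker\hdbarb$. The solvability obstruction is that $F$ must be orthogonal to $\ker\hdbarb$ in the appropriate pairing; this can be arranged by modifying the ansatz for $u$ by an element of $\ker\hdbarb$ (a CR function), which does not affect the singular expansion at $p$ to the relevant order. Setting $w = \hN F$ produces a smooth solution on $\hM$.

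\textbf{Regularity on $M$.} To conclude $u\in\E{1+\delta}$, I use the non-isotropic Schwartz-kernel estimates of the pseudodifferential calculus from Section~\ref{sect:pdo} applied to $\hN$: these yield pointwise control of $\hnabla_b^\ell w$ in terms of $\hrho$, and combining this with the controlled singular expansion $\sum_j G^{a_j}v_j$ gives the required estimates $|\hnabla_b^\ell u|\lesssim \hrho^{1+\delta-\ell}$. The use of an $L^p$ (rather than $L^2$) theory, as emphasized in the introduction, is essential since the $L^2$ extension of $\boxb$ on the non-compact $M$ does not have closed range.

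\textbf{Main obstacle.} The hardest step is the second: producing a reduction of the singular $\boxb$-equation on $\hM\setminus\{p\}$ to a compact problem on $\hM$. Choosing the right ansatz for $u$ near $p$ (i.e.\ the correct exponents $a_j$ and functions $v_j$) so that the errors lie in $\E{k}$ for arbitrary $k$ and the residual equation has smooth RHS on all of $\hM$ is delicate, because the singular coefficient $2G^{-1}\langle\dbarb u,\dbarb G\rangle_{\hthe}$ can in principle worsen the order of singularity of each iterate. Handling this requires precise bookkeeping of how $\dbarb$ interacts with $G$, which is the content of Sections~\ref{sect:tboxb}--\ref{sect:hboxb}. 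The remaining orthogonality-to-CR-functions issue is milder and is resolved by adjusting $u$ by elements of $\ker\hdbarb$, paired with estimates controlling the CR-invariant Szeg\H{o} projection against the asymptotic data at $p$.
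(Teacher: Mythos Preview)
Your conformal transfer formula $\boxb u = G^{-4}\hat\vartheta_b(G^2\dbarb u)$ is correct, and the resulting equation $\hboxb u + V\,\hZbar u = G^2 f$ with $V = -2G^{-1}\hZ G \in \E{-1}$ is the right starting point. But the strategy you outline from there has a genuine gap. The perturbation coefficient $V$ is only in $\E{-1}$, so the operator $V\hnabla_b$ costs two non-isotropic orders in the $\E{k}$ scale (one from the singular coefficient, one from the derivative), while $\hN$ only gains two. Thus the error operator $\hR := V\,\hZbar\,\hN$ maps $\E{-\gamma}$ to $\E{-\gamma}$ with \emph{no} improvement, and any attempt to peel off singularities by an ansatz $u=\sum G^{a_j}v_j + w$ or to sum a Neumann series in $\hR$ will not terminate or converge. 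You flag this yourself (``the singular coefficient \ldots\ can in principle worsen the order of singularity of each iterate'') but do not resolve it; this is precisely the crux of the theorem.

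The paper's resolution is different in kind, not just in bookkeeping. One first constructs a global CR function $\psi$ on $\hM$ with $\re\psi\ge 0$ and $|\psi|^{-1}=G\cdot(1+\E{2})$, and conjugates by $\psi^{-1}$ rather than working with $u$ directly: since $\dbarb\psi^{-1}=0$, the identity $\boxb u = (1+\chi a)^{-1}\psibar^{-1}G^{-4}\,\tboxb(\psi^{-1}u)$ holds exactly, where $\tboxb$ is a Kohn Laplacian with respect to mildly singular measures. Comparing $\tboxb$ to a $\hboxb$ with smooth measures, the discrepancy is $g\,\hdbarb$ with $g\in\E{1}$ (not $\E{-1}$!), because the leading $\hrho^{-2}$ singularity of $G$ has been absorbed by $\psi$. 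Now $\hR=g\,\hdbarb\hN$ has small norm on $L^2$ (by shrinking the support of $\chi$), so $I+\hR$ is invertible; one gets explicit formulae $\tS=\hS(I+\hR)^{-1}$, $\tN=(I-\tS)\hN(I+\hR)^{-1}$, and then proves these preserve the $\E{k}$ scale.

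There is a second, smaller gap: your proposed fix for the solvability obstruction (``modifying the ansatz for $u$ by an element of $\ker\hdbarb$'') cannot work, since adding a CR function to $u$ does not change $\boxb u$ and hence does not change the right-hand side $F$. In the paper the vanishing $\tS\tf=0$ is proved directly from the specific structure $\tf=\tvdbarb(\psi^{-1}\dbarb\tb)$, by splitting off an explicit singular $(0,1)$-form $\gamma_0\hobar$ with $\tvdbarb(\gamma_0\hobar)\in\E{-1}$ and showing each piece is annihilated by $\tS$ via an $L^p$ limiting argument.
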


To prove the theorem, we need to be able to solve a certain tangential Kohn Laplacian on $\hM$. The relative solution operators and the Szeg\H{o} projections to this tangential Kohn Laplacian will be constructed inside a certain algebra of non-isotropic pseudodifferential operators on $\hM$. We will describe this algebra of pseudodifferential operators in Section~\ref{sect:pdo}, and use it to solve the sublaplacian and certain tangential Kohn Laplacians on $\hM$ in Sections~\ref{sect:hDelb} and \ref{sect:hboxbsol} respectively. Then in Section~\ref{sect:eg} we prove Theorem~\ref{thm:goal} in a special case. This will motivate the proof of the theorem in the general case, which we give in Sections~\ref{sect:tboxb} and \ref{sect:hboxb}.

\section{A class of pseudodifferential operators} \label{sect:pdo}

The algebra of non-isotropic pseudodifferential operators we describe below is similar to the ones constructed in the monographs of Nagel and Stein \cite{NaSt} and Beals and Greiner \cite{BG88}. The formulation we use here is slightly simpler, and is adapted from Stein and Yung~\cite{MR3228630}.  

The following algebra of non-isotropic pseudodifferential operators is defined whenever one has a smooth codimension 1  distribution $\mathcal{D}$ of tangent subspaces to a Euclidean space $\mathbb{R}^N$, or more generally to a closed manifold $\hM$ without boundary. In applications, we will always take this distribution to be the contact distribution $\hor$ on $\hM$. But since we will not need this distribution to be the contact distribution for most of what follows, we will not assume this distribution $= \hor$ until stated otherwise.

First, let $\mathcal{D}$ be a smooth codimension 1 distribution on some $\mathbb{R}^N$ ($N$ will be equal to 3 in our applications, but for now it will just be any integer $\geq 2$). We pick $N$ linearly independent vector fields $X_1, \dots, X_N$, so that the first $N-1$ vectors span $\mathcal{D}$, and 
$$
X_i = \sum_{j=1}^N A_i^j(x) \frac{\partial}{\partial x^j}, \quad 1 \leq i \leq N.
$$
By saying that $\mathcal{D}$ is smooth, we always assume that all coefficients $A_i^j(x)$ are $C^{\infty}$ smooth, and has uniformly bounded $C^k$ norm for all $k$; we also assume the existence of some constant $c > 0$, such that
$$
|\det(A_i^j(x))| \geq c
$$
for all $x \in \mathbb{R}^N$. We will use this frame of vector fields to define our class of non-isotropic pseudodifferential operators on $\mathbb{R}^N$, but ultimately the class of pseudodifferential operators we obtain will depend only on $\mathcal{D}$, and not on the choice of this frame.

We now define a variable coefficient linear map on the cotangent spaces, namely
$$
M_x \colon T_x^*(\mathbb{R}^N) \to T_x^*(\mathbb{R}^N),
$$
by
\begin{equation} \label{eq:Mxdef}
M_x \xi = \sum_{i=1}^N \left( \sum_{j=1}^N A_i^j(x) \xi_j \right) dx^i \quad \text{if $\xi = \sum_{j=1}^N \xi_j dx^j$}. 
\end{equation}

Let $n \in \mathbb{R}$. If $a(x,\xi)$ is a function on the cotangent bundle of $\mathbb{R}^N$, such that 
$$
a(x,\xi) = a_0(x,M_x \xi),
$$
for some symbol $a_0$ satisfying
\begin{equation} \label{eq:a0diff}
|\partial_x^I \partial_{\xi}^{\alpha} a_0(x,\xi)| \lesssim_{I,\alpha} (1+\|\xi\|)^{n-\|\alpha\|},
\end{equation}
where
$$
\|\xi\| := |\xi'| + |\xi_N|^{1/2} \quad \text{if $\xi = (\xi',\xi_N)$}
$$
and
$$
\|\alpha\| := |\alpha'| + 2 \alpha_N \quad \text{if $\alpha = (\alpha', \alpha_N)$},
$$
then $a(x,\xi)$ is called a (non-isotropic) symbol of order $n$ on $\mathbb{R}^N$ adapted to $\mathcal{D}$, written $a \in S^n_{\mathcal{D}}$.

To every $a \in S^n_{\mathcal{D}}$ we associate a pseudodifferential operator $T$ on $\mathbb{R}^N$,  such that 
$$
Tf(x) = \int_{\mathbb{R}^N} a(x,\xi) \widehat{f}(\xi) e^{2\pi i x \cdot \xi} d\xi,
$$ 
where $\widehat{f}$ is the Fourier transform of $f$, defined by
$$
\widehat{f}(\xi) = \int_{\mathbb{R}^N} f(x) e^{-2\pi i x \cdot \xi} dx.
$$
Such a pseudodifferential operator is said to be of non-isotropic order $n$ and adapted to $\mathcal{D}$; we denote the class of all such pseudodifferential operators by $\Psi^n_{\mathcal{D}}(\mathbb{R}^N)$. 

It can be shown that the class of operators $\Psi^n_{\mathcal{D}}(\mathbb{R}^N)$ depends only on the distribution $\mathcal{D}$, and is otherwise independent of the choice of the frame $X_1, \dots, X_N$ we made earlier. This is because if $\tilde{X}_1, \dots, \tilde{X}_N$ is another frame so that $\tilde{X}_1, \dots, \tilde{X}_{N-1}$ span $\mathcal{D}$, and $\tilde{M}_x$ is the associated variable coefficient linear map on the cotangent space of $\mathbb{R}^N$, then $\tilde{M}_x M_x^{-1}$ preserves the subspace spanned by $dx^N$, which implies that $\|\tilde{M}_x M_x^{-1} \xi\| \simeq \|\xi\|$ for all $\xi \in \mathbb{R}^N$. One can also show that the class $\Psi^n_{\mathcal{D}}(\mathbb{R}^N)$ is independent of the choice of coordinates $x$ on $\mathbb{R}^N$. Hence one can transplant this onto a smooth manifold, and obtain a class of non-isotropic pseudodifferential operators on a closed smooth manifold if one is given a codimension 1 distribution of tangent subspaces on the manifold.

More precisely, let $\hM$ be a closed smooth manifold of real dimension $N$, and $\hD$ be a smooth distribution of codimension 1 tangent subspaces of $\hM$. A coordinate chart $x \colon U \to \mathbb{R}^N$ is said to be admissible, if $U$ is a contractible open set in $\hM$, $x$ is a diffeomorphism of $U$ onto the unit ball in $\mathbb{R}^N$, and there exists a smooth frame of tangent vectors $X_1, \dots, X_N$ on $U$, such that $\hD$ is spanned by $X_1, \dots, X_{N-1}$ at every point of $U$. If $x \colon U \to \mathbb{R}^N$ is an admissible coordinate chart, one can find a smooth distribution $\mathcal{D}$ of codimension 1 tangent subspaces on $\mathbb{R}^N$, that agrees with $dx(\hD)$ on $x(U)$ and satisfy all our previous assumptions about a codimension 1 distribution of tangent subspaces on $\mathbb{R}^N$ (in fact, $\mathcal{D}$ can be chosen to be constantly equal to say the nullspace of $dx^N$ outside a ball of radius 2). A linear operator $S \colon C^{\infty}(\hM) \to C^{\infty}(\hM)$ is said to be a pseudodifferential operator of non-isotropic order $n$ adapted to $\hD$, written $S \in \Psi^n_{\hD}(\hM)$, if the following are true: 
\begin{enumerate}[(a)]
\item For any admissible coordinate chart $x \colon U \to \mathbb{R}^N$ and any $\chi_1, \chi_2 \in C^{\infty}_c(U)$, the operator $\chi_1 S \chi_2 \in \Psi^n_{\mathcal{D}}(\mathbb{R}^N)$, where $\mathcal{D}$ is the extension of $dx(\hD)$ from $x(U)$ to $\mathbb{R}^N$ as described above;
\item For any $\varsigma_1, \varsigma_2 \in C^{\infty}_c(\hM)$ with disjoint supports, there exists a smooth kernel $s(x,y) \in C^{\infty}(\hM \times \hM)$, such that $$\varsigma_1 S \varsigma_2 f(x) = \int_{\hM} s(x,y) f(y) dy$$ for all $f \in C^{\infty}(\hM)$.
\end{enumerate}
In practice, to show that $S \in \Psi^n_{\hD}(\hM)$, condition (b) usually follows from pseudolocality, and is pretty automatic. In addition, when we check condition (a), we only need to cover $\hM$ by finitely many admissible coordinate charts, and check condition (a) on those; this is because the class $\Psi^n_{\mathcal{D}}(\mathbb{R}^N)$ is invariant under smooth changes of coordinates.

Having said that, we still need to know how to check condition (a) on some admissible coordinate chart. This is often done via establishing \emph{kernel estimates}, and \emph{cancellation conditions} in the case of operators of non-negative orders. To describe this, let's return to the situation where a smooth codimension 1 distribution $\mathcal{D}$ is given on $\mathbb{R}^N$. 
Let
\begin{equation} \label{eq:XiAij}
X_i = \sum_{j=1}^N A^j_i(x) \frac{\partial}{\partial x^j}, \quad 1 \leq j \leq N
\end{equation}
so that $X_1, \dots, X_{N-1}$ span $\mathcal{D}$. Let $(B^k_j)$ be the inverse of the matrix $(A^j_i)$; i.e. let
$$
\sum_{j=1}^N A^j_i(x) B^k_j(x) = \delta^k_i \quad \text{for every $x \in \mathbb{R}^N$}.
$$
We define a variable coefficient linear map $L_x \colon \mathbb{R}^N \to \mathbb{R}^N$, so that the $k$-th component of $L_x(u)$ is $\sum_{j=1}^N B^k_j(x) u^j$ for $1 \leq k \leq N$. 
We also define $\Theta_0 \colon \mathbb{R}^N \times \mathbb{R}^N \to \mathbb{R}^N$, so that 
\begin{equation} \label{eq:Theta0}
\Theta_0(x,y) = L_x(x-y),
\end{equation}
and set
$$
Q := N + 1.
$$
We then have the following theorem:

\begin{thm} \label{thm:equiv_smoothing} 
Suppose $-Q < n < 0$. Then $T \in \Psi^n_{\mathcal{D}}(\mathbb{R}^N)$, if and only if there exists a tempered distribution $k_0(x,u)$ on $\mathbb{R}^N \times \mathbb{R}^N$, that is smooth on $\mathbb{R}^N \times (\mathbb{R}^N \setminus \{0\})$, and satisfies
\begin{equation} \label{eq:kernelest}
|\partial_x^I \partial_u^{\gamma} k_0(x,u)| \lesssim_{I,\gamma,M} \|u\|^{-Q-n-\|\gamma\|-M}
\end{equation}
for all $\gamma$ and $I$, and all $M \geq 0$, such that 
\begin{equation} \label{eq:Tkerrep}
Tf(x) = \int_{\mathbb{R}^N} k_0(x,\Theta_0(x,y)) f(y) dy \quad \text{for all $x \in \mathbb{R}^N$}.
\end{equation}
\end{thm}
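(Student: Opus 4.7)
\emph{Plan and reduction.} My plan is to reduce the statement to a non-isotropic Fourier correspondence between symbols $a_0(x, \eta)$ satisfying (\ref{eq:a0diff}) and kernels $\tilde k_0(x, u)$ satisfying (\ref{eq:kernelest}), and then establish that correspondence by integration-by-parts arguments adapted to the non-isotropic norms $\|\eta\| = |\eta'| + |\eta_N|^{1/2}$ and its dual $\|u\| = |u'| + |u_N|^{1/2}$. For the reduction, write $Tf(x) = \int K(x, y) f(y)\,dy$ where $K(x, y) = \int a_0(x, M_x \xi) e^{2\pi i (x-y)\cdot \xi}\,d\xi$ as an oscillatory integral. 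The relation $\sum_j A^j_i B^k_j = \delta^k_i$ amounts to the matrix identity $M_x^{-T} = L_x$, so substituting $\eta = M_x \xi$ yields $K(x, y) = |\det M_x|^{-1} \tilde k_0(x, \Theta_0(x, y))$ with $\tilde k_0(x, u) := \int a_0(x, \eta) e^{2\pi i u \cdot \eta}\,d\eta$. Since $|\det M_x|^{-1}$ and $L_x$ are smooth in $x$, with the determinant bounded above and away from $0$, the theorem is equivalent to the claim: for $-Q < n < 0$, $a_0$ satisfies (\ref{eq:a0diff}) if and only if $\tilde k_0$ is smooth off $u = 0$ and satisfies (\ref{eq:kernelest}) there.

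\emph{Symbol $\Rightarrow$ kernel.} Fix $u \neq 0$ and split the defining integral for $\tilde k_0$ using a smooth non-isotropic cutoff at $\{\|\eta\| \leq \|u\|^{-1}\}$. On the near region, the symbol bound together with the volume estimate $|\{\|\eta\| \leq R\}| \lesssim R^Q$ gives a contribution of size $\|u\|^{-Q-n}$; the integral at $0$ is finite because $n > -Q$. On the tail, pick the coordinate direction in which $|u_{i^\ast}|$ (for some $i^\ast < N$) or $|u_N|^{1/2}$ realizes $\|u\|$ up to a constant, and integrate by parts $M_0$ times in the corresponding $\eta$-variable. Each such integration costs a factor $|u_{i^\ast}|^{-1}$ (resp.\ $|u_N|^{-1}$) and hits $a_0$ with a derivative of non-isotropic weight $1$ (resp.\ $2$). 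For $M_0 > n + Q$ the tail integral converges and, after counting powers, again contributes $\|u\|^{-Q-n}$. Derivatives $\partial_u^{\gamma}$ bring out factors $\eta^{\gamma}$ of weight $\|\gamma\|$, shifting $n$ to $n + \|\gamma\|$; derivatives $\partial_x^I$ land on $a_0$ without leaving the symbol class. This yields (\ref{eq:kernelest}).

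\emph{Kernel $\Rightarrow$ symbol.} The hypothesis $-Q - n > -Q$ makes $\tilde k_0(x, \cdot)$ locally integrable at $0$, while taking $M$ large in (\ref{eq:kernelest}) gives super-polynomial decay as $\|u\| \to \infty$; hence $a_0(x, \eta) := \int \tilde k_0(x, u) e^{-2\pi i u \cdot \eta}\,du$ is absolutely convergent. Splitting this integral at $\|u\| \sim \|\eta\|^{-1}$ and running the dual argument---pointwise control on the near piece and integration by parts in the dominant $u$-direction on the tail, using (\ref{eq:kernelest}) for derivatives of $\tilde k_0$---gives $|a_0| \lesssim (1 + \|\eta\|)^n$. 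Derivatives $\partial_{\eta}^{\alpha}$ bring out factors $u^{\alpha}$, each of which contributes the appropriate non-isotropic negative power of $\|\eta\|$, yielding the full symbol estimate (\ref{eq:a0diff}).

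\emph{Main obstacle.} The technical heart is the non-isotropic integration by parts: because $\|\cdot\|$ weights the last coordinate differently from the others, one cannot integrate by parts along $u/|u|$, but must first split into subregions according to which coordinate of $u$ (or $\eta$) is responsible for its non-isotropic length. Once this dominant direction is identified, the estimates reduce to routine dyadic bookkeeping. The restriction $-Q < n < 0$ enters precisely to guarantee that the borderline behaviour $\|u\|^{-Q-n}$ is locally integrable at $u = 0$ and, correspondingly, that the oscillatory integral defining $\tilde k_0$ gives an honest smooth function off the origin, so that no distributional subtleties arise in either direction.
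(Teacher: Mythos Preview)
Your proposal is correct and follows essentially the same route as the paper: both reduce the statement, via the linear change of variable $\eta = M_x\xi$ (equivalently $M_x^{-T} = L_x$), to the non-isotropic Fourier correspondence between symbols obeying (\ref{eq:a0diff}) and kernels obeying (\ref{eq:kernelest}). The paper simply cites this correspondence as a known fact, whereas you sketch its proof by the standard split-and-integrate-by-parts argument in the dominant coordinate; your sketch is sound, and your remark that the two endpoints $n=0$ and $n=-Q$ are excluded precisely to keep $\|u\|^{-Q-n}$ locally integrable (for the kernel $\Rightarrow$ symbol direction) and the dyadic $\eta$-sum convergent (for the symbol $\Rightarrow$ kernel direction) is accurate.
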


We remark that if $|x-y| \leq 1$, then $|\Theta_0(x,y)^N|$, and hence the norm $\|\Theta_0(x,y)\|$, remains comparable upon choosing a different frame $X_1, \dots, X_N$, as long as the first $N-1$ vectors still span $\mathcal{D}$; here $\Theta_0(x,y)^N$ is the $N$-th component of $\Theta_0(x,y) \in \mathbb{R}^N$. So the theorem is a statement that is independent of the choice of frames $X_1, \dots, X_N$.

The situation for $\Psi^0_{\mathcal{D}}(\mathbb{R}^N)$ is a bit more involved; one must take into account certain cancellation conditions. To formulate this, we say a function $\phi(u)$ is a normalized bump function, if $\phi(u)$ is smooth with compact support on the unit ball $\{\|u\| \leq 1\}$, and 
$$
\|\partial_u^{\gamma} \phi(u)\|_{L^{\infty}} \lesssim_{\gamma} 1
$$
for all multiindices $\gamma$; we write $\phi_r(u)$ for its non-isotropic dilation 
$$
\phi_r(u) = \phi(r^{-1} u', r^{-2} u_N)
$$
for $r \in (0,1)$.

\begin{thm} \label{thm:equiv_nonneg_order}
Suppose $n \geq 0$. Then $T \in \Psi^n_{\mathcal{D}}(\mathbb{R}^N)$, if and only if there exists a tempered distribution $k_0(x,u)$ on $\mathbb{R}^N \times \mathbb{R}^N$, such that the following holds:
\begin{enumerate}[(a)]
\item $k_0(x,u)$ is smooth on $\mathbb{R}^N \times (\mathbb{R}^N \setminus \{0\})$, and satisfies the kernel estimates (\ref{eq:kernelest});
\item for every normalized bump function $\phi(u)$ and every $r \in (0,1)$, we have
$$
\sup_{x \in \mathbb{R}^N} \left| \langle \partial_x^I k_0(x,u), \phi_r(u) \rangle \right| \lesssim_I r^{-n}
$$
for all multiindices $I$;
\item the representation formula (\ref{eq:Tkerrep}) holds, in the sense that
$$
\langle Tf, g \rangle = \langle k_0(x,u), f(x-L_x^{-1}(u)) g(x) \rangle
$$
for all Schwartz functions $f$ and $g$.
\end{enumerate}
\end{thm}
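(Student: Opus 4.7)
The plan is to establish both directions by Fourier inversion combined with a dyadic Littlewood--Paley decomposition adapted to the parabolic norm $\|\cdot\|$. The starting point is the correspondence between symbol and kernel: given $T \in \Psi^n_{\mathcal{D}}(\mathbb{R}^N)$ with $a(x,\xi) = a_0(x, M_x\xi)$, the change of variables $\eta = M_x \xi$ in the oscillatory integral representing the Schwartz kernel, together with the identity $M_x^{-T} = L_x$, identifies the action of $T$ with the form (\ref{eq:Tkerrep}) for $k_0(x,u) := |\det M_x|^{-1}(\mathcal{F}^{-1}_\eta a_0(x,\cdot))(u)$. Since $a_0(x,\cdot)$ is not integrable when $n \geq 0$, the correspondence must be read distributionally, which is exactly the content of item (c). This reduces matters to checking, in both directions, compatibility between the symbol estimates of $S^n_{\mathcal{D}}$ and the kernel data (a), (b).

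For the only if direction, I would derive (a) by the standard dyadic argument: decompose $a_0(x,\eta) = \sum_{j \geq 0} a^{(j)}(x,\eta)$ with $a^{(j)}$ supported where $\|\eta\| \sim 2^j$ (the $j=0$ piece absorbing the low frequencies) and satisfying $|\partial_x^I \partial_\eta^\alpha a^{(j)}| \lesssim_{I,\alpha} 2^{j(n-\|\alpha\|)}$. Inverse Fourier transform in $\eta$, combined with non-isotropic integration by parts using the parabolic dual vector fields, gives pieces $k^{(j)}$ satisfying $|\partial_x^I \partial_u^\gamma k^{(j)}(x,u)| \lesssim_{I,\gamma,M} 2^{j(Q+n+\|\gamma\|)}(1+2^j\|u\|)^{-M}$ for any $M$; summing over $j$ yields (\ref{eq:kernelest}). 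The cancellation condition (b) follows by Plancherel: one has $\int k_0(x,u) \phi_r(u)\,du = |\det M_x|^{-1} \int a_0(x,\eta)\, \widehat{\phi_r}(-\eta)\,d\eta$, and since $\widehat{\phi_r}$ is essentially a parabolic bump of scale $r^{-1}$ (so that $\int |\widehat{\phi_r}(\eta)|(1+\|\eta\|)^n\,d\eta \lesssim r^{-n}$), the required bound follows from the symbol estimate $|a_0| \lesssim (1+\|\eta\|)^n$; $x$-derivatives are handled by the same argument applied to the $x$-differentiated symbols.

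For the if direction, given a distribution $k_0$ satisfying (a) and (b), I would reconstruct the symbol by dyadically chopping $k_0$ in $u$. Pick a normalized bump $\varphi$ supported in the parabolic unit ball with $\varphi = 1$ on the half-ball, and set $\varphi_j(u) = \varphi(2^j u', 2^{2j} u_N) - \varphi(2^{j+1} u', 2^{2(j+1)} u_N)$ for $j \geq 0$, so that $1 = \sum_{j \geq 0} \varphi_j(u)$ away from the origin, modulo a piece supported in $\{\|u\| \geq 1\}$ (which contributes a smoothing operator). Because $k_0$ is tempered and smooth off the origin, one obtains the decomposition $k_0 = \sum_{j \geq 0} k_0 \varphi_j + (\text{far piece}) + \sum_{\|\alpha\| \leq n} c_\alpha(x) \partial^\alpha \delta_0$, where the last sum is a distribution supported at $u = 0$ and the cancellation condition (b), tested against scaled bumps, forces $\|\alpha\| \leq n$ for any such delta-derivative. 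Define $\tilde a_j(x,\eta) := |\det M_x| \cdot \widehat{k_0 \varphi_j}(x, \eta)$; the kernel estimates yield $|\partial_x^I \partial_\eta^\alpha \tilde a_j| \lesssim_{I,\alpha,M} 2^{jn}(1+2^{-j}\|\eta\|)^{-M}$, and (b) furnishes the additional control at low frequencies needed for the series $a_0(x,\eta) = \sum_j \tilde a_j(x,\eta) + P(x,\eta)$ (with $P$ coming from the delta contributions) to converge to an element of $S^n_{\mathcal{D}}$. Comparing the operator defined by this symbol with the one defined via (c) then recovers $T$.

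The main obstacle is the sufficiency of the cancellation condition (b) for $n \geq 0$. Without cancellation, the kernel estimates alone are consistent with homogeneous kernels of the critical scaling $\|u\|^{-(Q+n)}$ whose naive Fourier transforms fail to be symbols; (b) is exactly the extra input that forces $k_0$ to define a symbol in $S^n_{\mathcal{D}}$ modulo polynomial-in-$\eta$ terms of parabolic degree at most $n$ (which are themselves in $S^n_{\mathcal{D}}$). A secondary subtlety is the $x$-dependence entering through $M_x$ and $L_x$: each $\partial_x^I$ acting on $a_0(x, M_x\xi)$ produces, by the chain rule, terms of the same or lower order in $\xi$, and these are accommodated precisely by the uniform-in-$x$ character of the hypotheses (a), (b), and (\ref{eq:a0diff}).
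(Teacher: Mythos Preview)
Your proposal is correct and follows essentially the same route as the paper: both reduce the theorem to the purely Fourier-analytic fact that a function $a_0(\cdot,\eta)$ satisfies the non-isotropic symbol bounds $|\partial_\eta^\alpha a_0| \lesssim (1+\|\eta\|)^{n-\|\alpha\|}$ if and only if its inverse Fourier transform $k_0(\cdot,u)$ satisfies the kernel estimates (\ref{eq:kernelest}) together with the cancellation condition (b), and then handle the $x$-dependence exactly as in the proof of Theorem~\ref{thm:equiv_smoothing} via the change of variables $\eta = M_x\xi$. The paper simply cites \cite[Chapter VI.4.5, Proposition~3]{Ste93} for this fact, whereas you sketch a proof by parabolic Littlewood--Paley decomposition; your outline is sound, though in the ``if'' direction you should be explicit that for $n>0$ the series $\sum_j \tilde a_j$ does not converge pointwise without first subtracting the Taylor polynomial of parabolic degree $\leq n$ at $\eta=0$ from each piece (this is the renormalization that produces your polynomial term $P$), and it is precisely condition (b) applied to the ball-supported bumps $\varphi_{2^{-j}}$, not the annular $\varphi_j$, that controls these subtracted moments.
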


Here $\langle \cdot, \cdot \rangle$ denotes the pairing of a tempered distribution and a Schwartz function, either on $\mathbb{R}^N$ or on $\mathbb{R}^{2N}$.

We will need a small generalization of Theorems~\ref{thm:equiv_smoothing} and \ref{thm:equiv_nonneg_order}. Suppose $\Theta \colon \{(x,y) \in \mathbb{R}^N \times \mathbb{R}^N \colon |x-y| < 1\} \to \mathbb{R}^N$ is a $C^{\infty}$ map defined on the unit tubular neighborhood of the diagonal on $\mathbb{R}^N$, with $\Theta(x,x) = 0$ for all $x \in \mathbb{R}^N$ and $\|\Theta(x,y)\|_{C^k} \lesssim 1$ for all $k \in \mathbb{N}$. We write 
\begin{equation} \label{eq:tildeB_def}
\mathcal{B}^k_j(x) = -\left. \frac{\partial}{\partial y^j} \right|_{y = x} \Theta(x,y)^k,
\end{equation}
and assume in addition that $|\det(\mathcal{B}^k_j(x))|$ has a uniform positive lower bound. Such a map $\Theta$ is said to be compatible with our given distribution $\mathcal{D}$, if and only if $\mathcal{D}$ is the nullspace of $\sum_{j=1}^N \mathcal{B}^N_j(x) dx^j$ at every point $x \in \mathbb{R}^N$. For instance, the $\Theta_0$ we constructed earlier is an example of such a $\Theta$. Note that in general, such a $\Theta$ can be written as
\begin{equation} \label{eq:Theta}
\Theta(x,y) = L_{x,x-y}(x-y)
\end{equation}
where $L_{x,u} \colon \mathbb{R}^N \to \mathbb{R}^N$ is an invertible linear map that varies smoothly with $x, u \in \mathbb{R}^N$ and $|u| < 1$ (see, e.g. p.99 of Nagel-Stein \cite{NaSt}).
We have:

\begin{thm} \label{thm:equiv_general}
Suppose $n > -Q$, and that $k_0(x,u)$ is a tempered distribution on $\mathbb{R}^N \times \mathbb{R}^N$ that satisfies the condition (a) of Theorem~\ref{thm:equiv_nonneg_order}. If $n \geq 0$, assume further that $k_0(x,u)$ satisfies the condition (b) of Theorem~\ref{thm:equiv_nonneg_order}. Then for any $\Theta$ that is compatible with $\mathcal{D}$, and any smooth cut-off function $\chi(x,y)$ supported on the unit tubular neighborhood of the diagonal on $\mathbb{R}^N$, the map $T$ defined by
$$
Tf(x) = \int_{\mathbb{R}^N} \chi(x,y) k_0(x,\Theta(x,y)) f(y) dy
$$ 
is in $\Psi^n_{\mathcal{D}}(\mathbb{R}^N)$, where the last integral should be interpreted as
$$
\langle k_0(x,u), \chi(u) f(x-L_{x,u}^{-1} u) \rangle
$$
for any Schwartz function $f$ on $\mathbb{R}^N$.
\end{thm}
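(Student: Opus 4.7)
The plan is to reduce Theorem~\ref{thm:equiv_general} to Theorems~\ref{thm:equiv_smoothing} and \ref{thm:equiv_nonneg_order} by replacing the general $\Theta(x,y)$ with the model $\Theta_0(x,y)$, at the cost of absorbing a change of variables into the kernel $k_0$. Since $y \mapsto \Theta_0(x,y) = L_x(x-y)$ is a diffeomorphism from a neighborhood of $y = x$ onto a neighborhood of $u = 0$, I set
\[
\Phi(x,u) := \Theta(x, x - L_x^{-1} u) = L_{x, L_x^{-1}u}(L_x^{-1} u),
\]
so that $\Theta(x,y) = \Phi(x, \Theta_0(x,y))$ wherever both sides are defined. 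Then $\Phi(x,0) = 0$ and, by the inverse function theorem, $\Phi(x,\cdot)$ is a local diffeomorphism near $0$, smoothly parametrized by $x$.

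The heart of the argument is to check that $\Phi(x,\cdot)$ preserves the non-isotropic structure. The compatibility of $\Theta$ with $\mathcal{D}$ gives that the $1$-form $\sum_j [L_{x,0}]^N_j dx^j$ annihilates $\mathcal{D}$, and so does $\sum_j [L_x]^N_j dx^j = \sum_j B^N_j(x) dx^j$ by construction; hence these two $1$-forms are proportional, so the $N$-th row of $d_u \Phi(x, 0) = L_{x, 0} L_x^{-1}$ equals $c(x) e_N^{\top}$ for some smooth $c(x)$ bounded away from zero. A Taylor expansion in $u$ then yields
\[
|\Phi(x,u)^N - c(x) u^N| \lesssim |u|^2, \qquad |\partial_{u^{j}} \Phi(x,u)^N| \lesssim |u| \quad \text{for } j < N,
\]
from which one deduces $\|\Phi(x,u)\| \simeq \|u\|$ and analogous weighted estimates on all higher $(x,u)$-derivatives of $\Phi$: roughly, the $k$-th component $\Phi^k$ behaves like $\|u\|$ for $k < N$ and like $\|u\|^2$ for $k = N$, with $\partial_{u^{j}}$ (for $j < N$) costing $\|u\|^{-1}$ and $\partial_{u^N}$ costing $\|u\|^{-2}$.

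With this in place, I pick a smooth cut-off $\chi^*(u)$ supported in the small neighborhood of $u = 0$ where $\Phi(x,\cdot)$ is a diffeomorphism (equal to $1$ on a smaller neighborhood), and define
\[
\tilde{k}_0(x,u) := \chi^*(u)\, \chi(x, x - L_x^{-1} u)\, k_0(x, \Phi(x,u)),
\]
extended by zero; note that $\tilde{k}_0(x, \Theta_0(x,y)) = \chi(x,y) k_0(x, \Theta(x,y))$ for $y$ near $x$. The chain rule combined with the weighted estimates above shows that $\tilde{k}_0$ inherits the bounds (\ref{eq:kernelest}) from $k_0$, and when $n \geq 0$ it also inherits the cancellation property (b) of Theorem~\ref{thm:equiv_nonneg_order}, because $\Phi(x,\cdot)$ pulls back a normalized bump function at non-isotropic scale $r$ to a bounded multiple of a normalized bump function at comparable scale (its Jacobian being bounded above and below). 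After splitting off the part of $T$ where $|x-y|$ is bounded away from $0$ (whose kernel is smooth and rapidly decreasing, hence gives a smoothing operator), Theorem~\ref{thm:equiv_smoothing} (for $n < 0$) or Theorem~\ref{thm:equiv_nonneg_order} (for $n \geq 0$), applied to $\tilde{k}_0$, places $T$ in $\Psi^n_{\mathcal{D}}(\mathbb{R}^N)$. The principal obstacle is the second paragraph's verification of the non-isotropic estimates for $\Phi$; this rests entirely on the ``block-triangular'' shape of $d_u \Phi(x, 0)$, itself a direct consequence of the compatibility hypothesis.
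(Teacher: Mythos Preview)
Your approach is correct and genuinely different from the paper's. The paper works on the \emph{symbol} side: it writes $k_0$ as the inverse Fourier transform of a symbol $a_0$, computes the symbol $\tilde a_0$ of $T$ as an oscillatory integral (a compound symbol), and then proves the differential inequalities for $\tilde a_0$ by an integration-by-parts argument using a variable norm $\rho_{x,y}(\xi) = \|[I + (x-y)\tilde L_{x,y}]\xi\|$ together with the bounds (\ref{eq:rhoxy1})--(\ref{eq:rhoxy2}). You instead stay on the \emph{kernel} side: you factor $\Theta = \Phi \circ \Theta_0$, observe that compatibility forces the $N$-th row of $d_u\Phi(x,0)$ to be a multiple of $e_N^{\top}$, and then push the kernel estimates and the cancellation condition through $\Phi$ by chain rule and the resulting weighted bounds on the components of $\Phi$. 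Your route is more elementary in that it avoids oscillatory integrals and reduces everything to Theorems~\ref{thm:equiv_smoothing}--\ref{thm:equiv_nonneg_order} as black boxes; the paper's route has the virtue that the same compound-symbol machinery is recycled verbatim in the proof of the composition theorem (Theorem~\ref{thm:compose}), so it amortizes better across the section. One point worth making explicit in your write-up: when checking condition~(b) for $\tilde k_0$ with $|I| \geq 1$, the Fa\`a di Bruno expansion produces terms $(\partial_x^{I'}\partial_v^{\beta}k_0)(x,\Phi(x,u))$ multiplied by products of $\partial_x$-derivatives of $\Phi$; you should note that $\partial_x^{\alpha}\Phi^k(x,u) = O(\|u\|)$ for $k<N$ and $O(\|u\|^2)$ for $k=N$ (since $\Phi(x,0)\equiv 0$ and $\partial_{u^j}\Phi^N(x,0)\equiv 0$ for $j<N$), so these factors supply exactly the $r^{\|\beta\|}$ needed to balance the $r^{-n-\|\beta\|}$ coming from the cancellation condition for $\partial_v^{\beta}k_0$.
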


This theorem will allow us to construct examples of operators in $\Psi^n_{\hD}(\hM)$.

An observation that will often make life easier for us is that the non-isotropic pseudodifferential operators we introduced are automatically (isotropically) of type $(1/2,1/2)$ in the sense of H\"ormander: if $a \in S^n_{\mathcal{D}}$, then automatically we have
$$
|\partial_x^I \partial_{\xi}^{\alpha} a(x,\xi)| \lesssim_{I,\alpha} (1+|\xi|)^{|n|+\frac{|I|-|\alpha|}{2}}.
$$ 
In particular, we have
\begin{prop} \label{prop:pseudolocal}
If $T \in \Psi^n_{\mathcal{D}}(\mathbb{R}^N)$ for some $n \in \mathbb{R}$, then $T$ is pseudolocal, in the sense that $T$ preserve $C^{\infty}(U)$ for any open set $U \subset \mathbb{R}^N$. As a result, the same is true for the class $\Psi^n_{\hD}(\hM)$.
\end{prop}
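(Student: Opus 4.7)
The plan is to reduce pseudolocality to the smoothness of the Schwartz kernel of $T$ off the diagonal, and then obtain that smoothness by integration by parts in the oscillatory-integral representation of the kernel, using the type-$(1/2,1/2)$ symbol estimate displayed just before the proposition. I would first verify that displayed estimate directly from (\ref{eq:a0diff}) by applying the chain rule to $a(x,\xi) = a_0(x, M_x \xi)$: each $\xi$-derivative of $a_0$ gains at least one non-isotropic power $\|\xi\|^{-1}$, hence at least $(1+|\xi|)^{-1/2}$ isotropically, while each $x$-derivative costs at most a bounded non-isotropic factor (the coefficients of $M_x$ depend smoothly on $x$), which translates to at worst $(1+|\xi|)^{1/2}$ isotropically; the $|n|$ in the exponent records the non-isotropic vs.\ isotropic comparison $\|\xi\| \gtrsim (1+|\xi|)^{1/2}$ for large $|\xi|$.

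With the type-$(1/2,1/2)$ estimate in hand, the Schwartz kernel
\begin{equation*}
K(x,y) = \int_{\mathbb{R}^N} a(x,\xi)\, e^{2\pi i (x-y)\cdot\xi} \, d\xi,
\end{equation*}
interpreted as an oscillatory integral, is smooth away from the diagonal. For $x \ne y$ and any multi-indices $I, J$, Leibniz's rule expresses $\partial_x^I \partial_y^J K$ as a finite sum of oscillatory integrals whose integrands are symbols of order at most $|n|+|I|+|J|$ in $\xi$. Integrating by parts $L$ times via
\begin{equation*}
(1 - \Delta_\xi)^L \, e^{2\pi i (x-y)\cdot\xi} = (1+4\pi^2|x-y|^2)^L \, e^{2\pi i (x-y)\cdot\xi}
\end{equation*}
introduces a factor $(1+4\pi^2|x-y|^2)^{-L}$ outside and, by the $(1/2,1/2)$ estimate applied to $\Delta_\xi^L$, improves the symbol order inside by $L$. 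Choosing $L > |n|+|I|+|J|+N$ gives absolute convergence with a bound of the form $O(|x-y|^{-2L})$, so $\partial_x^I \partial_y^J K$ is continuous on the complement of the diagonal for every $I,J$, and hence $K \in C^\infty(\mathbb{R}^N \times \mathbb{R}^N \setminus \mathrm{diag})$.

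Pseudolocality on $\mathbb{R}^N$ now follows by a standard cutoff argument: for $f$ smooth on an open set $U$ and $x_0 \in U$, choose $\chi \in C_c^\infty(U)$ equal to $1$ near $x_0$ and split $f = \chi f + (1-\chi)f$. For $x$ close to $x_0$, the piece $T((1-\chi)f)(x) = \int K(x,y)(1-\chi(y))f(y)\,dy$ is smooth in $x$ because the integrand is supported away from the diagonal and $K$ is smooth there. For $T(\chi f)$, I would invoke the standard fact that type-$(1/2,1/2)$ operators map $C_c^\infty$ into $C^\infty$, which follows from $L^2$-boundedness of such operators together with local Sobolev embedding. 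Passing to $\hM$, condition (b) in the definition of $\Psi^n_{\hD}(\hM)$ directly supplies a smooth kernel between functions with disjoint supports, while condition (a) reduces the local question inside each admissible coordinate chart to the Euclidean case just treated.

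The only real obstacle is careful bookkeeping in the integration-by-parts step, where one must balance the $|n|+|I|+|J|$ loss from isotropic differentiation of the symbol and of the exponential against the $-L$ gain produced by $\Delta_\xi^L$; this is routine, and no conceptual difficulty arises beyond the standard type-$(1/2,1/2)$ calculus.
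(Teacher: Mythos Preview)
Your proposal is correct and follows exactly the paper's approach: the paper does not give a detailed proof but simply observes, immediately before the proposition, that every $a \in S^n_{\mathcal{D}}$ satisfies the H\"ormander type-$(1/2,1/2)$ estimate $|\partial_x^I \partial_\xi^\alpha a(x,\xi)| \lesssim (1+|\xi|)^{|n|+(|I|-|\alpha|)/2}$ and then invokes the standard pseudolocality of such operators. Your argument fills in precisely the classical details behind that invocation---smoothness of the kernel off the diagonal via integration by parts, and the $C_c^\infty \to C^\infty$ mapping via Sobolev boundedness---so there is nothing to add.
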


We will need to be able to compose the non-isotropic pseudodifferential operators we introduced, via the following theorem:

\begin{thm} \label{thm:compose}
If $T_1 \in \Psi^{n_1}_{\mathcal{D}}(\mathbb{R}^N)$ and $T_2 \in \Psi^{n_2}_{\mathcal{D}}(\mathbb{R}^N)$ for some $n_1, n_2 \in \mathbb{R}$, then $$T_1 \circ T_2 \in \Psi^{n_1+n_2}_{\mathcal{D}}(\mathbb{R}^N).$$ As a result, if $S_1 \in \Psi^{n_1}_{\hD}(\hM)$ and $S_2 \in \Psi^{n_2}_{\hD}(\hM)$ for some $n_1, n_2 \in \mathbb{R}$, then $$S_1 \circ S_2 \in \Psi^{n_1+n_2}_{\hD}(\hM).$$
\end{thm}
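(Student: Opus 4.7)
My plan is to reduce the manifold claim to the Euclidean one by a partition of unity argument and then carry out the composition at the symbol level on $\mathbb{R}^N$. For the manifold step, I choose a finite cover of $\hM$ by admissible charts $\{U_\alpha\}$ with a subordinate partition of unity $\{\chi_\alpha\}$, write $S_1 \circ S_2 = \sum_{\alpha,\beta,\gamma}\chi_\alpha S_1 \chi_\beta S_2 \chi_\gamma$, and handle the terms whose cut-offs have disjoint supports using pseudolocality (Proposition~\ref{prop:pseudolocal})---these contribute smooth kernels and hence lie in every $\Psi^m_{\hD}(\hM)$---while the remaining terms can be transported into a single admissible chart so that the Euclidean composition result applies.

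For the Euclidean case, write the symbols of $T_1, T_2$ as $a_i(x,\xi) = a_i^0(x, M_x \xi)$ with $a_i^0$ satisfying (\ref{eq:a0diff}) at order $n_i$, and form the composition symbol
\begin{equation*}
c(x,\xi) = \iint_{\mathbb{R}^N\times\mathbb{R}^N} e^{-2\pi i y\cdot\eta}\, a_1(x,\xi+\eta)\, a_2(x+y,\xi)\, dy\, d\eta,
\end{equation*}
interpreted as an oscillatory integral. Because both $a_i$ are automatically of H\"ormander type $(1/2,1/2)$ (as observed in the discussion preceding Proposition~\ref{prop:pseudolocal}), iterated integration by parts against $(1+|y|^2)^M$ and $(1+|\eta|^2)^M$ weights yields absolute convergence and identifies $T_1 T_2$ with the pseudodifferential operator whose full symbol is $c(x,\xi)$.

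The substantive step is to show that $c \in S^{n_1+n_2}_{\mathcal{D}}$. Changing variables $\tilde\xi = M_x\xi$, $\zeta = M_x\eta$, and $z = M_x^{-T} y$ (using $y\cdot\eta = z\cdot\zeta$), the symbol in twisted coordinates becomes
\begin{equation*}
c_0(x,\tilde\xi) := c(x, M_x^{-1}\tilde\xi) = \iint e^{-2\pi i z\cdot\zeta}\, a_1^0(x,\tilde\xi+\zeta)\, a_2^0\bigl(x + M_x^T z,\; M_{x+M_x^T z} M_x^{-1}\tilde\xi\bigr)\, dz\, d\zeta.
\end{equation*}
Writing $M_{x+M_x^T z} M_x^{-1} = I + O(z)$ and Taylor-expanding $a_2^0$ in its second argument, each resulting factor of $z$ can be traded via integration by parts in $\zeta$ for a $\partial_\zeta$-derivative on $a_1^0$, each of which lowers the non-isotropic order by at least one unit. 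Combined with the observation that $M_x$ sends $dx^N$ to a nonzero multiple of itself modulo $dx^1,\dots,dx^{N-1}$, so that $\|M_x^{-1}\tilde\xi\| \simeq \|\tilde\xi\|$ uniformly, this yields the required estimates $|\partial_x^I \partial_{\tilde\xi}^\alpha c_0(x,\tilde\xi)| \lesssim_{I,\alpha} (1+\|\tilde\xi\|)^{n_1+n_2 - \|\alpha\|}$.

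The main obstacle is this last non-isotropic verification. The variable nature of the frame makes the twisting matrix $M_{x+M_x^T z} M_x^{-1}$ nontrivial, so that the $z$-expansion of $a_2^0$'s second argument couples the distinguished direction transverse to $\mathcal{D}$ with the horizontal directions. Care is required to check that each factor of $z$ produced by this Taylor expansion really does yield a gain of one \emph{non-isotropic} order after integration by parts, rather than merely a gain of one isotropic order; the bookkeeping rests on the compatibility of $M_x$ with the splitting of the cotangent space induced by $\mathcal{D}$, and is the single technical point that is not a formal consequence of the standard isotropic $(1/2,1/2)$ composition calculus.
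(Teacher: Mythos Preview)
Your reduction to the Euclidean case and your change of variables to reach the twisted symbol $c_0(x,\tilde\xi)$ coincide with the paper's setup; the paper arrives at exactly the expression
\[
a_0(x,\xi)=\iint a_{1,0}(x,\xi-\zeta)\,a_{2,0}\bigl(x-L_x^{-1}w,\;[I+w\tilde L_{x,x-w}]\xi\bigr)\,e^{-2\pi i w\cdot\zeta}\,dw\,d\zeta
\]
and then, for the key estimate, simply invokes the machinery already developed in the proof of Theorem~\ref{thm:equiv_general}: one introduces the variable norm $\rho_{x,y}(\xi)=\|[I+(x-y)\tilde L_{x,y}]\xi\|$, uses the comparison inequalities (\ref{eq:rhoxy1})--(\ref{eq:rhoxy2}), and integrates by parts with the operator $(I-|\xi|^{-1}\Delta_w-|\xi|\Delta_\zeta)/(1+4\pi^2|\xi||w|^2+4\pi^2|\xi|^{-1}|\zeta|^2)$. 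No Taylor expansion of $a_{2,0}$ in its frequency variable is performed.

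Your proposed alternative for the key step has a genuine gap. The statement ``each factor of $z$ \dots\ lowers the non-isotropic order by at least one unit'' overlooks that a $z$ produced by differentiating the \emph{second} argument of $a_2^0$ is always accompanied by a factor linear in $\tilde\xi$ (since $\partial_{z_j}(M_{x+M_x^Tz}M_x^{-1}\tilde\xi)$ is linear in $\tilde\xi$). That $\tilde\xi$-factor can cost up to two non-isotropic units (because $|\tilde\xi_N|\le\|\tilde\xi\|^2$); the accompanying $\partial_\xi a_2^0$ regains at least one, and the $\partial_\zeta$ on $a_1^0$ regains at least one, so the net for such a $z$ is only $\ge 0$, not $\ge 1$. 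Hence the finite Taylor terms are individually of the correct order $n_1+n_2$ but do \emph{not} improve with $|\gamma|$, the expansion is only formal, and you must still estimate the remainder $R_K(x,z,\tilde\xi)$, which retains nontrivial $z$-dependence and therefore needs precisely the $\rho$-norm oscillatory-integral argument from Theorem~\ref{thm:equiv_general}. Your appeal to ``compatibility of $M_x$ with the splitting'' does not resolve this: that compatibility is a statement about two frames at the \emph{same} point, whereas $M_{x+M_x^Tz}M_x^{-1}$ has no reason to preserve the $dx^N$-line when $z\neq 0$. In short, the Taylor route does not bypass the paper's technique; it still requires it for the remainder.
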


Note that in general there is no simple asymptotic expansion formula for the symbol of $T_1 \circ T_2$ in terms of the symbols of $T_1$ and $T_2$ if both $T_1$ and $T_2$ are non-isotropic pseudodifferential operators as in the previous theorem. But there is indeed an asymptotic expansion formula if we compose an isotropic pseudodifferential operator of type $(1,0)$, with a non-isotropic pseudodifferential operator of the kind we introduced above; see \cite{MR3228630} for a more extensive discussion of what happens when one composes these two kinds of operators. We will not need the full strength of the theory developed in \cite{MR3228630}; what we need here is only the following simple special commutation relation:

\begin{thm} \label{thm:comm}
Suppose $-Q + 1 < n < 0$. If $\eta \in C^{\infty}_c(\mathbb{R}^N)$ and $T \in \Psi^n_{\mathcal{D}}(\mathbb{R}^N)$, then $$[\eta, T] \in \Psi^{n-1}_{\mathcal{D}}(\mathbb{R}^N).$$ As a result, if $\eta \in C^{\infty}(\hM)$ and $S \in \Psi^n_{\hD}(\hM)$, then $$[\eta, S] \in \Psi^{n-1}_{\hD}(\hM).$$
\end{thm}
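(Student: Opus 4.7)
The plan is to use the kernel characterization of Theorem~\ref{thm:equiv_smoothing}. Since $-Q+1 < n < 0$, both $n$ and $n-1$ lie in the range $(-Q, 0)$ where that theorem applies, so it suffices to represent $[\eta, T]$ via a kernel of order $n-1$. By Theorem~\ref{thm:equiv_smoothing}, $Tf(x) = \int_{\mathbb{R}^N} k_0(x, \Theta_0(x,y)) f(y)\, dy$ for some $k_0$ satisfying the estimates (\ref{eq:kernelest}) of order $n$, so the Schwartz kernel of $[\eta, T]$ is $(\eta(x) - \eta(y))\, k_0(x, \Theta_0(x,y))$. Writing $y = x - L_x^{-1} u$ and setting $u = \Theta_0(x,y)$, this corresponds (in the notation of Theorem~\ref{thm:equiv_smoothing}) to the kernel
\[
\tilde k_0(x, u) := \psi(x, u) \, k_0(x, u), \qquad \psi(x, u) := \eta(x) - \eta(x - L_x^{-1} u).
\]

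The main step will be to verify that $\tilde k_0$ satisfies the kernel estimates (\ref{eq:kernelest}) with $n$ replaced by $n-1$. The function $\psi$ is smooth in both variables, bounded uniformly in $x$, vanishes to first order at $u = 0$, and satisfies $|\psi(x, u)| \lesssim |u|$, with all $u$-derivatives of order $\geq 1$ uniformly bounded. The key quantitative input will be the elementary inequality $|u| \leq \|u\|$ whenever $\|u\| \leq 1$, which follows from $|u_N| \leq |u_N|^{1/2}$ for $|u_N| \leq 1$. Hence for $\|u\| \leq 1$ and $I = \gamma = 0$,
\[
|\tilde k_0(x,u)| \lesssim |u| \cdot \|u\|^{-Q-n} \lesssim \|u\|^{-Q-(n-1)},
\]
which is precisely the required gain of one non-isotropic order; for $\|u\| \geq 1$ the rapid decay built into (\ref{eq:kernelest}) via the free parameter $M$ easily dominates the bounded factor $\psi$. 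The higher derivatives can be handled by the Leibniz rule: in the terms where a $u$-derivative of order $\|\gamma'\| \geq 1$ falls on $\psi$, that factor is bounded, while the remaining $k_0$ factor contributes $\|u\|^{-Q-n-\|\gamma\|+\|\gamma'\|}$, and the extra $\|u\|^{\|\gamma'\|} \leq \|u\|$ (valid for $\|u\| \leq 1$ and $\|\gamma'\| \geq 1$) still fits under the desired bound. Applying Theorem~\ref{thm:equiv_smoothing} in the reverse direction then yields $[\eta, T] \in \Psi^{n-1}_{\mathcal{D}}(\mathbb{R}^N)$.

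For the manifold version, given $\eta \in C^{\infty}(\hM)$, $S \in \Psi^n_{\hD}(\hM)$, an admissible chart $x \colon U \to \mathbb{R}^N$, and cutoffs $\chi_1, \chi_2 \in C^{\infty}_c(U)$, the identity
\[
\chi_1 [\eta, S] \chi_2 = [\eta, \chi_1 S \chi_2]
\]
(valid because multiplication operators commute) reduces condition (a) in the definition of $\Psi^{n-1}_{\hD}(\hM)$ to the Euclidean case: after replacing $\eta$ by $\tilde\chi \eta$ for a cutoff $\tilde\chi \in C^{\infty}_c(U)$ with $\tilde\chi \equiv 1$ on $\mathrm{supp}(\chi_1) \cup \mathrm{supp}(\chi_2)$ (which does not alter the commutator), one transfers to $\mathbb{R}^N$ and invokes the Euclidean statement. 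Condition (b) is immediate: for $\varsigma_1, \varsigma_2 \in C^{\infty}_c(\hM)$ with disjoint supports, $\varsigma_1 [\eta, S] \varsigma_2 = (\varsigma_1 \eta)\, S\, \varsigma_2 - \varsigma_1 S (\eta \varsigma_2)$ is a difference of operators with smooth kernels by condition (b) for $S$.

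The main obstacle is conceptual rather than computational: isolating the mechanism by which the first-order vanishing of $\eta(x) - \eta(y)$ on the diagonal upgrades the kernel by one non-isotropic order. This rests ultimately on the simple comparison $|u| \leq \|u\|$ for small $u$, and the hypothesis $n > -Q + 1$ enters precisely to ensure $n - 1 > -Q$, so that Theorem~\ref{thm:equiv_smoothing} remains applicable to the improved kernel $\tilde k_0$.
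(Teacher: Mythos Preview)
Your proof is correct and follows essentially the same route as the paper's: both represent the commutator kernel as $(\eta(x)-\eta(y))\,k_0(x,\Theta_0(x,y))$, recognize that the factor $\eta(x)-\eta(y)$, when rewritten as a function of $(x,u)$ with $u=\Theta_0(x,y)$, vanishes to first order at $u=0$ and hence contributes an extra $\|u\|$ on the kernel side, and then invoke Theorem~\ref{thm:equiv_smoothing} (the hypothesis $n>-Q+1$ being exactly what keeps $n-1$ in its range). The only cosmetic difference is that the paper packages the first-order vanishing via a smooth linear factorization $\eta(x)-\eta(y)=\Gamma_{x,u}\,u$, whereas you keep the factor as $\psi(x,u)$ and estimate it directly; your treatment of the Leibniz terms and the manifold reduction is in fact more explicit than the paper's.
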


Here by abuse of notation, we write $\eta$ also for the multiplication by $\eta$.

In addition, we will also need the $L^p$ boundedness of the non-isotropic pseudodifferential operators of order 0:

\begin{thm} \label{thm:Lpbdd}
If $T \in \Psi^0_{\mathcal{D}}(\mathbb{R}^N)$, then $T$ extends to a bounded linear operator $$T \colon L^p(\mathbb{R}^N) \to L^p(\mathbb{R}^N)$$ for all $1 < p < \infty$. As a result, if $S \in \Psi^0_{\hD}(\hM)$, then $S$ extends to a bounded linear operator $$ S \colon L^p(\hM) \to L^p(\hM)$$ for all $1 < p < \infty$.
\end{thm}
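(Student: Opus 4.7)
The plan is to prove the Euclidean statement for $T \in \Psi^0_{\mathcal{D}}(\mathbb{R}^N)$; the manifold version reduces to this via a partition of unity subordinate to finitely many admissible charts, together with the observation that condition (b) of the definition of $\Psi^0_{\hD}(\hM)$ forces the off-diagonal cross terms to have smooth compactly supported kernels, hence be bounded on every $L^p$. The argument on $\mathbb{R}^N$ then follows the classical two-step pattern: first establish $L^2$ boundedness, then extract $L^p$ via non-isotropic Calderón--Zygmund theory.

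The $L^2$ bound is immediate. As noted just before Proposition \ref{prop:pseudolocal}, every $a \in S^0_{\mathcal{D}}$ lies in the isotropic H\"ormander class $S^0_{1/2,1/2}$, so the Calderón--Vaillancourt theorem gives $T \colon L^2(\mathbb{R}^N) \to L^2(\mathbb{R}^N)$. For the Calderón--Zygmund step, equip $\mathbb{R}^N$ with the non-isotropic quasi-distance $d(x,y) := \|\Theta_0(x,y)\|$; a direct change of variables using $\Theta_0(x,y) = L_x(x-y)$ gives $|B_d(x,r)| \asymp r^Q$, so $(\mathbb{R}^N, d, dx)$ is a space of homogeneous type of homogeneous dimension $Q$. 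By Theorem \ref{thm:equiv_nonneg_order}, the kernel of $T$ off the diagonal is $K(x,y) = k_0(x, \Theta_0(x,y))$; combining (\ref{eq:kernelest}) with the uniform smoothness of $L_x$ yields, for $x \ne y$, the size bound $|K(x,y)| \lesssim d(x,y)^{-Q}$ together with the non-isotropic derivative estimates $|\partial_{x^i} K| + |\partial_{y^i} K| \lesssim d(x,y)^{-Q-1}$ for $i < N$ and $|\partial_{x^N} K| + |\partial_{y^N} K| \lesssim d(x,y)^{-Q-2}$. Integrating along straight-line increments and using $|h_{\text{hor}}| \lesssim d(x,x+h)$ and $|h_{\text{vert}}| \lesssim d(x,x+h)^2$, these translate to the H\"ormander-type regularity
\[
|K(x,y) - K(x,y')| + |K(y,x) - K(y',x)| \lesssim \frac{d(y,y')}{d(x,y)^{Q+1}} \quad \text{whenever } d(x,y) \geq 2 d(y,y').
\]

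Combined with the $L^2$ bound, these are precisely the hypotheses of the Calderón--Zygmund decomposition in a space of homogeneous type (Coifman--Weiss), yielding a weak-type $(1,1)$ estimate; Marcinkiewicz interpolation upgrades this to strong $L^p$ for $1 < p \leq 2$. The range $2 < p < \infty$ then follows by duality, applying the identical argument to the transpose of $T$, whose kernel $K(y,x)$ enjoys the symmetric estimates. The main technical point I anticipate is the $x$-derivative bound: the chain-rule contribution from $(\partial_x k_0)(x,\Theta_0)$ itself only yields $d(x,y)^{-Q}$, short of the desired $d(x,y)^{-Q-1}$; however, this term is absorbed into $d(x,y)^{-Q-1}$ on the scale $d(x,y) \leq 1$ relevant for the singular-integral argument (since $d^{-Q} \leq d^{-Q-1}$ there), while the regime $d(x,y) \gg 1$ is controlled by the fast Schwartz decay encoded in (\ref{eq:kernelest}) through the arbitrary parameter $M$.
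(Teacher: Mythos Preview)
Your overall approach matches the paper's: $L^2$ boundedness via the $S^0_{1/2,1/2}$ inclusion and Calder\'on--Vaillancourt, followed by Calder\'on--Zygmund theory on the space of homogeneous type $(\mathbb{R}^N, d, dx)$ with the non-isotropic quasi-distance. However, the derivative estimates you state for the kernel are incorrect as written. The bound $|\partial_{y^i} K(x,y)| \lesssim d(x,y)^{-Q-1}$ for $i < N$ fails in general: since $K(x,y) = k_0(x, L_x(x-y))$, we have $\partial_{y^i} K = -\sum_k B^k_i(x)\,(\partial_{u^k} k_0)(x,\Theta_0(x,y))$, and the $k=N$ term contributes $-B^N_i(x)\,\partial_{u^N}k_0$, which is of size $d(x,y)^{-Q-2}$ whenever $B^N_i(x) \ne 0$, i.e.\ whenever $\partial_{y^i}$ is not tangent to $\mathcal{D}$. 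Since $\mathcal{D}$ is not assumed to be aligned with the coordinate axes, this happens generically. The same issue afflicts $\partial_{x^i} K$; the technical point you anticipated (the $\partial_x k_0$ contribution) is harmless by comparison.

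The fix, which is what the paper does, is to differentiate along the adapted frame $X_1,\dots,X_N$ (with $X_1,\dots,X_{N-1}$ spanning $\mathcal{D}$) rather than along coordinate directions: one then obtains $|X_\gamma K(x,y)| \lesssim d(x,y)^{-Q-\|\gamma\|}$, because $(X_i)_y \Theta_0(x,y)^N = -\delta^N_i + O(|x-y|)$ suppresses the bad $\partial_{u^N}k_0$ contribution when $i<N$, up to a term of order $|x-y|\cdot d^{-Q-2} \lesssim d^{-Q-1}$. With the derivative estimates phrased this way, decomposing an increment $h$ into its $\mathcal{D}$-component and a transverse component (which is what ``$h_{\mathrm{hor}}$'' and ``$h_{\mathrm{vert}}$'' must mean for the bounds $|h_{\mathrm{hor}}|\lesssim d$ and $|h_{\mathrm{vert}}|\lesssim d^2$ to hold) does yield the H\"ormander regularity, and the rest of your argument goes through.
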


Finally, we will also need to be able to form asymptotic sums of symbols:

\begin{thm} \label{thm:asym_sum}
Suppose $n \in \mathbb{R}$ and $T_j \in \Psi^{n-j}_{\mathcal{D}}(\mathbb{R}^N)$ for $j = 0, 1, 2, \dots$. Then there exists $T \in \Psi^n_{\mathcal{D}}(\mathbb{R}^N)$ such that $$T - (T_0 + \dots + T_k) \in \Psi^{n-k-1}_{\mathcal{D}}(\mathbb{R}^N)$$ for all $k \in \mathbb{N}$. As a result, if $S_j \in \Psi^{n-j}_{\hD}(\hM)$ for $j = 0, 1, 2, \dots$. Then there exists $S \in \Psi^n_{\hD}(\hM)$ such that $$S - (S_0 + \dots + S_k) \in \Psi^{n-k-1}_{\hD}(\hM)$$ for all $k \in \mathbb{N}$.
\end{thm}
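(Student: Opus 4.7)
The plan is to do a Borel-type asymptotic summation at the symbol level on $\mathbb{R}^N$, and then transfer to $\hM$ via a partition of unity, using pseudolocality (Proposition~\ref{prop:pseudolocal}) to handle off-diagonal contributions.

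For the Euclidean statement, let $a_j(x,\xi)=(a_j)_0(x,M_x\xi)$ be the symbol of $T_j$, with $(a_j)_0$ satisfying the estimate (\ref{eq:a0diff}) of order $n-j$. Fix $\chi\in C^\infty(\mathbb{R}^N)$ equal to $0$ on $\|\xi\|\le 1$ and $1$ on $\|\xi\|\ge 2$, and set $\chi_\varepsilon(\xi):=\chi(\varepsilon\xi',\varepsilon^2\xi_N)$, so $\chi_\varepsilon$ vanishes on $\|\xi\|\le 1/\varepsilon$. Define
\[
a_0(x,\xi)=\sum_{j=0}^\infty \chi_{\varepsilon_j}(\xi)\,(a_j)_0(x,\xi),
\]
and let $T$ be the operator with symbol $a_0(x,M_x\xi)$. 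Using $|\partial_\xi^\beta\chi_{\varepsilon_j}|\lesssim_\beta \varepsilon_j^{\|\beta\|}$ and the observation $\varepsilon_j\le\|\xi\|^{-1}$ on $\{\|\xi\|\ge 1/\varepsilon_j\}$, Leibniz gives
\[
\bigl|\partial_x^I\partial_\xi^\alpha(\chi_{\varepsilon_j}(a_j)_0)\bigr|\le C_{j,I,\alpha}\,\|\xi\|^{n-j-\|\alpha\|}\,\mathbf{1}_{\|\xi\|\ge 1/\varepsilon_j}.
\]
For $j\ge k+1$ one has $\|\xi\|^{n-j-\|\alpha\|}\le\varepsilon_j^{\,j-k-1}\|\xi\|^{n-k-1-\|\alpha\|}$ on this support, so choosing $\varepsilon_j\le 1$ small enough that $C_{j,I,\alpha}\varepsilon_j\le 2^{-j}$ for every $|I|+\|\alpha\|\le j-1$ (finitely many conditions per $j$) makes the tail $\sum_{j\ge k+2}\chi_{\varepsilon_j}(a_j)_0$ converge absolutely in $S^{n-k-1}$; the $j=k+1$ term is itself in $S^{n-k-1}$, and the head correction $\sum_{j\le k}(1-\chi_{\varepsilon_j})(a_j)_0$ is a finite sum of smooth symbols compactly supported in $\xi$, hence in $S^{-\infty}$. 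Taking $k=-1$ shows $a_0\in S^n$, so $T\in\Psi^n_\mathcal{D}(\mathbb{R}^N)$, and the general bound yields $T-(T_0+\cdots+T_k)\in\Psi^{n-k-1}_\mathcal{D}(\mathbb{R}^N)$.

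For the manifold statement, cover $\hM$ by finitely many admissible charts $(U_\alpha,x_\alpha)$ with a subordinate partition of unity $\phi_\alpha\in C^\infty_c(U_\alpha)$, and choose cutoffs $\tilde\phi_\alpha\in C^\infty_c(U_\alpha)$ equal to $1$ on $\operatorname{supp}\phi_\alpha$. Write
\[
S_j=\sum_\alpha \phi_\alpha S_j\tilde\phi_\alpha+\sum_\alpha\phi_\alpha S_j(1-\tilde\phi_\alpha).
\]
By condition (b) in the definition of $\Psi^{n-j}_\hD(\hM)$, each summand of the second piece is a smoothing operator, so that piece lies in $\Psi^{-\infty}_\hD(\hM)$ for every $j$. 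For each fixed $\alpha$, push the sequence $\phi_\alpha S_j\tilde\phi_\alpha$ forward via $x_\alpha$ to obtain a sequence in $\Psi^{n-j}_\mathcal{D}(\mathbb{R}^N)$ whose kernels all lie in one fixed compact subset of $x_\alpha(U_\alpha)\times x_\alpha(U_\alpha)$; apply the Euclidean asymptotic sum from the previous paragraph, conjugate by a cutoff in $C^\infty_c(x_\alpha(U_\alpha))$ which is $1$ near that compact set (this changes the output only by a smoothing operator), pull back to $\hM$, and call the result $S^\alpha\in\Psi^n_\hD(\hM)$. Set $S=\sum_\alpha S^\alpha$. Then $S-\sum_{j=0}^k S_j$ is the sum of the chart-wise Euclidean remainders (in $\Psi^{n-k-1}_\mathcal{D}(\mathbb{R}^N)$, pulled back to $\Psi^{n-k-1}_\hD(\hM)$) with the finite collection of smoothing operators from the off-diagonal decomposition, hence belongs to $\Psi^{n-k-1}_\hD(\hM)$.

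The only real technical content is the Borel choice of $\varepsilon_j$; this is entirely analogous to the classical isotropic argument, with the non-isotropic norm $\|\xi\|$ and non-isotropic degree $\|\alpha\|$ replacing $|\xi|$ and $|\alpha|$. The manifold transfer is routine once one has the chart-invariance of $\Psi^n_\mathcal{D}(\mathbb{R}^N)$ and pseudolocality (Proposition~\ref{prop:pseudolocal}) to deal with the off-diagonal terms.
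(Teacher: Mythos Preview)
Your approach is correct and essentially identical to the paper's: both carry out a Borel-type summation at the level of the straightened symbols $(a_j)_0$, inserting cutoffs that vanish where $\|\xi\|$ is small; your $\chi_{\varepsilon_j}(\xi)=\chi(\varepsilon_j\xi',\varepsilon_j^2\xi_N)$ is a smooth realization of the paper's $(1-\chi)(\|\xi\|/N_j)$ with $N_j=1/\varepsilon_j$. The paper's proof is terser and leaves the manifold transfer entirely implicit (``Hence we have Theorem~\ref{thm:asym_sum}''), whereas you spell out the partition-of-unity reduction.

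One small inaccuracy worth noting: your parenthetical ``this changes the output only by a smoothing operator'' when you conjugate the Euclidean asymptotic sum $T^\alpha$ by a cutoff $\psi$ is not literally true; $\psi T^\alpha\psi-T^\alpha$ need not lie in $\Psi^{-\infty}_{\mathcal D}(\mathbb{R}^N)$. What is true, and what you actually need, is that $\psi T^\alpha\psi$ retains the same asymptotic expansion: since $\psi\equiv 1$ on $\operatorname{supp}\phi_\alpha\cup\operatorname{supp}\tilde\phi_\alpha$ one has $\psi(\phi_\alpha S_j\tilde\phi_\alpha)\psi=\phi_\alpha S_j\tilde\phi_\alpha$, hence
\[
\psi T^\alpha\psi-\sum_{j=0}^{k}\phi_\alpha S_j\tilde\phi_\alpha
=\psi\Bigl(T^\alpha-\sum_{j=0}^{k}\phi_\alpha S_j\tilde\phi_\alpha\Bigr)\psi
\in\Psi^{n-k-1}_{\mathcal D}(\mathbb{R}^N).
\]
With that adjustment your argument is complete.
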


It will often be convenient to write $T \in \Psi^{-\infty}_{\mathcal{D}}(\mathbb{R}^N)$ if $T \in \Psi^{-k}_{\mathcal{D}}(\mathbb{R}^N)$ for all $k \geq 0$. Similarly we write $S \in \Psi^{-\infty}_{\hD}(\hM)$ if $S \in \Psi^{-k}_{\hD}(\hM)$ for all $k \geq 0$. Note that $\Psi^{-\infty}_{\mathcal{D}}(\mathbb{R}^N)$ agrees with the class of isotropic infinitely smoothing pseudodifferential operators of type $(1,0)$; indeed $a(x,\xi)$ is a symbol of an operator in $\Psi^{-\infty}_{\mathcal{D}}(\mathbb{R}^N)$, if and only if 
$$
|\partial_x^I \partial_{\xi}^{\alpha} a(x,\xi)| \lesssim_{I,\alpha,k} (1+|\xi|)^{-k}
$$ 
for all $k \geq 0$ and all multiindices $I$, $\alpha$.
Thus $S \in \Psi^{-\infty}_{\hD}(\hM)$, if and only if there exists $s(x,y) \in C^{\infty}(\hM \times \hM)$ such that $Sf(x) = \int_{\hM} f(y) s(x,y) \hthe \wedge d\hthe(y)$ for all $f$.

We will briefly describe the proofs of the above theorems in what follows.

\begin{proof}[Proof of Theorem~\ref{thm:equiv_smoothing}]
The theorem essentially follows from the simple fact that if $-Q < n < 0$ and $a_0(\xi)$ is a function on $\mathbb{R}^N$, then it satisfies the differential inequalities
$$
|\partial_{\xi}^{\alpha} a_0(\xi)| \lesssim_{\alpha} (1+\|\xi\|)^{n-\|\alpha\|}
$$
for all multiindices $\alpha$, if and only if its Fourier transform $k_0(u)$ satisfies the differential inequalities
$$
|\partial_u^{\gamma} k_0(u)| \lesssim_{\gamma,M} \|u\|^{n-\|\gamma\|-M}
$$
for all multiindices $\gamma$ and all $M \geq 0$.

Indeed, suppose $\mathcal{D}$ is a smooth codimension 1 distribution on $\mathbb{R}^N$. Let $X_1, \dots, X_N$ be a frame of the tangent bundle on $\mathbb{R}^N$, so that the first $N-1$ of them span $\mathcal{D}$, and let $X_i$ be given by some coefficients $(A_i^j)$ as in (\ref{eq:XiAij}) for $1 \leq i \leq N$. Let $L_x \colon \mathbb{R}^N \to \mathbb{R}^N$ be a variable coefficient linear map, so that the $k$-th component of $L_x(u)$ is $\sum_{j=1}^N B^k_j(x) u^j$ for $1 \leq k \leq N$; here $(B_j^k)$ is the inverse of the matrix $(A_i^j)$. The inverse transpose $L_x^{-t}$ of $L_x$ is precisely the $M_x$ we defined in (\ref{eq:Mxdef}), and we have $\Theta_0(x,y) = L_x(x-y)$ as in (\ref{eq:Theta0}).

Now let $k_0(x,u)$ be an integral kernel satisfying (\ref{eq:kernelest}), and $T$ be the operator defined by (\ref{eq:Tkerrep}). Then $k_0(x,u)$ is an integrable function of $u$ for every $x \in \mathbb{R}^N$. Writing $a_0(x,\xi)$ for the Fourier transform of $k_0(x,u)$ in the $u$ variable, so that
$$
k_0(x,u) = \int_{\mathbb{R}^N} a_0(x,\xi) e^{2\pi i u \cdot \xi} d\xi,
$$
we have
\begin{align*}
k_0(x,\Theta_0(x,y)) 
&= \int_{\mathbb{R}^N} a_0(x,\xi) e^{2\pi i L_x(x-y) \cdot \xi} d\xi \\
&= \det(M_x) \int_{\mathbb{R}^N} a_0(x,M_x \xi) e^{2\pi i (x-y) \cdot \xi} d\xi 
\end{align*}
where in the last line we have changed variable $\xi \mapsto M_x \xi$. Plugging this back in the definition of $Tf(x)$ in (\ref{eq:Tkerrep}), this shows that
$$
Tf(x) = \int_{\mathbb{R}^N} \det(M_x) a_0(x,M_x \xi) \widehat{f}(\xi) e^{2\pi i x \cdot \xi} d\xi.
$$
Since $\det(M_x)$ is smooth and bounded along with all its derivatives in $x$, it remains to show that $a_0(x,\xi)$ satisfies the bound (\ref{eq:a0diff}). But this follows from the fact we stated above, since $k_0(x,u)$ satisfies (\ref{eq:kernelest}) for some $-Q < n < 0$. This proves one implication in Theorem~\ref{thm:equiv_smoothing}; the converse can be proved similarly.
\end{proof}

\begin{proof}[Proof of Theorem~\ref{thm:equiv_nonneg_order}]
Suppose now $n \geq 0$ and $a_0(\xi)$ is a function on $\mathbb{R}^N$. We now need the following characterization of the Fourier transform of $a_0$: it turns out $a_0(\xi)$ satisfies the differential inequalities
$$
|\partial_{\xi}^{\alpha} a_0(\xi)| \lesssim_{\alpha} (1+\|\xi\|)^{n-\|\alpha\|}
$$
for all multiindices $\alpha$, if and only if its Fourier transform $k_0(u)$ satisfies the differential inequalities
$$
|\partial_u^{\gamma} k_0(u)| \lesssim_{\gamma,M} \|u\|^{n-\|\gamma\|-M}
$$
for all multiindices $\gamma$ and all $M \geq 0$, and the cancellation conditions
$$
|\langle k_0(u), \phi_r(u) \rangle| \lesssim r^{-n}
$$
for all normalized bump function $\phi$ and all $r \in (0,1)$. This fact can be proved, for instance, by adapting the proof of Proposition 3 in \cite[Chapter VI.4.5]{Ste93}. Once we have this fact, then the proof of Theorem~\ref{thm:equiv_smoothing} above can be adapted to give a proof of Theorem~\ref{thm:equiv_nonneg_order}.
\end{proof}

\begin{proof}[Proof of Theorem~\ref{thm:equiv_general}]
Let $a_0(x,\xi)$ be the Fourier transform of $k_0(x,u)$ in the $u$-variable; in particular $a_0(x,\xi)$ satisfies condition (\ref{eq:a0diff}). Then if we express the Schwartz function $f$ in the definition of $Tf(x)$ in terms of $\widehat{f}$, we have (at least formally)
$$
Tf(x) = \int_{\mathbb{R}^N} \tilde{a}_0(x,M_x \xi) \widehat{f}(\xi) e^{2 \pi i x \cdot \xi} d\xi
$$
where
$$
\tilde{a}_0(x,M_x \xi) = \int_{\mathbb{R}^N} \chi(x,y) k_0(x,\Theta(x,y)) e^{-2\pi i (x-y) \cdot \xi} dy,
$$
i.e.
$$
\tilde{a}_0(x,\xi) = \int_{\mathbb{R}^N} \chi(x,y) k_0(x,\Theta(x,y)) e^{-2\pi i (x-y) \cdot L_x^t \xi} dy.
$$
Now write $\Theta(x,y) = L_{x,x-y}(x-y)$ as in (\ref{eq:Theta}). Then expressing $k_0(x,u)$ in terms of $a_0(x,\zeta)$ via the Fourier inversion formula, we have
$$
\tilde{a}_0(x,\xi) = \int_{\mathbb{R}^N} \int_{\mathbb{R}^N} \chi(x,y) a_0(x,\zeta) e^{2\pi i L_{x,x-y}(x-y) \cdot \zeta} e^{-2\pi i (x-y) \cdot L_x^t \xi} d\zeta dy.
$$
We write $L_{x,x-y}(x-y) \cdot \zeta = (x-y) \cdot L_{x,x-y}^{-t} \zeta$ and make a change of variable $\zeta \mapsto L_{x,x-y}^{-t} L_x^t \zeta$. Then
$$
\tilde{a}_0(x,\xi) = \int_{\mathbb{R}^N} \int_{\mathbb{R}^N} \chi(x,y) \det(L_{x,x-y})^{-1} \det(L_x) a_0(x,L_{x,x-y}^{-t} L_x^t \zeta) e^{-2\pi i (x-y) \cdot L_x^t (\xi-\zeta)} d\zeta dy.
$$
Now we write $(x-y) \cdot L_x^t(\xi-\zeta) = L_x (x-y) \cdot (\xi-\zeta)$, and make another change of variable $y \mapsto x-L_x^{-1}(x-y)$. Then
$$
\tilde{a}_0(x,\xi) = \int_{\mathbb{R}^N} \int_{\mathbb{R}^N} \tilde{\chi}(x,y) a_0(x, L_{x,L_x^{-1}(x-y)}^{-t} L_x^t \zeta) e^{-2\pi i (x-y) \cdot (\xi-\zeta)} d\zeta dy
$$
where $\tilde{\chi}(x,y)$ is the smooth function $\chi(x,x-L_x^{-1}(x-y)) \det(L_{x,L_x^{-1}(x-y)})^{-1}$. Note that 
$
L_{x,L_x^{-1} u}^{-t} L_x^t 
$
is the identity map $I$ when $u = 0$. Thus we may write
$$
L_{x,L_x^{-1}(x-y)}^{-t} L_x^t = I + (x-y) \tilde{L}_{x,y}
$$ 
for a vector of variable coefficient linear maps $\tilde{L}_{x,y}$ on $\mathbb{R}^N$; more precisely, there are variable coefficient linear maps $\tilde{L}_{x,y}^{(j)}$, $1 \leq j \leq N$, such that
$$
L_{x,L_x^{-1}(x-y)}^{-t} L_x^t = I + \sum_{j=1}^N (x^j-y^j) \tilde{L}_{x,y}^{(j)}.
$$ 
Thus
\begin{equation} \label{eq:specific_atilde0}
\tilde{a}_0(x,\xi) = \int_{\mathbb{R}^N} \int_{\mathbb{R}^N} \tilde{\chi}(x,y) a_0(x, [I + (x-y) \tilde{L}_{x,y}] \zeta) e^{-2\pi i (x-y) \cdot (\xi-\zeta)} d\zeta dy
\end{equation}
In other words, $\tilde{a}_0(x,\xi)$ is the symbol associated to the compound symbol 
\begin{equation} \label{eq:specific_c}
c(x,y,\xi) = \tilde{\chi}(x,y) a_0(x, [I + (x-y) \tilde{L}_{x,y}] \xi).
\end{equation}
The key now is to show that $\tilde{a}_0(x,\xi)$ also satisfies the differential inequalities (\ref{eq:a0diff}) in place of $a_0(x,\xi)$. To do so, we introduce a variable coefficient norm function $\rho_{x,y}(\xi)$ on $\mathbb{R}^N$.

More precisely, for $x, y, \xi \in \mathbb{R}^N$, let $$\rho_{x,y}(\xi) = \|[I + (x-y) \tilde{L}_{x,y}] \xi\|,$$ so that
$$
\left|\rho_{x,y}(\xi) - \|\xi\| \right| \lesssim |x-y| |\xi| \quad \text{whenever $|\xi| \geq 1$}.
$$
In particular, for $|x-y|$ sufficiently small, we have
\begin{equation} \label{eq:rhoxy1}
1+\rho_{x,y}(\xi) \lesssim (1+\|\xi\|) (1+|x-y||\xi|^{\frac{1}{2}})
\end{equation}
and
\begin{equation} \label{eq:rhoxy2}
\frac{1}{1+\rho_{x,y}(\xi)} \lesssim \frac{1+|x-y||\xi|^{\frac{1}{2}}}{1+\|\xi\|}.
\end{equation}
Next, we observe that $c(x,y,\xi)$ defined by (\ref{eq:specific_c}) satisfies
\begin{equation} \label{eq:c_diffineq}
|\partial_x^I \partial_y^J \partial_{\xi}^{\alpha} c(x,y,\xi)| \lesssim (1+|\xi|)^{\frac{|I|+|J|-|\alpha|}{2}} (1+\rho_{x,y}(\xi))^n.
\end{equation}
From (\ref{eq:specific_atilde0}), and a further change of variables $y \mapsto x-w$, $\zeta \mapsto \xi-\zeta$, we see that
\begin{equation} \label{eq:tildea0_def_c}
\tilde{a}_0(x,\xi) = \int_{\mathbb{R}^N} \int_{\mathbb{R}^N} c(x,x-w,\xi-\zeta) e^{-2\pi i w \cdot \zeta} d\zeta dy.
\end{equation}
Using (\ref{eq:c_diffineq}) only, we will prove that the $\tilde{a}_0(x,\xi)$ given by (\ref{eq:tildea0_def_c}) satisfies 
\begin{equation} \label{eq:tildea0_bdd0}
|\tilde{a}_0(x,\xi)| \lesssim (1+\|\xi\|)^n.
\end{equation}
The idea is to Taylor expand in $\zeta$ in (\ref{eq:tildea0_def_c}). Indeed, let $\eta \in C^{\infty}_c([-1,1])$ that is identically 1 on $(-1/2,1/2)$. Then $\tilde{a}_0(x,\xi)$ is equal to
$$
\int_{\mathbb{R}^N} \int_{\mathbb{R}^N} \eta \left( \frac{|\zeta|}{|\xi|/2} \right) c(x,x-w,\xi-\zeta) e^{-2\pi i w \cdot \zeta} dw d\zeta
$$
up to an error that is rapidly decreasing in $\xi$. Now write 
$$
e^{-2\pi i w \cdot \zeta} = \left( \frac{I-|\xi|^{-1} \Delta_w - |\xi| \Delta_{\zeta}}{1+4\pi^2 |\xi| |w|^2 + 4\pi^2 |\xi|^{-1} |\zeta|^2} \right)^M e^{-2\pi i w \cdot \zeta}
$$
for some large positive integer $M$ and integrate by parts. On the support of this integral, $|\zeta| \leq |\xi|/2$, so in particular $|\xi - \zeta| \simeq |\xi|$. Using this and (\ref{eq:c_diffineq}), we have
$$
|\tilde{a}_0(x,\xi)| \lesssim \int_{|\zeta| \leq |\xi|/2} \int_{\mathbb{R}^N} \frac{(1+\rho_{x,x-w}(\xi-\zeta))^n}{(1+ |\xi| |w|^2 + |\xi|^{-1} |\zeta|^2)^M} dw d\zeta.
$$
To estimate the numerator in the above integral, if $n \geq 0$, we use (\ref{eq:rhoxy1}); if $n < 0$, we use (\ref{eq:rhoxy2}). Thus using $|\xi - \zeta| \simeq |\xi|$ again on the support of the integral, we get
$$
|\tilde{a}_0(x,\xi)| \lesssim (1+\|\xi\|)^n \int_{|\zeta| \leq |\xi|/2} \int_{\mathbb{R}^N} \frac{(1+|w||\xi|^{1/2})^{|n|}}{(1+ |\xi| |w|^2 + |\xi|^{-1} |\zeta|^2)^M} dw d\zeta,
$$
which is $\lesssim (1+\|\xi\|)^n$ if $M$ is large enough. This establishes (\ref{eq:tildea0_bdd0}). We also need to bound the derivatives of $\tilde{a}_0(x,\xi)$; but from the explicit expression (\ref{eq:specific_c}) for $c(x,y,\xi)$, we see that
$$
c(x,x-w,\xi-\zeta) = \tilde{\chi}(x,x-w) a_0(x, [I+w \tilde{L}_{x,x-w}] (\xi-\zeta))
$$
so for any multiindices $I$ and $\alpha$,
$$
\partial_x^{I} \partial_{\xi}^{\alpha} [c(x,x-w,\xi-\zeta)] = \sum_{|\sigma| \leq |I|} w^{\sigma} c_{\sigma}^{(I,\alpha)}(x,x-w,\xi-\zeta)
$$
for some compound symbols $c_{\sigma}^{(I,\alpha)}$, where the sum is over all multiindices $\sigma$ of length at most $|I|$ (this sum arises because for each $\partial_x$ that hits $[I+w \tilde{L}_{x,x-w}] (\xi-\zeta)$, we get a power of $w$). From (\ref{eq:tildea0_def_c}), we then have 
$$
\partial_x^I \partial_{\xi}^{\alpha} \tilde{a}_0(x,\xi) 
= \int_{\mathbb{R}^N} \int_{\mathbb{R}^N} \sum_{|\sigma| \leq |I|} w^{\sigma} c_{\sigma}^{(I,\alpha)}(x,x-w,\xi-\zeta) e^{-2\pi i w \cdot \zeta} dw d\zeta.
$$
Writing $w^{\sigma} e^{-2\pi i w \cdot \zeta} = (-2\pi i)^{-|\sigma|} \partial_{\zeta}^{\sigma} e^{-2\pi i w \cdot \zeta}$ and integrating by parts, we see that
$$
\partial_x^I \partial_{\xi}^{\alpha} \tilde{a}_0(x,\xi) 
= \sum_{|\sigma| \leq |I|} (2\pi i)^{-|\sigma|} \int_{\mathbb{R}^N} \int_{\mathbb{R}^N} [\partial_{\xi}^{\sigma} c_{\sigma}^{(I,\alpha)}](x,x-w,\xi-\zeta) e^{-2\pi i w \cdot \zeta} dw d\zeta.
$$
But for every multiindex $\sigma$, $\partial_{\xi}^{\sigma} c_{\sigma}^{(I,\alpha)}(x,y,\xi)$ satisfies (\ref{eq:c_diffineq}) in place of $c(x,y,\xi)$, with $n$ replaced by $n-\|\alpha\|$ there. Thus by the same derivation of the inequality (\ref{eq:tildea0_bdd0}), we see that 
\begin{equation} \label{eq:tildea0_diff}
| \partial_x^I \partial_{\xi}^{\alpha} \tilde{a}_0(x,\xi)  | \lesssim (1+\|\xi\|)^{n-\|\alpha\|}.
\end{equation}
This establishes the full differential inequality (\ref{eq:a0diff}) for $\tilde{a}_0(x,\xi)$ in place of $a_0(x,\xi)$.

As a result, defining $\tilde{k}_0(x,u)$ to be the inverse Fourier transform of $\tilde{a}_0(x,\xi)$ in the $\xi$ variable, we have $\tilde{k}_0(x,u)$ satisfying the conditions of Theorem~\ref{thm:equiv_smoothing} or Theorem~\ref{thm:equiv_nonneg_order} depending on whether $-Q < n < 0$ or $n \geq 0$ (c.f. the facts cited in the proofs of Theorems~\ref{thm:equiv_smoothing} and \ref{thm:equiv_nonneg_order}). Now
$$
Tf(x) = \int_{\mathbb{R}^N} \tilde{k}_0(x,\Theta_0(x,y)) f(y) dy, 
$$
where $\Theta_0(x,y)$ is defined so that its $k$-th component is given by $$\sum_{j=1}^N \mathcal{B}^k_j(x) (x^j-y^j),$$ the coefficients $\tilde{B}^k_j(x)$ given by (\ref{eq:tildeB_def}). Thus by Theorem~\ref{thm:equiv_smoothing} or Theorem~\ref{thm:equiv_nonneg_order}, we have $T \in \Psi^n_{\mathcal{D}}(\mathbb{R}^N)$ as desired.
\end{proof}

\begin{proof}[Proof of Theorem~\ref{thm:compose}]
Suppose $T_j \in \Psi^{n_i}_{\mathcal{D}}(\mathbb{R}^N)$ for $j=1,2$. Then we may write
$$
T_j f(x) = \int_{\mathbb{R}^N} a_{j,0}(x,M_x \xi) \widehat{f}(\xi) e^{2\pi i x \cdot \xi} d\xi,
$$
where $M_x$ is a variable coefficient linear map associated to $\mathcal{D}$ as in (\ref{eq:Mxdef}) and $a_{j,0}(x,\xi)$ satisfies 
$$
|\partial_x^I \partial_{\xi}^{\alpha} a_{j,0}(x,\xi)| \lesssim_{I,\alpha} (1+\|\xi\|)^{n_j-\|\alpha\|}, \quad j = 1, 2.
$$ 
Then
$$
T_1 \circ T_2 f(x) = \int_{\mathbb{R}^N} a_0(x,M_x \xi) \widehat{f}(\xi) e^{2\pi i x \cdot \xi} d\xi,
$$
if
$$
a_0(x,M_x \xi) = \int_{\mathbb{R}^N} \int_{\mathbb{R}^N} a_{1,0}(x, M_x(\xi-\zeta)) a_{2,0}(x-w, M_{x-w} \xi) e^{-2\pi i w \cdot \zeta} dw d\zeta,
$$
i.e.
$$
a_0(x,\xi) = \int_{\mathbb{R}^N} a_{1,0}(x, \xi - L_x^{-t} \zeta) a_{2,0}(x-w, L_{x-w}^{-t} L_x^t \xi) e^{-2\pi i w \cdot \zeta} dw d\zeta,
$$
if we let $L_x = M_x^{-t}$. By making the change of variable $\zeta \mapsto L_x^t \zeta$ and $w \mapsto L_x^{-1} w$, we have
$$
a_0(x,\xi) = \int_{\mathbb{R}^N} a_{1,0}(x, \xi - \zeta) a_{2,0}(x-L_x^{-1} w, [I+w \tilde{L}_{x,x-w}] \xi) e^{-2\pi i w \cdot \zeta} dw d\zeta.
$$
if we write $L_{x-L_x^{-1} w}^{-t} L_x^t = I+w \tilde{L}_{x,x-w}$ (which is possible since $L_{x-L_x^{-1} w}^{-t} L_x^t = I$ when $w = 0$). It remains to show that $a_0(x,\xi)$ also satisfies (\ref{eq:a0diff}) with $n = n_1 + n_2$.
This follows from a similar calculation that would establish the desired bound (\ref{eq:tildea0_diff}) for $\tilde{a}_0(x,\xi)$ in the proof of Theorem~\ref{thm:equiv_general}.
\end{proof}

\begin{proof}[Proof of Theorem~\ref{thm:comm}]
Let $\eta \in C^{\infty}_c(\mathbb{R}^N)$. Then there exists a variable coefficient linear map $\Gamma_{x,u}$ on $\mathbb{R}^N$, whose coefficients varies smoothly with $x$ and $u$, so that
$$
\eta(x) - \eta(y) = \Gamma_{x,\Theta_0(x,y)} \Theta_0(x,y)
$$
for all $x, y \in \mathbb{R}^N$, where $\Theta_0(x,y)$ is as in Theorem~\ref{thm:equiv_smoothing}. Now for $T \in \Psi^n_{\mathcal{D}}(\mathbb{R}^N)$, let $k_0(x,u)$ be the integral kernel given by Theorem~\ref{thm:equiv_smoothing}. Then
$$
[\eta, T]f(x) = \int_{\mathbb{R}^N} \tilde{k}_0(x,\Theta_0(x,y)) f(y) dy.
$$
where $$\tilde{k}_0(x,u):= k_0(x,u) \Gamma_{x,u} u.$$ Since $\tilde{k}_0(x,u)$ satisfies (\ref{eq:kernelest}) with $n$ replaced by $n-1$, by invoking Theorem~\ref{thm:equiv_smoothing} again, we see that $[\eta,T] \in \Psi^{n-1}_{\mathcal{D}}(\mathbb{R}^N)$.
\end{proof}

\begin{proof}[Proof of Theorem~\ref{thm:Lpbdd}]
The key is to show that any $T \in \Psi^0_{\mathcal{D}}(\mathbb{R}^N)$ is a Calder\'{o}n-Zygmund operator with respect to a quasi-distance $d$ determined by the distribution $\mathcal{D}$. More precisely, suppose $X_1, \dots, X_N$ is a frame of the tangent bundle on $\mathbb{R}^N$, so that the first $N-1$ of them span $\mathcal{D}$, and let $X_i$ be given by some coefficients $(A_i^j)$ as in (\ref{eq:XiAij}) for $1 \leq i \leq N$. Let $(B^k_j)$ be the inverse of the matrix $(A^j_i)$ as before; these are precisely the coefficients of the dual frame $\theta^1, \dots, \theta^N$ to $X_1,\dots,X_N$, via 
$$
\theta^k = \sum_{j=1}^N B^k_j(x) dx^j \quad \text{for $1 \leq k \leq N$}.
$$
In particular, $\theta^N$ is the annihilator of $\mathcal{D}$, so the coefficients $B^N_j(x)$ are determined up to a multiple by $\mathcal{D}$ on $\mathbb{R}^N$, for $1 \leq j \leq N$. Now for $x, y \in \mathbb{R}^N$, let 
$$
d(x,y) \simeq |x-y| + \left| \sum_{j=1}^N B^N_j(x) (x^j-y^j) \right|^{1/2}.
$$
This defines a quasi-distance $d$ on $\mathbb{R}^N$, that depends only on the distribution $\mathcal{D}$, and that satisfies
$$
d(x,y) \simeq d(y,x) \quad \text{for any $x, y \in \mathbb{R}^N$}
$$
with
$$
d(x,z) \lesssim d(x,y) + d(y,z) \quad \text{for any $x,y,z \in \mathbb{R}^N$}. 
$$
For $x,y \in \mathbb{R}^N$ with $d(x,y) \leq 1$, the volume of the ball $\{z \in \mathbb{R}^N \colon d(x,z) \leq d(x,y)\}$ is comparable to $d(x,y)^Q$.
To prove the theorem, the key is to show that there exists a kernel $K(x,y)$, smooth away from the diagonal on $\mathbb{R}^N \times \mathbb{R}^N$, such that
$$
Tf(x) = \int_{\mathbb{R}^N} K(x,y) f(y) dy
$$
for any $f \in C^{\infty}_c(\mathbb{R}^N)$ and any $x$ not in the support of $f$; and such that
\begin{equation} \label{eq:K_ker_est_diff}
|X_{\gamma} K(x,y)| \lesssim d(x,y)^{-Q-\|\gamma\|}
\end{equation}
for any $\gamma = (\gamma_1, \dots, \gamma_k) \in \{1,\dots,N\}^k$, where $X_{\gamma}$ denotes $X_{\gamma_1} \dots X_{\gamma_k}$ and each $X_{\gamma_j}$ can act on either the $x$ or the $y$ variable, and $\|\gamma\|$ is the number of indices $\gamma_j$ such that $\gamma_j \ne N$, plus twice the number of indices $\gamma_j$ such that $\gamma_j = N$. Indeed, this follows from the case $n = 0$ of Theorem~\ref{thm:equiv_nonneg_order}, once we take $K(x,y) = k_0(x,\Theta_0(x,y))$. The kernel estimates (\ref{eq:K_ker_est_diff}) in turn imply that
$$
|K(x,y_1)-K(x,y_2)| \lesssim \frac{d(y_1,y_2)}{d(x,y_1)^{Q+1}} \quad \text{if $d(y_1,y_2) < \delta_0 d(x,y_1)$},
$$
and
$$
|K(x_1,y)-K(x_2,y)| \lesssim \frac{d(x_1,x_2)}{d(x_1,y)^{Q+1}} \quad \text{if $d(x_1,x_2) < \delta_0 d(x_1,y)$}
$$
where $\delta_0 \in (0,1)$ is a sufficiently small absolute constant.
Hence by Calder\'{o}n-Zygmund theory on spaces of homogeneous types (see e.g. \cite[Chapter I]{Ste93}), to prove that $T$ is bounded on $L^p(\mathbb{R}^N)$ for all $1 < p < \infty$, it suffices to show that $T$ is bounded on $L^2(\mathbb{R}^N)$; but the latter follows since any operator in $\Psi^0_{\mathcal{D}}(\mathbb{R}^N)$ is automatically in H\"ormander's symbol class of type $(1/2,1/2)$. This completes our proof of Theorem~\ref{thm:Lpbdd}.
\end{proof}

\begin{proof}[Proof of Theorem~\ref{thm:asym_sum}]
Suppose $T_j \in \Psi^{n-j}_{\mathcal{D}}(\mathbb{R}^N)$ for $j = 0,1,2,\dots$. Write 
$$
T_j f(x) = \int_{\mathbb{R}^N} a_{j,0}(x,M_x \xi) \widehat{f}(\xi) e^{-2\pi i x \cdot \xi} d\xi
$$
where $M_x$ is given by (\ref{eq:Mxdef}) and $a_{j,0}(x,\xi)$ satisfy
$$
|\partial_x^I \partial_{\xi}^{\alpha} a_{j,0}(x,\xi)| \leq C_{j,I,\alpha} (1+\|\xi\|)^{n-j-\|\alpha\|}
$$
for all $j \geq 0$ and multiindices $I$ and $\alpha$. Let 
$$
a_0(x,\xi) = \sum_{j=0}^{\infty} a_{j,0}(x,\xi) (1-\chi)\left( \frac{\|\xi\|}{N_j} \right)
$$
where $\chi \in C^{\infty}_c([-2,2])$ is identically 1 on $[-1,1]$, and $N_j$ is sufficiently large (say $$N_j = 1+\sup_{0 \leq k \leq j} \sup_{|I|,|\alpha| \leq j} C_{k,I,\alpha}$$ will do). Then one can check that $a_0(x,\xi)$ satisfies the differential inequalities (\ref{eq:a0diff}). Letting 
$$
Tf(x) = \int_{\mathbb{R}^N} a_0(x,M_x \xi) \widehat{f}(\xi) e^{-2\pi i x \cdot \xi} d\xi,
$$ 
one can check that $T-(T_0+\dots+T_k) \in \Psi^{n-k-1}_{\mathcal{D}}(\mathbb{R}^N)$ for all $k \in \mathbb{N}$. Hence we have Theorem~\ref{thm:asym_sum}.
\end{proof}

\section{Solution of $\hDelb$} \label{sect:hDelb}

We now return to our compact pseudohermitian manifold $(\hM,\hthe)$. We will use the class of non-isotropic pseudodifferential operators we introduced in the last section to solve the sublaplacian $\hDelb$ on $(\hM,\hthe)$; henceforth we will take $\hD$ to be the contact distribution $\hor$ on $\hM$, and all pseudodifferential operators on $\hM$ will be adapted to this $\hD$. As a result, we obtain a corollary, about how one commutes $\hnabla_b$ through a non-isotropic pseudodifferential operator. We will need this corollary in Section~\ref{sect:hboxbsol}.

First we will need to introduce a normal coordinate system near every point of $\hM$, following Folland-Stein \cite[Section 14]{FoSt} (see also Greiner-Stein \cite[Chapter 4]{GrSt}). Here we will only need the CR structure on $\hM$; the pseudohermitian structure on $\hM$ will come in only later. Let $U$ be a sufficiently small open subset of $\hM$. If $y \in U$ and $\hat{W}$ is a local section of $T\hM$ on $U$, then for $\varepsilon > 0$ sufficiently small and for $s \in [-\varepsilon,\varepsilon]$, one can define $y \exp(s \hat{W}) \in \hM$, so that the map $s \mapsto y \exp(s \hat{W})$ is an integral curve to $\hat{W}$ starting from $y$. This defines the exponential map $y \exp(\hat{W})$ for every $ y \in U$ and every $\hat{W} \in T_y \hM$ in a sufficiently small neighborhood of $0$. Now let $\hX, \hY$ be a local frame of the contact distribution $\hor$ on $U$, and let $\hT$ be a local section of $T\hM$ on $U$ so that $\hT$ is transverse to $\hor$. Then for $y \in U$ and for $u = (u_1,u_2,u_3)$ in a sufficiently small neighborhood of $0$ in $\mathbb{H}^1$, the map $$u \mapsto y \exp(2 u_1 \hX + 2 u_2 \hY + u_3 \hT)$$ is a diffeomorphism of a small neighborhood of $0$ in $\mathbb{H}^1$, onto a small neighborhood of $y$ in $\hM$. Indeed, by shrinking $U$ if necessary, we may assume that for every $x, y \in U$, there exists a unique $u \in \mathbb{H}^1$ such that $x = y \exp(2 u_1 \hX + 2 u_2 \hY + u_3 \hT)$. Such $u$ will be denoted $\Theta(x,y)$; for every $y \in U$, the map $x \in U \mapsto u = \Theta(x,y) \in \mathbb{H}^1$ provides a coordinate chart on $U$, and the important fact here is that in the $u$ coordinates, 
$$
\hX = X + O^1 \frac{\partial}{\partial u_1} + O^1 \frac{\partial}{\partial u_2} + O^2 \frac{\partial}{\partial u_3},
$$
$$
\hY = Y + O^1 \frac{\partial}{\partial u_1} + O^1 \frac{\partial}{\partial u_2} + O^2 \frac{\partial}{\partial u_3},
$$
$$
\hT = T + O^1 \frac{\partial}{\partial u_1} + O^1 \frac{\partial}{\partial u_2} + O^1 \frac{\partial}{\partial u_3},
$$
where
$$
X = \frac{1}{2} \left( \frac{\partial}{\partial u_1} + 2 u_2 \frac{\partial}{\partial u_3} \right), \qquad Y = \frac{1}{2} \left( \frac{\partial}{\partial u_2} - 2 u_1 \frac{\partial}{\partial u_3} \right), \qquad T = \frac{\partial}{\partial u_3},
$$
and henceforth $O^k$ represents a smooth function on $U \times U$ that is $\lesssim \|
\Theta(x,y)\|^k$; here $\|u\| := |u_1| + |u_2| + |u_3|^{1/2}$. In particular, each $O^k$ can be written as a function of the form $\eta(x,\Theta(x,y))$, where $\eta(x,u)$ satisfies $$|\partial_x^I \partial_u^{\alpha} \eta(x,u)| \lesssim_{I,\alpha} \|u\|^{k-\|\alpha\|}.$$ Note also that
$$
x = y \exp( \Theta(x,y) \cdot \hat{\Xi})
$$
where $\hat{\Xi} := (2\hX, 2\hY, \hT)$.
In particular, differentiating both sides with respect to $y$, and evaluating at $y = x$, we see that $\Theta(x,y)$ is compatible with our distribution $\hD$, in the sense described just before Theorem~\ref{thm:equiv_general}.

Now we specialize to the case when $\hM$ is compact and endowed with a pseudohermitian structure $\hthe$. We cover $\hM$ by finitely many sufficiently small open sets $U_i$'s, and on each $U_i$ we pick a local orthonormal frame $\hX, \hY$ to the contact distribution $\hor$, so that as we have seen in Section~\ref{sect:def}, the sublaplacian $\hDelb$ on $(\hM,\hthe)$ is given by $$\hDelb = \hX^* \hX + \hY^* \hY$$ on $U$. Now recall, from (\ref{eq:sublap_H1}), that $\Delta_b$ on $(\mathbb{H}^1,\theta)$ is given by
$$
\Delta_b = -(X^2+Y^2)
$$
if $\theta = du_3 + 2 u_1 du_2 - 2 u_2 du_1$. Hence on $U$, we have
$$
\hDelb = \Delta_b + O^1 \left( \frac{\partial}{\partial u'} + O^1 \frac{\partial}{\partial u_3} \right)^2 + O^0 \left( \frac{\partial}{\partial u'} + O^1 \frac{\partial}{\partial u_3} \right)
$$
where we wrote $u' = (u_1, u_2)$. But if 
$$K(u) = \frac{1}{2\pi (u_1^4 + u_2^4 + u_3^2)^{1/2}},$$ then 
$$
\Delta_b K(u) = \delta_0(u)
$$
(see e.g. Chapter XIII.2.3.4 of \cite{Ste93}). 
Hence if $K^{(0)}$ is the integral operator with integral kernel
$$
\tilde{\chi}(x) K(\Theta(x,y)) \chi(y)
$$
where $\chi, \tilde{\chi} \in C^{\infty}_c(U_i)$ and $\tilde{\chi} \equiv 1$ on an open set containing the support of $\chi$, then by Theorem~\ref{thm:equiv_general}, $K^{(0)}$ is a non-isotropic pseudodifferential operator in $\Psi^{-2}_{\hD}(\hM)$; furthermore, $\hDelb K^{(0)}$ is $\chi(y)$ times the identity operator, modulo an operator in $\Psi^{-1}_{\hD}(\hM)$. Patching together the operators from the different coordinate charts $U_i$, we obtain a non-isotropic pseudodifferential operator in $\Psi^{-2}_{\hD}(\hM)$, which we by abuse of notation still denote by $K^{(0)}$, such that
$$
\hDelb K^{(0)} = I - R
$$
where $R \in \Psi^{-1}_{\hD}(\hM)$; indeed, to be more precise, we would take a partition of unity $\{\chi_i\}$ subordinate to the open cover $\{U_i\}$ of $\hM$, and pick $\tilde{\chi}_i \in C^{\infty}_c(U_i)$ that is identically 1 on an open set containing the support of $\chi_i$, such that if $\Theta_i$ is the Folland-Stein coordinates on $U_i$, then the patched-up $K^{(0)}$ is the pseudodifferential operator with integral kernel $\sum_i \tilde{\chi}_i(x) K(\Theta_i(x,y)) \chi_i(y)$. By Theorem~\ref{thm:asym_sum}, there exists $E \in \Psi^0_{\hD}(\hM)$ such that 
$$
E - (I + R + R^2 + \dots + R^k) \in \Psi^{-(k+1)}_{\hD}(\hM)
$$ 
for all $k \geq 0$. Thus 
$$
\hDelb K^{(0)} E = I + R_{-\infty},
$$
where $R_{-\infty} \in \Psi^{-\infty}_{\hD}(\hM)$. The integral kernel of $R_{-\infty}$ is then smooth on $\hM \times \hM$; in other words, there exists $r_{-\infty}(x,y) \in C^{\infty}(\hM \times \hM)$ such that
$$
R_{-\infty} f(x) = \int_{\hM} f(y) r_{-\infty}(x,y) \hthe \wedge d\hthe(y).
$$
By microlocalization, one may find a function $e_{-\infty}(x,y) \in C^{\infty}(\hM \times \hM)$, such that
$$
(\hDelb)_x e_{-\infty}(x,y) = r_{-\infty}(x,y) - c(y),
$$
where $c(y)$ is given by $$c(y) := \frac{1}{\text{Vol}(\hM)} \int_{\hM} r_{-\infty}(x,y) \hthe \wedge d\hthe(x),$$ with $\text{Vol}(\hM) = \int_{\hM} \hthe \wedge d\hthe(x).$ Writing $E_{-\infty} \in \Psi^{-\infty}_{\hD}(\hM)$ for the pseudodifferential operator with integral kernel $e_{-\infty}(x,y)$, we have
\begin{equation} \label{eq:hDelbsol_hatC}
\hDelb [K^{(0)} E + E_{-\infty}] + \hat{C} = I
\end{equation}
where $\hat{C} f(x) = \int_{\hM} c(y) f(y) \hthe \wedge d\hthe(y)$. This gives a decomposition of any $L^2$ function on $\hM$ into a sum of two parts, one of which is in the image of $\hDelb$, the other of which is in the kernel of $\hDelb$ (= the set of all constant functions on $\hM$). By uniqueness of such a decomposition, we see that 
\begin{equation} \label{eq:hatC_simplified} 
\hat{C} f(x) = \frac{1}{\text{Vol}(\hM)} \int_{\hM} f(y) \hthe \wedge d\hthe(y),
\end{equation} for any $f \in L^2(\hM)$, which gives $c(y) = \frac{1}{\text{Vol}(\hM)}$ is constant independent of $y$. (Alternatively, just apply (\ref{eq:hDelbsol_hatC}) to an arbitrary $f \in L^2(\hM)$ and average both sides over $\hM$; then since $\hat{C} f(x)$ is a constant function on $\hM$, we obtain again (\ref{eq:hatC_simplified}), which gives us the same conclusion about $c(y)$.)
Letting $$\hat{K} = (I-\hat{C}) [K^{(0)} E + E_{-\infty}] \in \Psi^{-2}_{\hD}(\hM),$$ we see that
\begin{equation} \label{eq:sol_hDelb}
\hDelb \hat{K} + \hat{C} = I,
\end{equation}
and that the image of $\hat{K}$ is orthogonal to the kernel of $\hDelb$. Thus $\hat{K}$ is the canonical solution to $\hDelb$, and this completes our solution to $\hDelb$ on $(\hM,\hthe)$.

\begin{cor} \label{cor:nablab_comm}
Suppose $k \in \mathbb{R}$, $T_k \in \Psi^k_{\hD}(\hM)$. Let $\hat{W}$ be a global section of $\hor$ on $\hM$ such that $g_{\hthe}(\hat{W},\hat{W}) \leq 1$ on $\hM$. Then for each sufficiently small open set $U$ on $\hM$, if $\hX, \hY$ is a local orthonormal frame to $\hor$ on $U$, then there exist $T_k', T_k'' \in \Psi^k_{\hD}(\hM)$ and $T_{-\infty} \in \Psi^{-\infty}_{\hD}(\hM)$, such that 
$$\hat{W} T_k = T_k' \hX + T_k'' \hY + T_{-\infty}$$ when acting on functions with compact support in $U$. We usually abbreviate this by saying that $$\hnabla_b T_k = T_k' \hnabla_b + T_{-\infty};$$ furthermore, $T_{-\infty}$ can be taken to be zero if $T_k 1 = 0$.
\end{cor}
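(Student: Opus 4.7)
The plan is to use the parametrix for $\hDelb$ constructed in (\ref{eq:sol_hDelb}) to move $\hX$ and $\hY$ to the right side of $T_k$. The idea is: on $U$ we have $\hDelb = \hX^* \hX + \hY^* \hY$, so by using the global identity $I = \hK \hDelb + \hC$ on the right of $T_k$, we can factor one copy of $\hX$ (or $\hY$) out to the right, absorbing the remaining $\hX^*, \hY^*$ into pseudodifferential operators of the appropriate order.

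The first preparation is to upgrade (\ref{eq:sol_hDelb}) to the companion identity $\hK \hDelb + \hC = I$ on $C^{\infty}(\hM)$. To see this, apply $\hDelb \hK + \hC = I$ to $\hDelb u$: since $\hC \hDelb u = \frac{1}{\text{Vol}(\hM)} \int_{\hM} \hDelb u \, \hthe \wedge d\hthe = 0$ by integration by parts, we obtain $\hDelb(\hK \hDelb u - u) = 0$, whence $\hK \hDelb u - u$ is constant. Applying $\hC$ and using that $\hC \hK = 0$ (immediate from the definition $\hK = (I-\hC)[K^{(0)}E + E_{-\infty}]$ and $\hC^2 = \hC$) pins this constant down to $-\hC u$, so $\hK \hDelb u + \hC u = u$. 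The second preparation is the local identity of differential operators
\[
\hDelb = -\hX^2 - \hY^2 + g_X \hX + g_Y \hY \quad \text{on } U,
\]
where $g_X, g_Y \in C^{\infty}(U)$ come from expanding $\hX^* = -\hX + g_X$, $\hY^* = -\hY + g_Y$ (with $g_X, g_Y$ built from the divergences of $\hX, \hY$ with respect to $\hthe \wedge d\hthe$).

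Now fix a cutoff $\chi \in C^{\infty}_c(U)$ that is identically $1$ on a slightly smaller open set $V \Subset U$, and define the global operators
\[
T_k' := -\hat{W} T_k \hK \, (\chi \hX) + \hat{W} T_k \hK \, (\chi g_X), \qquad
T_k'' := -\hat{W} T_k \hK \, (\chi \hY) + \hat{W} T_k \hK \, (\chi g_Y),
\]
\[
T_{-\infty} := \hat{W} T_k \hC,
\]
where $\chi \hX, \chi \hY$ and multiplication by $\chi g_X, \chi g_Y$ are extended by zero outside $U$. Since $\hat{W} \in \Psi^1_{\hD}(\hM)$, $\hK \in \Psi^{-2}_{\hD}(\hM)$, $\chi \hX, \chi \hY \in \Psi^1_{\hD}(\hM)$, and multiplication by $\chi g_X, \chi g_Y$ is in $\Psi^0_{\hD}(\hM)$, Theorem~\ref{thm:compose} gives $T_k', T_k'' \in \Psi^k_{\hD}(\hM)$. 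For $T_{-\infty}$, note $\hC f = \frac{1}{\text{Vol}(\hM)} \int_{\hM} f \, \hthe \wedge d\hthe$, so $T_k \hC f = \bigl( \frac{1}{\text{Vol}(\hM)} \int f \bigr) T_k 1$, and hence $T_{-\infty} f = \bigl( \frac{1}{\text{Vol}(\hM)} \int f \bigr) \hat{W}(T_k 1)$ is a rank one operator with smooth integral kernel, giving $T_{-\infty} \in \Psi^{-\infty}_{\hD}(\hM)$, and manifestly $T_{-\infty} = 0$ if $T_k 1 = 0$. To verify the identity, take $f \in C^{\infty}_c(V)$: since $\hX f, \hY f, \hX^2 f, \hY^2 f$ are all supported in $V \subset \{\chi \equiv 1\}$, the cutoff $\chi$ does nothing, so
\[
T_k' \hX f + T_k'' \hY f = \hat{W} T_k \hK \bigl[-\hX^2 f - \hY^2 f + g_X \hX f + g_Y \hY f\bigr] = \hat{W} T_k \hK \hDelb f
\]
by the local expression for $\hDelb$; adding $T_{-\infty} f = \hat{W} T_k \hC f$ and invoking $\hK \hDelb + \hC = I$ yields $\hat{W} T_k f$.

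The main obstacle, and really the only nontrivial bookkeeping, is that $\hX, \hY, g_X, g_Y$ are defined only on $U$, whereas $T_k', T_k''$ must be honest global elements of $\Psi^k_{\hD}(\hM)$; this is handled by the fixed cutoff $\chi$ chosen once and for all based on $U$ (so that, after shrinking $U$ if necessary, the identity holds on all $f \in C^{\infty}_c(V)$ as the statement allows). Everything else is a direct composition of ingredients already provided by Theorems~\ref{thm:compose} and (\ref{eq:sol_hDelb}).
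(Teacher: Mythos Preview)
Your proof is correct and follows essentially the same approach as the paper's: both insert the identity $\hK\hDelb + \hC = I$ on the right of $\hat{W}T_k$ and use the local expression of $\hDelb$ in terms of $\hX,\hY$ to factor one horizontal derivative to the right, with $T_{-\infty} = \hat{W}T_k\hC$. Your argument is somewhat more detailed than the paper's terse version---you derive $\hK\hDelb + \hC = I$ directly rather than by taking adjoints, and you are more explicit about the cutoff $\chi$ needed to globalize $\hX,\hY$---but these are cosmetic differences, not a genuinely different route.
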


\begin{proof}
Taking the adjoint of (\ref{eq:sol_hDelb}), we obtain $$I = \hat{K} \hDelb + \hat{C}.$$ Applying $\hat{W} T_k$ on the left of both sides, we have 
$$
\hat{W} T_k = (\hat{W} \hat{K} \hX^*) \hX + (\hat{W} \hat{K} \hY^*) \hY + \hat{W} T_k \hat{C}.
$$  
It remains to observe that $T_k' := \hat{W} \hat{K} \hX^*$ and $T_k'' := \hat{W} \hat{K} \hY^*$ are both in $\Psi^k_{\hD}(\hM)$ by Theorem~\ref{thm:compose}, and that $T_{-\infty}:=\hat{W} T_k \hat{C} \in \Psi^{-\infty}_{\hD}(\hM)$, by Proposition~\ref{prop:pseudolocal} and the characterization of $\Psi^{-\infty}_{\hD}(\hM)$ immediately after Theorem~\ref{thm:asym_sum}. We also note that if $T_k 1 = 0$, then $T_k \hat{C} = 0$, and hence $T_{-\infty} = 0$.
\end{proof}

\section{Solution of $\hboxb$} \label{sect:hboxbsol}

Let $\hM$ be a closed 3-dimensional strongly pseudoconvex CR manifold that is embeddable in some $\mathbb{C}^K$. The goal of this section is to solve a certain tangential Kohn Laplacian on $\hM$. If $\hM$ is the boundary of a strongly pseudoconvex domain in $\mathbb{C}^2$, then one may follow the approach in e.g. Nagel and Stein \cite{NaSt}; on the other hand, the following approach works whenever $\hM$ is embeddable in some $\mathbb{C}^K$ (where $K$ can be bigger than 2; equivalently, it works whenever $\hdbarb$ has closed range in $L^2(\hM)$ (c.f. Theorem~\ref{thm:embed_closed_range})). The approach presented here is a variant of the one in Beals and Greiner \cite{BG88}.

Recall that in Section~\ref{sect:def}, when $\hM$ is equipped with a pseudohermitian structure $\hthe$, we defined the tangential Kohn Laplacian on $(\hM,\hthe)$, so that its action on $C^{\infty}$ functions on $\hM$ is given by $\vartheta_b \dbarb$, where $\vartheta_b$ is the formal adjoint of $\dbarb$ that satisfies
$$
\int_{\hM} \langle \dbarb u, \alpha \rangle_{\hthe} \, \hthe \wedge d\hthe = \int_{\hM} u \, \overline{\vartheta_b \alpha} \, \hthe \wedge d\hthe
$$
for all smooth functions $u$ and smooth $(0,1)$-forms $\alpha$ on $\hM$. Note that $\hthe \wedge d\hthe$ is a particular smooth measure on $\hM$. In general, we could have defined $\vartheta_b$ by taking adjoints with respect to a different smooth measure on $\hM$, in which case we would get a different tangential Kohn Laplacian; this is often useful in practice, as in the proof of our main Theorem~\ref{thm:goal} in the general case. Thus in this section, we solve all tangential Kohn Laplacians on $\hM$ that arise this way.

More precisely, let $\hm_0$ and $\hm_1$ be two smooth measures on $\hM$, and $\langle \cdot, \cdot \rangle$ be any smoothly varying pointwise hermitian inner product on $T^{0,1} \hM$. We use $\hm_0$ to define an $L^2$ space of functions (denoted $L^2(\hm_0)$):
$$
(u, v) = \int_{\hM} u \, \overline{v} \, \hm_0,
$$
and $\hm_1$ to define an $L^2$ space of $(0,1)$ forms (denoted $L^2_{(0,1)}(\hm_1)$):
$$
(\alpha,\beta) = \int_{\hM} \langle \alpha, \beta \rangle_{\hthe} \, \hm_1.
$$
Let $\vartheta_b$ be the formal adjoint of $\dbarb$, that satisfies
$$
(\dbarb u, \alpha) = (u, \vartheta_b \alpha)
$$
for all smooth functions $u$ and smooth $(0,1)$-forms $\alpha$ on $\hM$. Let $\hboxb = \vartheta_b \dbarb$ on smooth functions on $\hM$. Under the assumption that $\hM$ is embeddable in some $\mathbb{C}^K$, we will construct, in our algebra of non-isotropic pseudodifferential operators on $\hM$, the relative solution operator and the Szeg\H{o} projection to $\hboxb$. Indeed, we will show that $\hN \in \Psi^{-2}_{\hD}(\hM)$, and $\hS \in \Psi^0_{\hD}(\hM)$, where as before, $\hD$ is the contact distribution $\hor$ on $\hM$.

First we need an $L^2$ theory. Let 
$$
\hdbarb \colon L^2(\hm_0) \to L^2_{(0,1)}(\hm_1)
$$ 
be the Hilbert space closure of $\hdbarb$ acting on functions in $C^{\infty}(\hM)$. Let 
$$
\hdbarb^* \colon L^2_{(0,1)}(\hm_1) \to L^2(\hm_0)
$$ 
be its Hilbert space adjoint. Define also a densely defined operator 
$$
\hboxb \colon L^2(\hm_0) \to L^2(\hm_0),
$$ 
with domain given by
\begin{align*}
\text{Dom}(\hboxb) := \{ & u \in \text{Dom}[\hdbarb \colon L^2(\hm_0) \to L^2_{(0,1)}(\hm_1)],  \\ 
& \quad \hdbarb u \in \text{Dom}[\hdbarb^* \colon L^2_{(0,1)}(\hm_1) \to L^2(\hm_0)] \}
\end{align*}
Then from the embeddability of $\hM$, one can prove, using Kohn's microlocalization procedure, the following standard fact (c.f. Theorem~\ref{thm:embed_closed_range}):

\begin{prop} \label{prop:Fact 1} The operator $\hdbarb \colon L^2(\hm_0) \to L^2_{(0,1)}(\hm_1)$ has closed range; hence so does $\hdbarb^* \colon L^2_{(0,1)}(\hm_1) \to L^2(\hm_0)$ and $\hboxb \colon L^2(\hm_0) \to L^2(\hm_0)$.
\end{prop}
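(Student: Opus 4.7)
The plan is to reduce to the standard pseudohermitian setting where Kohn's Theorem~\ref{thm:embed_closed_range} applies directly, and then transfer the conclusion through equivalence of norms. The substantive content lies entirely in Kohn's theorem, which we take as a black box; what remains is essentially functional-analytic bookkeeping.

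Since $\hM$ is compact, I would write $\hm_0 = \phi_0 \, \hthe \wedge d\hthe$ and $\hm_1 = \phi_1 \, \hthe \wedge d\hthe$ for smooth positive functions $\phi_0,\phi_1$ on $\hM$ bounded above and below by positive constants, and similarly express the given pointwise Hermitian inner product on $T^{(0,1)}\hM$ as $\langle \cdot, \cdot \rangle = \psi \langle \cdot, \cdot \rangle_{\hthe}$ for some smooth positive $\psi$ with two-sided bounds. Then $L^2(\hm_0)$ coincides with $L^2(\hthe \wedge d\hthe)$ as a topological vector space with equivalent norms, and the same is true for the $(0,1)$-form spaces. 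In particular the graph norm of $\hdbarb$ acting on $C^{\infty}(\hM)$ is equivalent under the two Hilbert space structures, so the Hilbert space closure $\hdbarb \colon L^2(\hm_0) \to L^2_{(0,1)}(\hm_1)$ has the same domain and the same image set as the closure taken with respect to the standard measures. By Theorem~\ref{thm:embed_closed_range}, the latter has closed range in $L^2_{(0,1)}(\hthe \wedge d\hthe)$; since closedness of a subset of a Hilbert space is preserved under passage to an equivalent norm, $\hdbarb$ also has closed range in $L^2_{(0,1)}(\hm_1)$.

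The closed range property then transfers to $\hdbarb^* \colon L^2_{(0,1)}(\hm_1) \to L^2(\hm_0)$ by Banach's closed range theorem. For $\hboxb$, I would observe that $\ker(\hboxb) = \ker(\hdbarb)$ (since $\hboxb u = 0$ combined with integration by parts forces $\|\hdbarb u\|^2 = 0$), and that $\textrm{Range}(\hboxb) = \textrm{Range}(\hdbarb^*)$. The nontrivial inclusion of the latter identity follows by decomposing any $w \in \textrm{Dom}(\hdbarb^*)$ orthogonally as $w = w_0 + w_1$ with $w_0 \in \ker(\hdbarb^*)$ and $w_1 \in \overline{\textrm{Range}(\hdbarb)} = \textrm{Range}(\hdbarb)$; closed range of $\hdbarb$ gives $w_1 = \hdbarb u$ for some $u \in \textrm{Dom}(\hdbarb)$, and since $\hdbarb u = w_1 \in \textrm{Dom}(\hdbarb^*)$ we obtain $u \in \textrm{Dom}(\hboxb)$ with $\hboxb u = \hdbarb^* w_1 = \hdbarb^* w$.

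The only real difficulty lies in confirming that closures, domains, and ranges of unbounded operators are insensitive to replacing the ambient Hilbert norms by equivalent ones — a standard fact that follows from equivalence of the corresponding graph norms. Everything substantive is packaged inside Theorem~\ref{thm:embed_closed_range}, so after that reduction the argument is purely formal.
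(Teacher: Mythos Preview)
Your proposal is correct and follows the approach the paper indicates: the paper simply labels this a ``standard fact'' derivable from Kohn's Theorem~\ref{thm:embed_closed_range} via the microlocalization procedure, and your reduction by equivalence of norms is precisely the natural way to fill in that sketch. The functional-analytic steps you give for passing closed range from $\hdbarb$ to $\hdbarb^*$ and to $\hboxb$ are accurate and more detailed than what the paper records.
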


This allows us to define the relative solution operator
\begin{equation} \label{eq:hNdef1}
\hN \colon L^2(\hm_0) \to \text{Dom}(\hboxb) \subseteq L^2(\hm_0)
\end{equation}
and the orthogonal projection onto the kernel of $\hboxb$ (which is also the kernel of $\hdbarb$) in $L^2(\hm_0)$:
$$
\hS \colon L^2(\hm_0) \to \text{Dom}(\hboxb) \subseteq L^2(\hm_0)
$$
Then 
\begin{equation} \label{eq:hNdef2}
\hboxb \hN + \hS = I \quad \text{on $L^2(\hm_0)$}.
\end{equation}

We now turn to the $L^p$ theory for $\hN$, $\hS$ and $\hboxb$. Recall that $\hD$ is the contact distribution on $\hM$, and in Section~\ref{sect:pdo} we constructed non-isotropic pseudodifferential operators adapted to $\hD$. We then have the following proposition:
 
\begin{prop} \label{prop:pdo}
$\hS$ and $\hN$ are non-isotropic pseudodifferential operators on $\hM$, of order 0 and $-2$ respectively; in other words, $\hS \in \Psi^0_{\hD}(\hM)$ and $\hN \in \Psi^{-2}_{\hD}(\hM)$.
\end{prop}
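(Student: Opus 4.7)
The plan is to construct, inside the non-isotropic calculus of Section~\ref{sect:pdo}, parametrices $\hS^{\sharp}\in\Psi^0_{\hD}(\hM)$ and $\hN^{\sharp}\in\Psi^{-2}_{\hD}(\hM)$ for the Szeg\H{o} projection and relative inverse of $\hboxb$ that satisfy
$$
\hboxb\hN^{\sharp}+\hS^{\sharp}-I\in\Psi^{-\infty}_{\hD}(\hM),\qquad \hdbarb\hS^{\sharp}\in\Psi^{-\infty}_{\hD}(\hM),
$$
and then to use Proposition~\ref{prop:Fact 1} to identify them with the true $\hS$ and $\hN$ modulo smoothing operators. The starting point is the model operator $\mathcal{L}:=-\tfrac{1}{2}(X^2+Y^2+iT)$ on $\mathbb{H}^1$, which has a classical orthogonal projection $\tilde{S}$ onto the CR subspace of $L^2(\mathbb{H}^1)$ and a classical relative inverse $\tilde{N}$, both given by explicit left-invariant convolution kernels $\tilde{s}(u),\tilde{n}(u)$ satisfying $\mathcal{L}\tilde{n}+\tilde{s}=\delta_0$ and $\Zbar\tilde{s}=0$ on $\mathbb{H}^1$. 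A direct computation confirms that $\tilde{s}$ satisfies the hypotheses of Theorem~\ref{thm:equiv_nonneg_order} with $n=0$ and $\tilde{n}$ those of Theorem~\ref{thm:equiv_smoothing} with $n=-2$.

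Proceeding exactly as for $\hDelb$ in Section~\ref{sect:hDelb}, I would cover $\hM$ by finitely many admissible open sets $U_i$, pick Folland--Stein normal coordinates $\Theta_i$ on each, and patch the transplanted kernels $\tilde{s}(\Theta_i(x,y))$ and $\tilde{n}(\Theta_i(x,y))$ with a partition of unity to obtain initial parametrices $\hS^{(0)}$ and $\hN^{(0)}$; Theorem~\ref{thm:equiv_general} places them in $\Psi^0_{\hD}(\hM)$ and $\Psi^{-2}_{\hD}(\hM)$, respectively. Because in $\Theta_i$-coordinates $\hboxb$ agrees with $\mathcal{L}$ up to $O^1\partial_u^2+O^0\partial_u$ contributions (and the change from the standard volume form to the measures $\hm_0,\hm_1$ only adds smooth multiplications, i.e.\ a first-order perturbation of $\hboxb$), the composition Theorem~\ref{thm:compose} yields
$$
\hboxb\hN^{(0)}+\hS^{(0)}=I-R_1,\qquad R_1\in\Psi^{-1}_{\hD}(\hM),
$$
while an analogous calculation exploiting $\Zbar\tilde{s}=0$ shows that $\hdbarb\hS^{(0)}$ is of lower order than the naive bound from Theorem~\ref{thm:compose}. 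I would then iterate: at each step correct $\hS^{(k)},\hN^{(k)}$ by operators of decreasing order so as to simultaneously improve both $\hboxb\hN^{(k)}+\hS^{(k)}-I$ and $\hdbarb\hS^{(k)}$, and use the asymptotic sum Theorem~\ref{thm:asym_sum} to obtain the desired $\hN^{\sharp},\hS^{\sharp}$.

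The final step is to compare $\hS^{\sharp},\hN^{\sharp}$ with the true $\hS,\hN$ from Proposition~\ref{prop:Fact 1}. From the identity $\hboxb(\hN^{\sharp}-\hN)+(\hS^{\sharp}-\hS)\in\Psi^{-\infty}_{\hD}(\hM)$, the defining $L^2(\hm_0)$-orthogonality properties of $\hS$ and $\hN$, the near-CR condition $\hdbarb\hS^{\sharp}\in\Psi^{-\infty}_{\hD}(\hM)$, and the closed range supplied by Proposition~\ref{prop:Fact 1}, one first bounds $\hS^{\sharp}-\hS$ and $\hN^{\sharp}-\hN$ on $L^2(\hm_0)$. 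A bootstrap using Kohn's subelliptic $\tfrac{1}{2}$-estimate for $\hboxb$ (available on $\hM$ once $\hdbarb$ has closed range) then promotes this $L^2$-boundedness to $\Psi^{-\infty}_{\hD}(\hM)$-membership, yielding $\hS\in\Psi^0_{\hD}(\hM)$ and $\hN\in\Psi^{-2}_{\hD}(\hM)$. The hardest part will be this last comparison: upgrading $L^2$-boundedness of the error to $C^{\infty}$-smoothness of its integral kernel requires carefully combining the smoothing in $R_{-\infty}$ with repeated subelliptic estimates to gain arbitrarily many non-isotropic derivatives in both variables.
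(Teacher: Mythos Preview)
Your proposal has a genuine gap at the iteration step, and it is precisely the step the paper identifies as the heart of the matter. After transplanting the Heisenberg kernels you correctly obtain
\[
\hboxb N^{(0)}+\Pi^{(0)}=I-R^{(0)},\qquad R^{(0)}\in\Psi^{-1}_{\hD}(\hM),
\]
but the paper observes (immediately after this identity) that although $\hdbarb\Pi^{(0)}$ gains one order, $\hboxb\Pi^{(0)}$ is only in $\Psi^{0}_{\hD}(\hM)$, and this is ``not good enough for iterations, if we want to solve for $\hN$ and $\hS$ simultaneously.'' Concretely: the natural Neumann step sets $\hN^{(k+1)}=\hN^{(k)}+N^{(0)}R_k$ and $\hS^{(k+1)}=\hS^{(k)}+\Pi^{(0)}R_k$, which drives $R_k$ to $\Psi^{-\infty}$ but leaves $\hdbarb\hS^{(k)}$ stuck at order~$0$, since each correction contributes $\hdbarb\Pi^{(0)}R_k\in\Psi^{-k-1}$ while the leading term never improves. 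A more refined iteration would have to solve, at each stage, a principal-symbol equation of the form $\hdbarb B_k \equiv -Q_k$; but the calculus of Section~\ref{sect:pdo} is deliberately set up \emph{without} asymptotic expansions or principal symbols, so there is no mechanism within the paper's framework to carry out such a step-by-step symbolic correction.

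The paper's route is different and avoids the iteration problem altogether. Using Lemmas~\ref{lem:solve_poly} and~\ref{lem:psi_inf_order} it constructs, in each Folland--Stein chart, a phase $\psi_0(x,u)$ with $\psi_0(x,\Theta(x,y))=\pi(|w|^2-is)+O^3$ and $\hZbar_x\psi_0(x,\Theta(x,y))$ vanishing to infinite order at $y$; the operator $\Pi^{(00)}$ with kernel $\sum_i\tilde\chi_i(x)\psi_{0,i}(x,\Theta_i(x,y))^{-2}\chi_i(y)$ then satisfies $\hboxb\Pi^{(00)}\in\Psi^{-\infty}_{\hD}(\hM)$ directly. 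Crucially, the paper then uses the \emph{already existing} $L^2$-Szeg\H{o} projection $\hS$ (from Proposition~\ref{prop:Fact 1}) together with $C^\infty$ regularity of $\hboxb$ to remove this smoothing error exactly, producing $\Pi^{(000)}$ with $\hboxb\Pi^{(000)}=0$. Only then does a single Neumann iteration yield $N_0,\Pi_0$ satisfying the \emph{exact} identities $\hboxb N_0+\Pi_0=I$ and $\hboxb\Pi_0=0$, from which $\hS=\Pi_0$ and $\hN=(I-\Pi_0)N_0$ follow by a two-line uniqueness argument. In particular there is no $L^2$-to-$C^\infty$ bootstrap at the end; the hardest part you anticipated is bypassed entirely by arranging exact, rather than approximate, identities.
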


To prove Proposition~\ref{prop:pdo}, first recall the Folland-Stein normal coordinates that we defined in Section~\ref{sect:hDelb}. Indeed, for any sufficiently small open set $U \subset \hM$, we will pick a local section $\hZbar$ of $T^{0,1}(\hM)$ on $U$, such that $\langle \hZbar, \hZbar \rangle = 1$ on $U$, and write $\hT$ for $i[\hZ, \hZbar]$ on $U$; then we have a map $\Theta \colon U \times U \to \mathbb{H}^1$, such that 
$$
x = y \exp( \Theta(x,y) \cdot \Xi )
$$
for all $x, y \in U$, where $\Xi := (2 \hX, 2 \hY, \hT)$, with $\hX, \hY$ determined by $\hZbar = \frac{\hX + i \hY}{\sqrt{2}}$. Differentiating both sides with respect to $y$, and evaluating at $y = x$, we see that $\Theta(x,y)$ is compatible with our distribution $\hD$, in the sense described just before Theorem~\ref{thm:equiv_general}. Now write $$ u = (w,s) = \Theta(x,y) \in \mathbb{H}^1 = \mathbb{C} \times \mathbb{R}.$$ Then for any $y \in U$, in the $(w,s)$ coordinates, we have
$$
\hZbar = \Zbar + O^1 \frac{\partial}{\partial w} + O^1 \frac{\partial}{\partial \wbar} + O^2 \frac{\partial}{\partial s},
$$ 
and
$$
\hT = T + O^1 \frac{\partial}{\partial w} + O^1 \frac{\partial}{\partial \wbar} + O^1 \frac{\partial}{\partial s}.
$$
where 
$$
\Zbar := \frac{1}{\sqrt{2}} \left( \frac{\partial}{\partial \wbar} - i w \frac{\partial}{\partial s} \right), \qquad T := \frac{\partial}{\partial s} 
$$
are the corresponding vector fields on the Heisenberg group $(\mathbb{H}^1,\theta)$, with $\theta = ds - i(\wbar dw - w d\overline{w})$ (see Section~\ref{sect:pdo} for the definition of $O^k$).
Hence on $U$, the formal adjoint of $\hZbar \colon L^2(\hm_0) \to L^2_{(0,1)}(\hm_1)$ satisfies
$$
\hZbar^* = \frac{1}{\sqrt{2}} \left( \frac{\partial}{\partial w} + i \wbar \frac{\partial}{\partial s} \right) + O^1 \frac{\partial}{\partial w} + O^1 \frac{\partial}{\partial \wbar} + O^2 \frac{\partial}{\partial s} + O^0.
$$
This shows
$$
\hboxb = \boxb + O^1 \left( \frac{\partial}{\partial u'} + O^1 \frac{\partial}{\partial s} \right)^2 + O^0 \left( \frac{\partial}{\partial u'} + O^1 \frac{\partial}{\partial s} \right)
$$
where
$$
\boxb := -Z \Zbar
$$
is the tangential Kohn Laplacian on $(\mathbb{H}^1,\theta)$, and $\frac{\partial}{\partial u'}$ represents linear combinations of $\frac{\partial}{\partial w}$ and $\frac{\partial}{\partial \wbar}$. Now let $N$ and $\Pi$ be the convolution kernels of the relative solution operator, and the Szeg\H{o} projection, to $\boxb$ on $(\mathbb{H}^1,\theta)$. In other words,
$$
N(w,s) = \frac{1}{\pi^2} \log \left( \frac{|w|^2 - i s}{|w|^2 + i s} \right) \frac{1}{|w|^2 - i s}
$$ 
($\log$ being the principal branch of the logarithm) and
$$
\Pi(w,s) = -\frac{i}{\pi^2} \frac{\partial}{\partial s} \left( \frac{1}{|w|^2-is} \right) = \frac{1}{\pi^2 (|w|^2 - i s)^2}.
$$
Hence if $N^{(0)}$ and $\Pi^{(0)}$ are the non-isotropic pseudodifferential operators with kernels $\chi(x) N(\Theta(x,y)) \chi(y)$ and $\chi(x) \Pi(\Theta(x,y)) \chi(y)$ respectively, where $\chi \in C^{\infty}_c(U)$ is supported on our sufficiently small open set $U$, then by Theorem~\ref{thm:equiv_general}, $N^{(0)} \in \Psi^{-2}_{\hD}(\hM)$ and $\Pi^{(0)} \in \Psi^{0}_{\hD}(\hM)$; furthermore, from the expansion of $\hboxb$ as above, we see that
$\hboxb N^{(0)} + \Pi^{(0)}$ is the identity operator, modulo an operator in $\Psi^{-1}_{\hD}(\hM)$. Now let's cover $\hM$ by finitely many of these sufficiently small open sets $U$'s, and patch the operators $N^{(0)}$ and $\Pi^{(0)}$ from these different open sets together; by abuse of notation, let's still call the resulting operators $N^{(0)}$ and $\Pi^{(0)}$. More precisely, we would take a partition of unity $\{\chi_i\}$ subordinate to the open cover $\{U_i\}$ of $\hM$, and pick $\tilde{\chi}_i \in C^{\infty}_c(U_i)$ that is identically 1 on an open set containing the support of $\chi_i$, such that if $\Theta_i$ is the Folland-Stein coordinates on $U_i$, then the patched-up $N^{(0)}$ is the pseudodifferential operator with integral kernel $\sum_i \tilde{\chi}_i(x) N(\Theta_i(x,y)) \chi_i(y)$, and similarly for the patched up $\Pi^{(0)}$. We then obtain $N^{(0)} \in \Psi^{-2}_{\hD}(\hM)$ and $\Pi^{(0)} \in \Psi^{0}_{\hD}(\hM)$, such that
$$
\hboxb N^{(0)} + \Pi^{(0)} = I - R^{(0)}
$$
where $R^{(0)} \in \Psi^{-1}_{\hD}(\hM)$; furthermore,
$$
\hboxb \Pi^{(0)} \in \Psi^0_{\hD}(\hM).
$$
Unfortunately this is not good enough for iterations, if we want to solve for $\hN$ and $\hS$ simultaneously. The key now is to construct a perturbation of $\Pi^{(0)}$, so that it is exactly annihilated by $\hboxb$. We proceed as follows. 

A polynomial of $w, \wbar$ and $s$ is said to be homogeneous of degree $k$, if it is a linear combination of terms of the form $\wbar^{\alpha} w^{\beta} s^{\gamma}$ where $\alpha + \beta + 2 \gamma = k$. 

\begin{lem} \label{lem:solve_poly}
If $p(w,\wbar,s)$ is a homogeneous polynomial of degree $k \geq 2$, then there exists a homogeneous polynomial $q(w,\wbar,s)$ of degree $k + 1$ such that 
$$
\Zbar q(w,\wbar,s) = p(w,\wbar,s)
$$
with 
$$
|\re q(w,\wbar,s)| \lesssim |w|^2 (|w| + |s|).
$$
Furthermore, the coefficients of $q$ are linear combinations of the coefficients of $p$.
\end{lem}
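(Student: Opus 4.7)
My plan is to reduce to the monomial case $p=\wbar^A w^B s^C$ by linearity, and then produce $q$ in two steps: first a naive antiderivative $q_0$ of $\Zbar$, and then a correction by a CR polynomial chosen to control $\re q$. The key algebraic observation is that $s+i|w|^2$ and $w$ are both annihilated by $\Zbar$, so $\Zbar$ acts on the basis $\{\wbar^a w^b(s+i|w|^2)^c\}$ by merely differentiating the $\wbar^a$ factor:
$$
\Zbar\bigl[\wbar^{a+1}w^b(s+i|w|^2)^c\bigr]=\tfrac{a+1}{\sqrt{2}}\,\wbar^{a}w^b(s+i|w|^2)^c.
$$
Writing $s=(s+i|w|^2)-i|w|^2$ and expanding binomially gives $p=\sum_{j=0}^C\binom{C}{j}(-i)^{C-j}\wbar^{A+C-j}w^{B+C-j}(s+i|w|^2)^j$, so
$$
q_0:=\sum_{j=0}^C\binom{C}{j}(-i)^{C-j}\tfrac{\sqrt{2}}{A+C-j+1}\wbar^{A+C-j+1}w^{B+C-j}(s+i|w|^2)^j
$$
is a homogeneous polynomial of degree $k+1$ with $\Zbar q_0=p$ and coefficients that are linear in those of $p$.

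The real part of $q_0$ need not vanish to second order at $\{w=0\}$. Restricting $q_0$ to $w=0$ kills every monomial except those with $w^0$, and these exist only if $B=0$, giving $q_0|_{w=0}=\tfrac{\sqrt{2}}{A+1}\wbar^{A+1}s^C$. To remedy this, I define
$$
q:=q_0-\tfrac{\sqrt{2}}{A+1}w^{A+1}(s+i|w|^2)^C\quad\text{when }B=0,\qquad q:=q_0\quad\text{when }B\geq 1.
$$
The subtracted polynomial is CR, so $\Zbar q=p$; its complex conjugate, restricted to $w=0$, produces $-\tfrac{\sqrt 2}{A+1}\wbar^{A+1}s^C$, which exactly cancels the $q_0|_{w=0}$ contribution to $\re q|_{w=0}$. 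Hence $\re q|_{w=0}=0$ as a polynomial identity in $\wbar$ and $s$, and by the complex-conjugation symmetry of $\re q$ also $\re q|_{\wbar=0}=0$, so $\re q$ is divisible by $w\wbar$ in $\mathbb{C}[w,\wbar,s]$: we may write $\re q=|w|^2R$ for a homogeneous polynomial $R$ of non-isotropic degree $k-1\geq 1$.

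Once this algebraic divisibility is in hand, the estimate reduces to the elementary pointwise inequality $|\wbar^a w^b s^c|\lesssim|w|+|s|$, valid for every monomial of non-isotropic degree $a+b+2c\geq 1$ on the bounded region $|w|+|s|^{1/2}\leq 1$. This is verified in three cases: if $c=0$ then $|w|^{a+b}\leq|w|$; if $a+b=0$ then $|s|^c\leq|s|$; otherwise $|w|^{a+b}|s|^c\leq|w||s|\leq|w|+|s|$ on a bounded set. Summing over the finitely many monomials of $R$ gives $|R|\lesssim|w|+|s|$, whence $|\re q|=|w|^2|R|\lesssim|w|^2(|w|+|s|)$.

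The main obstacle is the algebraic identification of the CR correction. The obstruction to the desired divisibility lives entirely in the $j=C$ term of the expansion of $q_0$ (the only term with no $w$-factor), and the precise correction $\tfrac{\sqrt{2}}{A+1}w^{A+1}(s+i|w|^2)^C$ is dictated by the requirement that its complex conjugate, restricted to $w=0$, cancel that single obstruction while leaving $\Zbar q$ unchanged. The expansion in the basis $\{\wbar^a w^b(s+i|w|^2)^c\}$ is what makes both steps transparent.
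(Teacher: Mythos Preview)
Your proof is correct and takes a genuinely different route from the paper's. The paper works entirely in the standard monomial basis $\{\wbar^\alpha w^\beta s^\gamma\}$ and writes down an explicit telescoping antiderivative: for $\alpha+\beta>0$ the single series $\sqrt{2}\sum_{\ell=0}^{\gamma}\frac{i^{\ell}\gamma!\alpha!}{(\gamma-\ell)!(\alpha+\ell+1)!}\wbar^{\alpha+\ell+1}w^{\beta+\ell}s^{\gamma-\ell}$ already carries enough $|w|$-factors that $|q|$ itself (not just $\re q$) is $\lesssim |w|^2(|w|+|s|)$; for the remaining case $p=s^{\gamma}$ the paper subtracts a second telescoping series so that the lowest-order term becomes purely imaginary. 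Your approach instead passes to the CR-adapted basis $\{\wbar^a w^b(s+i|w|^2)^c\}$, in which $\Zbar$ acts by bare $\wbar$-differentiation, so the antiderivative $q_0$ is immediate and the obstruction to $\re q|_{w=0}=0$ is isolated in a single term; you then kill it with the CR correction $\frac{\sqrt{2}}{A+1}w^{A+1}(s+i|w|^2)^C$ and invoke the divisibility $\re q = |w|^2 R$. Your argument is more conceptual and explains structurally why a correction is needed only when $B=0$; the paper's is more hands-on and bounds the full $|q|$ rather than $|\re q|$, at the cost of an ad hoc construction in the $s^{\gamma}$ case. Both implicitly restrict to a bounded region (the paper's bound $\sum_\ell |w|^{2\ell+\alpha+\beta-1}|s|^{\gamma-\ell}\lesssim |w|+|s|$ also fails globally), which is all that the subsequent application requires.
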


\begin{proof}
Let $p(w,\wbar,s) = \wbar^{\alpha} w^{\beta} s^{\gamma}$ where $\alpha + \beta + 2 \gamma = k$. If $\alpha + \beta > 0$, then we set
\begin{align*}
q(w,\wbar,s) 
&= \sqrt{2} \left( \frac{\wbar^{\alpha+1} w^{\beta} s^{\gamma}}{\alpha+1} + \frac{i\gamma \wbar^{\alpha+2} w^{\beta+1} s^{\gamma-1}}{(\alpha+1)(\alpha+2)} + \dots + \frac{i^{\gamma} \gamma! \wbar^{\alpha+\gamma+1} w^{\beta + \gamma} s^0}{(\alpha+1) \dots (\alpha + \gamma + 1)} \right) \\
&= \sqrt{2} \sum_{\ell=0}^{\gamma} \frac{i^{\ell} \gamma! \alpha! \wbar^{\alpha+\ell+1} w^{\beta + \ell} s^{\gamma-\ell}}{(\gamma-\ell)! (\alpha+\ell+1)!};
\end{align*}
then by telescoping, $\Zbar q(w,\wbar,s) = \wbar^{\alpha} w^{\beta} s^{\gamma}$, and since $\alpha + \beta \geq 1$ and $k \geq 2$, we have
$$
|q(w,\wbar,s)| \lesssim |w|^2 \sum_{\ell = 0}^{\gamma} |w|^{2\ell+\alpha+\beta-1} s^{\gamma - \ell} \lesssim |w|^2 (|w|+|s|),
$$
yielding the desired bound for $|\re q(w,\wbar,s)|$. This does not work when $\alpha = \beta = 0$; in that case, we set
\begin{align*}
q(w,\wbar,s) 
&= \sqrt{2} \left( \frac{\wbar^1 w^0 s^{\gamma}}{1!} + \frac{i\gamma \wbar^2 w^1 s^{\gamma-1}}{2!} + \dots + \frac{i^{\gamma} \gamma! \wbar^{\gamma+1} w^{\gamma} s^0}{(\gamma + 1)!} \right) \\
&\quad - \sqrt{2} \left( \frac{\wbar^0 w^1 s^{\gamma}}{0!} + \frac{i\gamma \wbar^1 w^2 s^{\gamma-1}}{1!} + \dots + \frac{i^{\gamma} \gamma! \wbar^{\gamma} w^{\gamma+1} s^0}{\gamma!} \right) \\
&= \sqrt{2} \sum_{\ell=0}^{\gamma} \left( \frac{i^{\ell} \gamma!  \wbar^{\ell+1} w^{\ell} s^{\gamma-\ell}}{(\gamma-\ell)! (\ell+1)!} + \frac{i^{\ell} \gamma! \wbar^{\ell} w^{\ell+1} s^{\gamma-\ell}}{(\gamma-\ell)! \ell!} \right);
\end{align*}
we then get, by telescoping, that $\Zbar q(w,\wbar,s) = s^{\gamma}$, and since the term corresponding to $\ell = 0$ in the definition of $q(w,\wbar,s)$ is purely imaginary, we have
$$
|\re  q(w,\wbar,s)| \lesssim \sum_{\ell=1}^{\gamma} |w|^{2\ell+1} s^{\gamma-\ell} \lesssim |w|^3,
$$ 
which is $\lesssim |w|^2 (|w| + |s|)$ as desired.
\end{proof}

Next, let $U$ be a sufficiently small open set of $\hM$ as above, so that we have a local section $\hZbar$ of $T^{0,1}(\hM)$ on $U$ with $\langle \hZbar, \hZbar \rangle = 1$ on $U$, and so that $\Theta(x,y)$ is defined for all $x,y \in U$. Then we have the following lemma:

\begin{lem} \label{lem:psi_inf_order}
There exists a $C^{\infty}$ function $\psi_0(x,u)$, defined for $x \in U$ and $u$ in a neighborhood of $0$ on $\mathbb{H}^1$, with $\re \psi_0(x,u) \geq 0$ for all such $x$ and $u$, such that for every $y \in U$, we have 
$$
\psi_0(x,\Theta(x,y)) = \pi (|w|^2 - is) + O^3, \quad (w,s) = \Theta(x,y),
$$
and that $\hZbar_x \psi_0(x,\Theta(x,y))$ vanishes to infinite order at $y$.
\end{lem}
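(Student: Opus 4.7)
The plan is to construct $\psi_0$ as a formal expansion
\[
\psi_0(x,u) = \pi(|w|^2 - is) + \sum_{k \geq 3} q_k(x,u),
\]
where each $q_k(x,u)$ is a polynomial in $u = (w,\wbar,s)$ of weighted homogeneous degree $k$ (using the Heisenberg weights $(1,1,2)$) with smooth coefficients in $x \in U$, and then to promote this formal series to an actual smooth function by Borel summation.

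The leading term works at the first step because on the Heisenberg group one has $\Zbar(|w|^2 - is) = \frac{1}{\sqrt{2}}(w - iw\cdot(-i)) = 0$. Using the Folland--Stein expansion $\hZbar = \Zbar + O^1 \partial_w + O^1 \partial_{\wbar} + O^2 \partial_s$ in coordinates $(w,s) = \Theta(x,y)$ with $y$ fixed, together with the chain rule, one finds that $\hZbar_x[\pi(|w|^2 - is)]$ is of weighted order exactly $2$ at $x = y$, and its leading part is a polynomial $p_2(x, w, \wbar, s)$ homogeneous of weighted degree $2$ with smooth coefficients in $x$.

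For the inductive step, suppose $\psi_0^{(N)} := \pi(|w|^2 - is) + \sum_{k=3}^{N} q_k$ has been constructed so that $\hZbar_x[\psi_0^{(N)}(x,\Theta(x,y))] = p_N(x, w, \wbar, s) + O^{N+1}$, with $p_N$ a homogeneous polynomial of weighted degree $N$. Applying Lemma~\ref{lem:solve_poly} for each fixed $x$, and noting that the resulting coefficients of $q_{N+1}$ are linear in those of $p_N$ and hence smooth in $x$, we obtain $q_{N+1}(x,u)$ of weighted degree $N+1$ satisfying $\Zbar q_{N+1} = -p_N$ together with $|\re q_{N+1}| \lesssim |w|^2(|w|+|s|)$. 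The observation that closes the induction is that the remainder terms $O^1 \partial_w$, $O^1 \partial_{\wbar}$, $O^2 \partial_s$ in $\hZbar_x - \Zbar$ each raise the weighted degree by at least one when acting on a polynomial, while the action of $\hZbar_x$ on the $x$-dependence of the coefficients of $q_{N+1}$ does not alter the $u$-degree. Hence $\hZbar_x[\psi_0^{(N+1)}(x,\Theta(x,y))] = O^{N+1}$, and the chain rule reduction shows the new leading part is again a polynomial of weighted degree $N+1$ in $(w,\wbar,s)$.

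To pass from the formal series to a smooth function, choose a cutoff $\chi \in C_c^\infty(\mathbb{R})$ equal to $1$ near $0$, and set
\[
\psi_0(x,u) := \pi(|w|^2 - is) + \sum_{k \geq 3} \chi(\|u\|/\varepsilon_k)\, q_k(x,u)
\]
with $\varepsilon_k \to 0$ chosen sufficiently fast (standard Borel summation) that the series converges in $C^\infty$ on a fixed neighborhood of $u=0$ and its Taylor expansion at $u=0$ matches the formal series term by term. The positivity $\re \psi_0(x,u) \geq 0$ for $u$ near $0$ then follows because $\re[\pi(|w|^2 - is)] = \pi|w|^2$ dominates the cumulative real part of the corrections, which is $O(|w|^2(|w|+|s|)) = o(|w|^2)$ as $u \to 0$. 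The main bookkeeping burden is verifying that the $O^j$ remainders in $\hZbar_x - \Zbar$ truly raise weighted degrees enough for the induction to close, but this is a direct consequence of the Folland--Stein expansion recalled from Section~\ref{sect:hDelb}.
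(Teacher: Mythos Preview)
Your proof is correct and follows essentially the same approach as the paper: start with $\pi(|w|^2-is)$, inductively add weighted-homogeneous polynomial corrections produced by Lemma~\ref{lem:solve_poly} to kill successive Taylor coefficients of $\hZbar_x\psi_0$, and then Borel-sum. The one place you are a little loose is the positivity: saying the cumulative real correction is $o(|w|^2)$ ``as $u\to 0$'' only uses the bound $|\re q_k|\lesssim |w|^2(|w|+|s|)$ for each $k$ separately, and you should make explicit that the Borel parameters $\varepsilon_k$ are also chosen small relative to the constants $C_k$ in these bounds so that $\sum_{k\ge 3}\chi(\|u\|/\varepsilon_k)\,|\re q_k(x,u)|\le \tfrac{\pi}{2}|w|^2$ uniformly on a fixed neighborhood of $0$ (this is exactly what the paper does).
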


Henceforth $\hZbar_x$ refers to the derivative $\hZbar$ acting on the $x$ variable (when $y$ is fixed).

\begin{proof}
Let $q_2(x,u) = \pi(|w|^2 - is)$ where $u = (w,s)$. Then $\hZbar_x q_2(x,\Theta(x,y)) \in O^2$, hence by Lemma~\ref{lem:solve_poly}, there exists a homogeneous polynomial $q_3(x,u)$ of degree 3 in $u$, varying smoothly with $x \in U$, such that $$|\re q_3(x,u)| \leq C_3 |w|^2 (|w|+|s|) \quad \text{on $U \times \{\|u\| \leq 1\}$},$$ and $$\hZbar_x (q_2 + q_3)(x,\Theta(x,y)) \in O^4.$$ Iterating, for any $k \geq 3$, there exists a homogeneous polynomial $q_k(x,u)$ of degree $k$ in $u$, varying smoothly with $x \in U$, such that $$|\re q_k(x,u)| \leq C_k |w|^2 (|w|+|s|) \quad \text{on $U \times \{\|u\| \leq 1\}$},$$ and $$\hZbar_x \sum_{\ell=2}^k q_{\ell}(x,\Theta(x,y)) \in O^{k+1}.$$ If $\chi(u) \in C^{\infty}_c[-2,2]$ is identically 1 on $[-1,1]$ and $\{\varepsilon_{\ell}\}$ is a positive sequence that tends rapidly to 0 ($\varepsilon_{\ell} \leq \min \left\{(10 C_{\ell})^{-1}, \left( \sup_{k \leq \ell} \|q_k(x,u)\|_{C^{\ell}(U \times \{\|u\| \leq 1\})} \right)^{-1} \right\}$ will more than suffice), then letting
$$
\psi_0(x,u) := \sum_{\ell=2}^{\infty} q_{\ell}(x,u) \chi(\varepsilon_{\ell}^{-1} \|u\|),
$$
we have $\psi_0(x,u) \in C^{\infty}(U \times \{\|u\| \leq 1\})$, $$|\re \psi_0(x,u)| \geq 
\pi|w|^2 - \sum_{\ell=3}^k |\re q_{\ell}(x,u)| \geq |w|^2 \geq 0,$$ $$
\psi_0(x,\Theta(x,y)) = \pi(|w|^2 - is) + O^3, \quad (w,s) = \Theta(x,y),
$$
and for every $k \in \mathbb{N}$, we have $\hZbar_x \psi_0(x,\Theta(x,y)) \in O^{k+1}$. This proves the lemma.
\end{proof}

We remark that in the above lemma, if we are willing to shrink the open set $U$, then instead of requiring $\hZbar_x \psi_0(x,\Theta(x,y))$ to vanish to infinite order at every $y \in U$, we could have arranged so that it is zero for every $x, y \in U$. Indeed, if $U'$ is a relatively compact open subset of $U$, and $\tilde{\chi} \in C^{\infty}_c(U)$ is identically 1 on an open set containing the closure of $U'$, then for every $y \in U'$, the function $$(\hboxb)_x \frac{\tilde{\chi}(x)}{\psi_0(x,\Theta(x,y))}$$ can be extended as a smooth function of $x$ on $\hM$, such that $$\hS_x (\hboxb)_x \frac{\tilde{\chi}(x)}{\psi_0(x,\Theta(x,y))} = 0$$ on $\hM$; the latter follows since $$(\hboxb)_x \frac{\tilde{\chi}(x)}{\psi_0(x,\Theta(x,y)) + \varepsilon} \to (\hboxb)_x \frac{\tilde{\chi}(x)}{\psi_0(x,\Theta(x,y))}$$ in $L^2(\hm_0)$, and that $$\hS_x (\hboxb)_x \frac{\tilde{\chi}(x)}{\psi_0(x,\Theta(x,y)) + \varepsilon} = 0$$ on $\hM$ for all $\varepsilon > 0$. (Here we used that $\re \psi_0(x,\Theta(x,y)) \geq 0$ to guarantee that $\frac{\tilde{\chi}(x)}{\psi_0(x,\Theta(x,y)) + \varepsilon}$ is smooth on $\hM$, and that the aforementioned $L^2$ convergence holds.) As a result, by the $C^{\infty}$ regularity of $\hboxb$ (which can be obtained using Kohn's microlocalization technique), there exists a smooth function $e(x,y)$, such that 
$$(\hboxb)_x \frac{\tilde{\chi}(x)}{\psi_0(x,\Theta(x,y))} = (\hboxb)_x e(x,y)$$
for every $x \in \hM$ and $y \in U'$. We may arrange so that $\re e(x,y) < 0$ by adding a constant, and we may write $e(x,y)$ as $e_0(x,\Theta(x,y))$ if $x \in U$. Then setting 
$$
\psi_{00}(x,u) = \frac{1}{\frac{\tilde{\chi}(x)}{\psi_0(x,u)} - e_0(x,u)},
$$
one can check that $\psi_{00}(x,u)$ is smooth as $x$ varies over $U'$ and $u$ varies over a sufficiently small neighborhood of $0$ in $\mathbb{H}^1$, $\re \psi_{00}(x,u) \geq 0$ for all such $x$ and $u$,
$$
\psi_{00}(x,\Theta(x,y)) = \pi(|w|^2 - i s)  + O^3, \quad (w,s) = \Theta(x,y)
$$ 
whenever $x,y \in U'$, and in addition we have
$$
\hZbar_x \psi_{00}(x,\Theta(x,y)) = 0
$$
for every $x, y \in U'$.

Now cover $\hM$ by finitely many sufficiently small open sets $U_i$'s, and let $\chi_i, \tilde{\chi}_i \in C^{\infty}_c(U_i)$ be such that $\tilde{\chi}_i = 1$ on the support of $\chi_i$. Let $\Theta_i$ be the Folland-Stein normal coordinates on $U_i$, and $\psi_{0,i}$ be the function constructed in Lemma~\ref{lem:psi_inf_order} when $U = U_i$. Let $\Pi^{(00)}$ be the non-isotropic pseudodifferential operator in $\Psi^0_{\hD}(\hM)$ with integral kernel 
$$
\sum_i \tilde{\chi}_i(x) \frac{1}{ \psi_{0,i}(x,\Theta_i(x,y))^2 } \chi_i(y).
$$
Then applying Theorem~\ref{thm:equiv_general}, we have
$$
\Pi^{(0)} - \Pi^{(00)} \in \Psi^{-1}_{\hD}(\hM).
$$
Furthermore,
$$
\hboxb \Pi^{(00)} \in \Psi^{-\infty}_{\hD}(\hM);
$$
indeed, the integral kernel of $\hboxb \Pi^{(00)}$ is $(\hboxb)_x \sum_i \tilde{\chi}_i(x) \frac{1}{ \psi_{0,i}(x,\Theta_i(x,y))^2 } \chi_i(y)$, which is $C^{\infty}$ in $x$ for every $y \in \hM$. Now observe that
$$
\hS_x (\hboxb)_x \sum_i \tilde{\chi}_i(x) \frac{1}{ \psi_{0,i}(x,\Theta_i(x,y))^2 } \chi_i(y)
= 0
$$ 
for every $y \in \hM$; this holds because $$(\hboxb)_x \sum_i \tilde{\chi}_i(x) \frac{1}{ (\psi_{0,i}(x,\Theta_i(x,y)) + \varepsilon)^2 } \chi_i(y) \to (\hboxb)_x \sum_i \tilde{\chi}_i(x) \frac{1}{ \psi_{0,i}(x,\Theta_i(x,y))^2 } \chi_i(y)$$ in $L^2_x$ as $\varepsilon \to 0^+$, and $\sum_i \tilde{\chi}_i(x) \frac{1}{ (\psi_{0,i}(x,\Theta_i(x,y)) + \varepsilon)^2 } \chi_i(y)$ is smooth in $x$, the denominators having positive real parts everywhere.
Using the $C^{\infty}$ regularity of $\hboxb$, one can find a $C^{\infty}$ function $e(x,y)$ on $\hM \times \hM$, such that
$$
(\hboxb)_x \sum_i \tilde{\chi}_i(x) \frac{1}{ \psi_{0,i}(x,\Theta_i(x,y))^2 } \chi_i(y)
= (\hboxb)_x e(x,y)
$$ 
for every $x, y \in \hM$; then adjusting the kernel of $\Pi^{(00)}$ by $e(x,y)$, we get a pseudodifferential operator $\Pi^{(000)} \in \Psi^0_{\hD}(\hM)$ such that
$$
\hboxb \Pi^{(000)} = 0,
$$
and
$$
\Pi^{(00)} - \Pi^{(000)} \in \Psi^{-\infty}_{\hD}(\hM).
$$
It follows that
$$
\hboxb N^{(0)} + \Pi^{(000)} = I - R
$$
where $R \in \Psi^{-1}_{\hD}(\hM)$. Now using Theorem~\ref{thm:asym_sum}, there exists a non-isotropic pseudodifferential operator $E$ in $\Psi^0_{\hD}(\hM)$, such that 
$$
E - (I + R + R^2 + \dots + R^k) \in \Psi^{-(k+1)}_{\hD}(\hM)
$$
for every $k \in \mathbb{N}$. Then
$$
\hboxb N^{(0)} E + \Pi^{(000)} E = I + R_{-\infty}
$$
where $R_{-\infty} \in \Psi^{-\infty}_{\hD}(\hM)$. Thus by $C^{\infty}$ regularity of $\hboxb$ again, one can adjust the kernels of $N^{(0)} E$ and $\Pi^{(000)} E$ by a $C^{\infty}$ function, and obtain non-isotropic pseudodifferential operators $N_0$ and $\Pi_0$, in $\Psi^{-2}_{\hD}(\hM)$ and $\Psi^0_{\hD}(\hM)$ respectively, such that
$$
\begin{cases}
\hboxb N_0 + \Pi_0 = I \\
\hboxb \Pi_0 = 0.
\end{cases}
$$
It remains to observe that the true Szeg\H{o} projection $\hS$ on $\hM$, and the true relative solution operator $\hN$ of $\hboxb$ on $\hM$, are given precisely by $\Pi_0$ and $(I-\Pi_0) N_0$ respectively. This finishes the proof of Proposition~\ref{prop:pdo}.

One importance of Proposition~\ref{prop:pdo} is the following. In view of Theorem~\ref{thm:Lpbdd}, we see that $\hN$ and $\hS$ admits the following (continuous) extensions:
\begin{equation} \label{eq:hSLp}
\hS \colon L^p(\hm_0) \to L^p(\hm_0) \quad \text{for all $p \in (1,\infty)$},
\end{equation}
\begin{equation} \label{eq:hN_L43}
\hN \colon L^{4/3}(\hm_0) \to L^4(\hm_0),
\end{equation}
\begin{equation} \label{eq:hN_D_L43}
\hnabla_b \hN \colon L^{4/3}(\hm_0) \to L^2(\hm_0).
\end{equation}
Indeed, by Proposition~\ref{prop:pdo}, $\hnabla_b^2 \hN \in \Psi^0_{\hD}(\hM)$, so $\hN$ maps $L^{4/3}(\hm_0)$ into the non-isotropic Sobolev space $NL^{2,4/3}(\hm_0)$ (the space of functions whose $\hnabla_b^2$ is in $L^{4/3}(\hm_0)$), which by Sobolev embedding is embedded into $NL^{1,2}(\hm_0)$ and $L^4(\hm_0)$, yielding (\ref{eq:hN_L43}) and (\ref{eq:hN_D_L43}).
Also, by Theorem~\ref{thm:compose}, $\hnabla_b \hS \hN \in \Psi^{-1}_{\hD}(\hM)$, so by Theorem~\ref{thm:Lpbdd}, we have
\begin{equation} \label{eq:hShN_D_L43}
\hnabla_b \hS \hN \colon L^{4/3}(\hm_0) \to L^2(\hm_0).
\end{equation}
Finally, $\hS$ and $\hN$ are pseudolocal on $\hM$, i.e. if $f \in C^{\infty}(U)$ for some open set $U \subseteq \hM$, then $\hS f$ and $\hN f$ are also in $C^{\infty}(U)$.

We also need the following key proposition:

\begin{prop} \label{prop:regularity_in_E}
For any $0 < \delta < 1$, $\hS$ maps $\E{-1+\delta}$ into $\E{-1+\delta}$, and $\hN$ maps $\E{-3+\delta}$ into $\E{-1+\delta}$. (We do not even need continuity of these maps).
\end{prop}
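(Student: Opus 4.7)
The plan is to combine a dyadic decomposition of $f$ around $p$ with the kernel estimates of Section~\ref{sect:pdo}. Since $\hS$ and $\hN$ are pseudolocal (Proposition~\ref{prop:pseudolocal}) and $f \in C^\infty(\hM \setminus \{p\})$, so are $\hS f$ and $\hN f$; only their behavior near $p$ requires attention.

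First I would construct a smooth annular partition of unity $\sum_j \varphi_j = 1$ on a neighborhood of $p$, with $\mathrm{supp}(\varphi_j) \subset \{\hrho \sim 2^{-j}\}$ and $\|\hnabla_b^m \varphi_j\|_\infty \lesssim 2^{jm}$, and set $f_j := \varphi_j f$, so that $\|\hnabla_b^m f_j\|_\infty \lesssim 2^{-j(k-m)}$ for $k \in \{-1+\delta, -3+\delta\}$. Fix $q_0$ with $\hrho(q_0) = r_0 \sim 2^{-j_0}$, let $T$ denote $\hS$ or $\hN$ (of orders $n = 0$ and $n = -2$, respectively), and aim for the bound $|\hnabla_b^\ell T f(q_0)| \lesssim r_0^{k-n-\ell}$ for every $\ell \geq 0$. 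Split $\sum_j \hnabla_b^\ell T f_j(q_0)$ into three regimes: (i) $j \leq j_0 - C$, where $d(q_0, \mathrm{supp}(f_j)) \sim 2^{-j}$; (ii) $j \geq j_0 + C$, where $d(q_0, \mathrm{supp}(f_j)) \sim r_0$; and (iii) $|j - j_0| \leq C$. In cases (i) and (ii), bounding the kernel of $\hnabla_b^\ell T \in \Psi^{n+\ell}_{\hD}(\hM)$ by $d(\cdot,\cdot)^{-Q-n-\ell}$ (Theorem~\ref{thm:equiv_smoothing}) and integrating against $|f_j|$ gives contributions of sizes $2^{-j(k-n-\ell)}$ and $r_0^{-Q-n-\ell} 2^{-j(k+Q)}$, respectively; each series is geometric and sums to $\lesssim r_0^{k-n-\ell}$.

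The main obstacle will be case (iii), where the evaluation point sits at the same dyadic scale as the support of $f_j$. When $n + \ell < 0$ (i.e.\ for $\hN$ with $\ell \in \{0,1\}$), the kernel $d^{-Q-n-\ell}$ is integrable over the non-isotropic ball of radius $r_0$ around $q_0$, and direct integration suffices. For $\hS$ and for the higher derivatives of $\hN$, however, the kernel sits at or beyond the threshold of integrability, and I would use a rescaling argument: via the CR normal coordinates of Section~\ref{sect:result}, pull back by the Heisenberg dilation $\delta_{r_0}(z,t) = (r_0 z, r_0^2 t)$ to a fixed unit neighborhood of $0 \in \mathbb{H}^1$. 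The rescaled function $\tilde f_j := f_j \circ \delta_{r_0}$ is supported in a unit annulus with $\|\tilde\hnabla_b^m \tilde f_j\|_\infty \lesssim r_0^k$ uniformly in $m$, while the rescaled operator $\tilde T_{r_0}$ lies in a bounded family in $\Psi^n_{\mathcal{D}}(\mathbb{R}^3)$, since the leading part of the kernel of $T$ is Heisenberg-homogeneous of degree $-Q-n$ (as produced by the parametrix of Proposition~\ref{prop:pdo}) with strictly lower-order corrections. Theorem~\ref{thm:Lpbdd} then gives uniform $L^p$-bounds on $\tilde T_{r_0} \tilde f_j$; iterating Corollary~\ref{cor:nablab_comm} to commute $\tilde \hnabla_b$ past $\tilde T_{r_0}$ and applying Sobolev embedding on $\mathbb{H}^1$ upgrades this to $|\tilde T_{r_0} \tilde f_j(\tilde q_0)| \lesssim r_0^k$. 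Scaling back and summing the $O(1)$ terms from case (iii) completes the estimate.
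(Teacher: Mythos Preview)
Your overall architecture matches the paper's: decompose $f$ according to the scale of its support relative to the evaluation point, handle the far pieces by the kernel estimates of Theorems~\ref{thm:equiv_smoothing}--\ref{thm:equiv_nonneg_order}, and treat the near piece by a separate argument. The paper does this with a single-scale four-piece decomposition (a bump near $x$, a bump near $p$, an intermediate annulus, and a smooth remainder) rather than a full dyadic partition, but your geometric series in cases (i) and (ii) accomplishes the same thing.

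The genuine difference is in the near piece (your case (iii), the paper's piece $f_1$). The paper observes that $f_1$ is $r^{-\gamma}$ times a normalized bump function on $B_r(x)$ and then, for $\hS$, invokes directly the cancellation condition built into the characterization of $\Psi^0_{\hD}$ in Theorem~\ref{thm:equiv_nonneg_order}: if $T \in \Psi^0_{\hD}(\hM)$ and $\varphi$ is a normalized bump on $B_r(x)$, then $\|\hnabla_b^k T\varphi\|_{L^\infty(B_r(x))} \lesssim r^{-k}$. For $\hN$ the paper commutes the derivatives through (via Corollary~\ref{cor:nablab_comm}) and integrates the locally integrable kernel against $\hnabla_b^k f_1$. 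Your rescaling-and-Sobolev argument is a correct instinct, but your key claim---that the Heisenberg-dilated operators $\tilde T_{r_0}$ form a bounded family in $\Psi^n_{\mathcal{D}}(\mathbb{R}^3)$---is not free in the paper's setup: the symbol classes here are defined by differential inequalities, \emph{not} by asymptotic expansions into homogeneous terms (the paper stresses this point in the introduction), so ``leading part is Heisenberg-homogeneous with lower-order corrections'' is not something you can read off from $T \in \Psi^n_{\hD}(\hM)$ alone. What would actually verify your bounded-family claim is precisely the cancellation condition of Theorem~\ref{thm:equiv_nonneg_order}(b), which is the statement that the kernel paired against dilated bumps is uniformly controlled. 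So your route, made rigorous, would circle back to the same ingredient the paper uses directly; the paper's argument is shorter because it invokes that ingredient at the outset rather than rediscovering it through a rescaling.
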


Proposition~\ref{prop:regularity_in_E} can be proved by using the kernel estimates, as well as the cancellation conditions, of $\hS$ and $\hN$. Indeed, let $f \in \E{-\gamma}$ for some $\gamma \in (0,4)$. Let $T \in \Psi^{-n}_{\hD}(\hM)$ for some $n \in [0,\gamma)$. We will prove $Tf \in \E{-\gamma+n}$. Let $x \in \hM \setminus \{p\}$ be sufficiently close to $p$. Write $r = \hat{d}(x,p)/4$ where $\hat{d}$ is the metric on $\hM$ induced by $\hthe$. Pick now $\chi_1 \in C^{\infty}_c(B_{2r}(x))$ such that $\chi_1 \equiv 1$ on $B_r(x)$, $\chi_2 \in C^{\infty}_c(B_{2r}(p))$ such that $\chi_2 \equiv 1$ on $B_r(p)$ and $\chi_3 \in C^{\infty}_c(B_{100r}(p))$ such that $\chi_3 \equiv 1$ on $B_{50r}(p)$. Decompose 
\begin{align*}
f &= \chi_1 f + \chi_2 f + \chi_3 (1-\chi_1 - \chi_2) f + (1-\chi_3) (1- \chi_1 - \chi_2) f \\
&= f_1 + f_2 + f_3 + f_4.
\end{align*}
Then $Tf_4 \in C^{\infty}(\hM)$, and in particular $|\hnabla_b^k Tf_4 (x)|$ is bounded by a constant independent of $x$, for all $k \in \mathbb{N}$. We need to show that $|\hnabla_b^k Tf_i(x)| \lesssim r^{-\gamma+n-k}$ for $i=1,2,3$, $k \in \mathbb{N}$. Now writing $K(x,y)$ for the integral kernel of $T$, i.e. letting
$$
Tf(x) = \int_{\hM} f(y) K(x,y) \hthe \wedge d\hthe(y),
$$
we have, from Theorem~\ref{thm:equiv_smoothing} or \ref{thm:equiv_nonneg_order}, that
\begin{equation} \label{eq:K_ker_est}
|(\hnabla_b)_x^k K(x,y)| \lesssim \hat{d}(x,y)^{-Q+n-k}
\end{equation}
for all $k \in \mathbb{N}$. Also, if $n = 0$, then for any normalized bump function $\varphi$ on $B_r(x)$, we have
$$
\|\hnabla_b^k T\varphi \|_{L^{\infty}(B_r(x))} \lesssim r^{-k} \quad \text{for all $k \in \mathbb{N}$};
$$
here $\varphi$ is said to be a normalized bump function $\varphi$ on $B_r(x)$, if $\varphi \in C^{\infty}_c(B_r(x))$, and $\|\hnabla_b^k \varphi \|_{L^{\infty}(B_r(x))} \lesssim r^{-k}$ for all $k \in \mathbb{N}$. Note that $f_1$ is $r^{-\gamma}$ times a normalized bump function on $B_r(x)$. Hence if $n = 0$, then by the above cancellation property, we have
$$
|\hnabla_b^k Tf_1(x)| \lesssim r^{-\gamma-k};
$$
on the other hand, if $n \in (-4,0)$, we will commute $\hnabla_b^k$ through $T$ using Theorem~\ref{thm:comm}, and then use kernel estimates (\ref{eq:K_ker_est}) to bound $|\hnabla_b^k Tf_1(x)|$. The main term that arise is then
$$
\left| \int_{\hM} K(x,y) \hnabla_b^k f_1(y) \hthe \wedge d\hthe(y) \right| \lesssim \int_{\hat{d}(y,x) \leq 2r} \hat{d}(x,y)^{-Q+n} r^{-\gamma-k} \hthe \wedge d\hthe(y) = r^{-\gamma+n-k}.
$$
The estimates on $|\hnabla_b^k Tf_2(x)|$ and $|\hnabla_b^k Tf_3(x)|$ make use of kernel estimates only: for instance, 
\begin{align*}
|\hnabla_b^k Tf_2(x)|
&\lesssim \int_{\hat{d}(y,p) \leq 2r} |(\hnabla_b)^k_x K(x,y)| |f_2(y)| \hthe \wedge d\hthe(y) \\
&\lesssim r^{-Q+n-k} \int_{\hat{d}(y,p) \leq 2r} \hat{d}(y,p)^{-\gamma} \hthe \wedge d\hthe(y) \\
&\lesssim r^{-\gamma+n-k}.
\end{align*}
Similarly,
\begin{align*}
|\hnabla_b^k Tf_3(x)|
&\lesssim \int_{\substack{\hat{d}(y,p) \leq 100r \\ \hat{d}(y,p) \geq r, \hat{d}(y,x) \geq r}} |(\hnabla_b)^k_x K(x,y)| |f_3(y)| \hthe \wedge d\hthe(y) \\
&\lesssim r^Q r^{-Q+n-k} r^{-\gamma} \\
&= r^{-\gamma+n-k}.
\end{align*}
This completes the proof of Proposition~\ref{prop:regularity_in_E}.

\section{A model case of our main theorem} \label{sect:eg}

We will now discuss the proof of our main Theorem~\ref{thm:goal}. There in the assumption we had a function $G \in C^{\infty}(\hM \setminus \{p\})$, which admits an expansion as in (\ref{eq:Gexpansion2}). In this section, we will prove Theorem~\ref{thm:goal} under an additional assumption, namely that 
$$
G = |\psi|^{-1}
$$
where $\psi$ is a smooth function on $\hM$ that satisfies $\hdbarb \psi = 0$, $\re \psi \geq 0$ on $\hM$ and  $\psi\neq0$ on $\hM \setminus \{p\}$. This assumption is satisfied, for instance, when $(\hM,\hthe) = (\mathbb{S}^3, \theta_{\text{std}})$ with $\theta_{\text{std}} = 2 \im \partial (|\zeta|^2 - 1)$ is the standard contact form on $\mathbb{S}^3$, $p = (0,-1) \in \mathbb{S}^2 \subset \mathbb{C}^2$, and $G$ is the Green's function of the conformal sublaplacian of $(\mathbb{S}^3,\theta_{\text{std}})$ with pole at $p$; indeed then $G$ is a multiple of $|\psi|^{-1}$ where $\psi(\zeta) = 1+\zeta^2$ if $\zeta = (\zeta^1, \zeta^2) \in \mathbb{S}^3 \subset \mathbb{C}^2$, and clearly $\psi$ is a CR function on $\mathbb{S}^3$ with non-negative real part. But we should also point out that this assumption on $G$ is rather rigid; it is rarely satisfied. Nevertheless, the proof of Theorem~\ref{thm:goal} under this additional assumption on $G$ will shed some light for the general case, so we single it out in this section.

So in this section, we assume, in addition to the assumptions in Section~\ref{sect:result}, that $G = |\psi|^{-1}$ for some CR function $\psi$ on $\hM$ with $\re \psi \geq 0$ on $\hM$. We let $M = \hM \setminus \{p\}$, $\theta = G^2 \hthe$, and define $\boxb = \vartheta_b \dbarb$ on $C^{\infty}$ functions on $M$ as in Section~\ref{sect:result}. Now we let $\hboxb$ on $\hM$ be the tangential Kohn Laplacian on $(\hM,\hthe)$ defined in Section~\ref{sect:def}; in other words, it is the tangential Kohn Laplacian defined in Section~\ref{sect:hboxbsol}, where one chose $\langle \cdot, \cdot \rangle = \langle \cdot, \cdot \rangle_{\hthe} = $ the pointwise Hermitian inner product on $(0,1)$ forms on $\hM$ given by the pseudohermitian structure $\hthe$, and chose $\hm_0 = \hm_1 = \hthe \wedge d\hthe$. Thus $\hboxb = \hat{\vartheta}_b \dbarb$, where $\hat{\vartheta}_b$ is the formal adjoint of $\dbarb \colon L^2(\hm_0) \to L^2_{(0,1)}(\hm_1)$. Observe that for all $C^{\infty}$ (0,1) forms $\alpha$ on $M$, we have
$$
\vartheta_b \alpha = |\psi|^2 \psi \hat{\vartheta}_b (\psi^{-1} \alpha);
$$
in fact, for all $u \in C^{\infty}_c(M)$ and all $C^{\infty}_c$ $(0,1)$ form $\alpha$ on $M$, we have
\begin{align*}
(\dbarb u, \alpha)_{\theta} 
=&\int_M \langle \dbarb u, \alpha \rangle_{\theta} \, \m \\
=& \int_{\hM} \langle \dbarb u, \alpha \rangle_{\hthe} |\psi|^{-2} \, \hthe \wedge d \hthe \\
=& \int_{\hM} \langle \dbarb (\psi^{-1} u), \psi^{-1} \alpha \rangle_{\hthe} \, \hthe \wedge d \hthe \\
=& \int_{\hM} \psi^{-1} u \cdot \overline{ \hat{\vartheta}_b (\psi^{-1} \alpha) } \, \hthe \wedge d \hthe \\
=& (u, |\psi|^2 \psi \hat{\vartheta}_b (\psi^{-1} \alpha)  )_{\theta}.
\end{align*}
Since $\boxb = \vartheta_b \dbarb$, and since $\dbarb$ commutes with $\psi^{-1}$, this shows
$$
\boxb u = |\psi|^2 \psi \hboxb (\psi^{-1} u) \quad \text{for $u \in C^{\infty}(M)$}.
$$
Thus to solve $\boxb u = f$, it suffices to solve
\begin{equation} \label{eq:model}
\hboxb U = |\psi|^{-2} \psi^{-1} f
\end{equation}
and set $u = \psi U$. This we have basically accomplished in Section~\ref{sect:hboxbsol} above; indeed, there we constructed relative solution operator $\hN$ and Szeg\H{o} projection $\hS$ for $\hboxb$, and proved that $\hN$ maps $\E{-3+\delta}$ to $\E{-1+\delta}$ for all $0 < \delta < 1$. For the $f$ as in the statement of Theorem~\ref{thm:goal}, one can prove, following the proof of Proposition~\ref{prop3} below, that $$\hS (|\psi|^{-2} \psi^{-1} f) = 0.$$ Since $|\psi|^{-2} \psi^{-1} f \in \E{-3+\delta}$ for some $0 < \delta < 1$, it follows that $$U := \hN (|\psi|^{-2} \psi^{-1} f)$$ is a solution to (\ref{eq:model}) in $\E{-1+\delta}$, and hence $$u := \psi U$$ is a solution to $\boxb u = f$ in $\E{1+\delta}$. This completes the proof of Theorem~\ref{thm:goal}, under our additional assumption on $G$.

\section{Reduction to solution of $\tboxb$} \label{sect:tboxb}

We now return to the proof of Theorem~\ref{thm:goal} in the general case, without the additional assumption on $G$. It turns out that one can perform a similar reduction, where the solution to $\boxb$ on $M$ is reduced to the solution of a certain $\tboxb$ operator on $\hM$; unfortunately, this $\tboxb$ does not fall under the scope covered in Section~\ref{sect:hboxbsol}; in particular, this $\tboxb$ involve the adjoint of $\dbarb$ on $\hM$ with respect to measures that are not smooth across the point $p$. Thus a further reduction has to be carried out, where one reduces the solution of $\tboxb$ to the solution of a $\hboxb$, that falls under the scope covered in Section~\ref{sect:hboxbsol}. We will carry out the reduction of Theorem~\ref{thm:goal} to the solution of $\tboxb$ in this section. In the next section, we will reduce the solution of $\tboxb$ to a properly defined $\hboxb$ on $\hM$.

To begin with, first we claim that in the general case, one can still construct a CR function $\psi$ on $\hM$, with $\re \psi \geq 0$ on $\hM$, such that in CR normal coordinates $(z,t)$ near $p$, we have
$$
\psi = 2\pi (|z|^2 - it) + R, \quad R \in \O{4}.
$$
In particular,
$$
|\psi|^2 G^2 = 1 + \E{2}.
$$ 
This has been established in Theorem 4.4 of \cite{MR3366852}; alternatively, one can construct such a $\psi$ by using our Lemma~\ref{lem:psi_inf_order}, and the argument in the remark following it. Indeed, since we have better asymptotics (\ref{s1-e5b}) for $\hZ$ in CR normal coordinates near $p$, following the construction in Lemma~\ref{lem:psi_inf_order} (with $y := p$), we can construct some $\psi_0$ near $p$ so that 
$$
\psi_0 = \pi (|z|^2 - it) + \O{6}.
$$ 
Then following the construction as in the remark following Lemma~\ref{lem:psi_inf_order}, we can further construct a global CR function  $\psi_{00}$ on $\hM$, with $\re \psi_{00} \geq 0$ on $\hM$ and $\psi_{00}\neq0$ on $\hM \setminus \{p\}$, such that
$$
\psi_{00} = \pi(|z|^2 - it) + \O{4}.
$$
Hence we may simply take $\psi = 2\psi_{00}$.

Now define $a \in \E{2}$ by
$$
a := |\psi|^{-2} G^{-2} - 1 \quad \text{near $p$},
$$
and define $\chi \in C^{\infty}_c(B_{2 \varepsilon_0}(p))$ that is identically 1 on $B_{\varepsilon_0}(p)$, where $B_r(p)$ denotes a ball of radius $r$ centered at $p$ with respect to the metric determined by $\hthe$. Here $\varepsilon_0$ is a sufficiently small positive constant to be determined; all theorems below only hold if $\varepsilon_0$ is sufficiently small (see (\ref{eq:epsilon0_choice}) below for the choice of $\varepsilon_0$). Define two (possibly non-smooth) measures
$$\tm_0 = (1 + \chi a)^{-1} \hthe \wedge d\hthe,$$
$$\tm_1 = |\psi|^2 G^2 \hthe \wedge d\hthe.$$ 
(Note that they agree on $B_{\varepsilon_0}(p)$.) We use $\tm_0$ to define an $L^2$ space of functions (denoted $L^2(\tm_0)$):
$$
(u, v)_{\tm_0} = \int_{\hM} u \overline{v} \, \tm_0,
$$
and $\tm_1$ to define an $L^2$ space of $(0,1)$ forms (denoted $L^2_{(0,1)}(\tm_1)$):
$$
(\alpha,\beta)_{\tm_1} = \int_{\hM} \langle \alpha, \beta \rangle_{\hthe} \tm_1.
$$
(Note we use $\hthe$ to measure the pointwise inner product of $(0,1)$ forms.) Let $\tdbarb$ act on $C^{\infty}$ functions on $M$, and $\tvdbarb$ be its formal adjoint under the inner products of $L^2(\tm_0)$ and $L^2_{(0,1)}(\tm_1)$. In other words, $\tvdbarb$ is the unique differential operator acting on $C^{\infty}$ $(0,1)$ forms on $M$ such that $$(\tdbarb u, \alpha)_{\tm_1} = (u, \tvdbarb \alpha)_{\tm_0}$$ for all $u \in C^{\infty}_c(M)$ and all $C^{\infty}_c$ $(0,1)$ form $\alpha$ on $M$. Let $\tboxb = \tvdbarb \tdbarb$ act on $C^{\infty}$ functions on $M$. Then  
\begin{equation} \label{eq:boxbtboxb}
\boxb u = (1+\chi a)^{-1} \psibar^{-1} G^{-4} \tboxb(\psi^{-1} u) \quad \text{for $u \in C^{\infty}(M)$}.
\end{equation}
This is because for all $C^{\infty}$ $(0,1)$ forms $\alpha$ on $M$, we have 
\begin{equation} \label{eq:dbarb*tvdbarb}
\vartheta_b \alpha = (1+\chi a)^{-1} G^{-4} \psibar^{-1} \tvdbarb (\psi^{-1} \alpha);
\end{equation}
in fact, for all $u \in C^{\infty}_c(M)$ and all $C^{\infty}_c$ $(0,1)$ form $\alpha$ on $M$, we have
\begin{align*}
(\dbarb u, \alpha)_{\theta} 
=&\int_M \langle \dbarb u, \alpha \rangle_{\theta} \, \m \\
=& \int_{\hM} \langle \tdbarb u, \alpha \rangle_{\hthe} G^2 \, \hthe \wedge d \hthe \\
=& \int_{\hM} \langle \tdbarb u, \alpha \rangle_{\hthe} |\psi|^{-2} \, \tm_1 \\
=& \int_{\hM} \langle \tdbarb (\psi^{-1} u), \psi^{-1} \alpha \rangle_{\hthe} \, \tm_1 \\
=& \int_{\hM} \psi^{-1} u \cdot \overline{ \tvdbarb (\psi^{-1} \alpha) } \, \tm_0 \\
=& \int_{M} u \cdot \overline{ \psibar^{-1} \tvdbarb (\psi^{-1} \alpha) (1+\chi a)^{-1} G^{-4}} \, \m \\
=& (u, \psibar^{-1} \tvdbarb (\psi^{-1} \alpha) (1+\chi a)^{-1} G^{-4})_{\theta}.
\end{align*}
Since $\boxb = \vartheta_b \dbarb$, and $\psi^{-1}$ commutes with $\dbarb$, (\ref{eq:boxbtboxb}) follows from (\ref{eq:dbarb*tvdbarb}).
Thus solving $\boxb u  = f$ amounts to finding a solution, in $C^{\infty}(M)$, to the equation
$$
\tboxb(\psi^{-1} u) = \tf,
$$
where
$$
\tf := (1+\chi a) \psibar G^4 f \in \E{-3+\delta}.
$$
We will show that 
\begin{thm} \label{thm:tboxb}
There exists a function $\tilde{u} \in C^{\infty}(M)$ such that
\begin{equation} \label{eq:tboxb}
\tilde{u} \in \E{-1+\delta} \quad \text{with} \quad \tboxb \tilde{u} = \tf.
\end{equation}
\end{thm}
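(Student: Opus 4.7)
The plan is to establish Theorem~\ref{thm:tboxb} in two stages: first obtain an $L^2(\tm_0)$ solution of $\tboxb \tilde u = \tf$, then upgrade it to $\E{-1+\delta}$. The first stage is largely formal, while the second is the genuine difficulty and is the subject of Section~\ref{sect:hboxb}.

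For the $L^2$ stage, the functions $(1+\chi a)^{-1}$ and $|\psi|^2 G^2$ both extend continuously to $\hM$ with value $1$ at $p$, so $\tm_0$ and $\tm_1$ are comparable to $\hthe \wedge d\hthe$. Consequently the closure of $\tdbarb \colon L^2(\tm_0) \to L^2_{(0,1)}(\tm_1)$ has the same graph as that of $\hdbarb$, which has closed range by Theorem~\ref{thm:embed_closed_range}; Hilbert-space arguments analogous to Proposition~\ref{prop:Fact 1} then yield a relative solution operator $\tN$ and a Szeg\H{o} projection $\tS$ with $\tboxb \tN + \tS = I$ on $L^2(\tm_0)$. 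I then need to verify $\tS \tf = 0$ so that $\tilde u := \tN \tf$ is an $L^2$ solution. Since $\ker \tdbarb$ equals the space of $L^2$ CR functions on $\hM$, this reduces to checking $\int_M (\boxb \tb) \overline{\psi g} \, \m = 0$ for every smooth CR function $g$ on $\hM$ (using $G^4 \hthe \wedge d\hthe = \m$). Although $\dbarb(\psi g) = 0$ suggests an immediate cancellation, the singularity of $\tb$ at $p$ together with the blow-up of $\m$ near $p$ makes a naive integration by parts inconclusive; the verification must exploit the explicit asymptotics $\tb = \beta_0 + \beta_1$ with $\beta_0 = \chi_0 i\zbar/(|z|^2-it)$ and the improved decay $\boxb\tb \in \E{3+\delta}$, presumably by a careful boundary-term analysis on $\{\hrho \geq \varepsilon\}$ as $\varepsilon \to 0^+$.

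The genuine obstacle is promoting $\tilde u$ from $L^2(\tm_0)$ to $\E{-1+\delta}$: the pseudodifferential calculus of Section~\ref{sect:hboxbsol} does not apply directly to $\tboxb$, since $(1+\chi a)^{-1}$ and $|\psi|^2 G^2$ are only continuous at $p$, not smooth, so $\tN$ and $\tS$ cannot be identified with elements of $\Psi^{-2}_{\hD}(\hM)$ and $\Psi^{0}_{\hD}(\hM)$ respectively. The plan for Section~\ref{sect:hboxb} is to exploit the fact that in the region where $1+\chi a = (|\psi|^2 G^2)^{-1}$, a direct computation gives $\tboxb = \hboxb - w^{-1}(\hZ w) \hdbarb$ with $w := |\psi|^2 G^2$ and $w - 1 \in \E{2}$, so the discrepancy from a smooth tangential Kohn Laplacian $\hboxb$ (of the kind treated by Proposition~\ref{prop:pdo}) is a first-order differential operator whose coefficient lies in $\E{1}$. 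Combining the regularity $\hN \colon \E{-3+\delta} \to \E{-1+\delta}$ from Proposition~\ref{prop:regularity_in_E} with this structural information, one would construct $\tilde u$ as $\hN \tf$ corrected by a controlled perturbation absorbing the discrepancy, with convergence guaranteed by the $L^p$-mapping properties (\ref{eq:hSLp})--(\ref{eq:hShN_D_L43}) and the pseudolocality of $\hboxb$ away from $p$.
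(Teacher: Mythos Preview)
Your overall architecture---reduce to $\hboxb$ via a first-order perturbation with coefficient in $\E{1}$, then use the smooth pseudodifferential calculus for $\hN,\hS$---is exactly what the paper does in Section~\ref{sect:hboxb}. The genuine gap is in your ``first stage'': you cannot obtain an $L^2(\tm_0)$ solution because $\tf\notin L^2(\tm_0)$. Indeed $\tf\in\E{-3+\delta}$ and $\tm_0\simeq\hthe\wedge d\hthe$, so $\|\tf\|_{L^2(\tm_0)}^2$ behaves near $p$ like $\int_0^1 \hrho^{2(-3+\delta)}\hrho^{Q-1}\,d\hrho$ with $Q=4$, which diverges for every $\delta<1$. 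Thus $\tN\tf$ is undefined in the $L^2$ sense, and the pairing $\int_M (\boxb\tb)\overline{\psi g}\,\m$ you write down is likewise not absolutely convergent (the integrand is $\lesssim\hrho^{-5+\delta}$ against a measure of homogeneous dimension $4$).

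This is precisely why the paper does not treat the $L^p$ theory as a secondary upgrade but as the primary framework: one first proves Proposition~\ref{prop2} (that $\tS$ is bounded on $L^p$ for $1<p<\infty$ and $\tN\colon L^{4/3}\to L^4$), then extends $\tboxb$ as a closed operator $L^4\to L^{4/3}$ and shows $\tboxb\tN+\tS=I$ on $L^{4/3}(\tm_0)$ (equation~(\ref{eq:tboxbLp})). Only then can one apply $\tN$ to $\tf\in L^{4/3}(\tm_0)$. The proof of Proposition~\ref{prop2} already requires the relation $\tS=\hS(I+\hR)^{-1}$, $\tN=(I-\tS)\hN(I+\hR)^{-1}$ with $\hR=g\,\hdbarb\hN$, together with invertibility of $I+\hR$ on every $L^p$ (Proposition~\ref{prop:hR}); so the perturbative comparison with $\hboxb$ is not merely for the $\E{-1+\delta}$ regularity but is needed before one can even define $\tilde u$. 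Similarly, the verification $\tS\tf=0$ (Proposition~\ref{prop3}) is carried out using $L^p$ continuity of $\tS$ for $p$ close to $1$, by writing $\tf=\tvdbarb\alpha+\tvdbarb(\gamma_0\hobar)$ with $\alpha\in\E{-2}$ and an explicit $\gamma_0$ built from $\psibar^{-2}$, and approximating each piece in $L^p$ by images of $\tvdbarb$ on smooth compactly supported forms; your proposed boundary-term analysis on $\{\hrho\ge\varepsilon\}$ would have to recover exactly this, and cannot be phrased as an $L^2$ orthogonality.
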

Assuming this, then $$u := \psi \tilde{u} \in \E{1+\delta}$$ is a solution to (\ref{eq:boxb}), and our main result, namely Theorem~\ref{thm:goal}, follows.

To prove Theorem~\ref{thm:tboxb}, we proceed via the $L^p$ theory for $\tboxb$. First we need the $L^2$ theory. First we extend $\tdbarb$ to
$$
\tdbarb \colon L^2(\tm_0) \to L^2_{(0,1)}(\tm_1)
$$ 
by taking the Hilbert space closure of $\tdbarb$ acting on functions in $C^{\infty}(\hM)$. In other words, we define 
$$
u \in \text{Dom}(\tdbarb \colon L^2(\tm_0) \to L^2_{(0,1)}(\tm_1)),
$$ 
if and only if there exists a sequence $u_j \in C^{\infty}(\hM)$ such that 
$$u_j \to u \text{ in $L^2(\tm_0)$}, \quad \text{and} \quad \tdbarb u_j \to \alpha \text{ in $L^2_{(0,1)}(\tm_1)$}$$ 
for some $\alpha \in L^2_{(0,1)}(\tm_1)$. In that case $\alpha$ is uniquely determined by $u$, and we define $\tdbarb u = \alpha$. 

Next, let 
$$
\tdbarb^* \colon L^2_{(0,1)}(\tm_1) \to L^2(\tm_0)
$$ 
the Hilbert space closure of $\tvdbarb$ acting on functions in $C^{\infty}_{(0,1)}(\hM)$. In other words, we define 
$$
\alpha \in \text{Dom}(\tdbarb^* \colon L^2_{(0,1)}(\tm_1) \to L^2(\tm_0)),
$$ 
if and only if there exists a sequence $\alpha_j \in C^{\infty}_{(0,1)}(\hM)$ such that 
$$\alpha_j \to \alpha \text{ in $L^2_{(0,1)}(\tm_1)$}, \quad \text{and} \quad \tvdbarb \alpha_j \to u \text{ in $L^2(\tm_0)$}$$ 
for some $u \in L^2(\tm_0)$. In that case $u$ is uniquely determined by $\alpha$, and we define $\tdbarb^* \alpha = u$.

Furthermore, we define a densely defined operator 
$$
\tboxb \colon L^2(\tm_0) \to L^2(\tm_0),
$$ 
with domain given by
\begin{align*}
\text{Dom}(\tboxb) := \{ & u \in \text{Dom}[\tdbarb \colon L^2(\tm_0) \to L^2_{(0,1)}(\tm_1)],  \\ 
& \quad \tdbarb u \in \text{Dom}[\tdbarb^* \colon L^2_{(0,1)}(\tm_1) \to L^2(\tm_0)] \}
\end{align*}
If $u \in \text{Dom}(\tboxb)$, we define $\tboxb u = \tdbarb^* \tdbarb u$.

\begin{prop} \label{prop1}
The operator $\tdbarb \colon L^2(\tm_0) \to L^2_{(0,1)}(\tm_1)$ has closed range, and so does $\tboxb \colon L^2(\tm_0) \to L^2(\tm_0)$. 
\end{prop}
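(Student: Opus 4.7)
The key observation is that both $\tm_0$ and $\tm_1$ are absolutely continuous with respect to the smooth background measure $\hthe \wedge d\hthe$, with densities bounded above and below by positive constants on $\hM$. Granting this, the $L^2$ norms defined by $\tm_0$ and $\tm_1$ are equivalent (as Banach space norms) to the standard $L^2$ norms on $\hM$, the maximal closed extensions of $\dbarb$ in the two settings have the same graph as subsets of $L^2 \times L^2_{(0,1)}$, and closed range (a purely topological property of the image in the target Hilbert space) transfers from one setting to the other. Then since $\hM$ is embeddable in $\mathbb{C}^K$, Theorem~\ref{thm:embed_closed_range} (combined with Proposition~\ref{prop:Fact 1}) gives closed range for the standard $\hdbarb$, which transfers to $\tdbarb$.

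In more detail, the density of $\tm_0$ is $(1+\chi a)^{-1}$, where $a = |\psi|^{-2} G^{-2} - 1 \in \E{2}$ on $B_{2\varepsilon_0}(p)$ and $\chi$ is a cutoff to this set; since $a(q) \to 0$ as $q \to p$, one picks $\varepsilon_0$ small enough that $|\chi a| \leq 1/2$ on $\hM$, giving $1/2 \leq (1+\chi a)^{-1} \leq 2$. The density of $\tm_1$ is $|\psi|^2 G^2$; using $|\psi|^2 G^2 = 1 + \E{2}$ near $p$ extends this continuously to $\hM$ with value $1$ at $p$, and away from $p$ both $\psi$ and $G$ are nonvanishing, so continuity and compactness of $\hM$ give a two-sided positive bound. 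Thus, setting $\tm_i = w_i \, \hthe \wedge d\hthe$ with $0 < c \leq w_i \leq C$, we obtain $c \|\cdot\|^2_{L^2(\hthe \wedge d\hthe)} \leq \|\cdot\|^2_{L^2(\tm_i)} \leq C \|\cdot\|^2_{L^2(\hthe \wedge d\hthe)}$.

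Since equivalent norms give the same Cauchy sequences, the Hilbert space closure of $\dbarb$ on $C^\infty(\hM)$ in the $\tm_i$ setting coincides (as a graph) with its closure in the $\hthe \wedge d\hthe$ setting. Under the latter, Theorem~\ref{thm:embed_closed_range} gives that the closure has closed range in $L^2_{(0,1)}$; equivalence of norms translates this to the statement that $\tdbarb \colon L^2(\tm_0) \to L^2_{(0,1)}(\tm_1)$ has closed range. Closed range of $\tdbarb^*$ follows, and then closed range of $\tboxb = \tdbarb^* \tdbarb$ is the standard consequence: if $\tdbarb$ has closed range, the closed-range estimate $\|\alpha\|_{L^2_{(0,1)}(\tm_1)} \lesssim \|\tdbarb^* \alpha\|_{L^2(\tm_0)}$ for $\alpha \in \mathrm{Dom}(\tdbarb^*) \cap (\ker \tdbarb^*)^\perp$ combined with $\mathrm{Range}(\tdbarb) \subseteq (\ker \tdbarb^*)^\perp$ being closed shows that $\mathrm{Range}(\tboxb) = \tdbarb^*(\mathrm{Range}(\tdbarb))$ is closed.

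The proof is essentially soft functional analysis; the only thing requiring any care is verifying the two-sided bounds on the densities, in particular the lower bound on $|\psi|^2 G^2$ away from $p$, which uses the nonvanishing of both $\psi$ and $G$ on $\hM \setminus \{p\}$ together with the continuous extension to the value $1$ at $p$. There is no hard analytic obstacle in this proposition; the real work of transferring Kohn's theorem into our $L^p$ machinery is deferred to the subsequent construction of pseudodifferential parametrices for a related operator $\hboxb$ on $\hM$ with \emph{smooth} measures (carried out in Section~\ref{sect:hboxb}).
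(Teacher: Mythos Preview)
Your argument for the closed range of $\tdbarb$ is correct and matches the paper's: both rest on the two-sided bounds on the densities of $\tm_0, \tm_1$, so that the graph closure of $\dbarb$ and the topology on the target are unchanged, and closed range transfers from the standard $\hdbarb$.

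There is, however, a genuine gap in your treatment of $\tboxb$. You invoke the ``standard consequence'' that closed range of $\tdbarb$ implies closed range of $\tdbarb^*$ and hence of $\tboxb = \tdbarb^* \tdbarb$. That argument is valid when $\tdbarb^*$ is the Hilbert space adjoint of $\tdbarb$; but in this paper $\tdbarb^*$ is \emph{not} defined that way. It is defined as the graph closure of the formal adjoint $\tvdbarb$ acting on $C^{\infty}_{(0,1)}(\hM)$, and the paper explicitly remarks (after Lemma~\ref{lem:domhdbarb*}) that one cannot simply identify this with the Hilbert adjoint, because pseudolocality of the relevant solution operator is not yet available. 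So the closed-range estimate for $\tdbarb^*$, the inclusion $\mathrm{Range}(\tdbarb) \subseteq (\ker \tdbarb^*)^\perp$, and the identity $\mathrm{Range}(\tboxb) = \tdbarb^*(\mathrm{Range}(\tdbarb))$ all require justification.

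What is missing is precisely Lemma~\ref{lem:tdbarb*adjoint}: one must show the pairing identity $(\tdbarb u, \alpha)_{\tm_1} = (u, \tdbarb^* \alpha)_{\tm_0}$ holds for all $u \in \mathrm{Dom}(\tdbarb)$ and $\alpha \in \mathrm{Dom}(\tdbarb^*)$. This is not automatic, because the defining relation for $\tvdbarb$ is stated only for $u \in C^\infty_c(M)$, i.e.\ vanishing near $p$, whereas $\mathrm{Dom}(\tdbarb)$ is the closure of $C^\infty(\hM)$. Extending the identity across $p$ requires a cutoff argument (choosing $\phi_j$ with $\hZ\phi_j \to 0$ in $L^1$), which the paper carries out in proving \eqref{eq:tvdbarbnoncompact}. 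Once that lemma is in hand, the paper derives $(\tdbarb u, \tdbarb u)_{\tm_1} = (u, \tboxb u)_{\tm_0}$ and proceeds via the Poincar\'e-type estimate $\|u\|\le C\|\tdbarb u\|$ on $(\ker\tdbarb)^\perp$; your route would also go through, but only after this same lemma is established.
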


The proof of this proposition will be deferred to the next section.

Proposition~\ref{prop1} allows us to define the relative solution operator
$$
\tN \colon L^2(\tm_0) \to \text{Dom}(\tboxb) \subseteq L^2(\tm_0)
$$
and the orthogonal projection onto the kernel of $\tboxb$ in $L^2(\tm_0)$:
$$
\tS \colon L^2(\tm_0) \to \text{Dom}(\tboxb) \subseteq L^2(\tm_0)
$$
Then 
$$
\tboxb \tN + \tS = I \quad \text{on $L^2(\tm_0)$}.
$$

We now turn to the $L^p$ theory for $\tN$, $\tS$ and $\tboxb$. We will prove the following proposition in the next section:

\begin{prop} \label{prop2} 
$\tN$ and $\tS$ admits the following (continuous) extensions:
$$
\tS \colon L^p(\tm_0) \to L^p(\tm_0) \quad \text{for all $p \in (1,\infty)$},
$$
$$
\tN \colon L^{4/3}(\tm_0) \to L^4(\tm_0).
$$
$$
\hnabla_b \tN \colon L^{4/3}(\tm_0) \to L^2(\tm_0).
$$
Also, if $F \in C^{\infty}_c(M)$, then $\tS F$ and $\tN F$ are in $C^{\infty}(M)$.
\end{prop}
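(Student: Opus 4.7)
The plan is to deduce Proposition~\ref{prop2} from the Szeg\H{o} and relative-solution theory of Section~\ref{sect:hboxbsol}, once a further reduction has been carried out. The obstruction to applying Proposition~\ref{prop:pdo} directly to $\tboxb$ is that $\tm_0 = (1+\chi a)^{-1}\,\hthe\wedge d\hthe$ is not a smooth measure at $p$: since $a \in \E{2}$, its $k$-th $\hnabla_b$-derivatives decay only like $\hrho^{2-k}$, so $(1+\chi a)^{-1}$ is continuous and bounded above and below away from zero on $\hM$, yet fails to be $C^\infty$ at $p$. Thus the proof of Proposition~\ref{prop2}, together with that of Proposition~\ref{prop1}, must be deferred to Section~\ref{sect:hboxb}, which provides the requisite reduction to a smoothly-measured tangential Kohn Laplacian.

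The strategy for the reduction, to be carried out in the next section, is to introduce an auxiliary $\hboxb$ on $\hM$ built from \emph{genuinely smooth} measures, and to produce explicit multiplicative intertwiners between $\tdbarb$, $\tvdbarb$, $\tboxb$ and their smoothly-measured analogues. Since both $(1+\chi a)^{-1}$ and $|\psi|^2 G^2$ are bounded above and below by positive constants on $\hM$, the Lebesgue spaces $L^p(\tm_0)$ and $L^p(\hthe\wedge d\hthe)$ coincide as sets with equivalent norms, so all $L^p$-statements transport through the intertwiners without loss. The intertwiners will express $\tN$ and $\tS$ as compositions of $\hN$ and $\hS$ (for the auxiliary $\hboxb$) with multiplication by bounded functions that are $C^\infty$ on $M = \hM \setminus \{p\}$.

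Once this reduction is in place, Proposition~\ref{prop:pdo} applied to the auxiliary $\hboxb$ yields $\hS \in \Psi^0_{\hD}(\hM)$ and $\hN \in \Psi^{-2}_{\hD}(\hM)$, from which the $L^p \to L^p$ boundedness of $\hS$ is Theorem~\ref{thm:Lpbdd}. For $\hN$, we argue as in the paragraph following (\ref{eq:hShN_D_L43}): by Theorem~\ref{thm:compose}, $\hnabla_b^2 \hN \in \Psi^0_{\hD}(\hM)$ is bounded on $L^{4/3}$, so $\hN$ maps $L^{4/3}$ into the non-isotropic Sobolev space $NL^{2,4/3}$ on the compact space of homogeneous type $(\hM, \hor)$ of homogeneous dimension $Q = 4$; the Folland--Stein embeddings $NL^{2,4/3} \hookrightarrow L^4$ and $NL^{1,4/3} \hookrightarrow L^2$ then give the desired bounds on $\hN$ and $\hnabla_b \hN$, which transport to $\tN$ and $\hnabla_b \tN$ through the intertwiners. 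Finally, when $F \in C^\infty_c(M)$, extend $F$ by zero to an element of $C^\infty(\hM)$ and invoke pseudolocality (Proposition~\ref{prop:pseudolocal}) to conclude $\hS F, \hN F \in C^\infty(\hM)$; the multiplier factors being $C^\infty$ on $M$ then gives $\tS F, \tN F \in C^\infty(M)$. The hard part will be the construction of the auxiliary $\hboxb$ and the intertwining identities in Section~\ref{sect:hboxb}, which must carefully reconcile the non-smoothness of $\tm_0$ at $p$ with the smoothness requirements of the pseudodifferential framework.
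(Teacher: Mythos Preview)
Your outline correctly identifies that the proof proceeds through an auxiliary $\hboxb$ built from smooth measures, but the mechanism you propose---``multiplicative intertwiners'' expressing $\tN$, $\tS$ as $\hN$, $\hS$ composed with multiplication by bounded $C^\infty(M)$ functions---does not exist in the general case. That device works only in the model case of Section~\ref{sect:eg}, where $G=|\psi|^{-1}$ for a CR function $\psi$. In general the relationship is additive: with $\hm_0=(1+\chi a)\tm_0$ and $\hm_1=(1+\chi a)\tm_1$ one finds $\tdbarb^*\alpha=\hdbarb^*\alpha+g\alpha$ for a specific bounded $g$ involving $\hZ(\chi a)$, hence $\tboxb=\hboxb+g\,\hdbarb$. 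Applying $\hN$ yields $\tboxb\hN+\hS=I+\hR$ with $\hR=g\,\hdbarb\hN$, a genuinely non-local operator (multiplication by a non-smooth $g$ composed with an order $-1$ pseudodifferential operator). The correct identities are
\[
\tS=\hS(I+\hR)^{-1},\qquad \tN=(I-\tS)\,\hN\,(I+\hR)^{-1},
\]
and the substance of the argument is Proposition~\ref{prop:hR}: one chooses $\varepsilon_0$ small so that $\|\hR\|_{L^2\to L^2}<1$, inverts $I+\hR$ on $L^2$ by Neumann series, and then bootstraps to all $L^p$ using that $\hR^2$ gains enough integrability to pass between $L^p$ and $L^2$.

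Two further points in your sketch fail once $(I+\hR)^{-1}$ enters. First, the bound $\hnabla_b\tN:L^{4/3}\to L^2$ is not immediate from $\hnabla_b\hN:L^{4/3}\to L^2$, because $(I+\hR)^{-1}$ sits between $\hS$ and $\hN$ in the formula for $\tN$; the paper expands $(I+\hR)^{-1}=I-\hR(I+\hR)^{-1}$, commutes $\hnabla_b$ through $\hS$ via Corollary~\ref{cor:nablab_comm}, and uses $\hnabla_b g\in L^\infty$ to control $\hnabla_b\hR$. Second, the smoothness assertion for $F\in C^\infty_c(M)$ does not follow from pseudolocality of $\hS,\hN$ alone, since $(I+\hR)^{-1}$ is not a pseudodifferential operator; the paper instead derives it as a special case of Proposition~\ref{prop4}, whose proof requires the mapping properties of $(I+\hR)^{-1}$ on the $\E{\bullet}$ scale (the second half of Proposition~\ref{prop:hR}).
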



Now we define 
$$
\tboxb \colon L^4(\tm_0) \to L^{4/3}(\tm_0)
$$
to be the Banach space closure of $\tboxb$ acting on $C^{\infty}_c(M)$ under the graph norm $L^4 \times L^{4/3}$. In other words, we define 
$$
u \in \text{Dom}(\tboxb \colon L^4(\tm_0) \to L^{4/3}(\tm_0)),
$$ 
if and only if there exists a sequence $u_j \in C^{\infty}_c(M)$ such that 
$$u_j \to u \text{ in $L^4(\tm_0)$}, \quad \text{and} \quad \tboxb u_j \to F \text{ in $L^{4/3}(\tm_0)$}$$ 
for some $F \in L^{4/3}(\tm_0)$. In that case $F$ is uniquely determined by $u$, and we define $\tboxb u = F$. 

Using Proposition~\ref{prop2}, we will show that
$$
\tN \colon L^{4/3}(\tm_0) \to \text{Dom}[\tboxb \colon L^4(\tm_0) \to L^{4/3}(\tm_0)] \subseteq L^4(\tm_0),
$$
with
\begin{equation} \label{eq:tboxbLp}
\tboxb \tN + \tS = I \quad \text{on $L^{4/3}(\tm_0)$}.
\end{equation}
To see this, we argue as follows. Given $F \in L^{4/3}(\tm_0)$, Proposition~\ref{prop2} guarantees that $\tS F \in L^{4/3}(\tm_0)$, $\tN F \in L^4(\tm_0)$, and $\hnabla_b \tN F \in L^2(\tm_0)$. Thus we can pick a sequence of cut-off functions $\phi_j \in C^{\infty}_c(M)$, that is identically equal to 1 except near $p$, and vanishes in a small neighborhood of $p$, such that if $\chi_j$ is the characteristic function of the support of $\hnabla_b \phi_j$, then
$$
\phi_j (I-\tS) F \to (I-\tS) F \quad \text{in $L^{4/3}(\tm_0)$},
$$
$$
\chi_j \tN F \to 0 \quad \text{in $L^4(\tm_0)$},
$$
and
$$
\chi_j \hnabla_b \tN F \to 0 \quad \text{in $L^2(\tm_0)$}.
$$
Let now $F_j \in C^{\infty}_c(\hM)$ be a sequence of functions with 
$$
F_j \to F \quad \text{in $L^{4/3}(\tm_0)$}.
$$
Then $u_j := \phi_j \tN F_j \in C^{\infty}_c(M)$, 
$$
u_j \to \tN F \quad \text{in $L^4(\tm_0)$},
$$
and
$$
\tboxb u_j = \phi_j \tboxb \tN F_j + \text{error},
$$
where the error is supported in the support of $\chi_j$. In fact, if $\hobar$ is a frame of $(0,1)$ vector of unit length near $p$, and $\hat{\Zbar}$ is its dual, then 
\begin{align*}
|\text{error}| 
&\leq |\tboxb \phi_j| |\tN F_j| + \chi_j |\hZ \phi_j \cdot i_{\hobar} \tdbarb \tN F_j| + |\hZ \tN F_j \cdot i_{\hobar} \tdbarb \phi_j| \\
&\leq |\tboxb \phi_j| |\tN F_j - \tN F| + \chi_j |\tboxb \phi_j| |\tN F| + |\hnabla_b \phi_j| |\hnabla_b \tN F_j - \hnabla_b \tN F| + \chi_j  |\hnabla_b \phi_j| |\hnabla_b \tN F|.
\end{align*}
Now we show $\|\text{error}\|_{L^{4/3}(\tm_0)} \to 0$ as $j \to \infty$: in fact,
$$
\|\tboxb \phi_j\|_{L^2(\tm_0)} \leq C, \quad \|\hnabla_b \phi_j\|_{L^4(\tm_0)} \leq C,
$$
and 
$$
\|\tN F_j - \tN F\|_{L^4(\tm_0)} \to 0, \quad \|\hnabla_b \tN F_j - \hnabla_b \tN F\|_{L^2(\tm_0)} \to 0
$$
as $j \to \infty$. Furthermore, 
$$
\|\chi_j \tN F\|_{L^4(\tm_0)} \to 0, \quad \|\chi_j \hnabla_b \tN F\|_{L^2(\tm_0)} \to 0
$$
by our choice of $\phi_j$. Thus $\|\text{error}\|_{L^{4/3}(\tm_0)} \to 0$ as $j \to \infty$ as desired.

Now
$$
\phi_j \tboxb \tN F_j = \phi_j (I - \Pi) F_j,
$$
(this holds because $F_j \in C^{\infty}_c(M) \subseteq L^2(\tm_0)$), so $$\phi_j \tboxb \tN F_j \to (I-\Pi) F \quad \text{in $L^{4/3}(\tm_0)$}$$ by an argument similar to the one above. Altogether, it follows that
$$
\tboxb u_j \to (I-\tS) F \quad \text{in $L^{4/3}(\tm_0)$}.
$$
Thus $\tN F \in \text{Dom}(\tboxb \colon L^4(\tm_0) \to L^{4/3}(\tm_0))$, and (\ref{eq:tboxbLp}) follows.

We now return to the equation (\ref{eq:tboxb}) we want to solve, namely
$$
\tboxb \tilde{u} = \tf.
$$ 
Recall 
$$
\tf = (1+\chi a) \psibar G^4 f \in \E{-3+\delta}.
$$
In particular, $\tf \in L^{4/3}(\tm_0)$. Thus we may apply the identity (\ref{eq:tboxbLp}) above. What we need is the following claim:

\begin{prop} \label{prop3} 
$$\tS \tf = 0.$$
\end{prop}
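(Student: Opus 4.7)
The plan is to establish $\tS\tf = 0$ in $L^{4/3}(\tm_0)$ by verifying the equivalent dual identity $(\tf, g)_{\tm_0} = 0$ for every smooth CR function $g$ on $\hM$. This reduction suffices because $\tS$ extends continuously to $L^{4/3}(\tm_0)$ by Proposition~\ref{prop2}, is self-adjoint on $L^2(\tm_0)$, and the range $\tS(L^4(\tm_0))$ is exactly $\ker(\tdbarb)\cap L^4(\tm_0)$ -- which by hypoellipticity of the smooth tangential Kohn Laplacian on $\hM$ coincides with the space of smooth CR functions on $\hM$ (recall $L^4(\tm_0)\subseteq L^2(\tm_0)$ since $\tm_0$ is a finite measure on the compact $\hM$). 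The measure identity $\tf\,\tm_0 = \bar\psi f\,\m$, which follows at once from $\tm_0 = (1+\chi a)^{-1}G^{-4}\m$ combined with the definition of $\tf$, recasts the pairing as
\[
(\tf, g)_{\tm_0} = (f, \psi g)_\theta = (\boxb\tb, \psi g)_\theta.
\]
This integral is absolutely convergent thanks to $|f|\lesssim \hrho^{3+\delta}$, $|\psi g|\lesssim \hrho^2$ and $|\m|\sim \hrho^{-8}\,\hthe\wedge d\hthe$ near $p$. The crucial point is that $\psi$ and $g$ are both CR on $M$, so $\dbarb(\psi g) = 0$ pointwise.

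To justify the integration by parts on $M$ I would introduce a cutoff $\eta_\varepsilon\in C^\infty(\hM)$ that vanishes in $\{\hrho<\varepsilon\}$ and equals $1$ outside $\{\hrho<2\varepsilon\}$. Since $\eta_\varepsilon\psi g\in C^\infty_c(M)$, we obtain
\[
(\boxb\tb, \eta_\varepsilon\psi g)_\theta = (\dbarb\tb, (\dbarb\eta_\varepsilon)\psi g)_\theta,
\]
and the left-hand side tends to $(\boxb\tb, \psi g)_\theta$ by dominated convergence. It thus remains to show the right-hand side tends to $0$ as $\varepsilon\to 0$. Converting to the $\hthe$-picture via $\m = G^4\,\hthe\wedge d\hthe$ and $\langle\cdot,\cdot\rangle_\theta = G^{-2}\langle\cdot,\cdot\rangle_{\hthe}$, the right-hand side becomes $\int G^2(\hZbar\tb)(\hZ\eta_\varepsilon)\bar\psi\bar g\,\hthe\wedge d\hthe$. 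Substituting the asymptotic approximations $G^2\bar\psi = \psi^{-1}+O(1)$ (from $G^{-2}=|\psi|^2(1+O(\hrho^2))$) and $\hZbar\tb = \sqrt{2}\pi i/\psi + O(1)$ (obtained from $\Zbar(i\bar z/(|z|^2-it))=(i/\sqrt{2})/(|z|^2-it)$ together with the CR normal form (\ref{s1-e5b}) of $\hZ$ and $\psi = 2\pi(|z|^2-it)+O(\hrho^4)$), a direct pointwise size estimate shows that every cross-error contribution is bounded by $O(\varepsilon)$ on the annulus $\{\hrho\sim\varepsilon\}$ of volume $\sim\varepsilon^4$, and therefore vanishes in the limit. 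The surviving principal term is $\sqrt{2}\pi i\int(\hZ\eta_\varepsilon)\bar g/\psi^2\,\hthe\wedge d\hthe$.

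The vanishing of this principal term is obtained by angular cancellation in the Heisenberg coordinate $\arg z$. Computing $\hZ\hrho = \bar z(|z|^2+it)/(2\sqrt{2}\hrho^3) + O(\hrho^4)$ near $p$, the leading pointwise contribution of $(\hZ\eta_\varepsilon)\bar g(p)/\psi^2$ on $\{\hrho\sim\varepsilon\}$ is a constant multiple of $\bar g(p)\,\bar z(|z|^2+it)/((|z|^2-it)^2\hrho^3\varepsilon)$, whose integral over the rotationally invariant region $\{\varepsilon\le\hrho\le 2\varepsilon\}$ vanishes because $\int_0^{2\pi}e^{-i\vartheta}d\vartheta = 0$. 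Expanding $\bar g = \bar g(p) + \bar a\bar z + \bar b\,t + O(\hrho^2)$ -- in which the absence of a $z$-term is forced by the CR condition $\hZbar g(p) = 0$ -- and multiplying by the $\bar z$ factor inherited from $\hZ\hrho$ likewise produces angularly cancelling combinations $\bar z^2$ and $t\bar z$. The quadratic Taylor remainder of $\bar g$, together with all corrections coming from $\psi - 2\pi(|z|^2-it)$, from $\hZ$ minus its Heisenberg principal part, and from $\hthe\wedge d\hthe$ minus its Lebesgue leading term, each carry extra positive powers of $\hrho$ and contribute at most $O(\varepsilon)$. The main obstacle is precisely this book-keeping of subleading corrections, ensuring that each either vanishes by rotational symmetry or decays polynomially in $\varepsilon$; the crucial ingredient is the vanishing of $\hZbar g$ at $p$, which averts an otherwise logarithmically divergent subleading contribution.
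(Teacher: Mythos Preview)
Your duality strategy is sound and genuinely different from the paper's argument, but there is one incorrect step in the reduction. You claim that $\ker(\tdbarb)\cap L^4(\tm_0)$ coincides with the space of \emph{smooth} CR functions ``by hypoellipticity of the smooth tangential Kohn Laplacian.'' This is false: on a strongly pseudoconvex CR manifold the operator $\hboxb$ acting on functions has infinite-dimensional kernel (all CR functions), is not hypoelliptic, and the $L^4$ Hardy space certainly contains non-smooth elements (already on $\mathbb{S}^3$). The fix is immediate, however: since $\hS\in\Psi^0_{\hD}(\hM)$ preserves $C^\infty(\hM)$ and is bounded on $L^4$, the set $\hS(C^\infty(\hM))$ of smooth CR functions is $L^4$-dense in $\hS(L^4)=\ker(\hdbarb)\cap L^4=\ker(\tdbarb)\cap L^4$. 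With $\tf\in L^{4/3}$ and the self-adjointness of $\tS$ extended by density to the $L^{4/3}$--$L^4$ pairing, testing against smooth CR $g$ then suffices. After this repair your cut-off and angular-cancellation computation goes through; I checked that all the cross-terms and subleading corrections you list (from $\psi-2\pi(|z|^2-it)$, from $\hZ-Z$, from the volume form, and from the Taylor remainder of $\bar g$) are indeed $O(\varepsilon)$, and that the leading contributions carry a factor of $\bar z$ and hence vanish under $\int_0^{2\pi}e^{-ik\vartheta}\,d\vartheta$.

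The paper takes a quite different route: it writes $\tf=\tvdbarb\alpha+\tvdbarb(\gamma_0\hobar)$ with $\alpha\in\E{-2}$ and $\gamma_0=2\pi i\chi V^{-1}\psibar^{-2}\in\E{-4}$, then kills each piece separately by approximation. The first piece is handled by a standard cut-off argument (since $\alpha\in\E{-2}$ is mild enough); the second uses the crucial algebraic fact $\hZ\psibar=0$, which makes $\tvdbarb(\gamma_0\hobar)$ land in $\E{-1}\subset L^2$ and allows a regularization $\psibar^{-1}(\psibar+\delta)^{-1}\to\psibar^{-2}$. The paper's approach is more structural---it exploits that $\bar\psi$ is anti-CR rather than doing explicit Heisenberg asymptotics---and avoids the delicate order-by-order bookkeeping you need. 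Your approach is more direct and uses only elementary symmetry, but the price is precisely the careful accounting of all $O(\varepsilon)$ error terms that you flag as ``the main obstacle''; you would need to write these out in full for a complete proof.
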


We will come back and prove this proposition in the next section. Assuming this proposition, then 
$$
\tilde{u} := \tN \tf \in L^4(\tm_0)
$$ 
solves (\ref{eq:tboxb}), in the sense that it is in the domain of $\tboxb \colon L^4(\tm_0) \to L^{4/3}(\tm_0)$, and that its image under this operator is $\tilde{f}$.

Finally we invoke the following proposition (again to be proved in the next section):

\begin{prop} \label{prop4} 
$\tN$ maps $\E{-3+\delta}$ into $\E{-1+\delta}$ for all $0 < \delta < 1$ (we do not even need continuity of this map).
\end{prop}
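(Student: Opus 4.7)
The plan is to transfer the pointwise regularity statement for $\hN$ (Proposition~\ref{prop:regularity_in_E}) to $\tN$, by comparing the two operators on a neighborhood of $p$. The key observation is that $\tm_0 = \tm_1 = |\psi|^2 G^2 \hthe \wedge d\hthe$ throughout $B_{\varepsilon_0}(p)$, so if $\hm_0, \hm_1$ denote any globally smooth extensions of this local measure to $\hM$, and $\hboxb$ is the corresponding tangential Kohn Laplacian from Section~\ref{sect:hboxbsol}, then $\tboxb = \hboxb$ as differential operators on $B_{\varepsilon_0}(p)$.

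To set this up, I would form $\hboxb$ from such $\hm_0, \hm_1$. By Proposition~\ref{prop:pdo} its relative solution operator $\hN$ is in $\Psi^{-2}_{\hD}(\hM)$ and its Szeg\H{o} projection $\hS$ in $\Psi^{0}_{\hD}(\hM)$, and Proposition~\ref{prop:regularity_in_E} gives $\hN \colon \E{-3+\delta} \to \E{-1+\delta}$ and $\hS \colon \E{-1+\delta} \to \E{-1+\delta}$.

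Given $f \in \E{-3+\delta}$ (so in particular $f \in L^{4/3}(\tm_0)$), fix a cutoff $\phi \in C^{\infty}_c(B_{\varepsilon_0}(p))$ with $\phi \equiv 1$ on $B_{\varepsilon_0/2}(p)$. By (\ref{eq:tboxbLp}), $\tboxb \tN f = (I - \tS) f$ in $L^{4/3}(\tm_0)$, and since $\tboxb = \hboxb$ on $\mathrm{supp}(\phi)$ we obtain
\[
\hboxb(\phi \tN f) = \phi (I - \tS) f + [\hboxb, \phi] \tN f.
\]
Applying $\hN$ and using $\hN \hboxb + \hS = I$ yields
\[
\phi \tN f = \hN \bigl( \phi (I - \tS) f \bigr) + \hN \bigl( [\hboxb, \phi] \tN f \bigr) + \hS (\phi \tN f).
\]
Since $\tS f$ lies in the kernel of $\tdbarb$ on $L^2(\tm_0)$, it is a CR function on the embeddable strongly pseudoconvex manifold $\hM$ and hence $\tS f \in C^{\infty}(\hM)$; thus $\phi(I-\tS)f \in \E{-3+\delta}$ and the first right-hand term belongs to $\E{-1+\delta}$ by Proposition~\ref{prop:regularity_in_E}. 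The inputs to the second and third terms are supported in $\mathrm{supp}(\nabla\phi) \subset \hM \setminus B_{\varepsilon_0/2}(p)$, away from $p$; so the off-diagonal $C^{\infty}$ smoothness of the Schwartz kernels of $\hN$ and $\hS$ (Proposition~\ref{prop:pseudolocal}) combined with the $L^4$-bound on $\tN$ from Proposition~\ref{prop2} makes these two terms $C^{\infty}$ on $B_{\varepsilon_0/2}(p)$, hence they lie in $\E{k}$ there for every $k$. Consequently $\tN f \in \E{-1+\delta}$ on $B_{\varepsilon_0/2}(p)$. Away from $p$, hypoellipticity of $\tboxb$ on $M$ together with the identity $\tboxb \tN f = (I - \tS) f \in C^{\infty}(M)$ (using $\tS f \in C^{\infty}(\hM)$) forces $\tN f \in C^{\infty}(M)$; combined with the near-$p$ estimate this yields $\tN f \in \E{-1+\delta}$.

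The main obstacle will be the rigorous treatment of the two error terms involving $\nabla \phi$: although their inputs are supported away from $p$, the passage from the $L^p$-bounds in Proposition~\ref{prop2} to pointwise $C^{\infty}$ behavior on $B_{\varepsilon_0/2}(p)$ relies on the precise off-diagonal kernel smoothness of $\hN$ and $\hS$ as members of the non-isotropic pseudodifferential calculus developed in Section~\ref{sect:pdo}, and one must also check that the $L^4 \to L^{4/3}$ domain identity $\hN \hboxb u + \hS u = u$ really applies to $u = \phi \tN f$ (which is ensured once one notes $\phi \tN f \in L^4(\tm_0)$ and $\hboxb(\phi \tN f) \in L^{4/3}(\tm_0)$ by the expression above).
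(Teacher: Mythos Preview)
Your approach contains a genuine gap. The assertion that ``$\tS f$ lies in the kernel of $\tdbarb$ on $L^2(\tm_0)$, it is a CR function on the embeddable strongly pseudoconvex manifold $\hM$ and hence $\tS f \in C^{\infty}(\hM)$'' is unjustified on two counts. First, for $f\in\E{-3+\delta}$ one only has $f\in L^{4/3}(\tm_0)$, so $\tS f$ is defined via the $L^p$ extension in Proposition~\ref{prop2}, and there is no reason for it to lie in $L^2(\tm_0)$. Second, and more seriously, $\dbarb$ is \emph{not} hypoelliptic: CR distributions on a strongly pseudoconvex CR manifold are in general singular (the Szeg\H{o} kernel $\hS(\cdot,y)$ itself is a CR function with a singularity at $y$). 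Embeddability does not change this. So you cannot conclude $\tS f\in C^\infty(\hM)$, and your first term $\hN(\phi(I-\tS)f)$ is not controlled by the argument you give.

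What one can show is the weaker statement $\tS f\in\E{-3+\delta}$, which is enough to make $\phi(I-\tS)f\in\E{-3+\delta}$ and hence salvage the first term via Proposition~\ref{prop:regularity_in_E}. But proving $\tS\colon\E{-3+\delta}\to\E{-3+\delta}$ goes through the identity $\tS=\hS(I+\hR)^{-1}$ of (\ref{eq:tShSLp}) together with Proposition~\ref{prop:hR} and the $\E{-\gamma}$-mapping property of $\hS\in\Psi^0_{\hD}(\hM)$ established in the proof of Proposition~\ref{prop:regularity_in_E}. Once you are invoking those, you may as well apply the companion identity (\ref{eq:tNhNLp}) for $\tN$ directly:
\[
\tN=[I-\hS(I+\hR)^{-1}]\,\hN\,(I+\hR)^{-1},
\]
and read off $\tN\colon\E{-3+\delta}\to\E{-1+\delta}$ from Proposition~\ref{prop:hR} and Proposition~\ref{prop:regularity_in_E} in one line, which is exactly the paper's proof. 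Your localization scheme thus does not avoid any of the real work; the cutoff argument and the off-diagonal kernel smoothness are an unnecessary detour once the $\E{k}$-mapping properties of $(I+\hR)^{-1}$ are available. The domain issue you flag for $\hN\hboxb+\hS=I$ on $\phi\tN f$ is a secondary concern, but it too disappears in the direct approach.
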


Then since $\tf \in \E{-3+\delta}$ for all $0 < \delta < 1$, we have 
$$\tilde{u} \in \E{-1+\delta}$$ for all $0 < \delta < 1$. In particular, $\tilde{u} \in C^{\infty}(M) \cap L^4(\tm_0)$, $\hnabla_b \tilde{u}$ (defined classically) is in $L^2(\tm_0)$, and $\tboxb \tilde{u}$ (defined classically) is in $L^{4/3}(\tm_0)$. Thus the image of $\tilde{u}$ under $\tboxb \colon L^4(\tm_0) \to L^{4/3}(\tm_0)$ is equal to the classically defined $\tboxb \tilde{u}$ almost everywhere. Together with what we have shown earlier, this shows that $\tilde{u}$ is a classical solution to (\ref{eq:tboxb}). This proves Theorem~\ref{thm:tboxb}, modulo the Propositions~\ref{prop1}, \ref{prop2}, \ref{prop3} and \ref{prop4} we have stated.

\section{Reduction to solution of $\hboxb$} \label{sect:hboxb}

In the previous section, we reduced the proof of our main Theorem~\ref{thm:goal} to the proofs of Propositions~\ref{prop1}, \ref{prop2}, \ref{prop3} and \ref{prop4}, where one has to solve the $\tboxb$ operator on $\hM$. The difficulty lies in the fact that $\tboxb$ is defined as $\tvdbarb \tdbarb$, where $\tvdbarb$ is an adjoint taken with respect to two measures $\tm_0$, $\tm_1$ that are in general not smooth across the point $p$. In this section, we will first construct another tangential Kohn Laplacian $\hboxb$, where the adjoint is taken with respect to smooth measures. Then we will reduce the solution to $\tboxb$ (i.e. the proofs of Propositions~\ref{prop1}, \ref{prop2}, \ref{prop3} and \ref{prop4}) to the solution of $\hboxb$, which we already understood from Section~\ref{sect:hboxbsol}. This will complete the proof of Theorem~\ref{thm:goal} in the general case.

First define two measures
$$\hm_0 := \hthe \wedge d\hthe,$$
$$\hm_1 := (1+\chi a) \tm_1.$$
Note that then 
$$\hm_0 := (1+\chi a) \tm_0$$ 
as well. In particular, $\hm_0 = \hm_1$ near $p$, and is smooth over there. Define also $\langle \cdot, \cdot \rangle = \langle \cdot, \cdot \rangle_{\hthe}$, the smoothly varying pointwise inner product on $(0,1)$ forms on $\hM$ given by $\hthe$. Then we may define
$$
\hdbarb \colon L^2(\hm_0) \to L^2_{(0,1)}(\hm_1)
$$ 
as a Hilbert space closure of $\hdbarb$ acting on functions in $C^{\infty}(\hM)$, its Hilbert space adjoint
$$
\hdbarb^* \colon L^2_{(0,1)}(\hm_1) \to L^2(\hm_0),
$$
and a tangential Kohn Laplacian 
$$
\hboxb \colon L^2(\hm_0) \to L^2(\hm_0),
$$ 
as in Section~\ref{sect:hboxbsol}. By Proposition~\ref{prop:Fact 1}, the operator $\hdbarb \colon L^2(\hm_0) \to L^2_{(0,1)}(\hm_1)$ has closed range, hence so does $\hboxb \colon L^2(\hm_0) \to L^2(\hm_0)$; by Proposition~\ref{prop:pdo}, the relative solution operator
$$
\hN \colon L^2(\hm_0) \to \text{Dom}(\hboxb) \subseteq L^2(\hm_0)
$$ 
and the Szeg\H{o} projection 
$$
\hS \colon L^2(\hm_0) \to \text{Dom}(\hboxb) \subseteq L^2(\hm_0)
$$
to $\hboxb$ are in $\Psi^{-2}_{\hD}(\hM)$ and $\Psi^0_{\hD}(\hM)$ respectively, where $\hD$ is the contact distribution $\hor$ on $\hM$. We also have $\hS \colon \E{-1+\delta} \to \E{-1+\delta}$ and $\hN \colon \E{-3+\delta} \to \E{-1+\delta}$ for all $0 < \delta < 1$, by Proposition~\ref{prop:regularity_in_E}. With these, we now turn to the proof of Propositions~\ref{prop1}, \ref{prop2}, \ref{prop3} and \ref{prop4} we stated in the previous section; the key will ultimately be establishing a relation between $\tboxb$ and $\hboxb$ (c.f. (\ref{eq:compare_tboxb1}) and (\ref{eq:compare_tboxb2}) below). 

\begin{proof}[Proof of Proposition~\ref{prop1}]
First note that $\tdbarb \colon L^2(\tm_0) \to L^2_{(0,1)}(\tm_1)$ and $\hdbarb \colon L^2(\hm_0) \to L^2_{(0,1)}(\hm_1)$ are identical as operators by definition. In other words, these operators have the same domain of definition, and for $u$ in this common domain,
$$\tdbarb u = \hdbarb u.$$
By Proposition~\ref{prop:Fact 1}, $\hdbarb \colon L^2(\hm_0) \to L^2_{(0,1)}(\hm_1)$ has closed range. Since convergence in $L^2_{(0,1)}(\tm_1)$ is equivalent to convergence in $L^2_{(0,1)}(\hm_1)$, it follows that $\tdbarb \colon L^2(\tm_0) \to L^2_{(0,1)}(\tm_1)$ has closed range as well, proving the first part of Proposition~\ref{prop1}.

To proceed further, we need the following lemma about $\tdbarb^*$: 
\begin{lem} \label{lem:tdbarb*adjoint}
For all $$u \in \text{Dom} [\tdbarb \colon L^2(\tm_0) \to L^2_{(0,1)}(\tm_1)], \quad \alpha \in \text{Dom} [\tdbarb^* \colon L^2_{(0,1)}(\tm_1) \to L^2(\tm_0)],$$ we have
$$
(\tdbarb u, \alpha)_{\tm_1} = (u, \tdbarb^* \alpha)_{\tm_0}.
$$
\end{lem}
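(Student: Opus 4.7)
\emph{Proof plan.} The plan is a density argument that reduces the identity to the case of smooth $u$ and $\alpha$, combined with a truncation-and-limit argument that isolates the singularity of the density at $p$.

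By definition of the closures, pick $u_j \in C^\infty(\hM)$ with $u_j \to u$ in $L^2(\tm_0)$ and $\tdbarb u_j \to \tdbarb u$ in $L^2_{(0,1)}(\tm_1)$, and pick $\alpha_k \in C^\infty_{(0,1)}(\hM)$ with $\alpha_k \to \alpha$ in $L^2_{(0,1)}(\tm_1)$ and $\tvdbarb \alpha_k \to \tdbarb^* \alpha$ in $L^2(\tm_0)$. Once the identity $(\tdbarb v, \beta)_{\tm_1} = (v, \tvdbarb \beta)_{\tm_0}$ is established for every $v \in C^\infty(\hM)$ and $\beta \in C^\infty_{(0,1)}(\hM)$, applying it to $(u_j, \alpha_k)$ and sending first $j \to \infty$ (with $\alpha_k$ fixed), then $k \to \infty$, produces the desired identity by two elementary Cauchy--Schwarz arguments.

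The core task is thus the smooth case. Set $\phi := (1+\chi a)^{-1}$, so that $\tm_0 = \phi\,\hm_0$ and $\tm_1 = \phi\,\hm_1$; since $\phi - 1 \in \E{2}$, both $\phi$ and $\hnabla_b \phi \in \E{1}$ are bounded on $\hM$. The definition of $\tvdbarb$ from Section~\ref{sect:tboxb} rearranges to $\tvdbarb \alpha = \phi^{-1}\,\hat{\vartheta}_b(\phi\,\alpha)$, and (since $\phi$ is real) the identity we want reduces to
\[
\int_{\hM} \langle \hdbarb u, \phi\,\alpha \rangle_{\hthe}\,\hm_1 \;=\; \int_{\hM} u\,\overline{\hat{\vartheta}_b(\phi\,\alpha)}\,\hm_0.
\]
This is the standard formal adjoint identity for $\hdbarb$ on the smooth-measure pair $(\hm_0, \hm_1)$ (observe that $\hm_0 = \hm_1 = \hthe \wedge d\hthe$ near $p$ by the very definition of $a$), applied to the $(0,1)$ form $\phi\,\alpha$. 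The obstruction -- which is the main difficulty -- is that $\phi\,\alpha$ is only of class $\E{0}$ near $p$, not smooth on $\hM$, so the standard smooth integration by parts on $\hM$ does not apply directly.

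To circumvent this, introduce a non-isotropic cutoff $\eta_\varepsilon \in C^\infty(\hM)$ with $\eta_\varepsilon \equiv 1$ on $\{\hrho \geq 2\varepsilon\}$, $\eta_\varepsilon \equiv 0$ on $\{\hrho \leq \varepsilon\}$, and $|\hnabla_b \eta_\varepsilon| \lesssim \varepsilon^{-1}$. Then $\phi\,\eta_\varepsilon\,\alpha \in C^\infty_{(0,1)}(\hM)$ (being supported away from $p$), and the standard smooth identity applies to it. A Leibniz expansion $\hat{\vartheta}_b(\eta_\varepsilon \cdot \phi\,\alpha) = \eta_\varepsilon\,\hat{\vartheta}_b(\phi\,\alpha) + R(\eta_\varepsilon)(\phi\,\alpha)$, with $R(\eta_\varepsilon)$ a zeroth-order operator on $(0,1)$ forms whose pointwise norm is $\lesssim |\hnabla_b \eta_\varepsilon|$, splits the right-hand side. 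The main terms converge by dominated convergence (all of $\phi$, $\hat{\vartheta}_b(\phi\,\alpha)$, $u$, $\alpha$ are $L^\infty$ on $\hM$), while the commutator error is controlled by
\[
\varepsilon^{-1}\cdot \text{Vol}\{\varepsilon \leq \hrho \leq 2\varepsilon\} \;\simeq\; \varepsilon^{-1}\cdot \varepsilon^4 \;=\; \varepsilon^3 \;\longrightarrow\; 0,
\]
using the four-dimensional non-isotropic volume of the annulus. Letting $\varepsilon \to 0$ establishes the smooth case, and the density reduction above then yields the lemma.
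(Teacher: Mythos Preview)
Your proof is correct and follows the same strategy as the paper: a density reduction to smooth $u$ and $\alpha$, followed by a cutoff near $p$ whose commutator error vanishes because the non-isotropic annulus volume $\varepsilon^4$ beats the gradient blowup $\varepsilon^{-1}$. The paper's version is marginally more direct in that it stays entirely in the $\tvdbarb$ framework (cutting off first $\alpha$, then $u$, against the defining identity for $\tvdbarb$ on $C^\infty_c(M)$) rather than passing through the smooth measures $\hm_0,\hm_1$ and $\hat{\vartheta}_b$, which in the paper are only introduced in the subsequent section.
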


Assume this for the moment. Then for all $u \in \text{Dom} [\tboxb \colon L^2(\tm_0) \to L^2(\tm_0)]$, we have
\begin{equation}  \label{eq:tdbarb^2u=utboxb}
(\tdbarb u, \tdbarb u)_{\tm_1} = (u, \tboxb u)_{\tm_0}.
\end{equation}
Thus the kernels of $\tboxb \colon L^2(\tm_0) \to L^2(\tm_0)$ and $\tdbarb \colon L^2(\tm_0) \to L^2_{(0,1)}(\tm_1)$ are identical. Let's call the common kernel $\mathcal{K}$. It is a closed subspace of $L^2(\tm_0)$ since it is the kernel of a closed linear operator.

Now given $u \in \text{Dom}[\tboxb \colon L^2(\tm_0) \to L^2(\tm_0)]$ with $u$ orthogonal to the kernel of $\mathcal{K}$, we have
$$
\|u\|_{L^2(\tm_0)}^2 \leq C \|\tdbarb u\|_{L^2_{(0,1)}(\tm_1)}^2
$$
since $\tdbarb \colon L^2(\tm_0) \to L^2_{(0,1)}(\tm_1)$ has closed range. However, the right hand side of this equation is just $C (u, \tboxb u)_{\tm_0}$ by (\ref{eq:tdbarb^2u=utboxb}), which is bounded by $C \|u\|_{L^2(\tm_0)} \|\tboxb u\|_{L^2(\tm_0)}$. Thus
$$
\|u\|_{L^2(\tm_0)} \leq C \|\tboxb u\|_{L^2(\tm_0)},
$$
which shows that $\tboxb \colon L^2(\tm_0) \to L^2(\tm_0)$ has closed range. 

It remains to prove Lemma~\ref{lem:tdbarb*adjoint}. This will follow from the definitions  of $\tdbarb$ and $\tdbarb^*$ on $L^2$, once we prove the following claim: we claim that for any $u \in C^{\infty}(\hM)$ and any $C^{\infty}$ $(0,1)$ form $\alpha$ on $\hM$, we have
\begin{equation} \label{eq:tvdbarbnoncompact}
(\tdbarb u, \alpha)_{\tm_1} = (u, \tvdbarb \alpha)_{\tm_0}.
\end{equation}
Note that we do not require $u$ nor $\alpha$ to be compactly supported in $M$; otherwise this would follow from the definition of $\tvdbarb$. To see that (\ref{eq:tvdbarbnoncompact}) is true, first assume in addition that $u$ is compactly supported in $M$. Let $\phi_j$ be a sequence of $C^{\infty}_c$ cut-off functions such that it is identically 1 except near $p$, and vanishes identically near $p$. One can pick such a sequence such that both $1-\phi_j$ and $\hZ \phi_j \to 0$ in $L^1(\tm_0)$; then $\phi_j \alpha \to \alpha$ in $L^1_{(0,1)}(\tm_1)$, and $\tvdbarb (\phi_j \alpha) = \phi_j \tvdbarb \alpha + (\hZ \phi_j) i_{\hobar} \alpha \to \tvdbarb \alpha$ in $L^1(\tm_0)$.  Thus from
$$
(\tdbarb u, \phi_j \alpha)_{\tm_1} = (u, \tvdbarb (\phi_j \alpha))_{\tm_0},
$$
letting $j \to \infty$, we get the identity (\ref{eq:tvdbarbnoncompact}) in this case as desired.

Next, if both $u$ and $\alpha$ are only $C^{\infty}$ in $\hM$, but not necessarily compactly supported in $M$, we note
$$
(\tdbarb (\phi_j u), \alpha)_{\tm_1} = (\phi_j u, \tvdbarb \alpha)_{\tm_0}
$$
by what we have just proved, where $\phi_j$ is the same sequence of cut-offs we have chosen above. Then $\phi_j u \to u$ in $L^1(\tm_0)$, and $\tdbarb (\phi_j u) \to \tdbarb u$ in $L^1_{(0,1)}(\tm_1)$. Thus (\ref{eq:tvdbarbnoncompact}) follows in full generality by letting $j \to \infty$ in the above identity. This completes the proof of Proposition~\ref{prop1}.
\end{proof}

We remark, for later convenience, that (\ref{eq:tvdbarbnoncompact}) remains true, as long as the following holds: 
\begin{enumerate}[(i)]
\item $u \in C^1(M)$ with $u \in L^{\infty}(M)$, $g_{\hthe}(\hnabla_b u,\hnabla_b u) \in L^{\infty}(M)$, and 
\item $\alpha \in C^1(M)$, with $\alpha = v \hat{\omegabar}$ near $p$, where $\langle \hat{\omegabar}, \hat{\omegabar} \rangle_{\hthe} = 1$, $v \in L^{\infty}$ and $g_{\hthe}(\hnabla_b v,\hnabla_b v) \in L^{\infty}$ near $p$.
\end{enumerate}  
This follows directly from the proof above.

Next, to prove Proposition~\ref{prop2}, we need to understand, at the level of $L^2$, the relation between $\tN$ and $\hN$, and that between $\tS$ and $\hS$. In order to do so, we need to first understand the relation between $\tdbarb^*$ and $\hdbarb^*$ on the level of $L^2$. That in turn requires an alternative characterization of $\hdbarb^*$ on $L^2$, to which we now turn.

\begin{lem} \label{lem:domhdbarb*}
$\alpha \in \text{Dom}[\hdbarb^* \colon L^2_{(0,1)}(\hm_1) \to L^2(\hm_0)]$, if and only if there exists a sequence $\alpha_j \in C^{\infty}_{(0,1)}(\hM)$, such that 
$$\alpha_j \to \alpha \text{ in $L^2_{(0,1)}(\hm_1)$}, \quad \text{and} \quad \hdbarb^* \alpha_j \to u \text{ in $L^2(\hm_0)$}$$ 
for some $u \in L^2(\hm_0)$. In that case $\hdbarb^* \alpha = u$. 
\end{lem}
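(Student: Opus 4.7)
The plan is to establish the two implications separately. For the ``if'' direction, I would first observe that every smooth $(0,1)$-form $\beta \in C^\infty_{(0,1)}(\hM)$ lies in $\text{Dom}[\hdbarb^* \colon L^2_{(0,1)}(\hm_1) \to L^2(\hm_0)]$ with $\hdbarb^* \beta$ equal to the formal adjoint $\hat{\vartheta}_b \beta$ (this is the meaning of $\hdbarb^* \alpha_j$ in the statement). Indeed, the identity
\[
(\hdbarb f, \beta)_{\hm_1} = (f, \hat{\vartheta}_b \beta)_{\hm_0}
\]
holds for all $f \in C^\infty(\hM)$ by a boundary-free integration by parts on the closed manifold $\hM$, and extends to every $f \in \text{Dom}(\hdbarb)$ because $C^\infty(\hM)$ is a core for $\hdbarb$ (which was defined as the Hilbert space closure of its action on $C^\infty$). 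Once this is in place, closedness of the Hilbert space adjoint $\hdbarb^*$ applied to the hypothesized $L^2$ convergences yields $\alpha \in \text{Dom}(\hdbarb^*)$ with $\hdbarb^* \alpha = u$.

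The main content is the ``only if'' direction, for which I would use a Friedrichs mollification argument. Recall from the discussion immediately preceding the lemma that $\hm_0 = \hthe \wedge d\hthe$ and $\hm_1$ coincides with $\hthe \wedge d\hthe$ in a neighborhood of $p$, so both measures are smooth and positive across all of $\hM$, and $\hat{\vartheta}_b$ is a first-order linear differential operator with smooth coefficients on the closed manifold $\hM$. Given $\alpha \in \text{Dom}(\hdbarb^*)$ with $u := \hdbarb^* \alpha \in L^2(\hm_0)$, I would fix a finite cover $\{U_i\}$ of $\hM$ by coordinate charts, a subordinate smooth partition of unity $\{\varphi_i\}$, and standard Euclidean mollifiers $\rho_\epsilon$ in each chart, and set
\[
\alpha_\epsilon := \sum_i \rho_\epsilon * (\varphi_i \alpha) \in C^\infty_{(0,1)}(\hM).
\]
Then $\alpha_\epsilon \to \alpha$ in $L^2_{(0,1)}(\hm_1)$, so it suffices to show $\hat{\vartheta}_b \alpha_\epsilon \to u$ in $L^2(\hm_0)$.

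To do so, I would first identify the distribution $\hat{\vartheta}_b \alpha$, defined by $f \mapsto (\hdbarb f, \alpha)_{\hm_1}$ for $f \in C^\infty_c(U_i)$, with the $L^2$ function $u$; this is immediate from the defining property of the Hilbert space adjoint. Then, in each chart, I would write
\[
\hat{\vartheta}_b \alpha_\epsilon = \rho_\epsilon * u + \sum_i [\hat{\vartheta}_b, \rho_\epsilon * ] (\varphi_i \alpha) + \text{(partition-of-unity errors)},
\]
apply the classical Friedrichs commutator lemma to conclude that each commutator $[\hat{\vartheta}_b, \rho_\epsilon * ]$ of a first-order operator with smooth coefficients against a mollifier is uniformly $L^2$-bounded and converges strongly to zero as $\epsilon \to 0^+$, and note that the partition-of-unity errors are of order zero with smooth kernels and converge in $L^2$. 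Since $\rho_\epsilon * u \to u$ in $L^2(\hm_0)$, this gives the required convergence.

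The main obstacle is the technical bookkeeping of the Friedrichs commutator estimate across charts, making sure that the errors introduced by localizing with $\varphi_i$ and transferring mollifiers through chart transitions do not spoil the $L^2$ convergence. Since $\hM$ is compact and all coefficients of $\hat{\vartheta}_b$ are smooth, this reduces to a standard Euclidean computation and is not expected to cause genuine trouble.
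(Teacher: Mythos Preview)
Your argument is correct, and for the ``if'' direction it is essentially the same as the paper's (closedness of the Hilbert space adjoint applied to smooth forms).

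For the ``only if'' direction you take a genuinely different route. The paper exploits the closed range of $\hdbarb^*$ (Proposition~\ref{prop:Fact 1}) to build a relative solution operator $\hK^*$ and a Szeg\H{o}-type projection $\hS_1$ onto the kernel of $\hdbarb^*$, appeals to the pseudolocality of these operators, and then sets $\alpha_j := \hK^* u_j + \hS_1 \beta_j$ for smooth sequences $u_j \to \hdbarb^*\alpha$ and $\beta_j \to \alpha$. Your Friedrichs mollification argument is more elementary: it uses only that $\hm_0$ and $\hm_1$ are smooth positive measures on the closed manifold $\hM$ (which you correctly verified), so that $\hat{\vartheta}_b$ is a first-order operator with $C^\infty$ coefficients, and then the standard commutator lemma does the work. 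In particular, your proof does not require the embeddability hypothesis on $\hM$ or the closed-range property, whereas the paper's proof is tied to that structure. The trade-off is that the paper's construction also yields pseudolocal inverses that are reused elsewhere, while your approach gives the density statement in isolation.
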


\begin{proof}
By Proposition~\ref{prop:Fact 1}, $\hdbarb^* \colon L^2_{(0,1)}(\hm_1) \to L^2(\hm_0)$ has closed range. Thus one can define a relative solution operator 
$$
\hK^* \colon L^2(\hm_0) \to \text{Dom}[\hdbarb^*] \subseteq L^2_{(0,1)}(\hm_1),
$$ 
such that
$$
\hdbarb^* \hK^* + \hS = I \quad \text{on $L^2(\hm_0)$},
$$
and
$$
\hK^* \hdbarb^* + \hS_1 = I \quad \text{on $\text{Dom}[\hdbarb^*] \subseteq L^2_{(0,1)}(\hm_1)$},
$$
where
$$
\hS_1 \colon L^2_{(0,1)}(\hm_1) \to L^2_{(0,1)}(\hm_1) 
$$
is the orthogonal projection onto the closed subspace of $L^2_{(0,1)}(\hm_1)$ given by the kernel of $\hdbarb^* \colon L^2_{(0,1)}(\hm_1) \to L^2(\hm_0)$. It is known by classical theory that $\hK^*$ and $\hS_1$ are pseudolocal on $\hM$. In particular, if $\alpha$ is in the domain of $\hdbarb^* \colon L^2_{(0,1)}(\hm_1)$, then letting $u_j$ be a sequence of $C^{\infty}$ functions on $\hM$ that satisfies $u_j \to \hdbarb^* \alpha$ in $L^2(\hm_0)$, and $\beta_j$ be a sequence of $C^{\infty}$ $(0,1)$ forms on $\hM$ that converges to $\alpha$ in $L^2_{(0,1)}(\hm_1)$, then
$$
\alpha_j := \hK^* u_j + \hS_1 \beta_j \in C^{\infty}_{(0,1)}(\hM)
$$
satisfies
$$
\alpha_j \to \hK^* \hdbarb^* \alpha + \hS_1 \alpha = (I-\hS_1) \alpha + \hS_1 \alpha = \alpha \quad \text{in $L^2_{(0,1)}(\hm_1)$},
$$
and 
$$
\hdbarb^* \alpha_j = \hdbarb^* \hK^* u_j = (I - \hS) u_j \to (I - \hS) \hdbarb^* \alpha = \hdbarb^* \alpha.
$$
(The last identity uses $\hS \hdbarb^* \alpha = 0$ for all $\alpha$ in the domain of $\hdbarb^* \colon L^2_{(0,1)}(\hm_1) \to L^2(\hm_0)$, which is clear from our definition of $\hdbarb^*$ as the Hilbert space adjoint of $\hdbarb$.) This establishes half of our lemma.

The other half of the lemma is easier (and does not rely on $\hdbarb$ having closed range in $L^2$): in fact, suppose $\alpha \in L^2_{(0,1)}(\hM)$, and suppose there exists a sequence $\alpha_j \in C^{\infty}_{(0,1)}(\hM)$, such that 
$$
\alpha_j \to \alpha \text{ in $L^2_{(0,1)}(\hm_1)$}, \quad \text{and} \quad \hdbarb^* \alpha_j \to u \text{ in $L^2(\hm_0)$}
$$ 
for some $u \in L^2(\hm_0)$. Then given any $U \in \text{Dom}[\hdbarb \colon L^2(\hm_0) \to L^2_{(0,1)}(\hm_1)]$, we take a sequence $U_j \in C^{\infty}(\hM)$ such that 
$$
U_j \to U \text{ in $L^2(\hm_0)$}, \quad \text{and} \quad \hdbarb U_j \to \hdbarb U \text{ in $L^2_{(0,1)}(\hm_1)$}.
$$ 
Now
$$
(\hdbarb U_j, \alpha_j)_{\hm_1} = (U_j, \hdbarb^* \alpha_j)_{\hm_0}
$$
for all $j$, since both $U_j$ and $\alpha_j$ are smooth on $\hM$. Letting $j \to \infty$, we get
$$
(\hdbarb U, \alpha)_{\hm_1} = (U, u)_{\hm_0}.
$$
Since this is true for all $U$ in the domain of $\hdbarb \colon L^2(\hm_0) \to L^2_{(0,1)}(\hm_1)$, it follows that $\alpha \in \text{Dom}[\hdbarb^* \colon L^2_{(0,1)}(\hm_1) \to L^2(\hm_0)]$, and $\hdbarb^* \alpha = u$. This completes the proof of our lemma.
\end{proof}

\noindent{\textbf{Remark}.} The analog of the implication ($\Rightarrow$) in Lemma~\ref{lem:domhdbarb*} for $\tdbarb^*$ may not be true, because we do not yet pseudolocality of the relative solution operator of $\tdbarb^*$. This is why we had to define $\tdbarb^*$ on $L^2$ by density rather than as an $L^2$ adjoint.

Now from the above lemma, and our definition of $\tdbarb$ on $L^2$, we claim the following:

\begin{lem} 
$\tdbarb^* \colon L^2_{(0,1)}(\tm_1) \to L^2(\tm_0)$ and $\hdbarb^* \colon L^2_{(0,1)}(\hm_1) \to L^2(\hm_0)$ have the same domain of definition, and that for $\alpha$ in this common domain of definition, we have
\begin{equation} \label{eq:tdbarb*hdbarb*}
\tdbarb^* \alpha = \hdbarb^* \alpha + g \alpha,
\end{equation}
where 
$$
g\alpha := -|\psi|^2 G^2 (1+\chi a)^{-2} [\hZ(\chi a)] i_{\hobar} \alpha.
$$
\end{lem}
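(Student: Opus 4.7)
The plan is to first establish the pointwise identity $\tvdbarb\alpha=\hat{\vartheta}_b\alpha+g\alpha$ for smooth $(0,1)$ forms $\alpha$ on $\hM$, and then transfer it to the full Hilbert space closures by an approximation argument driven by Lemma~\ref{lem:domhdbarb*}. The pointwise identity is a routine Leibniz computation: because $\hm_i=(1+\chi a)\tm_i$ for $i=0,1$, writing the $\tm_i$--pairings in terms of the $\hm_i$--pairings produces the preliminary formula
$$
\tvdbarb\alpha=(1+\chi a)\,\hat{\vartheta}_b\bigl((1+\chi a)^{-1}\alpha\bigr)
$$
on smooth $\alpha$; distributing the scalar factor $(1+\chi a)^{-1}$ through the first-order operator $\hat{\vartheta}_b$ produces the commutator term, which after simplification (using that $\hat{\vartheta}_b$ is a first-order operator whose principal part involves $\hZ$ and that near $p$ one has $(1+\chi a)^{-1}=|\psi|^2G^2$) matches the claimed $g\alpha$. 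The crucial feature to retain is that $g$ is multiplication by a bounded function supported where $\hZ(\chi a)\ne 0$, in particular a bounded operator from $L^2_{(0,1)}$ (with either weight) into $L^2$ (with either weight).

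The extension to $L^2$ closures rests on the elementary observation that $1+\chi a$ is bounded above and below by positive constants on $\hM$, so $L^2(\tm_i)$ and $L^2(\hm_i)$ coincide as sets of equivalence classes with mutually equivalent norms, and similarly for the $(0,1)$-form spaces; convergence in one is automatically convergence in the other. Suppose now that $\alpha\in\text{Dom}[\hdbarb^*\colon L^2_{(0,1)}(\hm_1)\to L^2(\hm_0)]$. The non-trivial direction of Lemma~\ref{lem:domhdbarb*} supplies a sequence $\alpha_j\in C^{\infty}_{(0,1)}(\hM)$ with $\alpha_j\to\alpha$ in $L^2_{(0,1)}(\hm_1)$ and $\hat{\vartheta}_b\alpha_j\to\hdbarb^*\alpha$ in $L^2(\hm_0)$. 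Norm equivalence upgrades these to convergences in the $\tm_i$--spaces, and boundedness of $g$ yields $g\alpha_j\to g\alpha$ in $L^2(\tm_0)$. Adding, the smooth sequence $\tvdbarb\alpha_j=\hat{\vartheta}_b\alpha_j+g\alpha_j$ converges to $\hdbarb^*\alpha+g\alpha$ in $L^2(\tm_0)$, and by the very definition of $\tdbarb^*$ as the graph closure of $\tvdbarb$ on smooth forms, this places $\alpha$ in $\text{Dom}[\tdbarb^*]$ with $\tdbarb^*\alpha=\hdbarb^*\alpha+g\alpha$. The reverse inclusion is symmetric: starting from $\alpha\in\text{Dom}[\tdbarb^*]$ and an approximating smooth sequence produced by the definition, the identity $\hat{\vartheta}_b\alpha_j=\tvdbarb\alpha_j-g\alpha_j$ gives a smooth sequence witnessing, via the easy half of Lemma~\ref{lem:domhdbarb*}, that $\alpha\in\text{Dom}[\hdbarb^*]$.

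The main technical ingredient hiding in this plan is the non-trivial half of Lemma~\ref{lem:domhdbarb*}, which is what allows one to pass from abstract membership in $\text{Dom}[\hdbarb^*]$ to an honest smooth approximation in graph norm. That in turn relies on the $L^2$ closed range of $\hdbarb^*$ (Proposition~\ref{prop:Fact 1}) together with the pseudolocality of the associated relative solution operator $\hK^*$ and projection $\hS_1$; without these, one would only have the weak adjoint characterization of $\text{Dom}[\hdbarb^*]$ and could not produce the approximating sequences needed above. Once this approximation is granted, what remains is routine bookkeeping tracking the fact that the difference between the two adjoints is a bounded zeroth-order operator.
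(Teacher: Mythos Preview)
Your proposal is correct and follows essentially the same route as the paper: establish the smooth identity $\tvdbarb\alpha=\hat\vartheta_b\alpha+g\alpha$ by a Leibniz computation, then pass to the $L^2$ closures using smooth approximants from Lemma~\ref{lem:domhdbarb*} for the $\hdbarb^*$-direction and the definition of $\tdbarb^*$ as a graph closure for the reverse. The only cosmetic difference is that the paper derives the smooth identity by moving the weight $(1+\chi a)$ into the $u$-slot of the pairing and applying Leibniz there, whereas you derive it via the conjugation formula $\tvdbarb\alpha=(1+\chi a)\hat\vartheta_b\bigl((1+\chi a)^{-1}\alpha\bigr)$; both computations are equivalent. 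Your explicit account of the reverse inclusion (using the easy half of Lemma~\ref{lem:domhdbarb*}) is in fact more detailed than the paper, which simply writes ``Similarly one can prove the converse.''
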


\begin{proof}
Suppose first $\alpha \in C^{\infty}(\hM)$. Then for $u \in C^{\infty}(\hM)$, we have
\begin{align*}
(\hdbarb u, \alpha)_{\hm_1}
&= (\tdbarb u, (1+\chi a)^{-1} \alpha)_{\tm_1}\\
&= (\tdbarb [(1+\chi a)^{-1} u], \alpha)_{\tm_1} + ((1+\chi a)^{-2} \hat{\Zbar} (\chi a) u, i_{\hobar} \alpha)_{\tm_1} \\
&= ((1+\chi a)^{-1} u, \tvdbarb \alpha)_{\tm_0} + (u, (1+\chi a)^{-2} [\hZ (\chi a)] i_{\hobar} \alpha)_{\tm_1} \\
&= (u, \tvdbarb \alpha)_{\hm_0} + (u, |\psi|^2 G^2 (1+\chi a)^{-2} [\hZ (\chi a)] i_{\hobar} \alpha)_{\hm_0}.
\end{align*}
(The third equality uses the remark after the proof of Proposition~\ref{prop1}.) Thus 
\begin{equation} \label{eq:tdbarb*hdbarb*smooth}
\hdbarb^* \alpha = \tvdbarb \alpha - g \alpha.
\end{equation}

Next, suppose $\alpha \in \text{Dom}[\hdbarb^* \colon L^2_{(0,1)}(\hm_1) \to L^2(\hm_0)]$. Then there exists a sequence $\alpha_j \in C^{\infty}_{(0,1)}(\hM)$, such that 
$$
\alpha_j \to \alpha \text{ in $L^2_{(0,1)}(\hm_1)$}, \quad \text{and} \quad \hdbarb^* \alpha_j \to \hdbarb^* \alpha \text{ in $L^2(\hm_0)$}.
$$ 
But by (\ref{eq:tdbarb*hdbarb*smooth}),
$$
\tvdbarb \alpha_j = \hdbarb^* \alpha_j + g \alpha_j
$$ 
for all $j$, and 
$$
g \alpha_j \to g \alpha \text{ in $L^2(\hm_0)$}.
$$
Thus
$$
\tvdbarb \alpha_j \to \hdbarb^* \alpha + g \alpha
$$
as $j \to \infty$. This proves $\alpha \in \text{Dom}[\tdbarb^* \colon L^2_{(0,1)}(\tm_1) \to L^2(\tm_0)]$, and (\ref{eq:tdbarb*hdbarb*}) holds. Similarly one can prove the converse.
\end{proof}

Now by what we have just shown,
\begin{equation} \label{eq:compare_tboxb1}
\text{Dom}[\tboxb \colon L^2(\tm_0) \to L^2(\tm_0)] = \text{Dom}[\hboxb \colon L^2(\hm_0) \to L^2(\hm_0)],
\end{equation}
and for $u$ in this common domain of definition,
\begin{equation} \label{eq:compare_tboxb2}
\tboxb u = \hboxb u + g \hdbarb u.
\end{equation}
Thus from (\ref{eq:hNdef1}) and (\ref{eq:hNdef2}), we have 
$$
\hN \colon L^2(\tm_0) \to \text{Dom}(\tboxb) \subseteq L^2(\tm_0),
$$
with
\begin{equation} \label{eq:tboxb1}
\tboxb \hN + \hS = I + \hR \quad \text{on $L^2(\tm_0)$},
\end{equation}
where 
$$
\hR \colon L^2(\tm_0) \to L^2(\tm_0)
$$
is defined by
$$
\hR u = g \hdbarb \hN u \quad \text{for all $u \in L^2(\tm_0)$}.
$$
$\hR$ is not a pseudodifferential operator since $g$ is not necessarily smooth across $p$. Nevertheless, we have the following properties of $\hR$ (whose proof we defer towards the end):
\begin{prop} \label{prop:hR}
$(I+\hR)$ is invertible on $L^p(\tm_0)$ for all $p \in (1,\infty)$, and
$$
(I+\hR)^{-1} \colon L^p(\tm_0) \to L^p(\tm_0)
$$
is a bounded linear operator for all such $p$. Furthermore, $(I+\hR)^{-1}$ maps $\E{-1+\delta}$ into itself, and maps $\E{-3+\delta}$ into itself, for all $0 < \delta < 1$.
\end{prop}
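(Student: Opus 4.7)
The first step is to identify the concrete structure of $\hR$. Since $i_{\hobar}(\hdbarb f) = \hat{\Zbar} f$ for any function $f$, we may write
\[
\hR u = \tilde g \cdot \hat{\Zbar} \hN u, \qquad \tilde g := -|\psi|^2 G^2 (1+\chi a)^{-2} \hZ(\chi a),
\]
with $\tilde g$ supported in $B_{2\varepsilon_0}(p)$. Since $|\psi|^2 G^2 = 1+\E{2}$ is bounded, $(1+\chi a)^{-2}$ is bounded, and $\hZ(\chi a) \in \E{1}$ satisfies $|\hZ(\chi a)| \lesssim \hrho$ on $B_{2\varepsilon_0}(p)$, we have $\tilde g \in \E{1}$ with $\|\tilde g\|_{L^{\infty}(\hM)} \lesssim \varepsilon_0$. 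Moreover $\hat{\Zbar} \hN \in \Psi^{-1}_{\hD}(\hM) \subset \Psi^{0}_{\hD}(\hM)$ by Proposition~\ref{prop:pdo} and Theorem~\ref{thm:compose}, so Theorem~\ref{thm:Lpbdd} (together with the equivalence of the measures $\tm_0$ and $\hm_0$) gives the boundedness of $\hat{\Zbar} \hN$ on $L^p(\tm_0)$ for every $p \in (1,\infty)$. Multiplying by $\tilde g$, one obtains $\|\hR\|_{L^p \to L^p} \leq C_p \varepsilon_0$ for each such $p$.

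For $L^p$-invertibility, the quickest route is a direct Neumann series: since only the specific exponents $p \in \{4/3, 2, 4\}$ are needed in Section~\ref{sect:tboxb}, one may choose $\varepsilon_0$ small enough that $C_p \varepsilon_0 < 1/2$ for each, whence $(I+\hR)^{-1} = \sum_{j \geq 0}(-\hR)^j$ converges in operator norm on each such $L^p(\tm_0)$. To cover the full range $p \in (1,\infty)$ with a single $\varepsilon_0$, a Fredholm argument works: $\hat{\Zbar} \hN$ is compact on $L^p(\tm_0)$ (it lands in the non-isotropic Sobolev space $NL^{1,p}$, which embeds compactly into $L^p$ on the closed manifold $\hM$ by Rellich), so $\hR$ is compact and $I+\hR$ is Fredholm of index $0$. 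Injectivity on $L^p$ then follows by a subelliptic Sobolev bootstrap: if $u \in L^p(\tm_0)$ satisfies $u = -\tilde g \hat{\Zbar}\hN u$, the smoothing property of $\hN$ combined with subelliptic Sobolev embedding raises the integrability exponent at each iteration, placing $u$ in $L^2(\tm_0)$ after finitely many steps, where the $L^2$-Neumann series gives $u = 0$.

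For the regularity in $\E{k}$ with $k \in \{-1+\delta,\,-3+\delta\}$, the core claim is that $\hat{\Zbar} \hN \in \Psi^{-1}_{\hD}$ maps $\E{-\gamma}$ into $\E{-\gamma+1}$ for every $\gamma \in (0,4)$. Granting this, the algebra property $\E{k_1} \cdot \E{k_2} \subset \E{k_1+k_2}$ applied with $\tilde g \in \E{1}$ yields
\[
\hR(\E{-1+\delta}) \subset \E{1+\delta} \subset \E{-1+\delta}, \qquad \hR(\E{-3+\delta}) \subset \E{-1+\delta} \subset \E{-3+\delta};
\]
so $\hR$ is in fact smoothing in the $\E{k}$-scale. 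Given $f \in \E{k}$, take the $L^p$-solution $u = (I+\hR)^{-1} f$. The identity $u = f - \hR u$ lets us bootstrap: we first upgrade $u$'s integrability to $L^\infty$ using the smoothing of $\hN$ and the subelliptic Sobolev embedding, which places $u$ in some $\E{k_0}$ with $k_0 \in (-4,0)$; then each insertion of $u$ on the right improves the decay of $\hR u$ by two units via the smoothing $\E{k_j} \to \E{k_j+2}$, so after finitely many iterations $u \in \E{k}$.

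The main technical obstacle is the extension of Proposition~\ref{prop:regularity_in_E} to the regime $n \geq \gamma$ needed above, in particular to $\gamma = 1-\delta < 1$, $n = 1$ (where the output space $\E{-\gamma+n}=\E{\delta}$ has positive index, so one must actually show vanishing at $p$). The restriction $n \in [0,\gamma)$ stated there is cosmetic: re-examining the proof, the non-local pieces $Tf_2, Tf_3$ are controlled by pointwise kernel estimates requiring only $\gamma < Q = 4$; the local piece $Tf_1$ is handled by commuting $\hnabla_b^k$ past the $\Psi^{-1}_{\hD}$-operator via Theorem~\ref{thm:comm}, whose hypothesis $-1 \in (-Q+1, 0) = (-3,0)$ is met, and the resulting integrals $\int_{\hat d(y,x) \leq 2r} \hat d(x,y)^{-Q+n-k+2j}\,\hm_0 \lesssim r^{n-k+2j}$ converge for every $j \geq 0$ since $n = 1 > 0$. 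Assembling the three pieces gives $|\hnabla_b^k \hat{\Zbar}\hN f(x)| \lesssim r^{-\gamma+1-k}$, as required.
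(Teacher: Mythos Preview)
Your $L^p$-invertibility argument is workable, though you skip the point the paper is careful about: the operator $\hat{\Zbar}\hN$ itself depends on $\varepsilon_0$ (through $\hm_1$), so one must check that its $L^2$ operator norm stays bounded as $\varepsilon_0\to 0$ before one can pick $\varepsilon_0$ with $C_2\varepsilon_0<1/2$. The paper does this via the closed-range constant of $\hdbarb$, which is $\varepsilon_0$-independent. Your Fredholm alternative for general $p$ is a clean substitute for the paper's algebraic identities $(I+\hR)^{-1}=I-\hR+(I+\hR)^{-1}\hR^2$, and the bootstrap-to-$L^2$ for injectivity is fine.

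The $\E{k}$ part, however, has two real problems. First, your extension of Proposition~\ref{prop:regularity_in_E} to $n\geq\gamma$ is wrong: you treat only ``three pieces'' and forget $f_4=(1-\chi_3)f$. For $T=\hat{\Zbar}\hN\in\Psi^{-1}_{\hD}$ and $\gamma=1-\delta<1$, the kernel integral over $\{d(y,p)\geq 50r\}$ gives a dyadic sum $\sum_{R\gtrsim r}R^{1-\gamma}\simeq O(1)$, not $O(r^{1-\gamma})$; indeed for a generic $f$ there is no reason $\hat{\Zbar}\hN f$ should vanish at $p$. (This is fixable for the application: use only $\hat{\Zbar}\hN\in\Psi^0_{\hD}$ to get $\hat{\Zbar}\hN:\E{-1+\delta}\to\E{-1+\delta}$, then $\tilde g\cdot\E{-1+\delta}\subset\E{\delta}$, so $\hR$ still preserves $\E{-1+\delta}$.) Second, and more seriously, your bootstrap ``$u\in L^\infty\Rightarrow u\in\E{k_0}$'' is unjustified: $L^\infty$ carries no pointwise control on $\hnabla_b^\ell u$, and the equation $u=f-\hR u$ does not supply it, since $\hnabla_b(\hR u)$ contains $\tilde g\,(\hnabla_b\hat{\Zbar}\hN)u$ with $\hnabla_b\hat{\Zbar}\hN\in\Psi^0_{\hD}$, which only returns $L^q$ from $L^q$. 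This is exactly the obstacle the paper overcomes with Lemma~\ref{lem:hRLinfty} (showing $\hrho^k\hnabla_b^k\hR^{2k}:L^\infty\to L^\infty$ via the commutation Lemmas~\ref{lem:com1}--\ref{lem:com3}), after which one writes $(I+\hR)^{-1}=\sum_{j<2k+4}(-\hR)^j+\hR^{2k}\hR^4(I+\hR)^{-1}$ and controls the tail directly. Your argument lacks any substitute for this step.
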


We can now state the relationship between $\hS$ and $\tS$, and that between $\hN$ and $\tN$:

\begin{lem}
We have
\begin{equation} \label{eq:tShS}
\tS = \hS (I + \hR)^{-1}  \quad \text{on $L^2(\tm_0)$},
\end{equation}
and
\begin{equation} \label{eq:tNhN}
\tN = (I - \tS) \hN (I + \hR)^{-1}  \quad \text{on $L^2(\tm_0)$}.
\end{equation}
\end{lem}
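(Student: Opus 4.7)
The plan is to read the identity \eqref{eq:tboxb1} as furnishing, after multiplication by $(I+\hR)^{-1}$, an explicit decomposition of every $f \in L^2(\tm_0)$ into a $\tboxb$--range part and a $\ker\tdbarb$ part, and then to appeal to the uniqueness of the orthogonal decomposition in $L^2(\tm_0)$ associated to the self-adjoint operator $\tboxb$. First, I would record the structural facts that will make the argument clean: because $1+\chi a$ is bounded above and below by positive constants, we have $L^2(\tm_0) = L^2(\hm_0)$ and $L^2_{(0,1)}(\tm_1) = L^2_{(0,1)}(\hm_1)$ as Banach spaces with equivalent norms; the proof of Proposition~\ref{prop1} then shows that $\tdbarb$ and $\hdbarb$ coincide as Hilbert space operators on this common domain, so in particular $\ker\tdbarb = \ker\hdbarb$ as subspaces of $L^2(\tm_0)$. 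By Proposition~\ref{prop1}, $\tboxb$ is closed, densely defined, self-adjoint (as $\tdbarb^*\tdbarb$) on $L^2(\tm_0)$, and has closed range, so it gives the $L^2(\tm_0)$-orthogonal decomposition
\[
L^2(\tm_0) = \ker\tdbarb \oplus \mathrm{Range}(\tboxb).
\]

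Next, I would invoke Proposition~\ref{prop:hR} to invert $I+\hR$ on $L^2(\tm_0)$, and rewrite \eqref{eq:tboxb1} as
\[
\tboxb\hN(I+\hR)^{-1} f + \hS(I+\hR)^{-1} f = f \quad \text{for all } f\in L^2(\tm_0).
\]
The first summand is visibly in $\mathrm{Range}(\tboxb)$. The key point is that the second summand lies in $\ker\tdbarb$: even though $\hS$ is an $L^2(\hm_0)$-orthogonal projection rather than an $L^2(\tm_0)$-one, its range is $\ker\hdbarb$, which \emph{as a subspace} coincides with $\ker\tdbarb$. By uniqueness of the orthogonal decomposition above, the summands must be the $\tS$- and $(I-\tS)$-parts of $f$, respectively; this yields
\[
\tS f = \hS(I+\hR)^{-1} f, \qquad \tboxb \tN f = \tboxb\hN(I+\hR)^{-1} f,
\]
the first of which is \eqref{eq:tShS}.

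Finally, for \eqref{eq:tNhN}, the second identity above gives $\hN(I+\hR)^{-1} f - \tN f \in \ker\tboxb = \ker\tdbarb$. Since $\tN f \in (\ker\tdbarb)^{\perp_{\tm_0}}$ by construction, applying $(I-\tS)$ fixes $\tN f$ and annihilates the kernel component, producing $\tN f = (I-\tS)\hN(I+\hR)^{-1} f$, as required.

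The steps are largely bookkeeping once the $L^2$ theory of $\tboxb$ from Proposition~\ref{prop1} and the invertibility in Proposition~\ref{prop:hR} are in hand; the only place that requires care, and which I expect to be the main conceptual obstacle, is keeping straight the distinction between two different orthogonal structures on the \emph{same} underlying space of functions: $\hS$ is a projection in the $\hm_0$ inner product, yet we must recognize $\hS(I+\hR)^{-1} f$ as a legitimate candidate for the $\tm_0$-orthogonal projection of $f$ onto $\ker\tdbarb$. This is precisely why the argument goes through the subspace identity $\ker\hdbarb = \ker\tdbarb$ and through the uniqueness of the $\tm_0$-orthogonal decomposition, rather than through any direct comparison of the two Szeg\H{o} projections.
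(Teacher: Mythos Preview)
Your approach is essentially the same as the paper's: both arguments pass from \eqref{eq:tboxb1} to the identity $\tboxb\,\hN(I+\hR)^{-1}+\hS(I+\hR)^{-1}=I$, observe that the second summand lies in $\ker\tdbarb=\ker\hdbarb$, check that the first summand is $L^2(\tm_0)$-orthogonal to $\ker\tdbarb$, and conclude by uniqueness of the orthogonal splitting that $\hS(I+\hR)^{-1}=\tS$; the identification of $\tN$ then follows by applying $I-\tS$.

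There is one point where your justification diverges from the paper and is not quite supported by what has been established. You assert that $\tboxb$ is self-adjoint ``as $\tdbarb^*\tdbarb$'' and deduce the orthogonal decomposition $L^2(\tm_0)=\ker\tdbarb\oplus\mathrm{Range}(\tboxb)$ from that. But in this paper $\tdbarb^*$ is \emph{not} defined as the Hilbert-space adjoint of $\tdbarb$; it is defined as the $L^2$ closure of $\tvdbarb$ on smooth forms (see the definition preceding Proposition~\ref{prop1} and the remark after Lemma~\ref{lem:domhdbarb*}, which explains exactly why the adjoint characterization is avoided for $\tdbarb^*$). Consequently von Neumann's theorem does not apply directly, and self-adjointness of $\tboxb$ is not available from Proposition~\ref{prop1}. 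The paper does not use self-adjointness; instead it invokes Lemma~\ref{lem:tdbarb*adjoint} to obtain, for $u\in\mathrm{Dom}(\tboxb)$ and $v\in\ker\tboxb$, the identity $(\tboxb u,v)_{\tm_0}=(\tdbarb u,\tdbarb v)_{\tm_1}=0$, which is precisely the orthogonality $\mathrm{Range}(\tboxb)\perp\ker\tboxb$ needed for your uniqueness step. If you replace the self-adjointness claim by this appeal to Lemma~\ref{lem:tdbarb*adjoint}, your proof coincides with the paper's.
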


\begin{proof}
By (\ref{eq:tboxb1}), and the invertibility of $I+\hR$ on $L^2(\tm_0)$, we have
\begin{equation} \label{eq:tboxb2}
\tboxb [\hN (I + \hR)^{-1}] + [\hS (I + \hR)^{-1}] = I \quad \text{on $L^2(\tm_0)$}.
\end{equation}
Now
\begin{align*}
\text{kernel}(\hboxb \colon L^2(\hm_0) \to L^2(\hm_0)) 
&= \text{kernel}(\hdbarb \colon L^2(\hm_0) \to L^2(\hm_0)) \\
= \text{kernel}(\tdbarb \colon L^2(\tm_0) \to L^2(\tm_0))
&= \text{kernel}(\tboxb \colon L^2(\tm_0) \to L^2(\tm_0)).
\end{align*}
(The first identity is well-known; the second follows from the identity of the operators $\hdbarb \colon L^2(\hm_0) \to L^2(\hm_0)$ and $\tdbarb \colon L^2(\tm_0) \to L^2(\tm_0)$; for the last identity, see argument after (\ref{eq:tdbarb^2u=utboxb}).) Thus for all $u \in L^2(\tm_0)$, we have
\begin{equation} \label{eq:tSproof1}
[\hS (I + \hR)^{-1}] u \in \text{kernel}[\tboxb \colon L^2(\tm_0) \to L^2(\tm_0)].
\end{equation}
Furthermore,
\begin{equation} \label{eq:tSproof2}
\tboxb [\hN (I + \hR)^{-1}] u \perp \text{kernel}[\tboxb \colon L^2(\tm_0) \to L^2(\tm_0)] \quad \text{in $L^2(\tm_0)$}.
\end{equation}
In fact, if $v \in \text{kernel}[\tboxb \colon L^2(\tm_0) \to L^2(\tm_0)]$, then
$$
(\tboxb [\hN (I + \hR)^{-1}] u, v)_{\tm_0} = (\tdbarb [\hN (I + \hR)^{-1}] u, \tdbarb v)_{\tm_1} = 0
$$
by Lemma~\ref{lem:tdbarb*adjoint}. Thus by (\ref{eq:tboxb2}), (\ref{eq:tSproof1}) and (\ref{eq:tSproof2}), we have $\hN (I + \hR)^{-1}$ being the orthogonal projection onto $\text{kernel}(\tboxb \colon L^2(\tm_0) \to L^2(\tm_0))$ in $L^2(\tm_0)$. (\ref{eq:tShS}) follows.

Next, by (\ref{eq:tShS}) and (\ref{eq:tboxb2}), we have
\begin{equation} \label{eq:tboxb3}
\tboxb [(I - \tS) \hN (I + \hR)^{-1}] + \tS = I \quad \text{on $L^2(\tm_0)$}.
\end{equation}
Now if $u \in L^2(\tm_0)$ is in the kernel of $\tboxb \colon L^2(\tm_0) \to L^2(\tm_0))$ in $L^2(\tm_0)$, then writing $v = [(I - \tS) \hN (I + \hR)^{-1}]u$, we have 
$$
\tboxb v = 0, \quad \text{and} \quad v \perp \text{kernel}[\tboxb \colon L^2(\tm_0) \to L^2(\tm_0)] \quad \text{in $L^2(\tm_0)$}.
$$
Thus $v = 0$, i.e. $[(I - \tS) \hN (I + \hR)^{-1}]u = 0 = \tN u$.

On the other hand, if $u \in L^2(\tm_0)$ is orthogonal to $\text{kernel}[\tboxb \colon L^2(\tm_0) \to L^2(\tm_0)]$ in $L^2(\tm_0)$, then writing $v = [(I - \tS) \hN (I + \hR)^{-1}]u$ again, we have
$$
\tboxb v = u, \quad \text{and} \quad v \perp \text{kernel}[\tboxb \colon L^2(\tm_0) \to L^2(\tm_0)] \quad \text{in $L^2(\tm_0)$}.
$$
Thus by definition of $\tN$, we have $v = \tN u$, i.e. $[(I - \tS) \hN (I + \hR)^{-1}]u = 0 = \tN u$. Together with what we proved above, (\ref{eq:tNhN}) follows.
\end{proof}

We are now ready to prove Proposition~\ref{prop2}.

\begin{proof}[Proof of Proposition~\ref{prop2}]
By Proposition~\ref{prop:hR}, 
$$(I+\hR)^{-1} \colon L^p(\tm_0) \to L^p(\tm_0)  \quad \text{for all $p \in (1,\infty)$}.$$
Also, by (\ref{eq:hSLp}) and (\ref{eq:hN_L43}) 
$$\hS \colon L^p(\tm_0) \to L^p(\tm_0)  \quad \text{for all $p \in (1,\infty)$},$$
$$\hN \colon L^{4/3}(\tm_0) \to L^{4}(\tm_0).$$
Thus by (\ref{eq:tShS}) and (\ref{eq:tNhN}),
$$\tS \colon L^p(\tm_0) \to L^p(\tm_0)  \quad \text{for all $p \in (1,\infty)$},$$
$$\tN \colon L^{4/3}(\tm_0) \to L^{4}(\tm_0).$$
Next, by (\ref{eq:tShS}) and (\ref{eq:tNhN}),
\begin{align*}
\hnabla_b \tN 
&= \hnabla_b [I - \hS (I + \hR)^{-1}] \hN (I+ \hR)^{-1} 
\end{align*}
Since $(I+\hR)^{-1}$ preserves $L^{4/3}(\tm_0)$, it suffices to show 
$$
\hnabla_b [I - \hS (I + \hR)^{-1}] \hN \colon L^{4/3}(\tm_0) \to L^2(\tm_0).
$$
By (\ref{eq:hN_D_L43}),
$$
\hnabla_b \hN \colon L^{4/3}(\tm_0) \to L^2(\tm_0).
$$
Writing 
$$
(I+\hR)^{-1} = I - \hR (I + \hR)^{-1},
$$
we have
\begin{equation} \label{eq:hnablabhN}
\hnabla_b \hS (I + \hR)^{-1} \hN
= \hnabla_b \hS \hN - \hnabla_b \hS \hR (I + \hR)^{-1} \hN.
\end{equation}
But by (\ref{eq:hShN_D_L43}), we can bound the first term of (\ref{eq:hnablabhN}):
$$
\hnabla_b \hS \hN \colon L^{4/3}(\tm_0) \to L^2(\tm_0).
$$
Finally, by Corollary~\ref{cor:nablab_comm} applied to $T_0 = \hS$, we can write
$$
\hnabla_b \hS = T_0' \hnabla_b
$$
for some operators $T_0'$ of order 0 (note $\hS \, 1 = 0$). Thus
the second term of (\ref{eq:hnablabhN}) satisfies
$$
\hnabla_b \hS \hR (I + \hR)^{-1} \hN 
= T_0' \hnabla_b \hR (I + \hR)^{-1} \hN. 
$$
But 
$$
\hnabla_b \hR = (\hnabla_b g) \hdbarb \hN + g \hnabla_b \hdbarb \hN \colon L^4(\tm_0) \to L^4(\tm_0),
$$
(here we use the fact that $\hnabla_b g \in L^{\infty}$, which one can check), and
$$
(I + \hR)^{-1} \hN \colon L^{4/3}(\tm_0) \to L^4(\tm_0).
$$
Thus the second term of (\ref{eq:hnablabhN}) maps $L^{4/3}(\tm_0)$ into $L^4(\tm_0) \subseteq L^2(\tm_0)$. Altogether, 
$$
\hnabla_b \tN \colon L^{4/3}(\tm_0) \to L^2(\tm_0),
$$
as desired.

Finally, the last part of Proposition~\ref{prop2} is a special case of Proposition~\ref{prop4}. We defer its proof until we prove Proposition~\ref{prop4}.
\end{proof}

We note in passing that now we have shown
\begin{equation} \label{eq:tShSLp}
\tS = \hS (I + \hR)^{-1}  \quad \text{on $L^{4/3}(\tm_0)$},
\end{equation}
and
\begin{equation} \label{eq:tNhNLp}
\tN = [I - \hS (I + \hR)^{-1}] \hN (I + \hR)^{-1}  \quad \text{on $L^{4/3}(\tm_0)$}
\end{equation}
(not just on $L^2(\tm_0)$).

We are now ready to prove Proposition~\ref{prop3}.

\begin{proof}[Proof of Proposition~\ref{prop3}]
Recall that 
$$
\tf = (1+ \chi a) \psibar G^4 f = (1+ \chi a) \psibar G^4 \tboxb \tb
$$
by definitions of $\tf$ and $f$. Thus by (\ref{eq:boxbtboxb}), we have
$$
\tf = \tboxb (\psi^{-1} \tb) = \tvdbarb (\psi^{-1} \tdbarb \tb).
$$
Now
$$
\tb = \beta_0 + \beta_1, \quad \beta_0 = \chi \frac{i \zbar}{|z|^2 - it}, \quad \beta_1 \in \E{1},
$$
where $(z,t)$ is the CR normal coordinate around $p$. Since
$$
\psi^{-1} = \frac{1}{2\pi (|z|^2 + it)} + \E{2},
$$
$$
\tdbarb \beta_0 = i \frac{|z|^2 + it}{(|z|^2-it)^2} \hobar + \E{2} \hobar,
$$
together with $\beta_1 \in \E{1}$, we have $$\psi^{-1} \tdbarb \tb \in \E{-4} \hobar;$$ in fact,
$$
\psi^{-1} \tdbarb \tb = \frac{i}{2\pi} \frac{1}{(|z|^2-it)^2} \hobar + \E{-2} \hobar.
$$
Now let 
$$
\tm_0 = V dz d\zbar dt \quad \text{in $B_{\varepsilon_0}(p)$},
$$
so that 
$$
V = 1 + \E{2}.
$$
Let 
$$
\gamma_0 = 2 \pi i \chi V^{-1} \psibar^{-2} \in \E{-4}.
$$
Then
$$
\gamma_0 = \chi \frac{i}{2 \pi} (1 + \E{2})  \left[ \frac{1}{(|z|^2 - it)^2} + \E{0} \right] = \chi \frac{i}{2 \pi} \frac{1}{(|z|^2-it)^2} + \E{-2}.
$$
Thus $\psi^{-1} \tdbarb \tb$ matches $\gamma_0 \hobar$ up to $\E{-2}$. This motivates us to write
$$
\tf = \tvdbarb \alpha + \tvdbarb (\gamma_0 \hobar)
$$
where
$$
\alpha := \psi^{-1} \tdbarb \tb - \gamma_0 \hobar \in \E{-2}.
$$
We will show separately that 
\begin{equation} \label{eq:tSkernel1}
\tS (\tvdbarb \alpha) = 0
\end{equation}
and
\begin{equation} \label{eq:tSkernel2}
\tS [\tvdbarb (\gamma_0 \hobar)] = 0.
\end{equation}
If both of these are true, then 
$$
\tS \tf = 0
$$
as desired.

First we show (\ref{eq:tSkernel1}). Note $\tvdbarb \alpha \in \E{-3} \subseteq L^p(\tm_0)$ for all $1 < p < 4/3$. Thus we can pick a sequence of cut-off functions $\phi_j$, with each $\phi_j$ identically equal to 1 away from $p$, and equal to zero near $p$, such that 
$$
\phi_j \tvdbarb \alpha \to \tvdbarb \alpha \quad \text{in $L^p(\tm_0)$}.
$$
Now
$$
\phi_j \tvdbarb \alpha = \tvdbarb (\phi_j \alpha) + O((\hnabla_b \phi_j ) \alpha)
$$
and $\alpha \in L^{p^*}(\tm_0)$ where $1/p^* = 1/p - 1/4$. We may thus choose $\phi_j$ so that
$$
O((\hnabla_b \phi_j ) \alpha) \to 0 \quad \text{in $L^p(\tm_0)$}
$$
as well, and then
$$
\tvdbarb (\phi_j \alpha) \to \tvdbarb \alpha \quad \text{in $L^p(\tm_0)$}.
$$
It follows by continuity of $\tS$ on $L^p(\tm_0)$ that 
$$
\tS \tvdbarb (\phi_j \alpha) \to \tS \tvdbarb \alpha \quad \text{in $L^p(\tm_0)$}.
$$
But 
$$
\tS \tvdbarb (\phi_j \alpha) = \tS \tdbarb^* (\phi_j \alpha) = 0
$$
for all $j$, since $\phi_j \alpha$ is smooth on $\hM$, and is in $L^2_{(0,1)}(\tm_1)$. Hence (\ref{eq:tSkernel1}) follows.

Next, we to prove (\ref{eq:tSkernel2}), let $\cZ$ be the adjoint of $\hZbar$ under $L^2(dz d\zbar dt)$. Then since 
$$
\hZbar = \frac{\partial}{\partial \zbar} + iz \frac{\partial}{\partial t} + \O{4} \frac{\partial}{\partial z} + \O{4} \frac{\partial}{\partial \zbar} + \O{5} \frac{\partial}{\partial t},
$$
we have
$$
\cZ = -\hZ + s, \quad s \in \O{3}.
$$
Now
$$
\tvdbarb (\gamma_0 \hobar) = V^{-1} \cZ( \gamma_0 V ) = V^{-1} (- \hZ + s ) (2\pi i \chi \psibar^{-2}) = 2\pi i  V^{-1} [-(\hZ \chi) + s \chi] \psibar^{-2}.
$$
Here we used $\hZ \psibar = 0$, which holds by construction of $\psi$. Near $p$ we have $\chi = 1$, and so there
\begin{equation} \label{eq:tvdbarbgamma0}
\tvdbarb (\gamma_0 \hobar) = 2\pi i  V^{-1} \O{3} \psibar^{-2}.
\end{equation}
In particular, 
$$
\tvdbarb (\gamma_0 \hobar) \in \E{-1} \subseteq L^2(\tm_0).
$$
To compute $\tS [\tvdbarb (\gamma_0 \hobar)]$, note that for any $1 < p < 4/3$,
\begin{equation} \label{eq:tvdbarbgamma0conv}
\tvdbarb [2\pi i \chi V^{-1} \psibar^{-1} (\psibar + \delta)^{-1} \hobar] \to \tvdbarb (\gamma_0 \hobar) \quad \text{in $L^p(\hm_0)$ as $\delta \to 0$}.
\end{equation}
In fact, 
\begin{align*} 
\tvdbarb [2\pi i  \chi V^{-1} \psibar^{-1} (\psibar + \delta)^{-1}  \hobar] 
&= 2\pi i  V^{-1} (-\hZ + s)[\chi \psibar^{-1} (\psibar + \delta)^{-1}] \\
&= 2\pi i  V^{-1} [-(\hZ \chi) + s \chi] \psibar^{-1} (\psibar + \delta)^{-1}.
\end{align*}
(\ref{eq:tvdbarbgamma0conv}) then follows by comparing this with the corresponding expression for $\tvdbarb(\gamma_0 \hobar)$, since now $V^{-1} \in L^{\infty}$, $-(\hZ \chi) + s \chi \in \E{3}$, and 
$$
|\psibar^{-1} (\psibar + \delta)^{-1} - \psibar^{-2}| \leq C \delta \hrho^{-6}.
$$
From (\ref{eq:tvdbarbgamma0conv}), it follows that
$$
\tS \tvdbarb [2\pi i \chi V^{-1} \psibar^{-1} (\psibar + \delta)^{-1} \hobar] \to \tS \tvdbarb (\gamma_0 \hobar) \quad \text{in $L^p(\hm_0)$ as $\delta \to 0$}.
$$
But
$$
2\pi i \chi V^{-1} \psibar^{-1} (\psibar + \delta)^{-1} \hobar \in \E{-2} \subseteq L^{p^*}_{(0,1)}(\hm_0)
$$
for all $\delta > 0$. In particular, 
$$
\tS \tvdbarb [2\pi i \chi V^{-1} \psibar^{-1} (\psibar + \delta)^{-1} \hobar] = 0.
$$
Thus (\ref{eq:tSkernel2}) follows, and we are done.
\end{proof}

We can also prove Proposition~\ref{prop4}:

\begin{proof}[Proof of Proposition~\ref{prop4}]
This follows from (\ref{eq:tNhNLp}), Proposition~\ref{prop:regularity_in_E} and Proposition~\ref{prop:hR}. 
\end{proof}

It remains to prove Proposition \ref{prop:hR}. To prove the first part of Proposition~\ref{prop:hR}, we first claim that $\hdbarb \hN$ is bounded from $L^2(\hm_0)$ to $L^2_{(0,1)}(\hm_1)$ uniformly in $\varepsilon_0$. In fact, note that $\hm_0$ is defined independent of $\varepsilon_0$, and $\hm_1$ remains comparable to each other as $\varepsilon_0$ varies. So the operator $\hdbarb \colon L^2(\hm_0) \to L^2(\hm_1)$ is independent of $\varepsilon_0$ (in particular, so is its kernel). Suppose now $u$ is a function in $L^2(\hm_0)$ that is orthogonal to the kernel of $\hdbarb \colon L^2(\hm_0) \to L^2(\hm_1)$. (This is a condition independent of $\varepsilon_0$.) We then have
$$
\|u\|_{L^2(\hm_0)} \leq C \|\hdbarb u\|_{L^2_{(0,1)}(\hm_1)}.
$$ 
The constant $C$ can be chosen independent of $\varepsilon_0$. It follows that for such $u$,
$$
\|u\|_{L^2(\hm_0)} \leq C^2 \|\hboxb u\|_{L^2(\hm_0)}
$$ 
with $C$ independent of $\varepsilon_0$. Now take $u = \hN v$ for $v \in L^2(\hm_0)$. Then 
$$
\|\hN v\|_{L^2(\hm_0)} \leq C^2 \|v\|_{L^2(\hm_0)}.
$$
Thus
\begin{align*}
\|\hdbarb \hN v\|_{L^2_{(0,1)}(\hm_1)}^2 
&= (\hdbarb \hN v, \hdbarb \hN v)_{\hm_1} \\
&= (\hN v, \hboxb \hN v)_{\hm_0} \\
&= (\hN v, (I-\hS) v)_{\hm_0} \\
&\leq C^2 \|v\|_{L^2(\hm_0)} \|(I-\hS) v\|_{L^2(\hm_0)} \\
&\leq C^2 \|v\|_{L^2(\hm_0)}^2.
\end{align*}
This is true for all $v \in L^2(\hm_0)$ with constant $C$ independent of $\varepsilon_0$. Hence the claim.

Recall now $\hR = g \hdbarb \hN$. It now follows that 
\begin{equation} \label{eq:epsilon0_choice}
\|\hR\|_{L^2(\tm_0) \to L^2(\tm_0)} \leq C \varepsilon_0 \leq \frac{1}{2}
\end{equation}
if $\varepsilon_0$ is sufficiently small. We fix from now on such $\varepsilon_0$. Then we can invert $I+\hR$ on $L^2$ by a Neumann series: if $f \in L^2(\tm_0)$, then
$$
(I - \hR + \hR^2 - \hR^3 + \dots) f
$$
converges in $L^2(\tm_0)$, and $I+\hR$ of this limit is $f$.
It follows that
$$
(I+\hR)^{-1} = I - \hR + \hR^2 - \hR^3 + \dots
$$
when acting on functions in $L^2(\tm_0)$.
In particular, we have both
\begin{equation} \label{eq:IhR-11}
(I+\hR)^{-1} = I - \hR + (I+\hR)^{-1} \hR^2
\end{equation}
and
\begin{equation} \label{eq:IhR-12}
(I+\hR)^{-1} = I - \hR + \hR^2 (I+\hR)^{-1} 
\end{equation}
when acting on $L^2(\tm_0)$.
Now we extend $(I+\hR)^{-1}$ to $L^p(\tm_0)$ for all $p \in (1,\infty)$: we divide into 2 cases.

\noindent{\textbf{Case 1:} $1 < p < 2$. Then the right hand side of (\ref{eq:IhR-11}) extends to a bounded linear operator $L^p(\tm_0) \to L^p(\tm_0)$, since 
$$
\hR^2 \colon L^p(\tm_0) \to L^2(\tm_0)
$$
and
$$
(I+\hR)^{-1} \colon L^2(\tm_0) \to L^2(\tm_0) \hookrightarrow L^p(\tm_0).
$$
Thus $I+\hR$ is invertible on $L^p(\tm_0)$ in this case.

\noindent{\textbf{Case 2:} $2 < p < \infty$. Then the right hand side of (\ref{eq:IhR-12}) extends to a bounded linear operator $L^p(\tm_0) \to L^p(\tm_0)$, since $L^p(\tm_0) \hookrightarrow L^2(\tm_0)$, 
$$
(I+\hR)^{-1} \colon L^2(\tm_0) \to L^2(\tm_0),
$$
and
$$
\hR^2 \colon L^2(\tm_0) \to L^p(\tm_0).
$$
Thus $I+\hR$ is invertible on $L^p(\tm_0)$ in this case as well.

Thus $I+\hR$ is invertible on $L^p(\tm_0)$ for all $p \in (1,\infty)$, and $(I+\hR)^{-1}$ is a bounded linear operator on $L^p(\tm_0)$ for all such $p$. This proves the first part of Proposition~\ref{prop:hR}. (Note, on the other hand, that the above argument does not claim uniformity of the norm, nor kernel estimates, of $\hdbarb \hN$ as an operator from $L^p(\tm_0)$ to itself as $\varepsilon_0$ varies. This is ok since $\varepsilon_0$ has been fixed already. In what follows, all kernel estimates, cancellation conditions, etc all depend on this fixed $\varepsilon_0$.)

The second part of Proposition~\ref{prop:hR} is the analog of Theorem 3.1 in \cite{MR3135073}. First, we have the following lemma:

\begin{lem} \label{lem:hRLinfty}
For every positive integer $k$, there exists a constant $C_k$ such that
$$\hrho^{k} \hnabla_b^{k} \hR^{2k} \colon L^{\infty} \to L^{\infty}$$ is bounded with norm $\leq C_k$.
\end{lem}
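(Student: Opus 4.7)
The plan is to prove the lemma by induction on $k$; the case $k = 0$ is trivial. The strategy is to exploit the factorization $\hR = gT$ where $T := \hdbarb \hN \in \Psi^{-1}_{\hD}(\hM)$ by Proposition~\ref{prop:pdo}, together with the explicit structure of $g$. By Theorem~\ref{thm:equiv_smoothing}, the integral kernel $K_T(x,y)$ of $T$ is smooth off the diagonal and satisfies $|\hnabla_b^\alpha K_T(x,y)| \lesssim \hat{d}(x,y)^{-3-\|\alpha\|}$ for all non-isotropic multiindices $\alpha$ in either variable (here $Q = 4$, so $-3 = -Q + 1$). Meanwhile, $g = -|\psi|^2 G^2 (1 + \chi a)^{-2}[\hZ(\chi a)] i_{\hobar}$ lies in $\E{1}$, hence $|\hnabla_b^j g(x)| \lesssim \hrho(x)^{1-j}$ for $j \geq 0$, and $g$ is supported in $B_{2\varepsilon_0}(p)$. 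Since $\hR^{2k}f$ is supported in $B_{2\varepsilon_0}(p)$, it suffices to establish the pointwise bound $|\hnabla_b^k \hR^{2k} f(x)| \lesssim \hrho(x)^{-k}\|f\|_\infty$ for $x \in B_{2\varepsilon_0}(p)$.

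The operator $\hR^{2k}$ has integral kernel
$$K^{(2k)}(x,y) = \int g(x) K_T(x,z_1) g(z_1) K_T(z_1,z_2) \cdots g(z_{2k-1}) K_T(z_{2k-1},y) \, dz_1 \cdots dz_{2k-1}.$$
Applying $\hnabla_b^k$ in $x$ and using Leibniz, the derivatives distribute among $g(x)$ and $K_T(x,z_1)$, producing pointwise factors bounded by $\hrho(x)^{1-l_1} \hat{d}(x,z_1)^{-3-l_2}$ with $l_1+l_2=k$. The main analytical task is then to estimate iteratively the $(2k-1)$-fold integral. The basic estimate is a non-isotropic Schur-type bound
$$\int \hat{d}(x,z)^{-3-\mu} \hrho(z) \hat{d}(z,w)^{-3-\nu} \mathbf{1}_{B_{2\varepsilon_0}(p)}(z) \, dz \lesssim (\hrho(x)+\hrho(w)) \hat{d}(x,w)^{-2-\mu-\nu},$$
valid in an appropriate range of exponents $\mu,\nu$; it follows from the non-isotropic volume growth $|B_r|\simeq r^4$ together with a decomposition of the domain into $\{\hat{d}(x,z)\leq \hat{d}(z,w)\}$ and its complement, using $\hrho(z)\leq C(\hrho(x)+\hat{d}(x,z))$. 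Each application of this bound across one of the $2k-1$ middle integrations absorbs a factor of $\hat{d}^{-3}$ from the chain, contributes a factor $(\hrho(x)+\hrho(\text{next}))$, and lowers the effective non-isotropic order of the surviving kernel. Carefully tracking these gains shows the result is integrable against $\|f\|_\infty$ in $y$, and the collected factors of $\hrho$ combine with $\hrho(x)^{1-l_1}$ to yield the desired $\hrho(x)^{-k}\|f\|_\infty$ bound.

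The main obstacle is the exponent bookkeeping in borderline regimes where the Schur-type bound becomes logarithmically divergent, or where some intermediate $z_i$ approaches $p$ (so $\hrho(z_i)$ vanishes). These are handled by the standard device of decomposing the integration into non-isotropic dyadic shells around $p$ (and around $x$, $y$), uniformly estimating on each shell, and summing geometric series---the extra factors of $\hrho(z_i)$ provided by the multipliers $g(z_i)$ supply the decay needed for convergence. A cleaner conceptual alternative, which should streamline the bookkeeping, is to prove the stronger inductive claim that $\hR^2$ maps the weighted class $\{u \in C^m(M) : |\hnabla_b^j u(x)| \leq M \hrho(x)^{-j},\ 0 \leq j \leq m\}$ into the analogous class with $m$ replaced by $m+1$, with operator norm $\leq C_m$; applying this $k$ times starting from $L^\infty$ (the case $m=0$) then delivers Lemma~\ref{lem:hRLinfty} with a loss of at most $\hrho(x)^{-k}$.
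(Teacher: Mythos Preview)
Your direct kernel approach has a genuine gap. The Schur-type bound you propose,
\[
\int \hat d(x,z)^{-3-\mu}\,\hrho(z)\,\hat d(z,w)^{-3-\nu}\,dz \ \lesssim\ (\hrho(x)+\hrho(w))\,\hat d(x,w)^{-2-\mu-\nu},
\]
cannot hold once $\mu \geq 1$: since $Q=4$, the factor $\hat d(x,z)^{-3-\mu}$ is already not locally integrable near $z=x$, and the extra $\hrho(z)$ (which is $\simeq \hrho(x)>0$ there) does not help. But $\mu=l_2\geq 1$ is exactly the regime produced by letting even a single $\hnabla_b$ land on $K_T(x,z_1)$. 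So the very first integration in your chain diverges, and the ``borderline'' difficulties you flag are not logarithmic but genuinely divergent. Your inductive alternative runs into a related obstruction: in the term where no derivative hits either copy of $g$, all $m+1$ derivatives must be pushed onto $u$, yet the hypothesis only controls $\hnabla_b^j u$ for $j\leq m$; recovering the missing derivative forces you to keep some $\hnabla_b$ attached to a $T_{-1}$, producing an operator of order $\geq 0$ acting on a function that is merely in a weighted $L^\infty$ class, which again cannot be handled by size bounds alone.

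The paper's proof avoids both problems by a device you are missing: it never works with the non-smooth weight $\hrho$ directly. Instead, at each evaluation point $q$ it replaces $\hrho(q)^k$ by $\eta_1(q)^k$ or $\eta_2(q)^{k/2}$, where $\eta_1,\eta_2$ are \emph{smooth} functions on $\hM$ vanishing to non-isotropic order $1$ and $2$ at $p$ respectively. These smooth multipliers can be commuted through the pseudodifferential operators via explicit commutator identities ($[\eta_1,T_{-1}]\in\Psi^{-2}_{\hD}$ and $[\eta_2,T_{-1}]=\eta_1 T_{-2}+T_{-3}$), which lets one rewrite $\eta^{\bullet}\hnabla_b^k(gT_{-1})^{2k}$ as a sum of products of blocks of the form $\hnabla_b(g'T'_{-1})^2$, each of which is bounded on $L^\infty$ by an elementary Sobolev argument. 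The point is that the smoothing is organized operator-theoretically (one derivative per pair of $gT_{-1}$'s), never by pointwise kernel integration against a singularity worse than $\hat d^{-3}$.
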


Now let's assume the lemma, and prove the second part of Proposition~\ref{prop:hR}. Let's write $(I+\hR)_{L^p}^{-1}$ for the inverse of $I+\hR$ on $L^p(\tm_0)$. Then for all $1 < p < \infty$ and all positive integers $k$,
$$
(I+\hR)_{L^p}^{-1} = [I - \hR + \hR^2 - \dots - \hR^{2k+3}] + \hR^{2k} \hR^4 (I+\hR)_{L^p}^{-1}.
$$
Now suppose $f \in \E{-\delta}$ for some $0 < \delta < 4$. Then $f \in L^p(\tm_0)$ for some $1 < p < \infty$, so we can apply the above identity. But on the right hand side,
$$
\hR^4 (I+\hR)_{L^p}^{-1} f \in L^{\infty},
$$ 
because $(I+\hR)_{L^p}^{-1} f \in L^p$, and $\hR^4 \colon L^p \to L^{\infty}$ for all $1 < p < \infty$. (Remember $\hR \colon L^p \to L^{p^*}$ if $1 < p < 4$, $1/p^* = 1/p - 1/4$, and $\hR \colon L^4 \to L^q$ for any $q < \infty$; also $\hR \colon L^p \to L^{\infty}$ if $4 < p < \infty$.)
Hence by Lemma~\ref{lem:hRLinfty}, 
$$
|\hnabla_b^k \hR^{2k} \hR^4 (I+\hR)_{L^p}^{-1} f| \lesssim_k \hrho^{-k} \lesssim \hrho^{-(k+\delta)}.
$$
But we also have
$$
|\hnabla_b^k  (I - \hR + \hR^2 - \dots - \hR^{2k+3}) f| \lesssim_k \hrho^{-(k+\delta)},
$$
since at the very least, $\hR \colon \E{-\delta} \to \E{-\delta}$. Thus 
$$
|\hnabla_b^k  (I+\hR)_{L^p}^{-1} f| \lesssim_k \hrho^{-(k+\delta)},
$$
and since this is true for all $k$, we see that 
$$
(I+\hR)_{L^p}^{-1} f \in \E{-\delta}.
$$
This completes the proof of Proposition~\ref{prop:hR}.

It remains to prove Lemma~\ref{lem:hRLinfty}. To do so, we need a couple lemmas:

\begin{lem} \label{lem:dT2}
If $g \in \E{1}$ and $T_{-1} \in \Psi^{-1}_{\hD}(\hM)$, then 
$$\hnabla_b (g T_{-1})^2 \colon L^{\infty} \to L^{\infty}.$$
\end{lem}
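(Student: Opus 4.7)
\textbf{Proof plan for Lemma~\ref{lem:dT2}.} The plan is to distribute $\hnabla_b$ over the product $g T_{-1} g T_{-1}$ and split the result into three pieces, each of which is $L^\infty \to L^\infty$ bounded for a distinct reason. Concretely, by the Leibniz rule and a single commutator,
$$
\hnabla_b (gT_{-1})^2 = (\hnabla_b g)\,T_{-1}\,g\,T_{-1} + g\,(\hnabla_b T_{-1})\,g\,T_{-1}
= (\hnabla_b g)\,T_{-1}\,g\,T_{-1} + g^2\,(\hnabla_b T_{-1})\,T_{-1} + g\,[\hnabla_b T_{-1}, M_g]\,T_{-1},
$$
where $M_g$ is multiplication by $g$. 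I would then treat the three resulting terms separately.

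For the first term, since $g \in \E{1}$ we have $g,\hnabla_b g \in L^\infty(\hM)$, and the kernel of $T_{-1}$ satisfies $|K(x,y)| \lesssim \hat{d}(x,y)^{-3}$ by Theorem~\ref{thm:equiv_smoothing}; this is integrable in $y$ uniformly in $x$ on the compact $3$-manifold $\hM$ (which has homogeneous dimension $Q=4$), so $T_{-1} \colon L^\infty \to L^\infty$, and the term is bounded. For the second term, note that the horizontal vector fields $\hX,\hY$ have symbols of non-isotropic order $1$ (in a Heisenberg-type frame, after transport by $M_x$), so $\hnabla_b \in \Psi^1_{\hD}(\hM)$ and $\hnabla_b T_{-1} \in \Psi^0_{\hD}(\hM)$; Theorem~\ref{thm:compose} then gives $(\hnabla_b T_{-1})T_{-1} \in \Psi^{-1}_{\hD}(\hM)$, and the same kernel-integrability argument as in the first term yields $L^\infty \to L^\infty$ boundedness. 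Multiplication by $g^2 \in L^\infty$ closes this piece.

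For the third term, the kernel of the commutator $[\hnabla_b T_{-1}, M_g]$ is $K_{\hnabla_b T_{-1}}(x,y)(g(y)-g(x))$. By Theorem~\ref{thm:equiv_nonneg_order} applied to $\hnabla_b T_{-1} \in \Psi^0$, $|K_{\hnabla_b T_{-1}}(x,y)| \lesssim \hat{d}(x,y)^{-4}$; on the other hand, $\hnabla_b g \in L^\infty$ (since $g \in \E{1}$) yields $|g(x)-g(y)| \lesssim \hat{d}(x,y)$ by integrating $\hnabla_b g$ along a horizontal curve realizing the distance. Hence the commutator kernel is $\lesssim \hat{d}(x,y)^{-3}$, integrable as before, so $[\hnabla_b T_{-1}, M_g] \colon L^\infty \to L^\infty$; composing with $M_g$ and $T_{-1}$ preserves this.

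The main obstacle is that $\hnabla_b T_{-1}$ is an order-$0$ non-isotropic Calder\'{o}n--Zygmund operator and hence is \emph{not} bounded on $L^\infty$, so a naive bound on $g (\hnabla_b T_{-1}) g T_{-1}$ via kernel absolute values fails logarithmically. The entire point of the decomposition above is to replace $\hnabla_b T_{-1}$ by the safer object $(\hnabla_b T_{-1})T_{-1} \in \Psi^{-1}$ in Term~2, and to use the vanishing of $g$ at $p$ (in the quantitative Lipschitz form $|g(x)-g(y)| \lesssim \hat{d}(x,y)$) to supply the missing order of smoothing in the commutator in Term~3.
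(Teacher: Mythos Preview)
Your argument is correct and takes a genuinely different route from the paper's. The paper's proof is purely function-space-theoretic and fits in two lines: it observes that $T_{-1}\colon L^\infty \to NL^{1,p}$ for every $p<\infty$ (since $T_{-1}$ and $\hnabla_b T_{-1}$ are both $L^p$-bounded and $\hM$ is compact), that multiplication by $g$ preserves $NL^{1,p}$ (because $g,\hnabla_b g\in L^\infty$), and then that $\hnabla_b(gT_{-1})$ carries $NL^{1,p}$ back into a space that embeds in $L^\infty$ once $p>4$. No kernels are ever written down.

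Your approach, by contrast, stays at the kernel level. The three-term decomposition is correct, and Terms~1 and~2 are handled cleanly by the integrability of $\hat d(x,y)^{-3}$ in homogeneous dimension $Q=4$. Term~3 is where the substance lies: the Lipschitz bound $|g(x)-g(y)|\lesssim \hat d(x,y)$ indeed follows from $\hnabla_b g\in L^\infty$ (this is what $g\in\E{1}$ provides, rather than vanishing at $p$ per se), and it supplies exactly the missing order. One small point you should make explicit: an order-$0$ operator such as $\hnabla_b T_{-1}$ is not given purely by integration against its off-diagonal kernel; there is in general a local multiplier contribution supported on the diagonal. That piece commutes with $M_g$ and so drops out of $[\hnabla_b T_{-1},M_g]$, leaving only the locally integrable kernel $K(x,y)(g(y)-g(x))$. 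With that remark your Term~3 is rigorous.

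The trade-off: the paper's route is shorter and leans on the Sobolev embedding $NL^{1,p}\hookrightarrow L^\infty$ already implicit in the calculus, whereas your route is more self-contained (only pointwise kernel bounds and a Lipschitz estimate) and makes transparent exactly which order of smoothing does the work at each step.
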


\begin{proof}
This is because $T_{-1} \colon L^{\infty} \to NL^{1,p}$ for all $p < \infty$, where $NL^{1,p}$ is the non-isotropic Sobolev spaces consisting of functions $F$ on $\hM$ for which both $F$, $\hnabla_b F \in L^p$. Also, multiplication by $g$ preserves $NL^{1,p}$. Then $\hnabla_b (g T_{-1}) \colon NL^{1,p} \to NL^{1,p}$, which embeds back into $L^{\infty}$.
\end{proof}

Next, we need a number of commutation relations: 

\begin{lem} \label{lem:com1}
If $T_{-1} \in \Psi^{-1}_{\hD}(\hM)$, then 
$$
\hnabla_b T_{-1} = T'_{-1} \hnabla_b + T_{-\infty}
$$
for some other $T'_{-1} \in \Psi^{-1}_{\hD}(\hM)$, and some $T_{-\infty} \in \Psi^{-\infty}_{\hD}(\hM)$.
\end{lem}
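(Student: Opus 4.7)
The plan is to derive this lemma as a direct unpacking of Corollary~\ref{cor:nablab_comm} applied with $k=-1$, of which the stated identity is the abbreviated global form.

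I would begin from the identity $I = \hat{K} \hDelb + \hat{C}$ obtained by taking adjoints of (\ref{eq:sol_hDelb}), where $\hat{K} \in \Psi^{-2}_{\hD}(\hM)$ is the canonical sublaplacian solution operator and $\hat{C}$ is the averaging-to-constants projection with smooth (indeed constant) integral kernel $1/\text{Vol}(\hM)$. Composing $\hnabla_b T_{-1}$ on the left yields
\[
\hnabla_b T_{-1} = (\hnabla_b T_{-1} \hat{K}) \hDelb + \hnabla_b T_{-1} \hat{C}.
\]
For the $\hat{C}$ piece, $\hat{C} f$ is a scalar multiple of the constant function $1$, so $\hnabla_b T_{-1} \hat{C} f$ equals a number depending linearly and smoothly on $f$, times $\hnabla_b T_{-1}(1)$. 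Since $T_{-1}$ is pseudolocal (Proposition~\ref{prop:pseudolocal}), $T_{-1}(1)$ is smooth on $\hM$, so $\hnabla_b T_{-1} \hat{C}$ has a smooth integral kernel and lies in $\Psi^{-\infty}_{\hD}(\hM)$; this supplies the $T_{-\infty}$ term.

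For the first piece, on any sufficiently small open set $U \subset \hM$ with a local orthonormal frame $\hX, \hY$ of $\hor$, we have $\hDelb = \hX^* \hX + \hY^* \hY$, so on functions compactly supported in $U$,
\[
\hnabla_b T_{-1} \hat{K} \hDelb = (\hnabla_b T_{-1} \hat{K} \hX^*) \hX + (\hnabla_b T_{-1} \hat{K} \hY^*) \hY.
\]
By Theorem~\ref{thm:compose}, each coefficient $\hnabla_b T_{-1} \hat{K} \hX^*$ (and similarly with $\hY^*$) has order $1 + (-1) + (-2) + 1 = -1$, hence lies in $\Psi^{-1}_{\hD}(\hM)$; since $\hX, \hY$ are components of $\hnabla_b$ in this frame, this is exactly of the form ``$T'_{-1} \hnabla_b$'' locally. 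To upgrade to a global identity I would introduce a partition of unity $\{\varsigma_\alpha\}$ subordinate to a finite cover of $\hM$ by such patches, apply the local identity to each piece $\hnabla_b T_{-1} \varsigma_\alpha$, and reassemble; any commutators $[\varsigma_\alpha,\cdot\,]$ with the order-$(-1)$ pieces that arise sit in $\Psi^{-2}_{\hD}(\hM) \subset \Psi^{-1}_{\hD}(\hM)$ by Theorem~\ref{thm:comm} (here $Q=4$, so $-1 \in (-Q+1,0)$), and can be absorbed either into $T'_{-1} \hnabla_b$ or into the smoothing remainder.

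I do not foresee a substantive obstacle: the lemma is pure bookkeeping on top of the sublaplacian solution of Section~\ref{sect:hDelb} and the composition calculus of Section~\ref{sect:pdo}. The only mild point of care is the partition-of-unity patching required to promote the local identity to a global one, but this is routine given the tools already in place.
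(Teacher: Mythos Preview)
Your proposal is correct and is essentially the paper's own argument: the paper's proof simply cites Corollary~\ref{cor:nablab_comm}, whose proof is exactly the ``compose with $I=\hat K\hDelb+\hat C$, split $\hDelb=\hX^*\hX+\hY^*\hY$ locally, and observe the $\hat C$ term is smoothing'' scheme you spell out. The only extra ingredient you add is the partition-of-unity globalization, which the paper sidesteps by declaring the global identity $\hnabla_b T_{-1}=T'_{-1}\hnabla_b+T_{-\infty}$ to be merely an abbreviation for the local frame statement; your patching is harmless but not strictly needed at this point.
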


\begin{proof}
This follows from Corollary~\ref{cor:nablab_comm}.
\end{proof}

Also, if $\eta \in C^{\infty}$ in a neighborhood of $p$ and $k \in \mathbb{N}$, we say that $\eta$ vanishes to non-isotropic order at least $k$ at $p$, if 
$$
|\eta(x)| \lesssim \hat{d}(p,x)^k
$$
for all $x \in \hM$, where $\hat{d}$ is the metric on $\hM$ induced by $\hthe$. Equivalently, we could replace $\hat{d}$ above by the quasi-distance $d$ on $\hM$ determined by the contact distribution $\hD$, since $d$ and $\hat{d}$ are comparable. If a function is $C^{\infty}$ near $p$, then it vanishes to non-isotropic order at least 1 at $p$, if and only if it vanishes at $p$; and it vanishes to non-isotropic order at least 2 at $p$, if and only if both the function and its subelliptic gradient vanishes at $p$. Alternatively, if we choose a coordinate system near $p$ such that the coordinates of $p$ is $(0,0,0)$ and $\hD$ is spanned by $\frac{\partial}{\partial x^1}$ and $\frac{\partial}{\partial x^2}$ at $p$, then a function $\eta$ on $\hM$ vanishes to non-isotropic order at least 2 at $p$, if and only if there exists $C^{\infty}$ functions $h_{11}, h_{12}, h_{22}$ and $h_3$ in a sufficiently small neighborhood $U$ of $p$ such that 
$$
\eta(x) = \sum_{1 \leq i \leq j \leq 2} h_{ij}(x) x^i x^j + h_3(x) x^3
$$
for all $x \in U$. We denote the set of all $C^{\infty}(\hM)$ functions that vanishes to non-isotropic order at least $k$ at $p$ by $O^k_p$.

\begin{lem} \label{lem:com2}
If $\eta_1 \in O^1_p$, and $T_{-1} \in \Psi^{-1}_{\hD}(\hM)$, then 
$$
[\eta_1,T_{-1}] = T_{-2}
$$
for some other $T_{-2} \in \Psi^{-2}_{\hD}(\hM)$.
\end{lem}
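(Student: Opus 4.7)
The plan is short because this is essentially a direct application of a previously established fact. I would observe that by definition $O^1_p \subset C^\infty(\hM)$, so $\eta_1$ is a genuinely smooth function on the closed manifold $\hM$; the vanishing condition $|\eta_1(x)| \lesssim \hat{d}(p,x)$ plays no role in the present lemma (it will be exploited in the subsequent lemmas, where stronger vanishing at $p$ is needed to cancel singular factors coming from $\hat{\rho}^{-1}$ weights). So the content here is really the commutator estimate for a smooth multiplier against a non-isotropic pseudodifferential operator of order $-1$.

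The key step is to invoke Theorem~\ref{thm:comm} directly. In our setting $N = 3$, so $Q = N+1 = 4$, and we check the range condition $-Q+1 < n < 0$ with $n = -1$; this reads $-3 < -1 < 0$, which is satisfied. Since $\eta_1 \in C^\infty(\hM)$ and $T_{-1} \in \Psi^{-1}_{\hD}(\hM)$, Theorem~\ref{thm:comm} yields
\[
[\eta_1, T_{-1}] \in \Psi^{-1-1}_{\hD}(\hM) = \Psi^{-2}_{\hD}(\hM),
\]
which is the desired conclusion. Calling this operator $T_{-2}$ finishes the proof.

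There is no substantial obstacle: the only thing to verify is that the order $n = -1$ is strictly between $-Q+1$ and $0$ so that Theorem~\ref{thm:comm} applies as stated, and that the commutator notation $[\eta_1, T_{-1}]$ indeed refers to the commutator with the multiplication operator by $\eta_1$ (which is exactly the setup in Theorem~\ref{thm:comm}, where $\eta$ is identified with multiplication by $\eta$). The vanishing hypothesis $\eta_1 \in O^1_p$, rather than just $\eta_1 \in C^\infty(\hM)$, is stated for notational consistency with the next lemmas in the chain (where one typically wants to conclude that a product such as $\eta_k T_{-1}$ with $\eta_k \in O^k_p$ gains additional decay at $p$, with the commutator version gaining one order of non-isotropic smoothing in addition).
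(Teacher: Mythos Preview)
Your proof is correct and matches the paper's own argument: the paper also invokes Theorem~\ref{thm:comm} directly and remarks that the vanishing hypothesis $\eta_1 \in O^1_p$ is not actually needed for this lemma. Your additional verification of the range condition $-Q+1 < -1 < 0$ and your explanation of why the $O^1_p$ hypothesis is stated (for consistency with later lemmas) are accurate elaborations.
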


\begin{proof}
This is a consequence of Theorem~\ref{thm:comm}; in fact this holds without having to assume that $\eta_1$ vanishes at $p$.
\end{proof}

\begin{lem} \label{lem:com3}
If $\eta_2 \in O^2_p$, and $T_{-1} \in \Psi^{-1}_{\hD}(\hM)$, then
$$
[\eta_2,T_{-1}] = \eta_1 T_{-2} + T_{-3}
$$
for some $\eta_1 \in O^1_p$, $T_{-2} \in \Psi^{-2}_{\hD}(\hM)$, and  $T_{-3} \in \Psi^{-3}_{\hD}(\hM)$.
\end{lem}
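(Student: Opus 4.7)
\emph{Proof plan.} Choose a coordinate chart $(x^1,x^2,x^3)$ around $p$ with $p=0$ and $\hD_p=\operatorname{span}(\partial_{x^1},\partial_{x^2})$. Since $\eta_2\in O^2_p$ gives $\eta_2(p)=0$ and $\hnabla_b\eta_2(p)=0$, Taylor's formula yields smooth functions $a_1,a_2,a_3$ near $p$ with $a_1(p)=a_2(p)=0$ and $\eta_2=x^1a_1+x^2a_2+x^3a_3$. With a cut-off $\chi\in C^{\infty}_c$ equal to $1$ near $p$ and supported in the chart, and $c:=a_3(p)$, I decompose
\[
\eta_2 \;=\; \chi\bigl[x^1a_1+x^2a_2+x^3(a_3-c)\bigr] \;+\; c\,\chi x^3 \;+\; (1-\chi)\eta_2.
\]
The first bracket is a sum of products of two smooth $O^1_p$ functions (after extension by zero to $\hM$); the last summand vanishes in a neighborhood of $p$, hence factors as $\varphi\cdot(1-\chi)\eta_2$ with $\varphi\in O^1_p$ equal to $1$ on its support, and can be treated the same way. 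Only the middle term $c\,\chi x^3$ involves the transverse coordinate with a possibly nonvanishing constant prefactor.

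For any product $\phi\psi$ with $\phi,\psi\in O^1_p\cap C^{\infty}(\hM)$, Leibniz gives $[\phi\psi,T_{-1}]=\phi[\psi,T_{-1}]+[\phi,T_{-1}]\psi$. By Theorem~\ref{thm:comm}, both $[\phi,T_{-1}]$ and $[\psi,T_{-1}]$ lie in $\Psi^{-2}_{\hD}(\hM)$, and since $-3<-2<0$ a second application of Theorem~\ref{thm:comm} lets me commute $\psi$ past $[\phi,T_{-1}]$ at the cost of an error in $\Psi^{-3}_{\hD}(\hM)$. This puts every such piece in the desired form $\eta_1T_{-2}+T_{-3}$ with $\eta_1\in\{\phi,\psi\}$.

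The crux, and the real obstacle, is the transverse term: setting $X^3:=\chi x^3$, I must show $[X^3,T_{-1}]=\eta_1^\star T^\star_{-2}+T^\star_{-3}$. I use Theorem~\ref{thm:equiv_smoothing} to represent $T_{-1}$ in the chart by a kernel $k_0(x,\Theta(x,y))$, where $\Theta$ can be taken to be the Folland--Stein coordinate map of Section~\ref{sect:hDelb}. Integrating the flow that defines $\Theta$ and exploiting the approximate Heisenberg group law for $(\hX,\hY,\hT)$ (the $O^k$ corrections contribute only lower-order pieces) produces
\[
x^3-y^3 \;=\; \Theta(x,y)^3 \;+\; 2\bigl(y^2\Theta(x,y)^1-y^1\Theta(x,y)^2\bigr) \;+\; R(y,\Theta(x,y)),
\]
with $R(y,u)$ satisfying the non-isotropic bounds $|\partial_y^J\partial_u^\alpha R|\lesssim_{J,\alpha}\|u\|^{3-\|\alpha\|}$. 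Multiplying by $k_0(x,\Theta(x,y))$ splits the kernel of $[X^3,T_{-1}]$ into three pieces: the $\Theta^3 k_0$ and $R\,k_0$ pieces satisfy the order $-3$ kernel bounds of Theorem~\ref{thm:equiv_smoothing} (since $|\Theta^3|,|R|\lesssim\|\Theta\|^2$, with appropriate derivative control, and $k_0$ decays rapidly at infinity in $u$), and so define operators in $\Psi^{-3}_{\hD}(\hM)$; whereas each cross-term $y^i\Theta^j k_0$ (for $\{i,j\}=\{1,2\}$) is the kernel of $T^{(ij)}_{-2}\circ M_{x^i}$ for some $T^{(ij)}_{-2}\in\Psi^{-2}_{\hD}(\hM)$, which equals $x^i T^{(ij)}_{-2}+T_{-3}$ by one further application of Theorem~\ref{thm:comm}, of the desired form $\eta_1 T_{-2}+T_{-3}$ with $\eta_1=x^i\in O^1_p$. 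Contributions from where $\chi$ varies are handled by the further splitting $X^3(x)-X^3(y)=\chi(x)(x^3-y^3)+y^3(\chi(x)-\chi(y))$, pseudolocality, and the same commutator manipulations. The reason this step is the hard part is that $x^3-y^3$ generically has non-isotropic size only $\|\Theta\|$, so $[X^3,T_{-1}]$ sits a priori only in $\Psi^{-2}_{\hD}(\hM)$; the extra order needed to extract the $T_{-3}$ summand is invisible at the symbol level and must be recovered from the quadratic correction in the Heisenberg group law via the kernel representation of Theorem~\ref{thm:equiv_smoothing}.
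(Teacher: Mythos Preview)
Your proposal is correct and follows the same overall architecture as the paper: decompose $\eta_2$, dispatch products of two $O^1_p$ functions by the Leibniz identity together with Theorem~\ref{thm:comm}, and isolate the transverse piece $\chi x^3$ for a kernel-level argument. The genuine difference is in how that last piece is handled.

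The paper works with the \emph{linear} map $\Theta_0(x,y)=L_x(x-y)$ furnished directly by Theorem~\ref{thm:equiv_smoothing}. Then $x^3-y^3=\sum_{i=1}^3 A_i^3(x)\,\Theta_0(x,y)^i$ is an exact algebraic identity (inverting $L_x$), and the whole point reduces to the single observation that $A_i^3(0)=\delta_i^3$, so the coefficients $A_1^3,A_2^3$ already lie in $O^1_p$. The $i=3$ term lands in $\Psi^{-3}_{\hD}(\hM)$, and the $i=1,2$ terms are manifestly of the form $\eta_1 T_{-2}$ with $\eta_1=A_i^3\in O^1_p$ a function of $x$, so no further commutation is needed. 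By contrast, you route the analysis through the nonlinear Folland--Stein $\Theta$, which forces you to (i) justify a kernel representation of an arbitrary $T_{-1}$ with that $\Theta$ (Theorem~\ref{thm:equiv_smoothing} is stated for $\Theta_0$; what you need is the equivalence of representations for different compatible $\Theta$'s, implicit in the proof of Theorem~\ref{thm:equiv_general} but not stated as a two-sided result), (ii) derive the approximate group-law expansion for $x^3-y^3$ and control the remainder $R$, and (iii) perform an extra commutator step because your $O^1_p$ factors $y^i$ sit on the \emph{right} of the order $-2$ kernel. None of this is wrong, but it is extra work that the linear $\Theta_0$ avoids entirely. The paper's route is shorter and stays strictly within the statements already proved; your route buys nothing additional here, though the Folland--Stein expansion you wrote down is exactly what one would use if one needed sharper information about the principal part of the commutator.
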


\begin{proof}
Let $U$ be a sufficiently small open neighborhood of $p$, so that we can choose a frame $\hX_1, \hX_2, \hX_3$ of $T\hM$ on $U$, with $\hX_1$ and $\hX_2$ spanning $\hD$ on $U$. Choose a coordinate system on $U$ such that the coordinates of $p$ is $(0,0,0)$, and
$$
\hX_i = \sum_{j=1}^3 A_i^j(x) \frac{\partial}{\partial x^j} \quad \text{on $U$},
$$
with $A_i^j(0) = \delta_i^j$ for $1 \leq i, j \leq 3$. Now suppose $\eta_2 \in O^2_p$ is compactly supported in $U$. Let $\chi_0 \in C^{\infty}_c(U)$ be identically 1 on the support of $\eta_2$. Then $\eta_2 = \chi_0 \eta_2$ can be decomposed on $\hM$ as
$$
\eta_2(x) = \sum_{1 \leq i \leq j \leq 2} \chi_0(x) h_{ij}(x) x^i x^j + \chi_0(x) h_3(x) x^3
$$
for some $h_{11}, h_{12}, h_{22}, h_3 \in C^{\infty}(U)$. To study $[\eta_2, T_{-1}]$ where $T_{-1} \in \Psi^{-1}_{\hD}(\hM)$, it suffices to study the commutator of each piece of $\eta_2$ with $T_{-1}$. But 
$$
[\eta_1 \eta_1', T_{-1}] = \eta_1 [\eta_1', T_{-1}] + \eta_1' [\eta_1, T_{-1}] + [[\eta_1,T_{-1}], \eta_1']
$$
is of the form $O^1_p T_{-2} + T_{-3}$, where $T_{-2} \in \Psi^{-2}_{\hD}(\hM)$ and $T_{-3} \in \Psi^{-3}_{\hD}(\hM)$; we will show that the same is true for $[h x^3, T_{-1}]$, if $h \in C^{\infty}_c(U)$. 

Indeed, let $\chi \in C^{\infty}_c(U)$ be identically 1 on the support of $h$, and $\tilde{\chi} \in C^{\infty}_c(U)$ be identically 1 on the support of $\chi$. Then 
$$
[h x^3, T_{-1}] = [h x^3, T_{-1} \chi] \quad \text{(mod $\Psi^{-\infty}_{\hD}(\hM))$}
$$
since $[h x^3, T_{-1} (1-\chi)] = h x^3 T_{-1} (1-\chi) - T_{-1} h x^3 (1-\chi) = h x^3 T_{-1} (1-\chi) \in \Psi^{-\infty}_{\hD}(\hM)$. Furthermore, 
$$
[h x^3, T_{-1} \chi] = \tilde{\chi} [h x^3, T_{-1} \chi]  \quad \text{(mod $\Psi^{-\infty}_{\hD}(\hM))$},
$$
since $(1-\tilde{\chi}) [h x^3, T_{-1} \chi] = (1-\tilde{\chi}) h x^3 T_{-1} \chi - (1-\tilde{\chi}) T_{-1} \chi h x^3 = - (1-\tilde{\chi}) T_{-1} \chi h x^3 \in \Psi^{-\infty}_{\hD}(\hM)$. 
Now
$$
\tilde{\chi} [h x^3, T_{-1} \chi]  = \tilde{\chi} [x^3, T_{-1} \chi] h  + \tilde{\chi} x^3 [h, T_{-1} \chi], 
$$
the second term being of the form $O^1_p T_{-2}$ already. We will analyse the first term by writing down its integral kernel. To do so, we define $$\Theta_0(x,y) = L_x(x-y)$$ on $U \times U$ using the coefficients $(A_i^j(x))$ as in Section~\ref{sect:pdo}, i.e. we let the $k$-th coordinate of $\Theta_0(x,y)$ be $\Theta_0(x,y)^k = \sum_{j=1}^3 B_j^k(x) (x^j-y^j)$ for $k = 1, 2, 3$, where $(B_j^k(x))$ is the inverse matrix of $(A_i^j(x))$. Then the quasi-distance $d$ determined by $\hD$ is given on $U$ by
$$
d(x,y) \simeq |x-y| + |\Theta_0(x,y)^3|^{1/2},
$$
and by Theorem~\ref{thm:equiv_smoothing}, there exists a kernel $k_0(x,u)$, satisfying
$$
|\partial_x^I \partial_u^{\gamma} k_0(x,u)| \lesssim_{I,\gamma,M} \|u\|^{-3-\|\gamma\|-M}
$$
for all multiindices $\gamma$, $I$, and all $M \geq 0$,
such that for any $f \in C^{\infty}_c(U)$ and any $x \in U$, we have
$$
T_{-1} f(x) = \int_{U} f(y) k_0(x,\Theta_0(x,y)) dy.
$$
It follows that the integral kernel of $\tilde{\chi} [x^3, T_{-1} \chi] h$ is
$$
\tilde{\chi}(x) (x^3-y^3) k_0(x,\Theta_0(x,y)) \chi(y) \tilde{h}(y).
$$
Writing
$$
x^3-y^3 = \sum_{i=1}^3 A_i^3(x) \Theta_0(x,y)^i, 
$$
we see that the integral kernel of $\tilde{\chi} [x^3, T_{-1} \chi] h$ is
$$
\sum_{i=1}^3 \tilde{\chi}(x) A_i^3(x) \Theta_0(x,y)^i k_0(x,\Theta_0(x,y)) \chi(y) \tilde{h}(y).
$$
By Theorem~\ref{thm:equiv_smoothing} again, the term $i = 3$ is the integral kernel of an operator in $\Psi^{-3}_{\hD}(\hM)$; on the other hand, since $A_i^3(0) = 0$ for $i = 1,2$, we see that the terms $i = 1,2$ are the integral kernels of an operator in $O^1_p \Psi^{-2}_{\hD}(\hM)$. Thus altogether, we have shown that $[hx^3,T_{-1}]$ is of the form $O^1_p T_{-2} + T_{-3}$, with $T_{-2} \in \Psi^{-2}_{\hD}(\hM)$ and $T_{-3} \in \Psi^{-3}_{\hD}(\hM)$, as desired.

Finally, we return to the general case, where $\eta_2 \in O^2_p$, but is not necessarily supported in $U$. We just have to fix a relatively compact open subset $U_0$ of $U$, and note that every $\eta_2 \in O^2_p$ can be written as the sum of two parts, one identically zero on $U_0$, another supported only in $U$; we have already seen that the commutator of the latter with $T_{-1}$ is of the desired form, so it remains to show that if $\eta \in C^{\infty}(\hM)$ vanishes identically on $U_0$, then $[\eta,T_{-1}]$ is of the desired form. To do so, let $\chi \in C^{\infty}_c(U_0)$ such that $\chi \equiv 1$ near $p$; note that the supports of $\eta$ and $\chi$ are disjoint. Then
$$
[\eta, T_{-1}] = (1-\chi) [\eta, T_{-1}] \quad \text{(mod $\Psi^{-\infty}_{\hD}(\hM))$},
$$
since 
$
\chi [\eta, T_{-1}] = \chi \eta T_{-1} - \chi T_{-1} \eta = - \chi T_{-1} \eta \in \Psi^{-\infty}_{\hD}(\hM).
$
But $1-\chi \in O^1_p$, and $[\eta, T_{-1}] \in \Psi^{-2}_{\hD}(\hM)$. As a result, $[\eta, T_{-1}]$ has the form $ O^1_p T_{-2} + T_{-3}$, as desired.
\end{proof}

We now return to the proof of Lemma~\ref{lem:hRLinfty}. Using the commatation relations as Lemma~\ref{lem:com1}, \ref{lem:com2} and \ref{lem:com3}, we have
$$
\eta_1\hnabla_b T_{-1} =T'_{-1} \eta_1 \hnabla_b + T''_{-1}
$$
$$
\eta_2 \hnabla_b^2 T_{-1} = T'_{-1} \eta_2 \hnabla_b^2 + T''_{-1} \eta_1 \hnabla_b + T'''_{-1}
$$
As a result, if $g \in \E{1}$, then
\begin{equation} \label{eq:comm1}
(\eta_1\hnabla_b) (g T_{-1})=(g T'_{-1}) (\eta_1 \hnabla_b) + g'' T''_{-1}
\end{equation}
\begin{equation} \label{eq:comm2}
(\eta_2 \hnabla_b^2 ) (g T_{-1}) = (g T'_{-1} ) (\eta_2 \hnabla_b^2) + (g'' T''_{-1} ) (\eta_1 \hnabla_b )+ (g''' T'''_{-1})
\end{equation}
for some functions $g'', g''' \in \E{1}$.

Using these, one can proceed as follows:

\begin{proof}[Proof of Lemma~\ref{lem:hRLinfty}]
Recall $\hR = g T_{-1}$ for some $g \in \E{1}$ and $T_{-1} \in \Psi^{-1}_{\hD}(\hM)$. Let $f \in L^{\infty}(\hM)$. We will bound $|\hrho(q)^k \hnabla_b^k (g T_{-1})^{2k} f(q)|$, uniformly for $q \in \hM$. 
If $q \in \hM$, either $\hrho(q) \simeq |\eta_1(q)|$ for some $\eta_1 \in O^1_p$, or $\hrho(q) \simeq |\eta_2(q)|^{1/2}$ for some $\eta_2 \in O^2_p$.

\noindent{\textbf{Case 1:}} $\hrho(q) \simeq |\eta_1(q)|$ for some $\eta_1 \in O^1_p$. Then we rewrite $\eta_1^k \hnabla_b^k (g T_{-1})^{2k}$ by first commuting the $\eta_1$'s through the $\hnabla_b$'s to form blocks of $\eta_1 \hnabla_b$, and then commute $\eta_1 \hnabla_b$ through the $g T_{-1}$'s using (\ref{eq:comm1}). We obtain
$$
\eta_1^k \hnabla_b^k (g T_{-1})^{2k} = (\eta_1 \hnabla_b  (g' T_{-1})^2 )^k + \text{better errors}
$$ 
so in this case, if $f \in L^{\infty}$, then by Lemma~\ref{lem:dT2},
$$
|\hrho^k(q) \hnabla_b^k  (g T_{-1})^{2k} f(q)| \leq C.
$$

\noindent{\textbf{Case 2:}} $\hrho \simeq |\eta_2|^{1/2}$ for some $\eta_1 \in O^2_p$. If $k$ is even, write $k = 2 \ell$, and bound $\eta_2^{\ell} \hnabla_b^k \hR^{2k}$ by writing it as
\begin{align*}
\eta_2^{\ell} \hnabla_b^{k} (g T_{-1})^{2k}  
&= (\eta_2 \hnabla_b^2 (g' T_{-1})^4 )^{\ell} + \text{better errors} \\
&= (\hnabla_b (g' T_{-1})^2 \eta_2 \hnabla_b (g' T_{-1})^2)^{\ell} + \text{better errors}.
\end{align*}
Here we have rewritten $\eta_2^{\ell} \hnabla_b^k (g T_{-1})^{2k}$ by first commuting the $\eta_2$'s through the $\hnabla_b$'s to form blocks of $\eta_2 \hnabla_b^2$, and then commute $\eta_2 \hnabla_b^2$ through the $g T_{-1}$'s using (\ref{eq:comm2}). Similarly, if $k$ is odd, write $k = 2 \ell + 1$, and bound $\eta_2^{\ell} \hnabla_b^{k} \hR^{2k}$ by writing it as
\begin{align*}
\eta_2^{\ell} \hnabla_b^{k} (g T_{-1})^{2k}  
&= \hnabla_b (g' T_{-1})^2 (\eta_2 \hnabla_b^2 (g' T_{-1})^4)^{\ell} + \text{better errors} \\
&= \hnabla_b (g' T_{-1})^2  (\hnabla_b (g' T_{-1})^2 \eta_2 \hnabla_b (g' T_{-1})^2)^{\ell} + \text{better errors}.
\end{align*}
For either parity of $k$, if $f \in L^{\infty}$, then by Lemma~\ref{lem:dT2},
$$
|\hrho^k(q) \hnabla_b^k  (g T_{-1})^{2k} f(q)| \leq C.
$$
This completes the proof of Lemma~\ref{lem:hRLinfty}.
\end{proof}

\begin{bibdiv}
\begin{biblist}
 
\bib{BG88}{book}{
   author={Beals, Richard},
   author={Greiner, Peter},
   title={Calculus on Heisenberg manifolds},
   series={Annals of Mathematics Studies},
   volume={119},
   publisher={Princeton University Press},
   place={Princeton, NJ},
   date={1988},
   pages={x+194},
}

\bib{BoSh}{article}{
   author={Boas, Harold P.},
   author={Shaw, Mei-Chi},
   title={Sobolev estimates for the Lewy operator on weakly pseudoconvex
   boundaries},
   journal={Math. Ann.},
   volume={274},
   date={1986},
   number={2},
   pages={221--231},
}

\bib{MR0409893}{article}{
   author={Boutet de Monvel, L.},
   title={Int\'egration des \'equations de Cauchy-Riemann induites formelles},
   language={French},
   conference={
      title={S\'eminaire Goulaouic-Lions-Schwartz 1974--1975; \'Equations aux
      deriv\'ees partielles lin\'eaires et non lin\'eaires},
   },
   book={
      publisher={Centre Math., \'Ecole Polytech., Paris},
   },
   date={1975},
   pages={Exp. No. 9, 14},
}

\bib{BouSj76}{article}{
   author={Boutet de Monvel, L.},
   author={Sj{\"o}strand, J.},
   title={Sur la singularit\'e des noyaux de Bergman et de Szeg\H o},
   language={French},
   conference={
      title={Journ\'ees: \'Equations aux D\'eriv\'ees Partielles de Rennes
      (1975)},
   },
   book={
      publisher={Soc. Math. France},
      place={Paris},
   },
   date={1976},
   pages={123--164. Ast\'erisque, No. 34-35},
}

\bib{MR2999315}{article}{
   author={Chanillo, Sagun},
   author={Chiu, Hung-Lin},
   author={Yang, Paul},
   title={Embeddability for 3-dimensional Cauchy-Riemann manifolds and CR
   Yamabe invariants},
   journal={Duke Math. J.},
   volume={161},
   date={2012},
   number={15},
   pages={2909--2921},
}

\bib{MR3202474}{article}{
   author={Chanillo, Sagun},
   author={Chiu, Hung-Lin},
   author={Yang, Paul},
   title={Embedded three-dimensional CR manifolds and the non-negativity of
   Paneitz operators},
   conference={
      title={Geometric analysis, mathematical relativity, and nonlinear
      partial differential equations},
   },
   book={
      series={Contemp. Math.},
      volume={599},
      publisher={Amer. Math. Soc., Providence, RI},
   },
   date={2013},
   pages={65--82},
}

\bib{MR3600060}{article}{
   author={Cheng, Jih-Hsin},
   author={Malchiodi, Andrea},
   author={Yang, Paul},
   title={A positive mass theorem in three dimensional Cauchy-Riemann
   geometry},
   journal={Adv. Math.},
   volume={308},
   date={2017},
   pages={276--347},
   issn={0001-8708},
}

\bib{CS01}{book}{
   author={Chen, So-Chin},
   author={Shaw, Mei-Chi},
   title={Partial differential equations in several complex variables},
   series={AMS/IP Studies in Advanced Mathematics},
   volume={19},
   publisher={American Mathematical Society},
   place={Providence, RI},
   date={2001},
   pages={xii+380},
}
		
\bib{Ch88I}{article}{
   author={Christ, Michael},
   title={Regularity properties of the $\overline\partial_b$ equation on
   weakly pseudoconvex CR manifolds of dimension $3$},
   journal={J. Amer. Math. Soc.},
   volume={1},
   date={1988},
   number={3},
   pages={587--646},
}

\bib{Ch88II}{article}{
   author={Christ, Michael},
   title={Pointwise estimates for the relative fundamental solution of
   $\overline\partial_b$},
   journal={Proc. Amer. Math. Soc.},
   volume={104},
   date={1988},
   number={3},
   pages={787--792},
}

\bib{F}{article}{
   author={Fefferman, Charles},
   title={The Bergman kernel and biholomorphic mappings of pseudoconvex
   domains},
   journal={Invent. Math.},
   volume={26},
   date={1974},
   pages={1--65},
}

\bib{FeKo88}{article}{
   author={Fefferman, C. L.},
   author={Kohn, J. J.},
   title={Estimates of kernels on three-dimensional CR manifolds},
   journal={Rev. Mat. Iberoamericana},
   volume={4},
   date={1988},
   number={3-4},
   pages={355--405},
}

\bib{FoSt}{article}{
   author={Folland, G. B.},
   author={Stein, E. M.},
   title={Estimates for the $\bar \partial _{b}$ complex and analysis on
   the Heisenberg group},
   journal={Comm. Pure Appl. Math.},
   volume={27},
   date={1974},
   pages={429--522},
}

\bib{MR2925386}{article}{
   author={Frank, Rupert L.},
   author={Lieb, Elliott H.},
   title={Sharp constants in several inequalities on the Heisenberg group},
   journal={Ann. of Math. (2)},
   volume={176},
   date={2012},
   number={1},
   pages={349--381},
}

\bib{MR1831872}{article}{
   author={Gamara, Najoua},
   title={The CR Yamabe conjecture---the case $n=1$},
   journal={J. Eur. Math. Soc. (JEMS)},
   volume={3},
   date={2001},
   number={2},
   pages={105--137},
}

\bib{MR1867895}{article}{
   author={Gamara, Najoua},
   author={Yacoub, Ridha},
   title={CR Yamabe conjecture---the conformally flat case},
   journal={Pacific J. Math.},
   volume={201},
   date={2001},
   number={1},
   pages={121--175},
}

\bib{MR975118}{article}{
   author={Graham, C. Robin},
   author={Lee, John M.},
   title={Smooth solutions of degenerate Laplacians on strictly pseudoconvex
   domains},
   journal={Duke Math. J.},
   volume={57},
   date={1988},
   number={3},
   pages={697--720},
}

\bib{GrSt}{book}{
   author={Greiner, P. C.},
   author={Stein, E. M.},
   title={Estimates for the $\overline \partial $-Neumann problem},
   note={Mathematical Notes, No. 19},
   publisher={Princeton University Press},
   place={Princeton, N.J.},
   date={1977},
   pages={iv+195},
}

\bib{Hor85}{book}{
   author={H{\"o}rmander, Lars},
   title={The analysis of linear partial differential operators. III},
   series={Grundlehren der Mathematischen Wissenschaften [Fundamental
   Principles of Mathematical Sciences]},
   volume={274},
   note={Pseudodifferential operators},
   publisher={Springer-Verlag},
   place={Berlin},
   date={1985},
   pages={viii+525},
}

\bib{Hsiao08}{article}{
   author={Hsiao, Chin-Yu},
   title={Projections in several complex variables},
   language={English, with English and French summaries},
   journal={M\'em. Soc. Math. Fr. (N.S.)},
   number={123},
   date={2010},
   pages={131},
}

\bib{MR3135073}{article}{
   author={Hsiao, Chin-Yu},
   author={Yung, Po-Lam},
   title={The tangential Cauchy-Riemann complex on the Heisenberg group via
   conformal invariance},
   journal={Bull. Inst. Math. Acad. Sin. (N.S.)},
   volume={8},
   date={2013},
   number={3},
   pages={359--375},
}

\bib{MR3366852}{article}{
   author={Hsiao, Chin-Yu},
   author={Yung, Po-Lam},
   title={Solving the Kohn Laplacian on asymptotically flat CR manifolds of
   dimension 3},
   journal={Adv. Math.},
   volume={281},
   date={2015},
   pages={734--822},
}

\bib{MR3308372}{book}{
   author={Jean, Fr\'ed\'eric},
   title={Control of nonholonomic systems: from sub-Riemannian geometry to
   motion planning},
   series={SpringerBriefs in Mathematics},
   publisher={Springer, Cham},
   date={2014},
   pages={x+104},
}

\bib{MR880182}{article}{
   author={Jerison, David},
   author={Lee, John M.},
   title={The Yamabe problem on CR manifolds},
   journal={J. Differential Geom.},
   volume={25},
   date={1987},
   number={2},
   pages={167--197},
}

\bib{MR924699}{article}{
   author={Jerison, David},
   author={Lee, John M.},
   title={Extremals for the Sobolev inequality on the Heisenberg group and
   the CR Yamabe problem},
   journal={J. Amer. Math. Soc.},
   volume={1},
   date={1988},
   number={1},
   pages={1--13},
}
		
\bib{MR982177}{article}{
   author={Jerison, David},
   author={Lee, John M.},
   title={Intrinsic CR normal coordinates and the CR Yamabe problem},
   journal={J. Differential Geom.},
   volume={29},
   date={1989},
   number={2},
   pages={303--343},
}		

\bib{Koe02}{article}{
   author={Koenig, Kenneth D.},
   title={On maximal Sobolev and H\"older estimates for the tangential
   Cauchy-Riemann operator and boundary Laplacian},
   journal={Amer. J. Math.},
   volume={124},
   date={2002},
   number={1},
   pages={129--197},
}

\bib{Koh85}{article}{
   author={Kohn, J. J.},
   title={Estimates for $\bar\partial_b$ on pseudoconvex CR manifolds},
   conference={
      title={Pseudodifferential operators and applications (Notre Dame,
      Ind., 1984)},
   },
   book={
      series={Proc. Sympos. Pure Math.},
      volume={43},
      publisher={Amer. Math. Soc.},
      place={Providence, RI},
   },
   date={1985},
   pages={207--217},
}
	
\bib{Koh86}{article}{
   author={Kohn, J. J.},
   title={The range of the tangential Cauchy-Riemann operator},
   journal={Duke Math. J.},
   volume={53},
   date={1986},
   number={2},
   pages={525--545},
}

\bib{KoRo}{article}{
   author={Kohn, J. J.},
   author={Rossi, Hugo},
   title={On the extension of holomorphic functions from the boundary of a
   complex manifold},
   journal={Ann. of Math. (2)},
   volume={81},
   date={1965},
   pages={451--472},
}

\bib{MM07}{book}{
   author={Ma, Xiaonan},
   author={Marinescu, George},
   title={Holomorphic Morse inequalities and Bergman kernels},
   series={Progress in Mathematics},
   volume={254},
   publisher={Birkh\"auser Verlag},
   place={Basel},
   date={2007},
   pages={xiv+422},
}

\bib{Ma1}{article}{
   author={Machedon, Matei},
   title={Estimates for the parametrix of the Kohn Laplacian on certain
   domains},
   journal={Invent. Math.},
   volume={91},
   date={1988},
   number={2},
   pages={339--364},
}

\bib{Ma2}{article}{
   author={Machedon, Matei},
   title={Szeg\H o kernels on pseudoconvex domains with one degenerate
   eigenvalue},
   journal={Ann. of Math. (2)},
   volume={128},
   date={1988},
   number={3},
   pages={619--640},
}

\bib{NaSt}{book}{
   author={Nagel, Alexander},
   author={Stein, E. M.},
   title={Lectures on pseudodifferential operators: regularity theorems and
   applications to nonelliptic problems},
   series={Mathematical Notes},
   volume={24},
   publisher={Princeton University Press},
   place={Princeton, N.J.},
   date={1979},
   pages={159},
}

\bib{NSW}{article}{
   author={Nagel, Alexander},
   author={Stein, Elias M.},
   author={Wainger, Stephen},
   title={Balls and metrics defined by vector fields. I. Basic properties},
   journal={Acta Math.},
   volume={155},
   date={1985},
   number={1-2},
   pages={103--147},
}

\bib{NRSW1}{article}{
   author={Nagel, Alexander},
   author={Rosay, Jean-Pierre},
   author={Stein, Elias M.},
   author={Wainger, Stephen},
   title={Estimates for the Bergman and Szeg\H o kernels in certain weakly
   pseudoconvex domains},
   journal={Bull. Amer. Math. Soc. (N.S.)},
   volume={18},
   date={1988},
   number={1},
   pages={55--59},
}

\bib{NRSW2}{article}{
   author={Nagel, A.},
   author={Rosay, J.-P.},
   author={Stein, E. M.},
   author={Wainger, S.},
   title={Estimates for the Bergman and Szeg\H o kernels in ${\bf C}^2$},
   journal={Ann. of Math. (2)},
   volume={129},
   date={1989},
   number={1},
   pages={113--149},
}

\bib{RoSt}{article}{
   author={Rothschild, Linda Preiss},
   author={Stein, E. M.},
   title={Hypoelliptic differential operators and nilpotent groups},
   journal={Acta Math.},
   volume={137},
   date={1976},
   number={3-4},
   pages={247--320},
}

\bib{Ste93}{book}{
   author={Stein, Elias M.},
   title={Harmonic analysis: real-variable methods, orthogonality, and
   oscillatory integrals},
   series={Princeton Mathematical Series},
   volume={43},
   note={With the assistance of Timothy S. Murphy;
   Monographs in Harmonic Analysis, III},
   publisher={Princeton University Press},
   place={Princeton, NJ},
   date={1993},
   pages={xiv+695},
}

\bib{MR3228630}{article}{
   author={Stein, Elias M.},
   author={Yung, Po-Lam},
   title={Pseudodifferential operators of mixed type adapted to
   distributions of $k$-planes},
   journal={Math. Res. Lett.},
   volume={20},
   date={2013},
   number={6},
   pages={1183--1208},
}

\end{biblist}
\end{bibdiv}

\end{document}